\documentclass[a4paper,11pt]{amsart}

\usepackage{amssymb, amsfonts, latexsym, amsthm, amsmath, verbatim}
\pdfoutput=1
\usepackage{marginnote} 
\usepackage{array,longtable}
\usepackage{color}
\usepackage[all]{xy}
\usepackage{amscd}
\usepackage{mathrsfs} 
\usepackage[english]{babel}
\usepackage{tikz}
\usepackage{graphicx} 
\usepackage{standalone}
\usepackage{epstopdf}
\usepackage[paper=a4paper, text={155mm,218mm},centering]{geometry}
\definecolor{darkblue}{rgb}{0,0,0.4} 
\usepackage[colorlinks=true, citecolor=darkblue, filecolor=darkblue, linkcolor=darkblue,urlcolor=darkblue]{hyperref}
\usetikzlibrary{arrows,cd,decorations.pathmorphing,decorations.pathreplacing}
\usepackage{amscd}
\usepackage{enumitem}
\usepackage{transparent}
\usepackage{cancel}
\usepackage{tcolorbox}
\usepackage{makecell}
\tikzstyle{crossing}=[circle,fill=white,minimum height=6pt,inner sep=0pt, outer sep=0pt, style={transform shape=false}]

\usepackage{caption} 
\usepackage{thm-restate}
\usepackage{cleveref}

\numberwithin{equation}{section}

\theoremstyle{plain}
\newtheorem{thm}[equation]{Theorem}
\newtheorem{theorem}[equation]{Theorem}

\newtheorem*{thm*}{Theorem}
\newtheorem{corollary}[equation]{Corollary}
\newtheorem{lem}[equation]{Lemma}
\newtheorem{lemma}[equation]{Lemma}

\theoremstyle{definition}

\newtheorem{remark}[equation]{Remark}
\newtheorem{example}[equation]{Example}

\newtheorem{defn}[equation]{Definition}
\newtheorem{definition}[equation]{Definition}
\newtheorem{con}[equation]{Condition}

\numberwithin{figure}{section}

\usepackage{etoolbox}
\def\do#1{\csdef{c#1}{\mathcal{#1}}}
\docsvlist{A,B,C,D,E,F,G,H,I,J,K,L,M,N,O,P,Q,R,S,T,U,V,W,X,Y,Z}
\def\do#1{\csdef{#1}{\mathbb{#1}}}
\docsvlist{N,Z,Q,C,R,F,A,W,M}
\def\do#1{\csdef{#1#1}{\mathbf{#1}}}
\docsvlist{A,B,C,D,E,F,G,H,I,J,K,L,M,N,O,P,Q,R,S,T,U,V,W,X,Y,Z,a,b,c,d,e,f,h,i,j,k,m,n,o,p,q,r,s,t,u,v,w,x,y,z}
\def\do#1{\csdef{m#1}{\mathfrak{#1}}}
\docsvlist{a,b,e,j,m,E,I,J,K}
\def\do#1{\csdef{#1}{\operatorname{#1}}}
\docsvlist{atan,codim,Fix,Hom,icodim, Id, Int,lk, Mod,pcodim,Sym, supp}
\def\do#1{\csdef{sc#1}{\mathscr{#1}}}
\docsvlist{M,S}

\newcommand{\Sfree}{\mathcal{S}^-}
\newcommand{\Sfix}{\mathcal{S}^{\Z_2}}
\newcommand{\Sigmafree}{\Sigma^-}
\newcommand{\Sigmafix}{\Sigma^{\Z_2}}
\newcommand{\wt}[1]{\widetilde{#1}}

\newcommand{\ol}[1]{\overline{#1}}

\newcommand{\wtimes}{\boxtimes}

\usepackage{imakeidx}

\makeatletter
\newcommand*{\wackyenum}[1]{%
  \expandafter\@wackyenum\csname c@#1\endcsname%
}

\newcommand*{\@wackyenum}[1]{%
\ifcase#1\or{IR-1}\or{IR-2}\or{IR-3}\or{R-1}\or{R-2}\or{M-1}\or{M-2}\or{M-3}\else\@ctrerr\fi%
}
\AddEnumerateCounter{\wackyenum}{\@wackyenum}{53.13}
\makeatother

\newcounter{nparcount}
\newcounter{sparcount}
\def\npar#1#2{\stepcounter{sparcount}%
  \colorbox{green!10}{$?^{\arabic{sparcount}}$}\marginpar{\fcolorbox{green}{green!10}{\begin{minipage}{2cm}\fontsize{5}{6}\selectfont\color{green!20!black}${}^{\arabic{sparcount}}$#1\end{minipage}}\\ \fcolorbox{blue}{blue!10}{\begin{minipage}{2cm}\fontsize{5}{6}\selectfont\color{green!20!black}
${}^{\arabic{sparcount}}$#2\end{minipage}}}}
\def\ypar#1{\stepcounter{nparcount}%
  \colorbox{yellow!10}{$!^{\arabic{nparcount}}$}%
  \marginpar{\fcolorbox{yellow}{yellow!10}{\begin{minipage}{2cm}\fontsize{5}{6}\selectfont\color{green!20!black}
${}^{\arabic{nparcount}}$#1\end{minipage}}}}
\def\mpar#1{\stepcounter{nparcount}%
  \colorbox{yellow!5!red!5}{$!^{\arabic{nparcount}}$}%
  \marginpar{\fcolorbox{red}{orange!10}{\begin{minipage}{2cm}\fontsize{5}{6}\selectfont\color{green!20!black}
${}^{\arabic{nparcount}}$#1\end{minipage}}}}

\makeatletter

\def\@setref#1#2#3{%
  \ifx#1\relax
   \protect\G@refundefinedtrue
   \nfss@text{\colorbox{green!30}{[REF]}}%
   \@latex@warning{Reference '#3' on page \thepage \space
             undefined}%
  \else
   \expandafter#2#1\null
  \fi}
\def\@citex[#1]#2{\leavevmode
  \let\@citea\@empty
  \@cite{\@for\@citeb:=#2\do
    {\@citea\def\@citea{,\penalty\@m\ }%
     \edef\@citeb{\expandafter\@firstofone\@citeb\@empty}%
     \if@filesw\immediate\write\@auxout{\string\citation{\@citeb}}\fi
     \@ifundefined{b@\@citeb}{\colorbox{blue!30}{\reset@font REF}%
       \G@refundefinedtrue
       \@latex@warning
         {Citation `\@citeb' on page \thepage \space undefined}}%
       {\@cite@ofmt{\csname b@\@citeb\endcsname}}}}{#1}}
\makeatother

\def\mpar#1{\relax}
\def\npar#1#2{\relax}
\def\ypar#1{\relax}
\title{Reidemeister and Movie moves for involutive links}

\author{Maciej Borodzik}
\address{Institute of Mathematics\\ University of Warsaw
\\Warsaw, Poland}
\email{mcboro@mimuw.edu.pl}

\author{Irving Dai}
\address{Department of Mathematics\\The University of Texas at Austin\\ Austin, TX, USA}
\email{irving.dai@math.utexas.edu}

\author{Abhishek Mallick}
\address{Department of Mathematics\\Dartmouth College \\ Hanover, NH, USA}
\email{abhishek.mallick@dartmouth.edu}

\author{Matthew Stoffregen}
\address{Department of Mathematics\\Michigan State University\\ East Lansing, MI, USA}
\email{stoffre1@msu.edu}

\makeindex[title=Index of Notation, columns=2]
\begin{document}


\begin{abstract}
An involutive link is a link which is invariant under the standard rotation by 180 degrees in $S^3$. We establish an equivariant analogue of the work of Carter and Saito aimed at studying equivariant cobordisms between involutive links. This gives a set of $39$ equivariant movie moves that suffice to go between any two movie presentations of a pair of equivariantly isotopic cobordisms. Along the way, we give a singularity-theoretic proof of the equivariant Reidemeister theorem and study loops of equivariant Reidemeister moves. Our approach proceeds by analyzing codimension $2$ singularities of equivariant maps from $S^1$ to $\R^2$, as well as utilizing embedded equivariant Morse theory.
\end{abstract}
\maketitle
\tableofcontents

\section{Introduction}
\subsection{Motivation}
Equivariant knots and links are well-studied \cite{ChaKo, ND, Murasugi, Naik, Sakuma} and have appeared in a broad range of other topological contexts. Recent research in this area has utilized both classical techniques (see for example \cite{boyle2025equivariantunknottingnumbersstrongly,DiPrisa0,DiPrisa,Merz,MillerPowell,zampa2025boundequivariantunknottingnumber}) and gauge, symplectic and quantum invariants (see for example \cite{borodzik2025khovanov,dai2023equivariant, LSinvertible,lobb-watson,mallick2024knot,Sano,Watson}).  
Several of these papers have focused on questions involving equivariant surfaces and equivariant cobordisms. 
Results in this direction have often been aimed at obstructing equivariant concordance or bounding quantities such as the equivariant slice genus from below.

In prior work \cite{borodzik2025khovanov}, the authors constructed a $\Z_2$-equivariant version of Khovanov homology for strongly invertible knots and showed that given an equivariant cobordism (with a specified equivariant movie decomposition), there exists a cobordism map between the equivariant invariants associated to its ends. This was used to provide equivariant slice genus bounds and establish non-existence results regarding equivariant surfaces. However, for more refined questions, additional functoriality results must be established. For example, to \textit{distinguish} pairs of equivariant surfaces up to equivariant isotopy, we need to know that the homotopy type of our cobordism map is independent of all choices and preserved under equivariant isotopy. Such problems thus have both theoretical and practical interest.

A similar story has, of course, already played out in the non-equivariant setting. In \cite{khovanov}, Khovanov showed that given a presentation of a surface cobordism, one can construct a corresponding cobordism map on Khovanov homology. This was subsequently shown to be well-defined and invariant under isotopies of link cobordisms in $\mathbb{R}^3\times[0,1]$: first by 
Jacobsson \cite{jacobsson} (up to overall sign, also followed by \cite{khovanov-naturality,bar-natan-tangle}), with improved statements in \cite{blanchet,clark-morrison-walker} and generalizations to $\mathfrak{sl}(N)$
homologies in \cite{ETW}. Full functoriality in $S^3 \times I$ was established in \cite{Garfield} and played a foundational role in Hayden and Sundberg's work on exotic slice disks \cite{hayden-sundberg} and in defining skein
lasagna modules \cite{Garfield,Lasagna}; compare \cite{wedrich2025linkhomologytopologicalquantum}. 
Establishing functoriality in this sense has thus been essential for applications of Khovanov homology to low-dimensional topology.

In the non-equivariant case, cobordism maps on Khovanov homology are constructed by first giving a presentation for the cobordism as a \textit{movie} of Reidemeister moves and elementary cobordisms. We associate to each such piece a corresponding elementary cobordism map and define the overall cobordism map to be the composition of these. To establish well-definedness and/or isotopy invariance, one needs to have a comprehensive set of \textit{movie moves} such that any two movies for the same isotopy class of cobordism are related by a sequence of these moves. This is a foundational result of Carter and Saito \cite{CS}, based on \cite{Giller} (see also \cite{CRS,Roseman}). One then checks that if two movies are related by a movie move, then their associated maps on Khovanov homology are unchanged (in this case, up to overall sign).

The aim of the present paper is to carry out the topological side of this program in the $\Z_2$-equivariant setting. We concern ourselves with the general case of links which are invariant under the standard rotation by 180 degrees in $S^3$. Such links are called \textit{involutive links}.\footnote{The notion of an involutive link encompasses several other studies of equivariant links in the literature. Many other papers place restrictions on how the involution interacts with various link components. (For example, in terms of preserving/reversing orientation, the number of fixed points on each component, and so on.) Our only restriction is that the involution on $S^3$ is the standard rotation. In fact, it follows from the resolution of the Smith conjecture \cite{Waldhausen, MorganBass} that any orientation-preserving involution on $S^3$ with non-empty fixed-point set is conjugate to the standard rotation.} We first give a rigorous proof that any pair of diagrams for an involutive link are connected by a sequence of generalized equivariant Reidemeister moves, see Theorem~\ref{thm:intro1}. This is comparable to the classical Reidemeister theorem in the non-equivariant category. Moreover, we show that an equivariant cobordism between two involutive links can be decomposed into an equivariant movie, see Theorem~\ref{thm:intro2}. These claims were implicitly used in \cite{borodzik2025khovanov} to define equivariant Khovanov homology and construct equivariant cobordism maps. 

The bulk of the paper is then concerned with comparing pairs of equivariant cobordisms. We give a list of equivariant elementary movie moves such that if two equivariant movies represent the same equivariant isotopy class of cobordism, then they are related by a sequence of these moves, see Theorem~\ref{thm:intro3}. As a special case, we study loops of equivariant Reidemeister moves, see Theorem~\ref{thm:intro4}. These results represent the first step towards proving full functoriality of the cobordism maps for equivariant Khovanov homology, see \cite{borodzik2025khovanov}.

Like \cite{CS}, our results rely heavily on the machinery of singularity theory. However, the details differ significantly. The approach of Carter and Saito, is via studying singularities of surfaces. We present a somewhat longer (but perhaps more intuitive) approach. For movies that do not change the isotopy type of the underlying link (such as for a sequence of Reidemeister moves), we use the classification of codimension 2 singularities of equivariant maps from the circle to the plane. In this approach, we follow \cite{Perestroikas,David,FiedlerKurlin,Wall_Pgen}, porting their results
to the equivariant setting via \cite{Wall_jet}. For movies associated with a change of topology, we study equivariant embedded Morse theory,
alloying the results of \cite{Wassermann} with \cite{BP} and using also \cite{Akaho,BB} for studying families of equivariant Morse functions.

\subsection{Main results}
Our first result enumerates equivariant Reidemeister moves. View $S^3$ as $\R^3$ together with the point at $\infty$, and consider rotation about the $y$-axis. Our convention in this paper will be to fix the standard projection from $\mathbb{R}^3$ onto the $xy$-plane. Under this projection, an involutive link (which we always assume to be in generic position) gives a \textit{transvergent diagram}.  Equivariantly isotopic link diagrams
are connected by a sequence of equivariant Reidemeister moves; see \cite{lobb-watson,Sano}. We give a singularity theory based proof of
the following result.

\begin{restatable}[see \cite{lobb-watson,Sano}]{thm}{introone}\label{thm:intro1}
Let $L_0$ and $L_1$ be a pair of involutive links. An equivariant isotopy from $L_0$ to $L_1$ induces, after perturbation, a sequence of the following moves:
\begin{itemize}
  \item Equivariant planar isotopies;
  \item Off-axis Reidemeister moves (IR-1), (IR-2), (IR-3);
  \item On-axis Reidemeister moves (R-1), (R-2);
  \item On-axis mixed moves (M-1), (M-2), (M-3);
  \item The non-local I-move of Figure~\ref{fig:ISmove};
\end{itemize}
connecting their diagrams. See Figure~\ref{fig:AllReidemeister}. If the isotopy is in $\R^3$, the I-move is not needed.
\end{restatable}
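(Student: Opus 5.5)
We view an equivariant isotopy as a smooth one-parameter family $f_t\colon \bigsqcup S^1 \to \R^3$, $t\in[0,1]$, of $\Z_2$-equivariant embeddings, where $\Z_2$ acts on the source by the lifted involution and on $\R^3$ (or $S^3$) by rotation $\tau$ about the $y$-axis. Composing with the projection $\pi$ to the $xy$-plane gives a family of maps $g_t=\pi\circ f_t$. These are equivariant with respect to the reflection $(x,y)\mapsto(-x,y)$ of the plane, since $\pi\tau=\rho\pi$ with $\rho$ that reflection. We also keep the height (the $z$-coordinate), which is anti-invariant under $\rho$: $z\mapsto -z$.

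The plan is to put the one-parameter family in general position within the space of equivariant families. We then enumerate the codimension-one phenomena that generic equivariant paths must cross, and match each to a move. The key input is the equivariant transversality theorem (Bierstone/Wall, \cite{Wall_jet}). It says that for equivariant maps, generic means transverse to each stratum of the jet space within the fixed-point structure. Away from the fixed set, this reduces to ordinary multijet transversality.

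\textbf{Off-axis events.} Events at points whose orbit is free (two distinct points in the plane, swapped by $\rho$) are governed by the nonequivariant classification of codimension-one singularities of plane curves with height data. These are a cusp, a tangency and a triple point, giving (IR-1), (IR-2), (IR-3), each happening simultaneously with its mirror image. We must also allow events whose two constituent points are swapped by $\rho$ but lie off the axis. For example, a tangency between a strand and its own mirror image forces the tangency point to lie on the axis, so it belongs to the on-axis list.

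\textbf{On-axis events.} Points on the axis $\{x=0\}$ carry stabilizer $\Z_2$. Here we would classify, via the equivariant jet spaces, the codimension-one strata of $\Z_2$-equivariant germs. The cases are:
\begin{itemize}
\item a strand through a fixed point of $f_t$ (on the $y$-axis in $\R^3$) becoming tangent to the projection direction, which gives an equivariant cusp/kink, (R-1);
\item two strands symmetric about the axis becoming tangent on the axis, (R-2);
\item a crossing on the axis, that is, a pair of $\tau$-swapped points with the same projection on the axis. Such a point has $z=\pm$ values swapped, so the strands cross the axis transversely. Generic events here are a third strand passing through (M-1, M-3 type) or the crossing interacting with a fixed point of the link (M-2).
\end{itemize}
For each stratum we would write the normal form and check it matches the pictures of Figure~\ref{fig:AllReidemeister}. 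The count of codimension is done carefully: the reflection-invariant part of the jet space is smaller, so some phenomena that are codimension two nonequivariantly become codimension one equivariantly, and vice versa.

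\textbf{The I-move.} Finally, in $S^3$ the isotopy may pass through $\infty$, which lies on the axis. Generically a single fixed point of the link crosses $\infty$ transversely, which exchanges the two ends of the axis in the diagram. This is the non-local I-move. In $\R^3$ we can avoid $\infty$ by compactness, so it is not needed.

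The main obstacle is the on-axis classification. One must verify that the list of codimension-one equivariant strata is complete, and that the equivariant genericity theorem actually applies to multijets mixing fixed and free points. I would first try to reduce each on-axis case to an equivariant germ of a $\Z_2$-map $(\R,0)\to(\R^2,0)$, or a pair of such germs, and classify these by hand using invariant theory of the reflection.
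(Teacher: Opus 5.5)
Your overall route is the paper's. You make the projected family $\pi\circ\wt{\phi}_s$ generic via Wall's equivariant multijet transversality; the paper perturbs the projection and lifts back by $\wt{\phi}_s+\phi'_s-\phi_s$, which amounts to perturbing in $\R^3$. You then read off one move per codimension-one stratum crossed, and add the I-move when a fixed point of the link passes through $\infty$; your handling of $\infty$ is fine. Wall's theorem is formulated for multijets over arbitrary tuples of orbit types, so mixing free and fixed points is not an obstacle.

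The genuine gap is the on-axis list, which is where the content lies, and as written it is wrong. A free strand meeting $\cL$ degenerates in two different codimension-one ways. Each costs two conditions, $\phi(t)\in\cL$ and a condition on the direction of $\phi'(t)$, against one source dimension.
\begin{itemize}
\item The strand can become tangent to $\cL$. The model family is $(t^2+\lambda,t)$ with its mirror, two on-axis crossings are born or die, and this is (R-2).
\item The strand can become perpendicular to $\cL$. The model family is $(t,t^3+\lambda t)$ with its mirror $(-t,t^3+\lambda t)$, a single on-axis crossing turns into an on-axis crossing plus a mirror pair of off-axis crossings, and this is (M-2).
\end{itemize}
Your ``two symmetric strands becoming tangent on the axis, (R-2)'' either merges these or omits the second, and the second is not an (R-2) move. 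Moreover, ``the crossing interacting with a fixed point of the link'' is the same event as a fixed-point strand passing through an on-axis crossing. That is the fixed double point, which is (M-1). So you count (M-1) twice, call one copy (M-2), and nothing in your list produces the actual (M-2) move. A generic isotopy crossing the perpendicular stratum is therefore not covered by your argument.

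To get a provably complete list, use the observation of Theorem~\ref{thm:generic_half_knot}: an equivariant map is equivariantly regular exactly when the underlying plane curve has no cusps, tangencies or triple points. Hence every codimension-one event is one of these. Sort them by whether the domain points are free or fixed and whether the image lies on $\cL$, and compute $\pcodim=\codim-m$, with $m$ the number of free orbits. This yields exactly eight strata:
\begin{itemize}
\item off-axis cusp, off-axis tangency and off-axis triple point;
\item on-axis line tangency, on-axis perpendicular tangency, and on-axis double point (two free orbits meeting on $\cL$);
\item fixed-point cusp and fixed double point.
\end{itemize}
Everything else (on-axis cusps, higher-order tangencies, strikethroughs, coincidences) has $\pcodim\ge 2$.

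A second, smaller gap: a normal form of the singular germ does not by itself tell you the diagrams on either side of the crossing. You need a one-parameter versal deformation for each stratum, and the fact that a path crossing a simple stratum transversely is induced from it. Infinitesimal versality implies versality, but equivariantly this needs a $\Z_2$-version of Malgrange preparation. That is how the paper turns each stratum into the pictured move.
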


The next result deals with cobordisms. Fix the standard product involution on $S^3 \times [0, 1]$ given by rotation about the $y$-axis in every $S^3$-slice. 
The following result is a simplified version of Theorem~\ref{thm:526}, proved in Section~\ref{sub:applications}.
\begin{thm}[see Theorem~\ref{thm:526}]\label{thm:intro2}
Let $L_0$ and $L_1$ be a pair of involutive links. Suppose $\Sigma \subset S^3\times[0,1]$ is an equivariant cobordism from $L_0$ to $L_1$ such that the $\Z_2$-action
  on $\Sigma$ has no isolated fixed points. Then $\Sigma$ is equivariantly isotopic rel boundary to an equivariant cobordism $\Sigma_0$, where $\Sigma_0$ is built by stacking elementary equivariant cobordisms of the forms:
  \begin{itemize}
    \item Equivariant isotopies and equivariant Reidemeister moves as in Theorem~\ref{thm:intro1};
    \item Births, saddles and deaths off-axis, see Figure~\ref{fig:outside_crits};
    \item Births, saddles and deaths on-axis, see Figure~\ref{fig:inside_crits}.
  \end{itemize}
  If $\Sigma$ has isolated fixed points, then additionally, singular births, saddles, or deaths on-axis (see Figures~\ref{fig:iso_fix3} and~\ref{fig:iso_fix4}) may occur.
\end{thm}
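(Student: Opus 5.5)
The idea is equivariant embedded Morse theory applied to the height function $t\colon \Sigma\to[0,1]$, followed by Theorem~\ref{thm:intro1} on the regular intervals. The function $t$ is automatically $\Z_2$-invariant, since the involution preserves each slice $S^3\times\{t\}$.

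\textbf{Step 1: make $t|_\Sigma$ equivariantly Morse.} First I perturb $\Sigma$ by a small equivariant isotopy rel boundary so that $t|_\Sigma$ is Morse with distinct critical values. Off the fixed set, freeness of the action lets us perturb in a fundamental domain and copy the perturbation to its image. Critical points there then come in pairs $\{p,\tau p\}$ at the same level, so "distinct critical values" must be relaxed to "distinct up to the symmetry". These pairs give the off-axis births, saddles and deaths of Figure~\ref{fig:outside_crits}.

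Near the fixed set $\Sigma\cap (\text{axis}\times[0,1])$ I would use equivariant transversality, following \cite{Wassermann} and \cite{BP}. The fixed set of $\Sigma$ is a union of arcs and circles together with isolated points. Generically, $t$ restricted to the one-dimensional fixed arcs is Morse. At an on-axis critical point the involution acts on $T_p\Sigma$ either as a reflection (when $p$ lies on a fixed curve) or as $-\mathrm{Id}$ (when $p$ is an isolated fixed point).

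\textbf{Step 2: classify the on-axis critical points.} I would apply the equivariant Morse lemma to the invariant function. In the reflection case the local models are $\pm x^2\pm y^2$, with $x$ along the fixed curve. Combined with the way the ambient rotation acts on the normal directions, these give the on-axis birth, saddle and death of Figure~\ref{fig:inside_crits}. In the $-\mathrm{Id}$ case they give the singular moves of Figures~\ref{fig:iso_fix3} and~\ref{fig:iso_fix4}. The number of on-axis models is pinned down by the possible representations of $\Z_2$ on the normal bundle of $\Sigma$ in the $4$-manifold.

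\textbf{Step 3: push critical levels apart and apply the Reidemeister theorem.} I then move the critical levels apart equivariantly, so that each level contains a single orbit of critical points. Between critical levels, $\Sigma$ is an equivariant isotopy of involutive links. After a further generic equivariant perturbation, Theorem~\ref{thm:intro1} decomposes each such interval into planar isotopies and the listed Reidemeister moves. I also need to put the link just above and below each critical point into standard position relative to the diagram projection, so that the elementary cobordism matches the pictured model. I would do this with an equivariant isotopy supported near the critical point: conjugate the local Morse chart into the standard picture and make the projection generic.

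\textbf{Main obstacle.} The step I expect to be hardest is the on-axis analysis in Steps 1--2. It requires showing that equivariant genericity of the height function really achieves the Morse condition at fixed points. It also requires checking that every local model matches one of the pictured moves, including the orientation and how the $180^\circ$ rotation acts on the three normal directions of the axis. I would first try to reduce this to Morse theory of $t$ on the quotient surface $\Sigma/\Z_2$ in the quotient $4$-manifold, where branching occurs along the fixed set, and then lift the models back.
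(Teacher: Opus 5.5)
Your proposal follows essentially the same route as the paper. You make the height function equivariantly Morse on $\Sigma$; the paper does this by perturbing $F$ to an embedded equivariant Morse function (Theorem~\ref{thm:density}) and then invoking stability. You take local models from the equivariant (embedded) Morse lemma, Theorem~\ref{thm:eem}. You arrange a good projection in the sense of Definition~\ref{con:generic} near each critical orbit; the paper perturbs the projection while you isotope the surface locally, which amounts to the same thing. Finally you apply Theorem~\ref{thm:intro1} on the regular intervals.

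The one step you only assert is the isolated-fixed-point case, and it works as in Lemmas~\ref{lem:noP2} and~\ref{lem:phi_equiv}. There $T_p\Sigma$ is forced to be the plane orthogonal to the axis in the critical slice, so every equivariant projection is degenerate on it. Generically that projection is a fold, and this is what produces the on-axis crossing in the singular moves.
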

The assumption on the lack of isolated fixed points is satisfied, for instance, if the $\Z_2$-action inverts the orientation of every connected component
of the cobordism. (As an example, this is the case for a connected cobordism between a pair of strongly invertible knots.)

We refer to births, saddles and deaths mentioned in the statement of Theorem~\ref{thm:intro2} as \emph{equivariant Morse moves}.

Next, we discuss how two sequences of Reidemeister moves may be compared. In Section~\ref{sec:codim2}, we give $18$ loops of equivariant Reidemeister moves. An \textit{elementary loop replacement} consists of replacing a contiguous fragment of one of these loops by its complementary fragment, going in the opposite direction. These loops are summarized in Table~\ref{tab:local_loops}.

\begin{restatable}{thm}{introfour}\label{thm:intro4}
Let $L_0$ and $L_1$ be a pair of involutive links. Suppose we have two equivariant isotopies from $L_0$ to $L_1$ and let $M_0$ and $M_1$ be corresponding sequences of equivariant Reidemeister moves afforded by Theorem~\ref{thm:intro1}. If our equivariant isotopies can be connected by an equivariant isotopy-of-isotopies, then $M_0$ and $M_1$ can be transformed into each other by a sequence of the following:
  \begin{itemize}
    \item Changing the order of Reidemeister moves performed at different places;
    \item Inserting/deleting a pair of mutually inverse Reidemeister moves;
    \item One of 18 elementary loop replacement moves from Table~\ref{tab:local_loops}.
  \end{itemize}
\end{restatable}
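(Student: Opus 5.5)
We view an equivariant isotopy-of-isotopies as a two-parameter family $\gamma_{s,t}$ of $\Z_2$-equivariant embeddings of $\bigsqcup S^1$ into $\R^3$ (or $S^3$). Composing with the projection to the $xy$-plane gives a two-parameter family of equivariant maps $\pi\circ\gamma_{s,t}$ into $\R^2$. The edges $s=0$ and $s=1$ are the isotopies inducing $M_0$ and $M_1$. By the equivariant jet transversality theorem of \cite{Wall_jet}, we perturb the family rel $s\in\{0,1\}$ to be generic among equivariant two-parameter families. The perturbation must be small enough that the edge families are unchanged up to the allowed moves.

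In $(s,t)$-space we then have a stratification. The open top stratum consists of generic transvergent diagrams. Codimension-one strata are curves along which a single Reidemeister-type singularity occurs; these are exactly the moves of Theorem~\ref{thm:intro1}, as established in its proof. Codimension-two strata are isolated points. The plan is to sweep from $s=0$ to $s=1$, reading off the sequence of moves along each horizontal line $s=\text{const}$, and to record how this sequence changes when $s$ crosses a critical value. After a further generic perturbation, critical values occur when the line passes through a codimension-two point or is tangent to a codimension-one curve. Tangency produces the creation or cancellation of a pair of mutually inverse Reidemeister moves.

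The codimension-two points split into two types. The first type is a transverse double point of two codimension-one curves whose singularities occur at different places of the diagram. Crossing it swaps the order of two moves performed at different places. The second type is a genuinely codimension-two local singularity of the equivariant map $S^1\to\R^2$, possibly coupled with over/under data. Off the axis these are the classical codimension-two multigerms from \cite{Perestroikas,David,FiedlerKurlin,Wall_Pgen}. Near the axis, and on it, we need the classification of $\Z_2$-equivariant codimension-two germs and multigerms in the setting of Section~\ref{sec:codim2}. Non-local phenomena involving the point at infinity, relevant to the I-move, must be included as well. At each such point, a small circle in the $(s,t)$-plane around it crosses the adjacent codimension-one curves in a cyclic sequence of moves. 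Crossing the point replaces one arc of this loop with the complementary arc, which is exactly an elementary loop replacement. It remains to check that the loops that arise are precisely the 18 in Table~\ref{tab:local_loops}.

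The main obstacle is this last classification step: the list of equivariant codimension-two singularities must be complete. This includes multigerms where an on-axis singularity interacts with a pair of symmetric off-axis branches, and cases where the symmetry forces degeneracies that are non-generic classically. My first approach would be to reduce via Wall's equivariant jet theory to the classical classification applied separately to the off-axis part and to the fixed-point part. This uses the fact that an equivariant germ at an axis point is determined by its restriction to one side together with its parity conditions. For each case I would then draw the bifurcation diagram and read off the loop.
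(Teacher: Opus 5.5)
Your overall scheme matches the paper's. You make the two-parameter family generic and read off moves along horizontal lines. Crossings of two walls at different places give commutations, tangencies with a wall give inverse pairs, and codimension-two points give loop replacements; the crossing data is lifted at the end (Theorem~\ref{thm:two_loops} and the proof after it). The gap is the step you defer, which is the actual content: showing that the codimension-two strata are exactly the 18 of Table~\ref{tab:local_loops}. The route you sketch would not produce that list, for three reasons.

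\emph{No separate classification.} The multigerm at a point of $\cL$ does not split into an off-axis part and a fixed-point part that can be classified classically and separately. Only $\tau$-equivariant target diffeomorphisms are allowed. Branches interact through the axis: a free branch can meet its own mirror image on $\cL$, or a free pair can pass through a fixed-point branch, as in case~\ref{item:Sltang_bound} of Theorem~\ref{thm:complete}. Codimension must be counted as $\pcodim=\codim-m$, with $m$ the number of free orbits.

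\emph{Classical counts give wrong answers.} A $(3,4)$-cusp has classical $\pcodim=3$. At a fixed point it is cut out by just two conditions on a $0$-dimensional source, namely the vanishing of the $t$-coefficient of the odd component and the $t^2$-coefficient of the even component. So its $\pcodim$ is $2$, which yields loop L-8. Likewise, a free branch tangent to its mirror image along $\cL$ has $\pcodim=1$ (the R-2 move). A classical tangency constrained to lie on a line has $\pcodim=2$ instead, so the $2$-fold line tangency of L-14 would be thrown away.

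\emph{Normal forms.} Even where the count agrees, the normal forms need genuinely equivariant arguments (Lemmas~\ref{lem:normal_bound} and~\ref{lem:oblique_form}).

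The paper instead counts directly in the equivariant multijet spaces (Theorem~\ref{thm:complete}). Suppose $n$ branch-orbits meet at $w\in\cL$, with $f\in\{0,1\}$ of them through $\Sfix$. The incidence alone imposes $(2n-2)+(1-f)$ conditions on an $(n-f)$-dimensional source, so $\pcodim\ge n-1$. Thus $n\le 3$, each $n$ is sorted by its remaining one or two conditions, and Lemmas~\ref{lem:cusplist} and~\ref{lem:complicatedtan} complete the list.

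Two further points are missing from your plan. First, several strata are not simple: the quadruple point and the strikethroughs of on-axis and fixed double points carry a slope modulus. A generic two-parameter family is therefore not versal there, and the loop can be read off only after locally freezing the modulus, as in Section~\ref{ssub:X9} and the proof of Theorem~\ref{thm:two_loops}. Second, including the point at infinity contradicts your conclusion. In $S^3$, codimension-two events on the infinity locus, such as an R-1 or M-1 move at infinity (Subsection~\ref{sub:iloop}), give loops containing I-moves that are not in Table~\ref{tab:local_loops}. The paper's argument projects a family in $\R^3$, so you should restrict to isotopies-of-isotopies in $\R^3$, or else enlarge the list of loops.
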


Finally, we describe the elementary movie moves for equivariant cobordisms. 
\begin{thm}\label{thm:intro3}
  Let $L_0$ and $L_1$ be a pair of involutive links. Suppose $\Phi_s\colon \Sigma\to S^3\times[0,1]$, $s\in[0,1]$ is a family of equivariant cobordisms from $L_0$ to $L_1$ such that $\Phi_s(\Sigma)$ has no isolated fixed points of $\tau$ for each $s$. Let $M_0$ and $M_1$ be movies for $\Phi_0$ and $\Phi_1$ afforded by Theorem~\ref{thm:intro2}. Then $M_0$ and $M_1$ can be transformed into each other by elementary movie moves associated with the following:
  \begin{itemize}
    \item Commuting two moves occurring at different places, e.g. an equivariant Reidemeister move and an equivariant Morse move as in Subsection~\ref{sub:col_deg};
    \item A loop of Reidemeister moves of Theorem~\ref{thm:intro4};
    \item A loop of Reidemeister moves around the point at infinity, see Subsection~\ref{sub:iloop};
    \item A loop of equivariant Morse moves around the point at infinity as in Subsection~\ref{sub:ml_morse_inf};
    \item A loop of equivariant Morse moves around the point on the axis as in Subsection~\ref{sub:morse_online};
    \item A loop related to a singular equivariant Morse move as in Subsection~\ref{sub:twist_morse};
    \item A loop related to an equivariant Morse move over another point in the diagram as in Subsection~\ref{sub:morse_on_diagram};
    \item A loop of equivariant Morse moves related to a failure of the Morse condition, see Subsection~\ref{sub:morse_loop}. 
  \end{itemize}
\end{thm}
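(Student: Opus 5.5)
The plan is a parametric (one-parameter) version of the argument giving Theorem~\ref{thm:intro2}, organized by codimension. A movie for an equivariant cobordism $\Phi$ is read off from the composite of $\Phi$ with the projection $S^3\times[0,1]\to\R^2\times[0,1]$ (the $xy$-plane projection in each slice) together with the height function $t\colon S^3\times[0,1]\to[0,1]$. Generic equivariant cobordisms are those for which this data has only codimension~$0$ and codimension~$1$ degeneracies. These are exactly the equivariant Reidemeister moves of Theorem~\ref{thm:intro1}, the equivariant Morse moves of Theorem~\ref{thm:intro2}, and the interactions of these with the point at infinity. For the family $\Phi_s$, we first use an equivariant transversality statement (equivariant jet transversality, in the form of \cite{Wall_jet} already used for Theorem~\ref{thm:intro1}) to perturb the family rel $s=0,1$. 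After this, the $s$-family crosses the codimension~$2$ strata of the space of equivariant cobordisms transversally and at finitely many values $s_1<\dots<s_k$. Away from these values, the movie changes only by equivariant planar isotopy and by sliding the $t$-levels of moves past each other. This is ordinary reparametrization, except at those $s$ where two moves occur at the same $t$-level at different places. Such a crossing is a codimension~$2$ event of \emph{product} type, and it gives the first item of the list: commuting two distant moves.

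The remaining step is to enumerate the genuinely local codimension~$2$ strata and show that crossing each one changes the movie by a listed move. I would split according to whether a critical point of $t|_{\Phi_s(\Sigma)}$ participates.

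\textbf{Case A: no critical point of $t$ is involved.} The event is a codimension~$2$ singularity of an equivariant map $S^1\to\R^2$ in a one-parameter family of isotopies. This is exactly the content of Theorem~\ref{thm:intro4}, where we may assume the classification of Section~\ref{sec:codim2}; it yields the 18 loops of Table~\ref{tab:local_loops}. The other possibility is a codimension~$1$ diagrammatic event occurring at the point at infinity, which yields the loop of Subsection~\ref{sub:iloop}.

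\textbf{Case B: a critical point of $t$ is involved.} Here we use equivariant embedded Morse theory in families, following \cite{Wassermann,BP,Akaho,BB}. The codimension~$2$ events are of three kinds:
\begin{enumerate}
\item An equivariant birth--death of critical points, i.e.\ a failure of the Morse condition. This gives Subsection~\ref{sub:morse_loop}.
\item A critical point that is Morse but whose local diagram is degenerate. Its position relative to the axis and to $\infty$ gives the loops of Subsections~\ref{sub:ml_morse_inf} and~\ref{sub:morse_online}. When the critical point lies on the fixed set in a singular way, it gives the loop of Subsection~\ref{sub:twist_morse}.
\item A critical point whose image projects onto another strand of the diagram. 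This gives Subsection~\ref{sub:morse_on_diagram}.
\end{enumerate}
The hypothesis that no $\Phi_s(\Sigma)$ has isolated fixed points excludes the singular on-axis Morse moves and their degenerations. This hypothesis is what keeps the list finite and matching.

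For each event, we compare the movies just before and just after $s_i$ via a local normal form. This shows that the difference is replacing part of the associated loop by its complement.

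The main obstacle is the completeness of the enumeration in Case~B. We must show that the normal forms of equivariant Morse functions on surfaces in a $\Z_2$-space, restricted to the fixed axis (where the action on the normal bundle constrains the Hessian), together with the projection data, give no further codimension~$2$ strata. I would attack this by stratifying the equivariant jet space by orbit type. For free orbits the problem reduces to the non-equivariant Carter--Saito list, which contributes only commutations and standard moves. On the fixed locus, I would compute codimensions in the space of $\Z_2$-invariant jets, using that invariant functions are functions of invariant coordinates, and check that each surviving stratum is one of those listed.
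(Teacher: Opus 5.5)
Your strategy is the paper's: make the family generic, list the ways the conditions \ref{item:cs_ifirst}--\ref{item:cs_reid} can fail at isolated values of $s$, and compare the movies on the two sides of each failure by going around a small circle in the $(t,s)$-plane, as in the proof of Theorem~\ref{thm:two_loops}. Your product-type events are \ref{item:Col1}--\ref{item:Col2}, your Case~A is \ref{item:ml_reid} together with \ref{item:ml_iloop}, and your Case~B is \ref{item:ml_morse_inf}--\ref{item:ml_morse_loop}. All of these have codimension one in the space of cobordisms. ``Codimension two'' refers only to the slice data over the $(t,s)$-plane; a generic one-parameter family would miss codimension-two strata of the space of cobordisms.

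The gap is that your list of events is incomplete. Your claim that away from $s_1,\dots,s_k$ the movie changes only by planar isotopy and by sliding moves past each other in $t$ is false. Consider the set of $(t,s)$ at which the slice diagram lies in $\cF^1$, and likewise the infinity loci $\Sigmafix$ and $\Sigmafree$ of Definition~\ref{def:two_regular}. These are curves in the $(t,s)$-plane whose projection to the $s$-axis generically has isolated folds; for the infinity loci this is condition \ref{item:Sigma_Morse}. At such a fold the line $\{s=\mathrm{const}\}$ is tangent to the curve. There a pair of mutually inverse Reidemeister moves, I-moves or S-moves appears in or disappears from the movie. This is neither a codimension-two singularity of a slice nor a coincidence of distant moves, so neither your Case~A nor your Case~B produces it, and it has to be added. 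It is the source of the operation \ref{item:birth} in the proof of Theorem~\ref{thm:two_loops}, of the cancellations of pairs of I- and S-moves listed after Lemma~\ref{lem:generic_at_infty}, and of the last bullet of \ref{item:MM1}. Morse moves have no such events: nondegenerate critical points persist in families and are created or destroyed only at the events of Subsection~\ref{sub:morse_loop}. Separately, you attribute the loops of Subsection~\ref{sub:twist_morse} to the fixed set, but they arise whenever \ref{item:proj1} fails. That is, the projection restricted to the surface folds at the critical point, whether or not that point is on the fixed set; (SM-1) and (SM-2) are off-axis instances.
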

Theorem~\ref{thm:intro3} is proved as Theorem~\ref{thm:main_loop} in Section~\ref{sec:carter_saito}, where technical details
regarding Theorem~\ref{thm:intro3} are also explained. In particular,
we describe
in Subsection~\ref{sub:list} the relation between loops of moves of Theorem~\ref{thm:intro3} and elementary movie moves.

\begin{remark}
  Studying movie moves for cobordisms with isolated fixed points of the $\tau$-action requires significantly more involved
  tools, such as an analysis of higher codimension singularities of equivariant maps from $\R^2$ to $\R^3$. These are beyond the scope of this paper.
\end{remark}

\subsection{Main methods}
We now discuss some of the central methods used in the present work.

\subsubsection{Stratification induced by singularities}

A large portion of this paper -- especially our results concerning Reidemeister moves -- is based on understanding the space $\cF$ of smooth equivariant maps from a disjoint union of circles $\cS$ to $\R^2$. The idea will be to stratify $\cF$ and investigate the behavior of families of maps as strata are crossed. As a starting point, if we require
that the image is a link diagram (no cusps, and no tangencies, no triple points), then we obtain the space $\cF^0 \subset \cF$ of \emph{regular} maps, which
is open-dense. Thus, a generic map can be assumed to be regular.

However, a one-parameter family of maps will not usually stay inside $\cF^0$, but will instead cross its complement
\[
\wt{\cF}^1 = \cF \setminus \cF^0.
\]
We thus find a subset $\cF^1 \subset \wt{\cF}^1$ whose complement in $\wt{\cF}^1$ has codimension $2$. Any one-parameter family of maps can then be perturbed so that all intersections with $\smash{\wt{\cF}^1}$ occur at points of $\cF^1$. The space $\cF^1$ will be constructed as the union of strata $\cF^1_i$, each of which describes the failure of one of the regularity conditions, but which limits the failure to be as mild as possible. For instance, we might consider the space of maps with triple points, but these triple points are required to be ordinary, and no quadruple points are allowed. 

Importantly, this will allow us to write down a model singularity from each $\cF^1_i$, called a \textit{normal form}. In fact, in each case we will be able to write down a model one-dimensional deformation of such a singularity, called a \textit{versal deformation}. These versal deformations will allow us to understand how our one-parameter family crosses $\cF^1$. The qualitative change in behavior as each $\cF^1_i$ is crossed gives rise to the equivariant Reidemeister moves.

A similar story holds for two-parameter families. While a one-parameter family will generically stay inside $\cF^0 \cup \cF^1$, a two-parameter family of maps into $\cF$ will usually cross
\[
\wt{\cF}^2 = \cF \setminus (\cF^0 \cup \cF^1).
\]
We thus find a subset $\cF^2 \subset \wt{\cF}^2$ whose complement in $\wt{\cF}^2$ has codimension $3$. Any one-parameter family of maps can then be perturbed so that all intersections with $\smash{\wt{\cF}^2}$ occur at points of $\cF^2$. As before, $\cF^2$ will be constructed as the union of strata $\cF^2_i$, each describing a particular singularity which is (roughly speaking) the next level of severity as compared to the singularities in $\cF^1$. 

Once again, we write down a normal form and versal deformation for each $\cF^2_i$. In this case, the versal deformation will be two-dimensional, as it describes how a generic two-dimensional family intersects the codimension $2$ subset $\cF^2$. We think of the parameter space of the versal deformation as a disk, where the center corresponds to a singularity in some $\cF^2_i$. Generally, our deformation will continue to exhibit milder singularities along arcs radiating outwards from the center of the disk, corresponding to singularities of the various $\cF^1_i$. (The set of parameters along which a deformation is singular is called the \textit{discriminant locus}; see Section~\ref{sub:versal}.) Going around the boundary of the disk gives a loop of equivariant Reidemeister moves. These are precisely the loops appearing in the statement of Theorem~\ref{thm:intro4}.

\subsubsection{Equivariant Morse theory}
The second tool that we use -- especially in the context of analyzing equivariant cobordisms -- is embedded equivariant Morse theory. We briefly discuss some of the more subtle points which arise in this context. 

For the moment, let us consider the classical case of non-equivariant links and cobordisms. Let $\Sigma\subset \R^3\times[0,1]$ be a cobordism and consider the height function $f\colon\Sigma\to[0,1]$ given by projection onto the second factor of $\R^3\times[0,1]$. If $\Sigma$ is in general position,
then $f$ is Morse. As it is well-known, a critical point of $f$ induces a change of the topology of the link. It is usually accepted that an index zero critical point of $f$ corresponds to adding a round circle to the diagram of the link, while an index one critical point results in performing a standard saddle move. Finally, an index two critical point of $f$ destroys a circle. 

However, it turns out that there is a subtlety in this discussion which arises especially in the study of families of cobordisms. 
To explain this, let $z\in\Sigma$ be a critical point of $f$. The embedded Morse Lemma (see e.g.\ \cite[Lemma 2.17]{BP}) provides us with local coordinates $(u, v)$ on $\Sigma$ near $z$, such that $f$ is quadratic in $(u, v)$ and $z$ is identified with $(0,0)$. Let $(x, y)$ be coordinates on $\R^2$.
The family of diagrams of links as $t$ crosses the singular value is studied using the map $\pi\colon (u, v)\mapsto (x, y)$ obtained by composing the
embedding $\Sigma\hookrightarrow \R^3\times[0,1]$ 
with the projection $\R^3\times[0,1]\to\R^3\to\R^2$. 
If this
has non-vanishing derivative at $(0,0)$ (that is, if it is a local diffeomorphism) and if the inverse image consists of the singular point only, then the discussion in the previous paragraph holds. That is, the change of a link diagram after crossing an index zero critical point is adding a round circle, and so on, see Definition~\ref{con:generic}. 

If we study a one-parameter family of embeddings of $\Sigma$ into $\R^3\times[0,1]$, however, then degenerate situations will usually occur at certain parameter values. That is, for certain embeddings in this family, we may have that $\det D\pi(0,0)=0$, or that $\pi^{-1}(0,0)$ consists of more than just one point. These situations lead to movie moves involving Morse handles. The equivariant setting is even more challenging. While the degeneracy of $D\pi$ can be handled in a similar manner as in the non-equivariant setting, the condition that $\pi$ be one-to-one on its image near $z$ is significantly more involved in the presence of the
$\Z_2$-action.

\subsection{Structure of the article}
For the first few sections, we set up the general language we use for our equivariant application. In Section~\ref{sec:warmup}, we explain the classical theory of Reidemeister moves in the language of singularity theory. We introduce
jet spaces, state the Thom transversality theorem, and prove the usual Reidemeister theorem. We define regularity for diagrams and paths of diagrams. This section serves as an introduction to the techniques of the paper. In Section~\ref{sec:mutt}, we start generalizing these notions to the equivariant setting. We recall an equivariant version of the jet transversality theorem
due to Wall \cite{Wall_jet} and define regularity for involutive links. In Section~\ref{sec:def_versal}, we make a small detour: before we proceed to a classification
of codimension 1 and 2 singularities of equivariant maps, we need to recall several notions from singularity theory. We define RL equivalence and provide a definition of versality and infinitesimal versality.

We begin our work in earnest in Section~\ref{sec:patterns}, where we provide normal forms and versal deformations for several classes of equivariant singularities. We introduce the notion of a strikethrough, which is when a relatively non-complicated singularity is met by an extra line passing through it. Many codimension 2 singularities arise as strikethroughs of codimension 1 singularities, so a general way of handling these cases makes the description more concise.
We also discuss equivariant cusps and tangencies. As we will see, these must be subdivided into several cases depending on their behavior with respect to the axis of symmetry.

We use our work in Section~\ref{sec:patterns} to determine the codimension $1$ and codimension $2$ equivariant singularities and prove Theorems~\ref{thm:intro1} and \ref{thm:intro4}. Theorem~\ref{thm:intro1} is established in Section~\ref{sec:one_para}, where we study one-parameter families of equivariant diagrams.
We first prove an equivariant Reidemeister theorem for diagrams in $\R^3$ and then discuss the question of diagrams in $S^3$, where the extra I-move
is needed. We introduce another non-local move, called an S-move, consisting of moving a double point over a point at infinity. Unlike the I-move, the S-move can be deduced from other Reidemeister moves. However, it allows for a more concise description of loops of Reidemeister moves
in later sections.

In Section~\ref{sec:one_para}, we also discuss 
the possibility of eliminating one of the two R-1 moves in the case of a strongly invertible knot, which is useful
for discussing pointed link diagrams. Theorem~\ref{thm:intro4} is established in Section~\ref{sec:codim2}, where we study codimension~2 singularities and their bifurcations. We classify all codimension~2 singularities and draw bifurcation diagrams for each.

In Section~\ref{sec:4}, we study equivariant cobordisms. We develop the theory of equivariant embedded Morse functions. We prove density as
well as a variant of the embedded equivariant Morse lemma (Theorem~\ref{thm:eem}). We use this to prove Theorem~\ref{thm:intro2}. Finally, in Section~\ref{sec:carter_saito}, we study movie moves for equivariant cobordism and prove Theorem~\ref{thm:intro3}.

As a last remark: in Section~\ref{sec:def_versal} we state the versality theorem (Theorem~\ref{thm:versality_theorem}) and note that the equivariant variant of the theorem has an analogous proof to the classical one. This relies on an equivariant version of Malgrange Preparation Theorem. The generalization is well-known to experts, but for completeness we include a quick argument in the appendix.

\subsection*{Acknowledgments} The authors would like to thank Kristen Hendricks, Dmitry Kerner, Slava Krushkal, Robert Lipshitz, Mark Powell, Liam Watson, and Ian Zemke for helpful conversations. Part of this work was completed when the authors were at the IMPAN Knots, Homology, and Physics Simons Semester and joint with Warsaw University IDUB programme. We also thank the Simons Center for Geometry and Physics for hosting some of the authors during the conference Gauge Theory and Floer Homology in Low Dimensional Topology. ID was supported by NSF grant DMS--2303823. MB was supported by the NCN grant OPUS 2024/53/B/ST1/03470. MS was partially supported by NSF grant DMS-2203828.

\section{Classical Reidemeister theory}\label{sec:warmup}

In this section, we briefly review jet spaces and state several related transversality theorems. We then provide a warm-up for the rest of the paper by using jet spaces to prove the usual (non-equivariant) Reidemeister theorem. This will serve as a foundation for the introduction of equivariant jet spaces and involutive links in the sequel. 

\subsection{Jet spaces}\label{sec:jet}

We begin by defining jet spaces and jet extensions; see \cite[Chapter 2]{GoluGuille} and \cite[Section 29]{Arnold_additional}. We start with the Euclidean case. Let $V$ and $W$ be two finite-dimensional vector spaces. Let $\cE(V)$ be the ring of $\R$-valued germs at $0 \in V$ and let $\mm(V)$ be its maximal ideal; this consists of germs vanishing at $0$. Write $\cE(V, W)$ for the $\cE(V)$-module of germs of smooth functions from $V$ to $W$ taking $0 \in V$ to $0 \in W$. Then the classical space of $k$-jets taking $0$ to $0$ is defined to be
\[
\mJ^k(V, W) = \cE(V, W)/\mm(V)^{k+1}\cE(V, W).
\]
Two germs in $\mJ^k(V, W)$ are the same if all of their partial derivatives up to order $k$ coincide. 

Now let $X$ and $Y$ be smooth manifolds and fix $x \in X$ and $y \in Y$. Choose Euclidean neighborhoods $V$ and $W$ of $x$ and $y$ in which $x$ and $y$ are taken to the origin. The space of $k$-jets taking $x$ to $y$ is defined to be
\[
\mJ^k(x, y) = \mJ^k(V, W).
\]
We denote a typical element of $\mJ^k(x, y)$ by $\mj^k f(x)$. 

\begin{defn}
The \textit{$k$-jet space} $\mJ^k(X, Y)$ is the union of $\mJ^k(x, y)$ over all $(x, y) \in X \times Y$. This can canonically be given the structure of a smooth bundle over $X \times Y$.
\end{defn}

\begin{defn}
Let $f \colon X \rightarrow Y$. The \emph{$k$-jet extension} of $f$ is the map 
\[
\mj^kf\colon X\to \mJ^k(X,Y)
\]
assigning to each $x\in X$ the $k$-jet $\mj^kf(x)$ obtained by restricting $f$ to a germ from $x$ to $f(x)$. 
\end{defn}

Roughly speaking, $\mj^k f(x)$ records the partial derivatives of $f$ at $x$ up to order $k$. Clearly, $\mJ^0(X,Y)=X\times Y$, while $\mJ^1(X,Y)$ is the bundle over $X\times Y$ with fiber at $(x,y)$ given by $\Hom(T_xX,T_yY)$.
Note that we have two maps
\[
\ma\colon \mJ^k(X,Y)\to X \quad \text{and} \quad \mb\colon \mJ^k(X,Y)\to Y.
\]
The first is given by sending a $k$-jet $\mj^k f(x)$ to $x$ and is referred to as the \emph{source map}. The second is given by sending a $k$-jet $\mj^k f(x)$ to $f(x)$ and is referred to as the \emph{target map}.

\begin{example}
  Suppose $X = Y=\R$. Then $\mJ^1(X,Y) = \R^3$, and the $1$-jet extension of a map $f\colon\R\to\R$ is the map $\mj^1f(x)=(x,f(x),f'(x))$.
\end{example}

Jets often arise in connection with degeneracy conditions on maps. 
Our general strategy will be to express various degeneracy conditions in terms of hitting some subspace of an appropriately defined jet space. If these subspaces have sufficiently large codimension, one can argue that such degeneracies are avoided by a generic map. 

\begin{thm}[Thom Transversality Theorem, see \expandafter{\cite[Theorem II.4.9]{GoluGuille}}]\label{thm:ttt}
  Let $W \subset \mJ^k(X,Y)$ be a submanifold. Then the set of maps $f\colon X\to Y$ such that $\mj^kf\colon X\to \mJ^k(X,Y)$
  is transverse to $W$ is residual in $C^\infty(X, Y)$. 
\end{thm}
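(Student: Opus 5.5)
The plan is the classical argument via the parametric Sard theorem, as in Golubitsky--Guillemin. Write $W$ as a countable union of closed pieces $W=\bigcup_i W_i$, where each $W_i$ is compact, lies in a coordinate chart of $\mJ^k(X,Y)$ over charts $U_i\times V_i$ of $X\times Y$, and has closure inside a slightly larger open piece of $W$. For each $i$, let $T_i\subset C^\infty(X,Y)$ be the set of maps $f$ such that $\mj^kf$ is transverse to $W$ at every point of $(\mj^kf)^{-1}(W_i)$. The set of maps with $\mj^kf\pitchfork W$ is $\bigcap_i T_i$. Since $C^\infty(X,Y)$ with the Whitney $C^\infty$ topology is a Baire space, it suffices to show that each $T_i$ is open and dense.

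\textbf{Openness.} Transversality at points of a compact set is an open condition on $\mj^kf$ in the $C^0$ topology, and hence an open condition on $f$ in the Whitney $C^{k+1}$ topology, after shrinking to a compact neighbourhood in the source. Because $W_i$ is compact and the source map $\ma$ is continuous, the relevant points of $X$ lie in a compact set, so the Whitney topology reduces to the usual compact-open $C^{k+1}$ topology there. This gives openness.

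\textbf{Density.} Fix $f$, and pick a bump function $\rho$ supported in $U_i$ and equal to $1$ near $\ma(W_i)$. Consider the finite-dimensional family
\[
F(x,p)=f(x)+\rho(x)\,p(x),
\]
computed in the chart $V_i$, where $p$ ranges over a small ball $B$ in the space of polynomial maps $\R^{\dim X}\to\R^{\dim Y}$ of degree at most $k$. This requires that $f$ maps the support region into $V_i$, which we arrange by refining the cover. The key step is that the map $\Phi\colon (x,p)\mapsto \mj^kF_p(x)$ is a submersion near $\ma(W_i)\times\{0\}$. Indeed, where $\rho\equiv1$, varying the coefficients of $p$ moves the Taylor coefficients at $x$ of order $\le k$ surjectively onto the fiber $\mJ^k(x,\cdot)$, including the value $f(x)$; varying $x$ covers the base direction. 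A submersion is transverse to $W$, so the parametric transversality theorem (Sard applied to $\Phi^{-1}(W)\to B$) shows that for almost every $p$, $\mj^kF_p\pitchfork W$ near $\ma(W_i)$. Choosing such $p$ arbitrarily small gives $F_p\in T_i$ arbitrarily close to $f$, which proves density.

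The main obstacle is bookkeeping rather than conceptual. One must ensure that the perturbation is localized and small in the Whitney topology, and that transversality along $W_i$ depends only on points over a compact part of $X$. Noncompact $X$ is handled by the countable cover together with the Baire property of the Whitney topology.
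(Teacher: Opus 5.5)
The paper does not prove this statement; it only cites Golubitsky--Guillemin, Theorem~II.4.9. Your outline is that standard argument: a countable cover of $W$ by compact pieces, the Baire property, openness via compactness of $\ma(W_i)$, and density via polynomial perturbations plus parametric transversality. One step, however, fails as written. For $F(x,p)=f(x)+\rho(x)p(x)$ to make sense you need $f(\operatorname{supp}\rho)\subset V_i$, and you propose to arrange this ``by refining the cover.'' That cannot work. The cover $\{W_i\}$, and hence the sets $T_i$, must be fixed before $f$ is chosen, since otherwise $\bigcap_i T_i$ is not a fixed countable intersection. But $f$ is arbitrary, so nothing prevents $f$ from sending points of $\ma(W_i)$, where you insist that $\rho\equiv1$, outside $V_i$. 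Shrinking the pieces does not change this.

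The fix is to let $\rho$ depend on $f$ and to require $\rho\equiv1$ only where it matters.
\begin{itemize}
\item Choose a compact $N\subset V_i$ containing a uniform neighbourhood (in the chart) of the compact set $\mb(W_i)$.
\item Put $K_f=\ma(W_i)\cap f^{-1}(N)$. This is compact and lies in the open set $U_i\cap f^{-1}(V_i)$.
\item Take $\rho$ compactly supported in $U_i\cap f^{-1}(V_i)$ with $\rho\equiv1$ on an open neighbourhood $O$ of $K_f$.
\item Define $F_p=f+\rho p$ on $U_i\cap f^{-1}(V_i)$ and $F_p=f$ elsewhere.
\end{itemize}
For $p$ small, $F_p$ is a well-defined smooth map, Whitney-close to $f$. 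Any $x$ with $\mj^kF_p(x)\in W_i$ satisfies $x\in\ma(W_i)$ and $F_p(x)\in\mb(W_i)$, with $F_p(x)$ uniformly close to (or equal to) $f(x)$. Hence $f(x)\in N$, so $x\in K_f\subset O$. On $O\times B$ the map $(x,p)\mapsto\mj^kF_p(x)$ is a submersion, so parametric transversality gives $F_p\in T_i$ for almost every small $p$.

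There is also a smaller slip in the openness step: transversality is never a $C^0$-open condition. What is true is that transversality to $W$, at points mapped into the compact set $W_i$, is a $C^1$-open condition on $\mj^kf$. That is what translates into $C^{k+1}$-openness in $f$, and hence openness in the Whitney $C^\infty$ topology. With these two corrections your argument is complete.
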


In order to restrict the behavior of a function at multiple points simultaneously, we introduce the notion of a multijet space. Let $s \in \mathbb{N}$ and $X^{(s)}$ be the set of ordered $s$-tuples
of pairwise distinct points in $X$:
\[X^{(s)}=\{(x_1,\dots,x_s)\in X^s\colon x_i\neq x_j\textrm{ unless } i=j\}.\]
For each $k$, let $\ma^s$ and $\mb^s$ be the $s$-fold Cartesian powers 
\[
\ma^s\colon \mJ^k(X,Y)^s\to X^s \quad \text{and} \quad \mb^s\colon \mJ^k(X,Y)^s\to Y^s
\]
of the source and target maps. 
\begin{definition}
The \emph{$s$-fold multijet space} $\mJ^{k,s}(X,Y)$ is the preimage of $X^{(s)}$ under $\ma^s$. 
\end{definition}

The $s$-fold multijet space consists of $s$-tuples of jets that live over $s$ distinct points in $X$. Note that $\mJ^k(X,Y)$ is $\mJ^{k,1}(X,Y)$. We likewise have the notion of a multijet extension.

\begin{defn}
Let $f \colon X \rightarrow Y$. The \emph{$s$-fold multijet extension} of $f$ is the map 
\[
\mj^{k,s}f\colon X^{(s)}\to \mJ^{k,s}(X,Y)
\]
that assigns to each $(x_1,\dots,x_s) \in X^{(s)}$ the multijet
\[\mj^{k,s}f(x_1,\dots,x_s)=(\mj^kf(x_1),\dots,\mj^kf(x_s)).\]
\end{defn}

The following result generalizes the Thom transversality theorem:

\begin{thm}[Multijet Transversality Theorem, see \expandafter{\cite[Theorem II.4.13]{GoluGuille}}]\label{thm:asdfg}
  Let $W \subset \mJ^{k,s}(X,Y)$ be a submanifold. Then the set of maps $f\colon X\to Y$
  such that $\mj^kf\colon X^{(s)}\to \mJ^{k,s}(X,Y)$ is transverse to $W$ is residual in $C^\infty(X, Y)$. 
\end{thm}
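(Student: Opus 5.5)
The plan is to reduce the multijet statement to the ordinary Thom Transversality Theorem~\ref{thm:ttt} by a parametric-transversality argument, working locally and then globalizing with a countability argument. The reduction has three steps.

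First, localize. Cover $X^{(s)}$ by countably many compact product sets $K=K_1\times\dots\times K_s$. Each $K_i$ should be a compact coordinate ball, and the balls should be pairwise disjoint. This is possible because the points of $X^{(s)}$ are pairwise distinct, and $X^{(s)}$ is second countable. Similarly, write $W$ as a countable union of closed (in $\mJ^{k,s}$) pieces $W_j$ that are relatively compact, and for which transversality on a compact set is an open condition. It then suffices to show, for each pair $(K,W_j)$, that the set
\[
T_{K,j}=\{f : \mj^{k,s}f \pitchfork W \text{ at all points of } K\cap(\mj^{k,s}f)^{-1}(\overline{W_j})\}
\]
is open and dense. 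Residuality then follows by intersecting countably many such sets; this uses the Baire property of $C^\infty(X,Y)$ in the Whitney topology. Openness is standard, since transversality is stable under $C^1$-small perturbation on compact sets.

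Second, prove density, which is the heart of the argument. Fix $f$. Using bump functions $\rho_i$ supported near $K_i$ (with disjoint supports), consider the finite-dimensional family
\[
f_a = f + \sum_{i=1}^s \rho_i\, P_{a_i},
\]
written in local coordinates of $Y$ near $f(K_i)$. Here $a=(a_1,\dots,a_s)$, and each $a_i$ ranges over polynomial maps of degree $\le k$ in the coordinates on $K_i$. Because the supports are disjoint, the parameters $a_i$ act independently on the $i$-th jet factor. On a neighborhood of $K$, the evaluation map
\[
F\colon K\times A\to \mJ^{k,s}(X,Y),\quad (x,a)\mapsto \mj^{k,s}f_a(x)
\]
is therefore a submersion: varying $a_i$ alone already realizes every $k$-jet at $x_i$ in the fiber over $(x_i,f_a(x_i))$, and this includes moving the target value. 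Hence $F\pitchfork W$. By the parametric transversality theorem (Sard applied to $F^{-1}(W)\to A$), for almost every small $a$ the map $\mj^{k,s}f_a$ is transverse to $W$ on $K$. Since $f_a\to f$ as $a\to0$ in the $C^\infty$ topology (the supports are compact), density follows.

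The main obstacle is the submersion claim in the second step. It genuinely needs the points to be distinct, and uniformly so on $K$; this is exactly why we restrict to $X^{(s)}$ and choose disjoint $K_i$. Without disjointness, perturbing near $x_i$ would also change the jet at $x_j$. A secondary technical point is the use of coordinate charts on $Y$: $f(K_i)$ might not lie in a single chart. This is handled by shrinking the $K_i$ further, so that each $f(K_i)$ lies in a chart, and by taking $a$ small enough that $f_a(K_i)$ stays in that chart.
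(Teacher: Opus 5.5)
Your proof is correct. The paper gives no proof of its own and only cites \cite[Theorem II.4.13]{GoluGuille}, and your argument is essentially the standard one from that reference: you localize to compact pieces $\overline{W_j}\subset W$ and to products of disjoint compact charts, get openness by compactness, and get density by perturbing with polynomials of degree $\le k$ times disjointly supported bump functions, so that the parametrized multijet map is a submersion and Sard applies.

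One detail should be phrased more carefully. Your cover by products $K$ is fixed before $f$, so ``shrinking the $K_i$'' must be implemented in one of two ways. You can cover $K$ by finitely many smaller products whose images under $f$ lie in charts and perturb successively, using openness to keep the earlier conditions. Alternatively, you can choose the $W_j$ so that their target projections lie in charts and place the cutoffs inside $f^{-1}$ of those charts.
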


In the above situation, we will sometimes say $f$ is transverse to $W$ to mean that the appropriate jet extension $\mj^{k,s}f$ is transverse to $W$.
It is a standard procedure to translate this into a ready-to-use parameter counting argument. The idea is as follows. Suppose we have an $r$-dimensional family of maps from $X$ to $Y$. For any map $f$ in this family, the associated multijet extension $\mj^{k, s} f \colon X^{(s)} \rightarrow \mJ^{k, s}(X, Y)$ has domain of dimension $sn$, where $n$ is the dimension of $X$. As $f$ varies, these multijet extensions thus sweep out a locus of dimension $sn + r$ in $\mJ^{k, s}(X, Y)$. Generically, this will not intersect a submanifold of $\mJ^{k, s}(X, Y)$ with codimension greater than $sn + r$. We make this precise below:

\begin{thm}[Parameter Counting]\label{thm:pc_argument}
Let $R$ be an $r$-dimensional manifold, which we think of as a parameter space, and $\dim X=n$. Let $W\subset \mJ^{k,s}(X,Y)$ be a submanifold of
  codimension $w$. If $sn+r<w$, then there is a residual subset of $C^\infty(R\times X,Y)$ such
  that for any $F$ in this subset and any $u\in R$, the jet extension 
  $\mj^{k,s}f_u$ of $f_u =F(u,\cdot)\colon X\to Y$ misses $W$. 
\end{thm}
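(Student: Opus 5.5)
The plan is to reduce the parametric statement to the multijet transversality theorem (Theorem~\ref{thm:asdfg}) applied to the single map $F\colon R\times X\to Y$, and then count dimensions. The point is that $F$ is an honest map from the $(r+n)$-dimensional manifold $R\times X$. Its multijets along a slice $\{u\}\times X$ determine the multijets of $f_u$.

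\textbf{Step 1: the projection.} Consider the locus in $\mJ^{k,s}(R\times X,Y)$ of $s$-tuples of jets based at points $(u_1,x_1),\dots,(u_s,x_s)$ with pairwise distinct $x_i$. On this locus there is a smooth submersion, in the source, to $\mJ^{k,s}(X,Y)$. It forgets the $R$-coordinates and all derivatives in the $R$-directions, recording only the $X$-partial derivatives up to order $k$ at each point.

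\textbf{Step 2: the pulled-back submanifold.} Let $\widetilde W$ be the set of multijets in this locus such that all base points share the same $R$-coordinate, $u_1=\dots=u_s$, and whose image under the projection lies in $W$. The diagonal condition imposes $(s-1)r$ independent equations on the base points. The $W$-condition is pulled back by a submersion, so it contributes codimension $w$, transversely to the diagonal condition. Hence $\widetilde W$ is a submanifold of codimension $w+(s-1)r$. By construction, $\mj^{k,s}f_u(x_1,\dots,x_s)\in W$ exactly when
\[
\mj^{k,s}F\bigl((u,x_1),\dots,(u,x_s)\bigr)\in \widetilde W .
\]

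\textbf{Step 3: transversality and dimension count.} By Theorem~\ref{thm:asdfg}, there is a residual set of $F\in C^\infty(R\times X,Y)$ for which $\mj^{k,s}F$ is transverse to $\widetilde W$. The domain $(R\times X)^{(s)}$ has dimension $s(r+n)$. We have
\[
s(r+n) < w+(s-1)r \iff sn + r < w ,
\]
which is exactly the hypothesis. So for such $F$, transversality forces the image of $\mj^{k,s}F$ to miss $\widetilde W$ entirely. Therefore no $u$ and no distinct $x_1,\dots,x_s$ satisfy $\mj^{k,s}f_u(x_1,\dots,x_s)\in W$.

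\textbf{Main obstacle.} The delicate step is Step 2. One must verify that $\widetilde W$ really is a submanifold of the stated codimension. Concretely, the map recording the $R$-coordinates and the $X$-jets must be a submersion onto $R^s\times \mJ^{k,s}(X,Y)$ restricted to the relevant open set. This holds because the $X$-jets and the base points are independent coordinates in the jet bundle, but it should be checked in local coordinates.

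There is also a technical mismatch in the multijet space. Theorem~\ref{thm:asdfg} uses tuples of distinct points of $R\times X$, whereas we need distinct $x_i$. Since the set of tuples with distinct $x_i$ is an open subset of $(R\times X)^{(s)}$, this causes no difficulty. If $R$ is noncompact, residuality is stated in the Whitney $C^\infty$ topology, which is the setting of Theorem~\ref{thm:asdfg}.
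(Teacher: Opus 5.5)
Your proposal is correct and follows the paper's argument. It pulls $W$ back along the forgetful projection $\mJ^{k,s}(R\times X,Y)\to\mJ^{k,s}(X,Y)$, intersects with the preimage of the diagonal of $R^s$ to get a submanifold of codimension $w+(s-1)r$, and applies multijet transversality to $F$ with the count $s(r+n)<w+(s-1)r$. Your restriction to the open locus where the $x_i$ are distinct, together with the submersion check that the two conditions are independent, makes explicit two points the paper glosses over when it defines $\Pi_X$ on all of $\mJ^{k,s}(R\times X,Y)$ and simply adds codimensions.
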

\begin{proof}
The result is classical, see e.g. \cite[Section 3]{FiedlerKurlin}, but we provide a quick proof which will generalize to the equivariant setting. The only subtlety in the above discussion is that (strictly speaking) we should phrase our argument in terms of the multijet transversality theorem for maps from $R \times X$ to $Y$. Let $\Delta'$ be the preimage of the diagonal in $R^s$ under
  \[
  \mJ^{k,s}(R \times X,Y)\to (R\times X)^s \to R^s,
  \]
  where the first map is the source map and the second is projection. Let $\pi_X\colon R \times X \to X$ be projection onto $X$. This induces a projection $\Pi_X\colon \mJ^{k,s}(R \times X,Y)\to \mJ^{k,s}(X,Y)$ which is a locally trivial fibration. Set $W' = \Pi_X^{-1}(W)\cap \Delta'$. By the multijet transversality theorem, the set of maps $F \colon R \times X \to Y$ which are transverse to $W'$ is residual. 

Now, $\codim W$ is $w$, and hence $\codim\Pi_X^{-1}(W)$ is also $w$, since $\Pi_X$ is a fibration. Likewise, $\codim \Delta'$ 
  is $r(s-1)$. Thus $\codim W'$ is $r(s-1)+w$. On the other hand, the dimension of $R \times X$ is $r+n$, and hence the dimension of $(R \times X)^s$ is $s(r + n)$. Thus, if 
  \[
s(r + n) < r(s-1)+w, 
  \]
  the mutlijet transversality theorem provides a residual subset of functions $F$ in $C^\infty(R \times X,Y)$ such that the multijet extension $\mj^{k,s}F$ misses $W'$. This inequality is of course identical to the one in the hypothesis of the theorem. It remains to show that $F$ missing $W'$ implies that $\mj^{k,s}f_u$ misses $W$ for all $u\in R$.

Suppose $\mj^{k,s}f_u$ hits $W$. This means that there exist points $x_1,\dots,x_s\in X$ such that
  \[(\mj^kf_u(x_1),\mj^kf_u(x_2),\dots,\mj^kf_u(x_s))\in W.\]
Then
  \[(\mj^kF(u,x_1),\mj^kF(u,x_2),\dots,\mj^kF(u,x_s))\in \Pi_X^{-1}(W)\subset \mJ^{k,s}(R \times X,Y).\]
  However, the point $(\mj^kF(u,x_1),\mj^kF(u,x_2),\dots,\mj^kF(u,x_s))$ also belongs to $\Delta'$. Hence,
  if $\mj^{k,s}f_u$ hits $W$, then $\mj^{k,s}F$ hits $W'$.
\end{proof}

Our usage of the parameter counting argument is so common that it will be useful to introduce some auxiliary notation:

\begin{defn}
Let $D$ be a submanifold of $\mJ^{k, s}(X, Y)$. The \textit{(jet space) codimension of $D$} (sometimes called the \textit{number of defining conditions}) is the usual codimension $\codim(D)$ of $D$ in $\mJ^{k, s}(X, Y)$. We define the \textit{(parametric) codimension of $D$} to be 
\[
\pcodim(D) = \codim(D) - s(\dim X).
\]
Note that $s(\dim X)$ is the dimension of the domain of the relevant multijet extension. In this language, the parameter counting argument states that a generic $r$-parameter family of maps in $C^\infty(X, Y)$ will miss $D$ so long as $r < \pcodim(D)$. 
\end{defn}

\begin{example}
Consider the case of an $s$-tuple self-intersection singularity. The presence of an $s$-tuple self-intersection of $f$ means that $\mj^{0, s}f$ hits $D\subset \mJ^{0,s}(X,Y)$, where $D$ is the preimage of the diagonal in $Y^s$ under the target map. This has $\codim D=(s-1)(\dim Y)$, so 
  \[
  \pcodim(D) = (s-1)(\dim Y) - s(\dim X).
  \]
For example, suppose $f$ is a link diagram (as we discuss presently), so that $X$ is a circle or disjoint union of circles and $Y$ is $\R^2$. Then $\pcodim(D) = s - 2$, so an $s$-tuple self-intersection does not occur in a generic $r$-parameter family of diagrams unless $r$ is at least $s - 2$.
\end{example}

\subsection{Regular diagrams}\label{sub:reidemeister}
We now illustrate the theory by using jet spaces to establish standard facts regarding link diagrams and Reidemeister moves. We begin with the former. 

\begin{defn}
Fix a disjoint union of circles $\cS$. A \textit{link} is an embedding 
\[
\wt{\phi} \colon \cS \to \R^3. 
\]
Let $\pi\colon\R^3\to\R^2$ be the fixed projection from $\R^3$ onto the $xy$-plane. We refer to 
\[
\phi\colon \cS\to\R^2 \quad \text{given by} \quad \phi=\pi\circ\wt{\phi}
\]
as the corresponding \emph{link diagram}. In Subsection~\ref{sub:ES3}, we will pass to links embedded in $S^3$ by viewing $S^3$ as the one-point compactification of $\R^3$.
\end{defn}

Our convention will be that maps or objects in $\R^3$ or $S^3$ are decorated with a tilde, while maps or objects in $\R^2$ are not. In general, a map $\phi \colon \cS \rightarrow \R^2$ might have arbitrarily complicated singularities. Our goal will to show that we can always perturb $\phi$ to have singularities which are particularly simple. The argument and notation here will be generalized to the equivariant setting in subsequent sections.

\begin{defn}\label{def:F0}
For a fixed $\cS$, let $\cF$ be the space of all smooth maps from $\cS$ to $\R^2$. 
\end{defn}

The following describes a generic element $\phi \in \cF$.

\begin{defn}\label{def:generic_R2}
Let $\cF^0$ be the subspace of $\cF$ consisting of maps whose only singularities are ordinary double points. Thus,
  $\phi$ belongs to $\cF^0$ if it has the following three properties:
  \begin{enumerate}[label=(Reg-\arabic*)]
    \item $\phi'$ is nonzero at every point of $\cS$;\label{item:R1}
    \item for any $w\in\R^2$, if $\phi^{-1}(w)$ consists of two points $t_1, t_2$, then $\phi'(t_1)$, $\phi'(t_2)$ are linearly independent;\label{item:R2}
    \item for any $w\in\R^2$, the preimage $\phi^{-1}(w)$ consists of at most two points.\label{item:R3}
  \end{enumerate}
Conditions~\ref{item:R1}, \ref{item:R2}, and~\ref{item:R3} can be phrased as: $\phi$ has no cusps, 
$\phi$ has no tangencies,
and $\phi$ has no triple points. We refer to such a $\phi$ as being \textit{regular}.
\end{defn}

This means there are three (non-exclusive) ways that $\phi$ can fail to be regular. Hence the complement of $\cF^0$ decomposes into three strata:

\begin{defn}\label{def:stratum}
For each $i$, let $\wt{\cF}^1_i$ be the subset of $\cF$ on which (\textrm{Reg}-$i$) is violated. Thus:
  \begin{enumerate}
    \item[($\wt{\cF}^1_1$)] \ref{item:R1} is violated: $\phi$ has a cusp; that is, there is at least one point where $\phi'(t) = 0$;
    \item[($\wt{\cF}^1_2$)] \ref{item:R2} is violated: $\phi$ has a tangency; that is, there is at least one pair of points $t_1$, $t_2$ such that $\phi(t_1) = \phi(t_2)$ and $\phi'(t_1)$, $\phi'(t_2)$ are linearly dependent;
    \item[($\wt{\cF}^1_3$)] \ref{item:R3} is violated: $\phi$ has a triple point; that is, there is at least one $w \in \R^2$ for which $\phi^{-1}(w)$ consists of three or more points.
  \end{enumerate}
Denote
\[
\wt{\cF}^1 = \wt{\cF}^1_1  \cup \wt{\cF}^1_2 \cup \wt{\cF}^1_3 = \cF \setminus \cF^0.
\]
\end{defn}

By examining these three strata, we obtain the following standard result:
\begin{thm}[see \cite{Whitney_closed}]\label{thm:cf0}
  The subset $\cF^0$ of regular maps is open-dense in $\cF$. 
\end{thm}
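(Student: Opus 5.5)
We prove density and openness separately. Since $\cS$ is compact, we work in the $C^\infty$ (equivalently, Whitney) topology on $\cF$. Following the parameter counting philosophy of Theorem~\ref{thm:pc_argument}, each of the strata $\wt{\cF}^1_i$ will be described as the set of maps whose (multi)jet extension hits a submanifold of some multijet space, and we will compute the parametric codimension of that submanifold.

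\emph{Density.} Take $X=\cS$ (so $\dim X=1$) and $Y=\R^2$.
\begin{itemize}
\item[(1)] A cusp means that $\mj^1\phi$ hits $D_1=\{\phi'=0\}\subset\mJ^1(\cS,\R^2)$. This has $\codim D_1=2$, so $\pcodim D_1=2-1=1$.
\item[(2)] A tangency means that $\mj^{1,2}\phi$ hits
\[
D_2=\{\phi(t_1)=\phi(t_2),\ \det(\phi'(t_1),\phi'(t_2))=0\}\subset\mJ^{1,2}(\cS,\R^2).
\]
There are $2+1=3$ conditions, so $\pcodim D_2=3-2=1$. Strictly speaking, $D_2$ is a submanifold only after removing the locus where $\phi'(t_1)=\phi'(t_2)=0$. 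We stratify $D_2$ into finitely many submanifolds, each of codimension at least $3$; the deeper strata have larger codimension.
\item[(3)] A triple point means that $\mj^{0,3}\phi$ hits the preimage $D_3$ of the small diagonal in $(\R^2)^3$. Then $\codim D_3=4$ and $\pcodim D_3=4-3=1$.
\end{itemize}
Applying Theorem~\ref{thm:pc_argument} with $r=0$ (or Theorems~\ref{thm:ttt} and~\ref{thm:asdfg} directly, since transversality to a set whose codimension exceeds the source dimension means missing it) gives a residual, hence dense, set of maps missing $D_1$, $D_2$ and $D_3$. Those maps lie in $\cF^0$. Quadruple and higher points are covered by (3), since they contain triple points.

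\emph{Openness.} The main subtlety is that multijet transversality only gives residual sets, and the multijet source $\cS^{(s)}$ is not compact, so openness needs a separate argument. Let $\phi\in\cF^0$. We argue as follows.
\begin{itemize}
\item[(a)] Since $\cS$ is compact and $|\phi'|>0$, we have $|\phi'|\geq c>0$. Hence every $C^1$-close map also has no cusps.
\item[(b)] By (a) and the inverse function theorem, uniformly, there is $\delta>0$ such that $\phi$ is injective on every arc of length $<\delta$. The same holds for any map $C^1$-close to $\phi$, so all self-intersections involve pairs with $|t_1-t_2|\geq\delta$, which form a compact set $K\subset\cS^{(2)}$.
\item[(c)] On $K$, the function $(t_1,t_2)\mapsto |\phi(t_1)-\phi(t_2)|+|\det(\phi'(t_1),\phi'(t_2))|$ is positive, because $\phi$ has no tangencies. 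It is therefore bounded below, and this bound persists under $C^1$-small perturbation. So nearby maps have no tangencies.
\item[(d)] The triple-point condition is handled in the same way, using the compact set of triples that are pairwise $\delta$-separated and the function $\max_{i<j}|\phi(t_i)-\phi(t_j)|$.
\end{itemize}
These estimates show that a $C^1$-neighbourhood of $\phi$ is contained in $\cF^0$, so $\cF^0$ is open.

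The step requiring the most care is (b): the uniform local injectivity that confines the multijet conditions to a compact region away from the diagonal. It is also what makes the codimension computation in (2) legitimate near the diagonal.
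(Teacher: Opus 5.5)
Your proposal is correct and follows the paper's proof. Density comes from multijet transversality with the same codimension counts ($2>1$, $3>2$, $4>3$), and your stratification of $D_2$ and your compactness and uniform-local-injectivity argument make explicit the submanifold and openness claims that the paper leaves as ``directly checkable.'' One small correction to your closing remark: step (b) is needed only for openness, since multijet transversality on the non-compact source $\cS^{(2)}$ already handles pairs near the diagonal in the density count.
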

\begin{proof}

It suffices to show that for each $i$, the set of maps missing each $\wt{\cF}^1_i$ is open-dense. By definition, $\cF^0$ will then be the intersection of three such sets. We proceed via multijet transversality. 

A map $\phi$ lies in $\wt{\cF}^1_1$ if $\mj^1\phi$ intersects the subspace of $\mJ^1(\cS,\R^2)$ defined by $\phi'(t)=0$. This is a codimension~2 subset of jet space (the vanishing of both components of the derivative) while the source is of dimension~1; Theorem~\ref{thm:asdfg} thus shows that the set of maps missing this subspace is residual. Because having $\phi'(t) \neq 0$ is clearly an open condition, in this case we have that the complement of $\smash{\wt{\cF}^1_1}$ is open-dense.

A map $\phi$ lies in $\wt{\cF}^1_2$ if $\mj^{1,2}\phi$ intersects the subspace of $\mJ^{1,2}(\cS,\R^2)$ defined by the conditions $\phi(t_1)=\phi(t_2)$ and $\det(\phi'(t_1),\phi'(t_2))=0$. This is a codimension~3 subset of jet space (the equality $\phi(t_1)=\phi(t_2)$ gives two coordinate constraints, and the linear dependence gives a third) while the source is of dimension~2; Theorem~\ref{thm:asdfg} thus shows that the set of maps missing this subspace is residual. Once again we may directly check that the complement of $\smash{\wt{\cF}^1_2}$ is open and thus that it is open-dense.

Finally, a map $\phi$ lies in $\wt{\cF}^1_3$ if $\mj^{0,3}\phi$ intersects the subspace of $\mJ^{0,3}(\cS,\R^2)$ defined by $\phi(t_1)=\phi(t_2)=\phi(t_3)$. This is a codimension~4 subset of jet space (there are four independent coordinate equalities) while the source is of dimension~3. Once again, the complement of $\smash{\wt{\cF}^1_3}$ is open-dense. 
  \end{proof}
  
  We quickly obtain the following corollary: 

\begin{corollary}\label{cor:perturb}
 Any link $\wt{\phi}$ can be perturbed so that $\phi$ is in $\cF^0$.
\end{corollary}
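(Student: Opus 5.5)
The plan is to deduce Corollary~\ref{cor:perturb} from Theorem~\ref{thm:cf0}, using the fact that small perturbations of the diagram lift to small perturbations of the link. Write $\wt{\phi} = (\phi, h)\colon \cS \to \R^3 = \R^2 \times \R$, where $h$ is the $z$-coordinate (height function).

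First, we record that embeddings form an open subset of $C^\infty(\cS, \R^3)$. Since $\cS$ is compact, the Whitney and compact-open $C^\infty$ topologies coincide here. Hence there is a neighborhood $\cU$ of $\phi$ in $\cF$ such that for every $\psi \in \cU$ the map $(\psi, h)$ is still an embedding.

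Next, by Theorem~\ref{thm:cf0}, $\cF^0$ is dense in $\cF$. We therefore pick $\psi \in \cF^0 \cap \cU$, chosen as close to $\phi$ as we like. Then $\wt{\psi} = (\psi, h)$ is an embedding, so it is a link, and its diagram $\pi \circ \wt{\psi} = \psi$ is regular.

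Finally, we check that $\wt{\psi}$ is isotopic to $\wt{\phi}$, so that "perturbed" is meaningful up to isotopy. The straight-line homotopy $(\phi + s(\psi - \phi), h)$, $s \in [0,1]$, stays inside the open set of embeddings as long as $\cU$ is chosen convex in a suitable $C^1$-norm ball. This is possible because a $C^1$-small ball is convex and embeddings form a $C^1$-open set.

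The only mild obstacle is this openness of embeddings, that is, of injective immersions of a compact manifold. This is standard: immersions are $C^1$-open, and injectivity persists under $C^1$-small perturbations by the usual compactness argument near the diagonal. It does not require anything beyond the classical fact.
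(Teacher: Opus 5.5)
Your proposal is correct and is essentially the paper's argument. The paper also keeps the height coordinate: it sets $\wt{\phi}_n=\wt{\phi}+\phi_n-\phi$ for a sequence $\phi_n\in\cF^0$ converging to $\phi$, and invokes $C^1$-stability of embeddings to conclude that $\wt{\phi}_n$ is an embedding isotopic to $\wt{\phi}$; your convex $C^1$-ball and straight-line homotopy just make explicit the isotopy that the paper attributes to ``stability of embeddings.''
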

\begin{proof}
  Let $\phi_n \in \cF^0$ be a sequence converging to $\phi$ in the $C^\infty$-topology and
  $\wt{\phi}_n=\wt{\phi}+\phi_n-\phi$. Here, we view the codomain of $\phi_n-\phi$ as $\mathbb{R}^3$ by setting the $z$-coordinate equal to zero. We have $\pi \circ \wt{\phi}_n=\phi_n$
  and $\wt{\phi}_n\to\wt{\phi}$ in $C^\infty$. In particular, the convergence is also in the $C^1$-norm.
  It follows from stability of embeddings that $\wt{\phi}_n$ is an 
  embedding isotopic to $\wt{\phi}$. 
\end{proof}

\subsection{Reidemeister moves}\label{sec:noneqreidemeister}
We complete our discussion of classical knot theory by sketching a proof of the Reidemeister theorem using jet spaces. For this, we will need to understand one-parameter families of maps in $\cF$. While a generic element of $\cF$ lies in $\cF^0$, a one-parameter family of maps in $\cF$ will usually cross the strata of Definition~\ref{def:stratum}. We thus begin by identifying a subset $\cF^1_i$ of each stratum consisting of the mildest possible singularities within that stratum. Our treatment is brief, since we give a detailed discussion of the equivariant case in subsequent sections.

\begin{defn}\label{def:generic_R3}
For each $i$, define $\cF^1_i \subset \wt{\cF}^1_i$ as follows.

  \begin{enumerate}
    \item[$(\cF^1_1)$] $\phi$ has a cusp of the lowest possible order; i.e., a $(2, 3)$-cusp (see Definition~\ref{def:cusppq}). This means that at the cusp point, $\phi''(t)$ and $\phi'''(t)$ are linearly independent.
    \item[$(\cF^1_2)$] $\phi$ has a tangency of the lowest possible order; i.e., a $1$-fold tangency (see Definition~\ref{def:tangency-fold}). This means that at the point of tangency, we can find coordinates around $t_1$, $t_2$, and $\phi(t_1) = \phi(t_2)$ such that the two branches of $\phi$ are given by
    \[
    \phi(t) = (t, 0) \quad \text{and} \quad \phi(s) = (s, h(s)),
    \]
    for some $h$ with $h(0) = h'(0) = 0$ and $h''(0) \neq 0$.
    \item[$(\cF^1_3)$] $\phi$ has an ordinary triple point. This means that $\phi(t_1) = \phi(t_2) = \phi(t_3)$ for some $t_1$, $t_2$, and $t_3$, but $\{\phi'(t_1), \phi'(t_2), \phi'(t_3)\}$ are pairwise linearly independent.
 \end{enumerate}
 In each case, we furthermore require $\phi$ has no coincidences (see Section~\ref{sub:coin}) or strikethroughs (see Section~\ref{sub:strike}). The former means that each $\phi$ has exactly one singularity; for example, we do not allow the simultaneous appearance of two cusps or the appearance of a cusp and a tangency. The latter means that we do not allow the participation of any other branch of $\phi$ in one of the singularities defined above; for example, we do not allow a triple point to become a quadruple point. Denote
\[
\cF^1 = \cF^1_1 \cup \cF^1_2 \cup \cF^1_3.
\]
\end{defn}

We claim that a generic one-parameter family of maps may cross $\smash{\wt{\cF}^1_i}$, but only does so in a controlled manner, by staying within $\cF^1_i$. For instance, when $i = 1$, Definition~\ref{def:generic_R3} says that a one-parameter family may develop a cusp, but this cusp will be of the lowest possible order. Or, when $i = 3$, Definition~\ref{def:generic_R3} says that such a family may develop a triple point, but this triple point will occur between three mutually transverse branches. 

\begin{defn}\label{def:generic_path}
A path $\phi_s$ for $s \in [0, 1]$ in $\cF$ is called \textit{regular} if:
  \begin{enumerate}[label=(PR-\arabic*)]
    \item $\phi_0$ and $\phi_1$ are regular maps; \label{item:eboundary_regular}
    \item $\phi_s$ belongs to $\cF^0\cup\cF^1$ for all $s$; \label{item:eno_big_deal}
    \item there are finitely many values $s_1,\dots,s_m$ such that $\phi_{s_i}\in\cF^1$, while for other
      values $\phi_s\in\cF^0$;\label{item:efinitely_many_events}
    \item regarded as a path in $\cF$,  $\phi_s$ crosses $\cF^1$ transversely.\label{item:etransverse}
  \end{enumerate}
\end{defn}

To show that a generic path stays within $\cF^0 \cup \cF^1$, we consider its complement:

\begin{defn}
For each $i$, let $\wt{\cF}^2_i = \wt{\cF}^1_i \setminus \cF^1_i$. Denote
\[
\wt{\cF}^2 = \wt{\cF}_1^2 \cup \wt{\cF}_2^2 \cup \wt{\cF}_3^2 = \cF \setminus (\cF^0 \cup \cF^1).
\]
\end{defn}

As in the proof of Theorem~\ref{thm:cf0}, we proceed by showing:

\begin{thm}[see \cite{Perestroikas}]\label{thm:perestrojka}
  Let $\phi_s$ be any path in $\cF$ with $\phi_0,\phi_1\in\cF^0$. Then there exists an arbitrarily close path $\phi_s'$,
  equal to $\phi_s$ at the ends, which is regular.
\end{thm}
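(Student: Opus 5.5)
The plan mirrors the proof of Theorem~\ref{thm:cf0}, now with one parameter. We view a path $\phi_s$ as a map $\Phi\colon[0,1]\times\cS\to\R^2$ and apply the Parameter Counting Theorem~\ref{thm:pc_argument} with $r=1$. The goal is to show that every way of lying in $\wt{\cF}^2=\cF\setminus(\cF^0\cup\cF^1)$ corresponds to a submanifold (or finite union of submanifolds) $D$ of some multijet space $\mJ^{k,s}(\cS,\R^2)$ with $\pcodim(D)\ge 2$. Such a $D$ is missed by a generic one-parameter family. To keep the endpoints fixed, we perturb only on the interior: we apply the theorem on $R=(\epsilon,1-\epsilon)$, or multiply the perturbation by a bump function vanishing near $s=0,1$. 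Near the ends nothing needs to be done, because $\cF^0$ is open and $\phi_0,\phi_1\in\cF^0$. Since residual sets are dense, the perturbed path can be taken arbitrarily close to $\phi_s$.

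The first step is to enumerate the degenerations making up $\wt{\cF}^2$ and count their codimensions.
\begin{itemize}
\item[(a)] Degenerate cusps, where $\phi'(t)=0$ and $\det(\phi''(t),\phi'''(t))=0$. This lies in $\mJ^3$ with codimension $3$ and $s=1$, so $\pcodim=2$.
\item[(b)] Degenerate tangencies, where $\phi(t_1)=\phi(t_2)$, the derivatives are parallel, and the relative curvature $h''(0)$ vanishes. Here $\codim=4$ and $s=2$, so $\pcodim=2$. The cases in which one branch at the tangency is also singular are covered by (e).
\item[(c)] Non-ordinary triple points, where $\phi(t_1)=\phi(t_2)=\phi(t_3)$ and two of the tangent vectors are parallel. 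Here $\codim=5$ and $s=3$, so $\pcodim=2$.
\item[(d)] Quadruple points (the strikethrough of a triple point): $\codim=6$, $s=4$, $\pcodim=2$.
\item[(e)] Strikethroughs of a cusp or tangency by an extra branch, for example a cusp point lying on another branch ($\codim=4$, $s=2$, $\pcodim=2$), or a tangency point lying on a third branch ($\codim=5$, $s=3$, $\pcodim=2$).
\item[(f)] Coincidences: two codimension-one events at the same parameter value. Examples are two cusps (in $\mJ^{1,2}$, codimension $4$, $s=2$) and a cusp together with a triple point (in $\mJ^{1,4}$, codimension $6$, $s=4$). Each has $\pcodim$ equal to the sum of the individual parametric codimensions, namely $1+1=2$.
\end{itemize}
In every case $\pcodim\ge2>1=r$, so Theorem~\ref{thm:pc_argument} gives a residual set of families $\Phi$ avoiding all these loci. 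Some loci, such as (a) and (b), are only semialgebraic rather than submanifolds. We handle this by stratifying them into finitely many submanifolds, each of codimension at least the stated one, and applying the theorem to each stratum.

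The second step is to verify \ref{item:efinitely_many_events} and \ref{item:etransverse}. We apply the multijet transversality theorem to $\Phi$ on $[0,1]\times\cS$, using the $\pcodim=1$ loci defining $\cF^1_i$, each intersected with the diagonal $\Delta'$ as in the proof of Theorem~\ref{thm:pc_argument}. The preimages of these loci are then $0$-dimensional, which gives finitely many events once compactness is used; for compactness one works on a closed subinterval where the path stays away from the ends. Transversality in jet space translates into transverse crossing of $\cF^1$. For example, at a cusp, transversality of $\mj^1\Phi$ to $\{\phi'=0\}$ says that $\partial_s\phi'$ is nonzero in the direction complementary to the image, and this is exactly the versal-unfolding condition. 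Similar statements hold for tangencies, where the $s$-derivative of the relative height is nonzero, and for triple points.

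The main obstacle I expect is making the enumeration in the first step complete, and checking that the "lowest order" conditions of Definition~\ref{def:generic_R3} really cut out submanifolds whose complements in $\wt{\cF}^1_i$ have the claimed codimension. The most delicate cases are the overlapping ones (a cusp at a tangency or triple point, a tangency that is also a triple point), where one must check that every combination is dominated by some listed locus. Equally delicate is the translation of jet-level transversality into the statement that the path crosses $\cF^1$ transversely, since $\cF^1$ is infinite-dimensional. Since the paper indicates these are treated in detail in the equivariant setting later, here I would give the counts and cite \cite{Perestroikas} for the routine verifications.
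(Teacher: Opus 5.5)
Your proposal is correct and follows essentially the same route as the paper's sketch. The paper also shows that every way of lying in $\wt{\cF}^2$ (coincidences, strikethroughs, higher-order cusps, degenerate tangencies and triple points) is a union of strata of parametric codimension at least $2$, and then invokes transversality to the $\cF^1$ strata to obtain finitely many transverse crossings; your cutoff near the endpoints, stratification of the semialgebraic loci, and compactness argument for finiteness fill in details the paper leaves implicit.
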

\begin{proof}[Sketch of proof]
  We prove \ref{item:eno_big_deal} by showing that each $\wt{\cF}_i^2$ is the union of strata with $\pcodim$ at least $2$ and is thus missed by a generic path. Our discussion is brief, as we describe the codimension $2$ strata in greater detail in Section~\ref{sec:codim2}. 

There are two ways for $\phi$ to be in $\smash{\wt{\cF}_i^2}$. The first (and less interesting) way is for $\phi$ to develop a coincidence or strikethough. A coincidence means $\phi$ violates two or more of the conditions $(\textrm{Reg}$-$i)$ at the same time, or the same condition twice. For instance, $\phi$ could have two cusps, which would correspond to the jet extension hitting a codimension~4 subspace of $\mJ^{1,2}(\cS, \R^2)$ (two equations for each cusp). As the domain in this case has dimension~2, such a subspace has $\pcodim = 2$. Other coincidences of singularities are dealt with similarly. In the case of a strikethrough, for each new branch we add at least two equations (one equation for each coordinate of the branch) and increase the dimension of the domain by one. Hence all coincidences or strikethroughs have $\pcodim$ at least $2$.

The other way to be in $\smash{\wt{\cF}_i^2}$ is to fail one of the specific conditions laid out in Definition~\ref{def:generic_R3}. 

\begin{itemize}
\item For $\smash{\wt{\cF}_1^2}$, $\phi$ must have a higher-order cusp. That is, at the cusp point we have that $\phi''(t)$ and $\phi'''(t)$ are linearly dependent.
\item For $\smash{\wt{\cF}_2^2}$, $\phi$ must have a degenerate or higher-order tangency. That is, at the point of tangency, we either have that one of the branches is actually a cusp, or the two branches are tangent to higher order.
\item For $\smash{\wt{\cF}_3^2}$, $\phi$ must have a degenerate triple point. That is, at the triple point, we either have that one (or more) of the branches is actually a cusp, or two (or more) branches are tangent.
\end{itemize}
In Section~\ref{sec:one_para}, we verify that the above subsets are unions of strata with $\pcodim$ at least $2$. For example, $\phi$ has a higher-order cusp if and only if its jet extension hits the subspace of $\mJ^3(\cS,\R^2)$ cut out by three equations: $\phi'(t) = 0$, together with the linear dependence of $\phi''(t)$ and $\phi'''(t)$. Since the domain has dimension $1$, this subspace has $\pcodim = 2$. It follows that none of these singularities occur in a generic 1-parameter family of maps. This proves that a residual set of maps from $[0,1]$ to $\cF$
stays in $\cF^0\cup\cF^1$, establishing item~\ref{item:eno_big_deal}.

The other conditions are straightforward. Item~\ref{item:eboundary_regular} is contained in the assumptions of the theorem. We thus deal with \ref{item:efinitely_many_events} and~\ref{item:etransverse}. By the transversality theorem, a residual set of maps
from can be made transverse
to the strata defining $\cF^1$. For such a map $\phi\colon[0,1]\to\cF$, the number of parameter values $s$ for which $\phi(s)\in\cF^1$
is finite, and the intersection is transverse.
\end{proof}

\begin{figure}
  \begin{tikzpicture}
    \draw (-3,3) node[rectangle, minimum width=2cm] {\includegraphics[height=2cm]{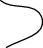}};
    \draw (0,3) node[rectangle, minimum width=2cm] {\includegraphics[height=2cm]{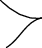}};
    \draw (3,3) node[rectangle, minimum width=2cm] {\includegraphics[height=2cm]{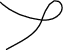}};
    \draw (-3,0) node[rectangle, minimum width=2cm] {\includegraphics[height=2cm]{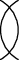}};
    \draw (0,0) node[rectangle, minimum width=2cm] {\includegraphics[height=2cm]{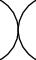}};
    \draw (3,0) node[rectangle, minimum width=2cm] {\includegraphics[height=2cm]{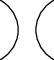}};
    \draw (-4,-3) node[rectangle, minimum width=1.5cm] {\includegraphics[height=2cm]{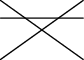}};
    \draw (0,-3) node[rectangle, minimum width=1.5cm] {\includegraphics[height=2cm]{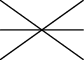}};
    \draw (4,-3) node[rectangle, minimum width=1.5cm] {\includegraphics[height=2cm]{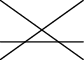}};
    \draw (0,4.2) node [scale=0.9] {Cusp:};
    \draw (0,1.2) node [scale=0.9] {Tangency:};
    \draw (0,-1.8) node [scale=0.9] {Triple point:};
  \end{tikzpicture}
  \caption{Perestroikas of critical points.}\label{fig:perestrojka}
\end{figure}

Theorem~\ref{thm:perestrojka} implies the classical Reidemeister theorem. The essential idea is as follows: for each $i$, the singularities described by $\cF^1_i$ are sufficiently constrained so that we have a local model for any $\phi$ lying in $\cF^1_i$. Such a model is called a \textit{normal form} for the corresponding singularity; see Section~\ref{sub:cA}. In fact, we moreover have a local model for any one-parameter family of maps transversely crossing $\cF^1_i$. This uses the notion of a \textit{versal deformation}; see Section~\ref{sub:versal}. Versal deformations allow us to construct qualitative before-and-after pictures for the maps in such a family.

For instance, up to a choice of coordinates, a $(2,3)$-cusp is locally given by $\phi(t) = (t^2, t^3)$. A model one-parameter family of maps passing through this cusp is given by $\phi_\varepsilon(t) = (t^2, t^3 + \varepsilon t)$. This family is universal in an appropriate sense discussed in Section~\ref{sub:versal}, and hence describes the behavior of a general one-parameter family transversely crossing $\cF^1_1$. The before-and-after pictures of this family for $\varepsilon < 0$ and $\varepsilon > 0$ represent the usual R1 Reidemeister move.

A formal summary of this argument is given below: 

\begin{corollary}\label{cor:Reidemeister}
  Any two diagrams of the same knot can be connected by a sequence of isotopies and finitely many Reidemeister moves.
\end{corollary}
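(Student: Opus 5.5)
Start from two diagrams of the same knot, coming from links $\wt{\phi}_0$ and $\wt{\phi}_1$ in $\R^3$. Choose an isotopy $\wt{\phi}_s$, $s\in[0,1]$, between them. By Corollary~\ref{cor:perturb} we may assume the endpoint diagrams $\phi_0=\pi\circ\wt{\phi}_0$ and $\phi_1=\pi\circ\wt{\phi}_1$ lie in $\cF^0$. The projected path $\phi_s=\pi\circ\wt{\phi}_s$ is a path in $\cF$ with regular endpoints. Apply Theorem~\ref{thm:perestrojka} to replace it by an arbitrarily close regular path $\phi'_s$ with the same ends. Lift it as in the proof of Corollary~\ref{cor:perturb}, setting $\wt{\phi}'_s=\wt{\phi}_s+\phi'_s-\phi_s$, with the difference placed in the $xy$-plane. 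If $\phi'_s$ is $C^1$-close to $\phi_s$ uniformly in $s$, then by compactness of $[0,1]$ and stability of embeddings each $\wt{\phi}'_s$ is an embedding. So $\wt{\phi}'_s$ is still an isotopy of links, and its projection is a regular path.

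Let $s_1<\dots<s_m$ be the finitely many parameters where $\phi'_s\in\cF^1$, guaranteed by \ref{item:efinitely_many_events}. On each interval between consecutive $s_i$ the diagrams stay in $\cF^0$. There, the double points vary smoothly and none is created or destroyed, since no cusp, tangency or triple point occurs. The crossing information also cannot change, because $\wt{\phi}'_s$ is an embedding: the two preimages of a double point always have distinct $z$-coordinates. Hence on such an interval the diagram changes by a planar isotopy that preserves crossing data. This can be made precise by an isotopy-extension argument on the immersed curves, which is where local stability of the set of immersions with normal crossings is used.

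Near each $s_i$, the diagram $\phi'_{s_i}$ has exactly one singularity, of one of the three types in Definition~\ref{def:generic_R3}, and no coincidences or strikethroughs. Away from a small disk around it, the family is again a planar isotopy. Inside the disk we use \ref{item:etransverse} together with versality. A path transverse to $\cF^1_i$ is, up to reparametrisation and local coordinate changes, induced from the versal one-parameter deformation of the normal form:
\begin{itemize}
\item for a $(2,3)$-cusp, $(t^2,t^3+\varepsilon t)$;
\item for a $1$-fold tangency, the branches $(t,0)$ and $(s,s^2+\varepsilon)$;
\item for a triple point, three lines with one translated by $\varepsilon$.
\end{itemize}
Reading off the before and after pictures for $\varepsilon<0$ and $\varepsilon>0$ (Figure~\ref{fig:perestrojka}) gives the local change. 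The embedding $\wt{\phi}'_s$ constrains the heights: in the tangency case one strand lies over the other on both bigon crossings, and in the triple point case the three strands have a consistent top/middle/bottom order. With these heights, the local changes are exactly R1, R2 and R3, each possibly run in reverse.

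The main obstacle is this last step: justifying that a general transverse crossing of $\cF^1_i$ is locally equivalent to the model deformation. This needs the normal forms and the versality theorem for RL-equivalence, which the paper treats later in Sections~\ref{sub:cA} and~\ref{sub:versal}. For this corollary I would cite these as standard (Arnold's perestroikas) and only verify the easy facts that each model deformation is transverse and that its two sides are the pictured diagrams.
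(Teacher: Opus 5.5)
Your proposal is correct and follows essentially the same route as the paper's sketch. Both project the isotopy, perturb it to a regular path via Theorem~\ref{thm:perestrojka}, lift by adding $\phi'_s-\phi_s$ and invoke $C^1$-stability of embeddings, and then identify each transverse crossing of $\cF^1_i$ with a perestroika (Arnold/versality) that lifts to R1, R2 or R3. Your explicit arguments for why crossing data is preserved between events, and why the heights are consistent at the bigon and the triple point, spell out steps the paper leaves implicit.
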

\begin{proof}[Sketch of proof]
  A path $\wt{\phi}_s\colon \cS\to\R^3$, $s\in[0,1]$ induces a path $\phi_s\colon \cS\to\R^2$
  via $\phi_{s}=\pi\circ\wt{\phi}_s$. We perturb $\phi_{s}$ to a regular path $\phi_{s,n}$
  in such a way that $\phi_{s,n}$ agrees with $\phi_s$ for $s=0,1$.
  The perturbation
  is small in $C^\infty$ topology, in particular, we may assume that $||\phi_{s,n}-\phi_s||_{C^1}\le \frac1n$ for all $s$.
  By \ref{item:efinitely_many_events}, $\phi_{s,n}$ intersects
  $\cF^1$ at finitely many points. We study how $\phi_{s,n}$ changes when $s$ crosses such a parameter.

  Suppose for $s_0\in[0,1]$, $\phi_{s_0,n}\in\cF^1_i$, for $i=1,\dots,3$, and suppose $\varepsilon>0$ is such that
  for $s\in[s_0-\varepsilon,s_0+\varepsilon]\setminus\{s_0\}$, $\phi_{s,n}\notin\cF^1$. By \ref{item:etransverse},
  the maps $\phi_{s_0-\varepsilon,n}$ and $\phi_{s_0+\varepsilon,n}$ belong to different connected components of $\cF^0$. It
  follows (see \cite{Perestroikas}) that the change $\phi_{s_0-\varepsilon,n}(\cS)$ and $\phi_{s_0+\varepsilon,n}(\cS)$
  is given by a \emph{perestroika}; see Figure~\ref{fig:perestrojka}.

  As $\phi_{s,n}$ converges to $\phi_s$, the sequence $\wt{\phi}_{s,n}:=\wt{\phi}_s+\phi_{s,n} - \phi_s$ converges
  to $\wt{\phi}_{s,n}$, where it is enough to use convergence in $C^1$-norm. By stability of embeddings, for sufficiently large $n$, $\wt{\phi}_{s,n}$ is isotopic to $\wt{\phi}$.
  Also $\pi(\wt{\phi}_{s,n})=\phi_{s,n}$. A perestroika at the level of $\phi_{s,n}$ induces a Reidemeister move at the level
  of $\wt{\phi}_{s,n}$.
  As displayed in Figure~\ref{fig:perestrojka}, a Reidemeister move R1 occurs at each cusp, a Reidemeister move R2 occurs at each tangency,
  and a Reidemeister move R3 happens at each triple point.
\end{proof}

\section{Equivariant jet spaces}\label{sec:mutt}
We now begin our discussion of involutive links. As in Section~\ref{sec:warmup}, we start by introducing equivariant jet spaces. We then establish the genericity of regular diagrams in the equivariant setting; this closely follows the argument of Section~\ref{sec:warmup}. The proof of the equivariant Reidemeister theorem, however, will require significantly more investment, so we delay it to future sections. Nevertheless, our discussion here will be foundational to the rest of the paper.

\subsection{General setup}\label{sub:ejet}
We begin with a general discussion of equivariant jet spaces following \cite{Wall_jet}. Although we only deal with the group $G = \Z_2$, we give Wall's construction for $G$ a compact Lie group in order to illustrate the subtleties that arise in the definition.

We start with the Euclidean case. Let $V$ and $W$ be two finite-dimensional vector spaces acted on by $G$; we assume this action to be orthogonal. Let $\cE^G(V) \subset \cE(V)$ be the ring of $G$-invariant germs at $0 \in V$ and $\mm^G(V)=\mm(V)\cap\cE^G(V)$ be its maximal ideal. Let $\cE^G(V, W) \subset \cE(V, W)$ be the $\cE^G(V)$-module of $G$-equivariant germs taking $0 \in V$ to $0 \in W$. The space of $G$-equivariant $k$-jets taking $0$ to $0$ is then defined to be 
\[
\mJ_G^k(V, W) = \cE^G(V, W)/\mm^G(V)^{k+1}\cE^G(V, W).
\]

Now suppose $G$ acts smoothly on two manifolds $X$ and $Y$. We first note that if $f$ is a $G$-equivariant map, then the behavior of $f$ in a neighborhood of $x$ determines the behavior of $f$ near the entire orbit of $x$. Thus, one might expect the equivariant jet space to be a bundle over $(X/G) \times Y$. Indeed, if the action of $G$ is free, then smooth $G$-equivariant maps from $X$ to $Y$ are in bijection with smooth maps from $X/G$ to $Y$, and the study of equivariant jets can be reduced to the study of non-equivariant jets on the quotient. However, in general the equivariant jet space at $x$ should depend on the isotropy group of $x$. This is because the isotropy group preserves a neighborhood of $x$ and imposes a nontrivial restriction on germs at $x$ when requiring $G$-equivariance.

Let $x \in X$ thus have isotropy group $H \subset G$. Fix a slice $S$ of the $G$-action at $x$; this is a locally $H$-invariant submanifold which is transverse to the $G$-orbit of $x$ at $x$. Fix a Euclidean neighborhood $V$ of $x$ in $S$ which is preserved by $H$ and on which $H$ acts orthogonally. Note that a $G$-equivariant map from $X$ to $Y$ necessarily takes $x$ into the $H$-fixed point set $Y^H$ of $Y$. Thus, let $y \in Y^H$ and let $W$ be a Euclidean neighborhood of $y$ in $Y$ on which $H$ acts orthogonally. The space of equivariant $k$-jets taking $x$ to $y$ is then defined to be
\begin{equation}\label{eq:eqxy}
\mJ_G^k(x, y) = \mJ_H^k(V, W).
\end{equation}
Note that the right-hand side consists of $H$-equivariant, rather than $G$-equivariant, germs. 

Now consider a point $g \cdot x$ in the orbit of $x$. This has isotropy group $g H g^{-1} \subset G$. Clearly, $g \cdot S$ is a slice through $g \cdot x$, and $g \cdot V$ and $g \cdot W$ are $gHg^{-1}$-invariant neighborhoods of $g \cdot x$ and $g \cdot y$, respectively. We thus have an obvious isomorphism
\begin{equation}\label{eq:eqiso}
\mJ_G^k(x, y) \cong \mJ_G^k(g \cdot x, g \cdot y)
\end{equation}
obtained by sending a germ $f$ from $x$ to $y$ to the germ $gfg^{-1}$ from $g \cdot x$ to $g \cdot y$. 

\begin{defn}
Let $[x] = G \cdot x$ be a $G$-orbit in $X$. An \textit{equivariant $k$-jet at $[x]$} (or \textit{along $[x]$}) consists of a $k$-jet at each point of $[x]$, such that each jet is equivariant in the sense of \eqref{eq:eqxy} and the different $k$-jets are taken to each other by the isomorphism \eqref{eq:eqiso}. 
\end{defn}

Roughly speaking, the reader should think that there are two steps to ensuring that a $k$-jet is $G$-equivariant. The first is that the germ itself should be equivariant under the local action of the isotropy group at that point. The second is that a germ at $x$ gives a class of germs defined along the orbit $[x]$, each of which is equivariant in the previous sense. Note that because of the first condition, equivariant germs at points with non-isomorphic isotropy groups are qualitatively different and should not be regarded as elements of the same jet space. To this end, we introduce the following partition of $X$:

\begin{defn}
The \textit{orbit type} of $x \in X$ is determined by the isotropy group $H$ of $x$ and the representation $\rho$ of $H$ on the tangent space $T_xS$. Two pairs $(H_1, \rho_1)$ and $(H_2, \rho_2)$ are equivalent if there exists some $g \in G$ such that $gH_1g^{-1} =H_2$ and the representations $h \mapsto \rho(h)$ and $h \mapsto \rho_2(g h g^{-1})$ of $H_1$ are equivalent; that is, $\rho_1(h) = \rho_2(ghg^{-1})$ for all $h \in H_1$.
\end{defn}

Let $\lambda$ now be a fixed orbit type and $X_\lambda$ be the subset of points in $X$ with orbit type $\lambda$. Clearly, $X_\lambda$ is composed of the union of $G$-orbits. Write $(X/G)_\lambda$ for the image of $X_\lambda$ in $X/G$. This is of course the same as the set of orbits in $X_\lambda$.

\begin{defn}
Let $\lambda$ be a fixed orbit type. The \textit{equivariant $k$-jet space} $\mJ_G^{k, \lambda}(X, Y)$ is the collection of all equivariant $k$-jets over all $[x] \in (X/G)_\lambda$. This can canonically be given the structure of a smooth manifold; see \cite{Wall_jet}.
\end{defn}

\begin{defn}
Let $\lambda$ be a fixed orbit type and $f \colon X \rightarrow Y$ be equivariant. The \emph{$k$-jet extension} of $f$ is the map 
\[
\mj^{k, \lambda}_{G} f \colon (X/G)_\lambda \to \mJ_G^{k, \lambda}(X,Y)
\]
that assigns to each $[x]\in (X/G)_\lambda$ the equivariant $k$-jet $\mj^{k, \lambda}_{G}f([x])$ along $[x]$ obtained by restricting $f$ to a germ at each point of $[x]$.
\end{defn}

The important takeaway is that there are many different equivariant jet spaces, each corresponding to a fixed orbit type and involving the points in $X$ of that orbit type. We will sometimes assemble all of these jet spaces into a single jet space by taking their disjoint union; we denote this by $\mJ^k_G(X, Y)$. However, as discussed in \cite{Wall_jet} there does not seem to be a satisfactory way to topologize this union to make it into a single bundle over $X$ or $X/G$. Nevertheless, if $f \colon X \rightarrow Y$ is a $G$-equivariant map, we obtain a $k$-jet extension
\[
\mj^k_Gf \colon X \to \mJ_G^k(X,Y)
\]
which assigns to each $x \in X$ the equivariant $k$-jet $\mj^{k, \lambda}_Gf(x) = \mj^{k, \lambda}_{G}f([x]) \in \mJ_G^{k, \lambda}(X,Y)$, where $\lambda$ is the orbit type of $x$ and $[x]$ is the equivalence class of $x$ in $(X/G)_\lambda$.

We now turn to equivariant multijets. For each orbit type $\lambda$, we have a source map
\[
\ma^\lambda \colon \mJ_G^{k, \lambda}(X, Y) \rightarrow (X/G)_\lambda.
\]
More generally, let $\Lambda = (\lambda_1, \lambda_2, \cdots, \lambda_s)$ be a tuple of orbit types, not necessarily all distinct. We then have the multijet source map
\[
\ma^{\Lambda} \colon \mJ^{k, \lambda_1}_G(X, Y) \times \cdots \times \mJ^{k, \lambda_s}_G(X, Y) \rightarrow (X/G)_{\lambda_1} \times \cdots \times (X/G)_{\lambda_s}.
\]
We introduce the notation
\[
X^{(\Lambda)} \subset (X/G)_{\lambda_1} \times \cdots \times (X/G)_{\lambda_s}
\]
to be the subsets of \textit{distinct} tuples, in analogy with $X^{(s)}$ from Section~\ref{sec:jet}. This condition is, of course, only meaningful if two of the $\lambda_i$ coincide.

\begin{definition}
Fix a tuple $\Lambda = (\lambda_1, \lambda_2, \cdots, \lambda_s)$ of orbit types. The \emph{$\Lambda$-fold multijet space} is the preimage
\[
\mJ_G^{k,\Lambda}(X,Y) = (\ma^{\Lambda})^{-1}(X^{(\Lambda)}).
\]
\end{definition}

\begin{defn}
Fix a tuple $\Lambda = (\lambda_1, \lambda_2, \cdots, \lambda_s)$ of orbit types and let $f \colon X \rightarrow Y$ be equivariant. The \emph{$\Lambda$-fold multijet extension} of $f$ is the map 
\[
\mj_G^{k,\Lambda}f\colon X^{(\Lambda)}\to \mJ_G^{k,\Lambda}(X,Y)
\]
that assigns to each $([x_1],\dots,[x_s]) \in X^{(\Lambda)}$ the multijet
\[\mj_G^{k,\Lambda}f([x_1],\dots,[x_s])=(\mj^{k, \lambda_1}_{G}f([x_1]),\dots,\mj^{k, \lambda_s}_{G}f([x_s])).\]
\end{defn}

Cleary, an equivariant multijet is analogous to the notion of a non-equivariant multijet, except that we speak of $s$ distinct orbits rather than $s$ distinct points, and we must specify the types of these orbits. We are now ready to state the multijet transversality theorem in the equivariant setting:

\begin{theorem}[see \expandafter{\cite[Theorem 2.1]{Wall_jet}}]\label{thm:ejet}
  Fix a tuple $\Lambda = (\lambda_1, \lambda_2, \cdots, \lambda_s)$ of orbit types.
  Let $\smash{W \subset \mJ_G^{k,\Lambda}(X,Y)}$ be a submanifold. Then the set of equivariant maps $f\colon X\to Y$
  such that $\smash{\mj_G^{k,\Lambda}f}$ is transverse to $W$ is residual. 
\end{theorem}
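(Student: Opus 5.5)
The plan is to reduce the equivariant statement to the ordinary multijet transversality theorem (Theorem~\ref{thm:asdfg}), applied stratum by stratum on the quotient. We work one orbit type at a time. For a fixed orbit type $\lambda$ with isotropy group $H$, the set $X_\lambda$ is a smooth $G$-invariant submanifold. The slice theorem identifies a $G$-invariant tubular neighbourhood of $X_\lambda$ with $G\times_H V$, where $V$ is the slice representation. An equivariant germ along $[x]$ is then the same as an $H$-equivariant germ $V\to W$. The fibre $\mJ^k_H(V,W)$ is a finite-dimensional vector space: it is $\cE^H(V,W)$ modulo $\mm^H(V)^{k+1}\cE^H(V,W)$, and by Schwarz/Hilbert it is finitely generated over invariant polynomials. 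So $\mJ_G^{k,\lambda}(X,Y)$ is a smooth bundle over $(X/G)_\lambda\times Y^H$, and $\mJ_G^{k,\Lambda}$ is an open subset of the product of these bundles.

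The core step is a local surjectivity statement, the equivariant analogue of the fact that the jet map $C^\infty(X,Y)\to \mJ^k$ is a submersion. The claim is that for each equivariant $f$ and each tuple of distinct orbits $([x_1],\dots,[x_s])$ there is a finite-dimensional family $f_b$, $b\in B$, of equivariant perturbations with $f_0=f$, such that
\[
(b,[x_1'],\dots,[x_s'])\mapsto \mj_G^{k,\Lambda}f_b([x_1'],\dots,[x_s'])
\]
is a submersion near $(0,[x_1],\dots,[x_s])$. To build it, choose finitely many $H_i$-equivariant polynomial germs $V_i\to W$ spanning $\mJ^k_{H_i}(V_i,W)$. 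Each one, multiplied by an $H_i$-invariant bump function, extends uniquely to a $G$-equivariant map supported in a small tube $G\times_{H_i}V_i$. Take the tubes around distinct orbits to be disjoint. Adding these perturbations with coefficients $b$ then hits every direction in the fibre. Translating by the base variables $[x_i']$ and by equivariant vector fields near $y_i\in Y^{H_i}$ covers the base directions, with target motions restricted to $Y^{H_i}$ as the jet space requires.

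With this in hand, the standard argument applies. By the parametric transversality theorem, for almost every $b$ the map $\mj_G^{k,\Lambda}f_b$ is transverse to $W$ on a compact neighbourhood $K$ of the chosen tuple. Transversality on a compact set is open, and the step above shows it is dense, so it holds for an open dense set of equivariant maps. We then cover $X^{(\Lambda)}$ by countably many such compacta $K_j$ (it is second countable). The set of equivariant maps transverse to $W$ is the intersection of these countably many open dense sets, and since $C^\infty_G(X,Y)$ is closed in the Baire space $C^\infty(X,Y)$, this intersection is residual.

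The main obstacle is the surjectivity step near points where orbit type is about to jump, and the fact that equivariant bump functions must not disturb jets at other orbits. I would handle both by shrinking the tubes until they do not meet one another, and by working only on the fixed stratum $(X/G)_\lambda$. Jets at nearby points of a different orbit type belong to a different jet space, so they impose no constraint here. For $G=\Z_2$ this is all elementary, since $H$ is either trivial or all of $\Z_2$ and $V$ decomposes into $\pm1$ eigenspaces. In that case I would simply write the invariant polynomial basis explicitly.
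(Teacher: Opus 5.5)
The paper does not prove this statement. It imports it, together with the smooth structure on $\mathfrak{J}^{k,\lambda}_G(X,Y)$, from Wall's \emph{Equivariant jets}, and only applies it with $G=\mathbb{Z}_2$. Your proposal supplies a proof along the usual Thom--Mather lines, and the outline is sound:
the fibre over an orbit of type $\lambda$ is $\mathcal{E}^H(V,W)/\mathfrak{m}^H(V)^{k+1}\mathcal{E}^H(V,W)$, and it is spanned by $H$-equivariant polynomials;
these can be cut off by $H$-invariant bumps and extended over disjoint tubes $G\times_H V$;
so $(b,\xi)\mapsto\mathfrak{j}^{k,\Lambda}_G f_b(\xi)$ is a submersion near $b=0$;
parametric transversality plus Baire in the closed subspace $C^\infty_G(X,Y)$ then finishes.

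The citation buys economy and the foundational facts you only assert, such as local triviality of the jet bundle along $(X/G)_\lambda$ and independence of the choice of slice. Your route buys transparency about why the theorem must be phrased orbit type by orbit type in Wall's jet spaces. At points with nontrivial isotropy, the ordinary jets of equivariant maps are confined to a proper linear subspace, so ordinary jet transversality is not generic. The equivariant fibre, by contrast, is exactly what equivariant polynomials realize. For $\mathbb{Z}_2$ your route even gives an explicit spanning family at a fixed point: $(t^{2i+1},0)$ and $(0,t^{2i})$.

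Two routine steps should be stated correctly.

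First, ``transversality on a compact set is open'' fails when $W$ is not closed in the jet space. Take $W$ an open segment in $\mathbb{R}^2$ and a curve tangent to its supporting line at a missing endpoint; a small shift makes the curve meet $W$ tangentially. As in the non-equivariant proof, write $W=\bigcup_r W_r$ with each $W_r\subset W$ closed in $\mathfrak{J}^{k,\Lambda}_G(X,Y)$. Then intersect the open dense sets of $f$ that are transverse to $W$ at the points of $K_j$ mapped into $W_r$.

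Second, density over a \emph{fixed} compactum $K_j$ needs an extra sentence. Either note that your local submersion persists for all $f'$ near $f$, or sum finitely many local families into one. Both are standard, since being a submersion on a compact set is an open condition in $f$.
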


\subsection{The case $G = \Z_2$}
We now apply our discussion of equivariant jets to the case of involutive links. Unless otherwise specified, henceforth we use $\cS$ to denote a disjoint union of circles, but now equipped with a smooth involution $\sigma$ with isolated fixed points. (That is, we do not allow the degenerate case where $\sigma$ acts as the identity on an entire component.) Write $\Sfix$ for the fixed-point set of $\sigma$ and $\Sfree$ for the subset of $\cS$ on which $\sigma$ acts freely. 

\begin{defn} Let $\wt{\tau}(x, y, z) = (-x, y, -z)$ be the involution on $\R^3$ given by rotation about the $y$-axis. We denote the fixed-point axis of $\wt{\tau}$ by $\wt{\cL} \subset \R^3$. Likewise, let $\tau(x, y) = (-x, y)$ be the corresponding involution on $\R^2$. We denote the fixed-point axis of $\tau$ by $\cL = \pi(\wt{\cL}) \subset \R^2$. 
\end{defn}

\begin{defn}
Let $\cS$ be a disjoint union of circles equipped with an involution $\sigma$. A link $\wt{\phi} \colon \cS \rightarrow \R^3$ is \textit{involutive} if 
\[
\wt{\phi} \circ \sigma = \wt{\tau} \circ \wt{\phi}
\]
and the image of $\phi$ intersects $\wt{\cL}$ at a finite number of points.\footnote{It is easily checked that this means each component of $\phi(\cS)$ must intersect $\wt{\cL}$ in either zero or two points.} The resulting link diagram $\phi = \pi \circ \wt{\phi}$ is equivariant in the following sense:
\[
\phi \circ \sigma = \tau \circ \phi.
\]
Note that the axis of symmetry is visible in the plane of the page. We call such a diagram a \textit{transvergent diagram}, in contrast to an \textit{intravergent diagram}, for which the axis of symmetry intersects the plane of projection at a single point. 
\end{defn}

We now describe the equivariant jet space $\mJ^k_{\Z_2}(\cS, \R^2)$, which will be fundamental to the rest of the paper. In this context, there are two orbit types, corresponding to points with trivial isotropy group and points that are fixed by $G$. We denote these orbit types by $\lambda_0$ and $\lambda_1$, respectively. By definition, the equivariant jet space corresponding to $\lambda_0$ is simply the non-equivariant jet space over $\Sfree/\Z_2$, which is diffeomorphic to a collection of open intervals:
\[
\mJ^{k, \lambda_0}_{\Z_2}(\cS, \R^2) = \mJ^k(\Sfree/\Z_2, \R^2).
\]
Elements of $\mJ^{k, \lambda_0}_{\Z_2}(\cS, \R^2)$ are pairs of germs of the form $(f, \tau f \sigma)$, where $f$ is defined on a small subinterval of $\Sfree$.

To understand the equivariant jet space corresponding to $\lambda_1$, we consider $\Z_2$-equivariant germs from points in $\Sfix$ to points in $(\R^2)^{\Z_2} = \cL$. Thus, let $x \in \Sfix$. Choose a $\Z_2$-invariant neighborhood $V$ of $x$ with parameter $t$ such that $x$ is taken to $0$ and $\sigma$ acts by negation. The set of real $\Z_2$-invariant germs at $x$ then consists of power series in $t^2$. The corresponding maximal ideal is given by $(t^2)$, and hence 
\[
  \mm^{\Z_2}(V)^{k+1}=(t^{2k+2}).
\]
Now let $y$ be a point in $\cL$ and consider $\cE^{\Z_2}(V, W)$, where $W = \R^2$ is equipped with the involution $\tau$. We treat this as a global equivariant coordinate chart and relax the condition that $y$ be identified with the origin. In local coordinates, such a germ is given by $(x(t), y(t))$, where $x(t)$ and $y(t)$ are power series in $t$. Imposing $\Z_2$-equivariance means that $x(-t) = -x(t)$ and $y(-t) = y(t)$; this occurs when $x(t)$ consists of odd-exponent terms and $y(t)$ consists of even-exponent terms. Quotienting out by $\mm^{\Z_2}(V)^{k+1}$, we thus have that 
\[
\mJ^{k, \lambda_1}_{\Z_2}(x, y) = \{(\alpha_1t+ \alpha_3 t^3 + \cdots + \alpha_{2k+1} t^{2k+1}, \beta_0+\beta_2t^2+ \cdots +\beta_{2k}t^{2k})\}.
\]
Note that this means for points in $\Sfix$, the $\Z_2$-equivariant $k$-jet space records derivatives of germs up to order $2k + 1$ rather than $k$, although equivariance causes half of these to be zero. This convention is discussed in \cite{Wall_jet}. 

We now consider equivariant multijets. As discussed in Section~\ref{sub:ejet}, we obtain an equivariant multijet space for every choice of tuple $\Lambda = (\lambda_0, \cdots, \lambda_0, \lambda_1, \cdots, \lambda_1) = (m \lambda_0, n \lambda_1)$. An element of this multijet space lives over $m$ distinct equivalence classes in $\Sfree/\Z_2$ and $n$ distinct points in $\Sfix$. 

\begin{defn}
We write 
\[
\mJ^{k, (m, n)}_{\Z_2}(\cS, \R^2) = \mJ^{k, (m \lambda_0, n \lambda_1)}_{\Z_2}(\cS, \R^2).
\]
When our multijet space only involves orbits of type $\lambda_0$, we frequently use a shorter notation
\[
\mJ^{k, m}(\cS, \R^2):=
\mJ^{k, (m, 0)}_{\Z_2}(\cS, \R^2).
\]
\end{defn}

\begin{example}
Consider the subset of equivariant multijet space that corresponds to a triple point off of the axis of symmetry. Phrased in terms of $\Z_2$-equivariant maps from $\cS^2$ to $\R^2$, this is given by the condition
\[
\exists t_1,t_2,t_3\in \Sfree \text{ pairwise distinct, such that } \phi(t_1)=\phi(t_2)=\phi(t_3) \notin \cL.
\]
Note that the above occurs if and only if $\sigma(t_1)$, $\sigma(t_2)$, and $\sigma(t_3)$ form a triple point on the other side of the axis of symmetry. However, as these conditions are not independent, we still think of the singularity as involving three (rather than six) points in the domain. It is straightforward to rephrase the above condition in terms of jets over three distinct classes in $\Sfree/\Z_2$. This takes place in
\[
\mJ^{0, (3, 0)}_{\Z_2}(\cS, \R^2) = \mJ^{0, 3}(\cS, \R^2).
\]
\end{example}

\begin{example}
Consider the subset of equivariant multijet space that corresponds to having a double point on the axis of symmetry. Phrased in terms of $\Z_2$-equivariant maps from $\cS^2$ to $\R^2$, this is given by the condition
\[
\exists t_1\in \Sfree, t_2\in \Sfix \text{ such that } \phi(t_1)=\phi(t_2).
\]
Note that the above occurs if and only if $\phi(\sigma(t_2))$ also coincides with $\phi(t_1) = \phi(t_2)$. Once again, however, we think of the singularity as involving two (rather than three) points in the domain. It is straightforward to rephrase the above condition in terms of a jet over a class in $\Sfree/\Z_2$ and a jet over a $\Z_2$-fixed point. This takes place in
\[
\mJ^{0, (1, 1)}_{\Z_2}(\cS, \R^2).
\]
\end{example}

We now have the analogue of the parameter counting theorem, which we state for the case of equivariant maps from $\cS$ to $\R^2$. This follows immediately from the same argument as before, after observing that the dimension of the domain of the relevant jet extension is equal to the number of factors of orbit type $\lambda_0$.

\begin{lemma}[Equivariant Parameter Counting]\label{lem:codimension}
  Let $R$ be an $r$-dimensional manifold, which we think of as a parameter space specifying a family of equivariant link diagrams. Let $W\subset \mJ^{k,(m, n)}_{\Z_2}(\cS,\R^2)$ be a submanifold of
  codimension $w$. If $r + m < w$, then there is a residual subset of equivariant maps from $R\times \cS$ to $\R^2$ such
  that for any $F$ in that subset and any $u\in R$, the equivariant jet extension 
  $\smash{\mj^{k,(m, n)}_{\Z_2}f_u}$ of $f_u =F(u,\cdot)\colon X\to Y$ misses $W$.
\end{lemma}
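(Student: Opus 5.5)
We will mimic the proof of Theorem~\ref{thm:pc_argument} exactly, replacing the multijet transversality theorem by its equivariant counterpart, Theorem~\ref{thm:ejet}. Let $\Z_2$ act on $R\times\cS$ by $\mathrm{id}_R\times\sigma$, so that an equivariant map $F\colon R\times\cS\to\R^2$ is the same as a family of equivariant maps $f_u=F(u,\cdot)$. The orbit types of $R\times\cS$ are again $\lambda_0$ (points of $R\times\Sfree$) and $\lambda_1$ (points of $R\times\Sfix$). Here $(R\times\cS/\Z_2)_{\lambda_0}=R\times(\Sfree/\Z_2)$ has dimension $r+1$, and $(R\times\cS/\Z_2)_{\lambda_1}=R\times\Sfix$ has dimension $r$. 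Thus the domain $(R\times\cS)^{(m\lambda_0,n\lambda_1)}$ of the multijet extension of $F$ has dimension $m(r+1)+nr$.

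Next we build the analogue of $W'$. Let $\Delta'\subset\mJ^{k,(m,n)}_{\Z_2}(R\times\cS,\R^2)$ be the preimage, under the source map followed by projection to $R^{m+n}$, of the small diagonal of $R^{m+n}$. It has codimension $r(m+n-1)$. Restricting an equivariant jet on $R\times\cS$ at a point $(u,[x])$ to the slice $\{u\}\times\cS$ gives a map
\[
\Pi_\cS\colon\mJ^{k,(m,n)}_{\Z_2}(R\times\cS,\R^2)\to\mJ^{k,(m,n)}_{\Z_2}(\cS,\R^2),
\]
which we claim is a locally trivial fibration (in particular a submersion). Set $W'=\Pi_\cS^{-1}(W)\cap\Delta'$, so that $\codim W'=w+r(m+n-1)$.

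Now apply Theorem~\ref{thm:ejet}. Residually many equivariant $F$ have multijet extension transverse to $W'$, and transversality means missing $W'$ whenever
\[
m(r+1)+nr<w+r(m+n-1),
\]
which simplifies to $m+r<w$. This is exactly our hypothesis. Finally, as in Theorem~\ref{thm:pc_argument}: if $\mj^{k,(m,n)}_{\Z_2}f_u$ hits $W$ at $([x_1],\dots,[x_{m+n}])$, then the multijet of $F$ at the tuple $((u,[x_i]))_i$ lies in $\Delta'$ and maps into $W$ under $\Pi_\cS$, so it hits $W'$. This is a contradiction.

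The main obstacle is checking the fibration claim for $\Pi_\cS$ at fixed-point orbit type $\lambda_1$. There the slice at $(u,x)$ is $R\times V$ with $\Z_2$ acting trivially on the $R$-factor. Equivariant jets are then expansions in $u$ and $t$ with the $x$-component odd in $t$ and the $y$-component even in $t$, truncated by powers of $\mm^{\Z_2}=(t^2,u_i)$. Restricting to $u=\text{const}$ is then the surjective linear map that forgets the terms involving $u$, with the forgotten coefficients forming the fiber. We also need the $\Z_2$-invariant ideal truncation to be compatible with this restriction. We would verify both points directly in Wall's explicit coordinates \cite{Wall_jet}, and we expect this to be routine.
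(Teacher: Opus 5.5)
Your proposal is correct and follows the paper's route. The paper's sketch likewise reruns the proof of Theorem~\ref{thm:pc_argument} with the equivariant multijet transversality theorem, observing that the effective source $R\times(\Sfree)^m\times(\Sfix)^n$ has dimension $r+m$; this is exactly your net count $m(r+1)+nr-r(m+n-1)=m+r$. Your check that restricting $\lambda_1$-jets to $\{u\}\times\cS$ is a fibration is a detail the paper leaves implicit.
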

\begin{proof}[Sketch of proof]
  The proof is analogous to the proof of Theorem~\ref{thm:pc_argument}. The difference is that
  in Theorem~\ref{thm:pc_argument}, we consider maps from $R\times X$ to $Y$. Here, we consider maps
  from $R\times(\Sfree)^m\times(\Sfix)^n$. The choice of the source is dictated by the parameters $m,n$ depending on
  the orbit type $W$ belongs to. The dimension of the source is $r+m$. The rest of the proof uses the same argument.
  \end{proof}

Once again, we can phrase this in terms of the codimension of a set of equivariant maps:

\begin{definition}\label{def:codimension}
Let $D$ be a submanifold of $\smash{\mJ_{\Z_2}^{k,(m, n)}(\cS,\R^2)}$.
The \textit{(jet space) codimension of $D$} (sometimes called the \textit{number of defining conditions}) is the usual codimension $\codim(D)$ of $D$. We define the \textit{(parametric) codimension of $D$} to be
\[
\pcodim(D) = \codim(D) - m.
\]
Note that $m$ is the dimension of the domain of the relevant multijet extension. In this language, the parameter counting argument states that a generic $r$-parameter family of equivariant link diagrams will miss $D$ so long as $r < \pcodim(D)$. 

\end{definition}
\begin{remark}
  The parametric codimension in the complex setting was studied e.g.\ in \cite{BZ}. In the real case, many calculations can be repeated.
\end{remark}

\subsection{Equivariant regular diagrams}\label{sub:2sym}

We close this section by establishing the genericity of equivariant regular diagrams. For this, we generalize the argument of Section~\ref{sec:warmup}. As henceforth all of our objects will be equivariant, we re-use the same notation $\cF$ (and $\cF^0$, and so on) as in the non-equivariant caase.

\begin{definition}\label{def:half-knot}
For a fixed $\cS$, let $\cF$ be the space of all equivariant smooth maps from $\cS$ to $\R^2$ that take distinct points in $\Sfix$ to distinct points in $\cL$.\footnote{It is clear that an actual link diagram will have this property, but here we are simply studying the space of equivariant maps $\phi$ from $\cS$ to $\R^2$. The reason that this condition is separated from Definition~\ref{def:regular-half-knot} below is that it does not lead to an equivariant Reidemeister move; rather, it should be thought of as a necessary condition for $\phi$ to lift to an actual involutive link.}
\end{definition}

The following is the analogue of Definition~\ref{def:generic_R2}:

\begin{defn}\label{def:regular-half-knot}
  Let $\cF^0$ be the subspace of $\cF$ consisting of equivariant maps satisfying the following conditions:
  \begin{enumerate}[label=(ER-\arabic*)]
    \item $\phi'$ is nonzero at every point of $\Sfree$; \label{item:nocusp}
    \item for any $w\in\R^2\setminus \cL$, if $\phi^{-1}(w)$ consists of two points $t_1,t_2$, then
      $\phi'(t_1)$, $\phi'(t_2)$ are linearly independent;\label{item:notang}
    \item for any $w\in\R^2\setminus \cL$, the preimage $\phi^{-1}(w)$ consists of at most two points; \label{item:notriple}
    \item if $t\in \Sfree$ is mapped to $\cL$, then $\phi'(t)$ is not tangent to $\cL$;\label{item:no_local_tang}
    \item if $t\in \Sfree$ is mapped to $\cL$, then $\phi'(t)$ is not perpendicular to $\cL$;\label{item:perp}
    \item if $t_1,t_2\in \Sfree$ and $\phi(t_1),\phi(t_2)\in \cL$, then $\phi(t_1)\neq \phi(t_2)$ unless $t_1,t_2$
      are in the same orbit of $\sigma$;\label{item:nocross}
    \item $\phi'$ is nonzero at every point of $\Sfix$;\label{item:no_end_tang}
    \item $\phi(\Sfree)$ and $\phi(\Sfix)$ are disjoint.\label{item:no_pass_end}
  \end{enumerate}
  We refer to such a $\phi \in \cF^0$ as being \textit{regular}.
\end{defn}
\begin{remark}
Several of these items are quite similar: for example, \ref{item:nocusp} and \ref{item:no_end_tang} can obviously be combined into the condition that $\phi'$ is nonzero at every point of $\cS$. We separate these because their violation leads to different Reidemeister moves in the equivariant setting.
\end{remark}

The following theorem relates $\cF^0$ above with $\cF^0$ from Definition~\ref{def:generic_R2}, and justifies the usage of the term regularity for both: 

\begin{thm}\label{thm:generic_half_knot}
Let $\phi \colon \cS \rightarrow \R^2$ be equivariant. Then $\phi$ is in $\cF^0$ (as in Definition~\ref{def:regular-half-knot}) if and only if it is in $\cF^0$ (as in Definition~\ref{def:generic_R2}) as a non-equivariant map.
  \end{thm}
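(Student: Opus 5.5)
The plan is to prove both implications by direct case analysis. The only tool needed is the relation between derivatives at $t$ and at $\sigma(t)$ that equivariance forces. Fix local parameters on $\cS$ near $t$ and $\sigma(t)$ in which $\sigma$ is a diffeomorphism with $\sigma'=\pm1$. Differentiating $\phi\circ\sigma=\tau\circ\phi$ gives
\[
\phi'(\sigma(t)) = \pm\,\tau\bigl(\phi'(t)\bigr).
\]
Hence, when $\phi(t)\in\cL$ and $t\in\Sfree$, the two branches through $\phi(t)$ at $t$ and at $\sigma(t)\neq t$ are tangent exactly when $\phi'(t)$ and $\tau\phi'(t)$ are linearly dependent. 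Since $\tau$ is a reflection, this happens exactly when $\phi'(t)$ is an eigenvector of $\tau$, i.e.\ when $\phi'(t)$ is tangent or perpendicular to $\cL$. This observation is the heart of the proof, and the remaining steps are bookkeeping.

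\emph{Regular as a non-equivariant map $\Rightarrow$ regular in the equivariant sense.} Assume (Reg-1)--(Reg-3).
\begin{enumerate}
\item \ref{item:nocusp} and \ref{item:no_end_tang} follow from (Reg-1).
\item \ref{item:notang} and \ref{item:notriple} are (Reg-2) and (Reg-3) restricted to $w\notin\cL$.
\item For \ref{item:no_local_tang} and \ref{item:perp}: if $t\in\Sfree$ and $\phi(t)\in\cL$, then $t\neq\sigma(t)$ are two preimages of $\phi(t)$. If $\phi'(t)$ were tangent or perpendicular to $\cL$, the displayed relation would make the two derivatives dependent, contradicting (Reg-2). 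If $\phi(t)$ had a third preimage, (Reg-3) would already fail, so this case is covered as well.
\item For \ref{item:nocross}: if $t_1,t_2\in\Sfree$ lie in different orbits and $\phi(t_1)=\phi(t_2)\in\cL$, then $t_1,\sigma(t_1),t_2$ are three distinct preimages, contradicting (Reg-3).
\item For \ref{item:no_pass_end}: if $\phi(t)=\phi(s)$ with $t\in\Sfree$ and $s\in\Sfix$, then this point lies in $\cL$, and $t,\sigma(t),s$ are three distinct preimages, contradicting (Reg-3).
\end{enumerate}

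\emph{Regular in the equivariant sense $\Rightarrow$ regular as a non-equivariant map.} Assume \ref{item:nocusp}--\ref{item:no_pass_end}, together with the standing assumption in Definition~\ref{def:half-knot} that distinct points of $\Sfix$ go to distinct points of $\cL$.
\begin{enumerate}
\item (Reg-1) follows from \ref{item:nocusp} and \ref{item:no_end_tang}.
\item For $w\notin\cL$, conditions (Reg-2) and (Reg-3) are exactly \ref{item:notang} and \ref{item:notriple}.
\item For $w\in\cL$, classify the preimages. By \ref{item:no_pass_end}, $\phi^{-1}(w)$ cannot contain both fixed and free points. If it contains fixed points, the standing assumption allows at most one, so $\phi^{-1}(w)$ is a single point. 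If it contains only free points, these come in $\sigma$-orbits, and \ref{item:nocross} allows only one orbit, so $\phi^{-1}(w)=\{t,\sigma(t)\}$. Either way there are at most two preimages, which gives (Reg-3).
\item A two-point preimage over $w\in\cL$ must therefore be of the form $\{t,\sigma(t)\}$. By the displayed relation and \ref{item:no_local_tang}, \ref{item:perp}, the derivatives $\phi'(t)$ and $\phi'(\sigma(t))$ are independent, which gives (Reg-2).
\end{enumerate}

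I expect no real obstacle. The one point needing care is the derivative relation: it must hold irrespective of whether $\sigma$ preserves or reverses orientation on the relevant component, and it must be applied only to $t\in\Sfree$, so that $t$ and $\sigma(t)$ are genuinely distinct preimages. The case split over $w\in\cL$ must also invoke the standing injectivity on $\Sfix$ from Definition~\ref{def:half-knot}, since that is the only thing preventing two fixed points from forming a double point on the axis.
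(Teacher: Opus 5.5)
Your approach is the same as the paper's: a condition-by-condition check in both directions, driven by the fact that $\phi'(\sigma t)$ is a nonzero multiple of $\tau\phi'(t)$. The direction from \ref{item:nocusp}--\ref{item:no_pass_end} (together with the condition of Definition~\ref{def:half-knot}) to \ref{item:R1}--\ref{item:R3} is complete. In the other direction, your handling of \ref{item:no_local_tang} and \ref{item:perp}, where you dispose of a possible third preimage via \ref{item:R3}, is if anything more careful than the paper's.

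The gap is in the direction starting from \ref{item:R1}--\ref{item:R3}. The equivariant $\cF^0$ is by definition a subset of $\cF$, and Definition~\ref{def:half-knot} admits into $\cF$ only those equivariant maps that send distinct points of $\Sfix$ to distinct points of $\cL$. You use exactly this as a ``standing assumption'' when proving \ref{item:R3}. However, the theorem assumes only that $\phi$ is equivariant. So when you start from \ref{item:R1}--\ref{item:R3}, you must also prove this injectivity. None of \ref{item:nocusp}--\ref{item:no_pass_end} supplies it, since none of them compares two fixed points. The paper proves it explicitly from \ref{item:R2}.

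Your own derivative relation closes the gap. At $s\in\Sfix$ the fixed point is isolated, so $\sigma'(s)=-1$, because an involution with derivative $+1$ at a fixed point is the identity near it. Hence $\phi'(s)=-\tau\phi'(s)$, so $\phi'(s)$ is perpendicular to $\cL$. Now suppose $s_1\neq s_2\in\Sfix$ had a common image $w$. Either $\phi^{-1}(w)$ contains a third point, contradicting \ref{item:R3}, or $\phi^{-1}(w)=\{s_1,s_2\}$ with both derivatives horizontal and hence dependent, contradicting \ref{item:R2}. With this item added, your proof coincides with the paper's.
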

\begin{proof}
Assume $\phi$ satisfies Definition~\ref{def:regular-half-knot} and consider Definition~\ref{def:generic_R2}. Clearly, \ref{item:R1} is implied by \ref{item:nocusp} and \ref{item:no_end_tang}. Next, consider \ref{item:R2}, which means that there are no tangencies at double points. If $w \in \R^2 \setminus \cL$, this is guaranteed by
  \ref{item:notang}. If $w \in \cL$, then the corresponding points $t_1, t_2$ in $\cS$ are either both in $\Sfix$, or one is in $\Sfix$ and one is in $\Sfree$, or both are in $\Sfree$. The first is disallowed by Definition~\ref{def:half-knot} and the second cannot happen by \ref{item:no_pass_end}. In the third case, we must have $t_1 = \sigma(t_2)$ by \ref{item:nocross}; it is then easily seen that $\phi'(t_1)$ and $\phi'(t_2)$ are linearly independent by the equivariance of $\phi$ combined with \ref{item:no_local_tang} and \ref{item:perp}. Finally, consider \ref{item:R3}, which means that there are no triple points. If $w \in \R^2 \setminus \cL$, this is guaranteed by \ref{item:notriple}. If $w \in \cL$, we consider the possibilities for $t_1, t_2, t_3$. If all three points lie in $\Sfree$, we obtain a contradiction with \ref{item:nocross} since each orbit has size two. Otherwise, exactly one point is in $\Sfix$ by Definition~\ref{def:half-knot}. But this contradicts \ref{item:no_pass_end}.
  
Conversely, assume $\phi$ satisfies Definition~\ref{def:generic_R2} and consider Definition~\ref{def:regular-half-knot}. Clearly, \ref{item:nocusp} and \ref{item:no_end_tang} are implied by \ref{item:R1}. Next, \ref{item:notang} is obviously implied by \ref{item:R2}. Considering \ref{item:no_local_tang} and \ref{item:perp}, we see that these are implied by \ref{item:R2} as well. Indeed, if $t \in \Sfree$ is mapped to $\cL$, then $\sigma(t)$ is also mapped to $\cL$; hence if \ref{item:no_local_tang} or \ref{item:perp} were violated, then the equivariance of $\phi$ would create a double point tangency. Likewise, \ref{item:R2} implies Definition~\ref{def:half-knot}, since if $t_1, t_2 \in \Sfix$ were both mapped to the same point of $\cL$, it is easily checked that $\phi'(t_1)$ and $\phi'(t_2)$ would both be perpendicular to $\cL$ and thus create a double point tangency. Finally, \ref{item:R3} obviously implies \ref{item:notriple}. Considering \ref{item:nocross} and \ref{item:no_pass_end}, we see that these are implied by \ref{item:R3} as well. Indeed, if \ref{item:nocross} was violated, we would obtain a quadruple point with preimage consisting of a pair of two-element orbits of $\sigma$; if \ref{item:no_pass_end} were violated, we would obtain a triple point with preimage consisting of a two-element orbit of $\sigma$ and a point in $\Sfix$.
\end{proof}

We close this section by establishing the analogue of Theorem~\ref{thm:cf0}. As in Section~\ref{sec:warmup}, this will be established by investigating the codimension of the strata corresponding to violations of \ref{item:nocusp} through \ref{item:no_pass_end}.

\begin{defn}\label{def:equivariant-stratum}
  For each $i$, let $\wt{\cF}^1_i$ be the subset of $\cF$ on which (\textrm{ER}-$i$) is violated. These are described explicitly in Table~\ref{tab:cpl}. Denote 
\[
\wt{\cF}^1 = \wt{\cF}^1_1  \cup \cdots \cup \wt{\cF}^1_8 = \cF \setminus \cF^0.
\]
\end{defn}

\begin{table}
  \begin{tabular}{|c|c|p{2.8cm}|p{5.4cm}|c|c|}\hline
    Violates & Stratum & Name & Condition & Jet& Codim\\ \hline
    \ref{item:nocusp} & $\wt{\cF}^1_1$ & off-axis cusp & $\exists t \in \Sfree, \phi(t) \notin \cL, \phi'(t)=0$ & $\mJ^1$ & 2\\ \hline
    \ref{item:notang} & $\wt{\cF}^1_2$ & off-axis tangency & $\exists t_1\neq t_2\in \Sfree, \phi(t_1)=\phi(t_2) \notin \cL$ and 
    $\phi'(t_1),\phi'(t_2)$
    are linearly dependent &$\mJ^{1,2}$ & 3 \\ \hline
    \ref{item:notriple} & $\wt{\cF}^1_3$ & off-axis triple~point \hfill &
    $\exists t_1,t_2,t_3\in \Sfree$ pairwise distinct, such that $\phi(t_1)=\phi(t_2)=\phi(t_3) \notin \cL$ & $\mJ^{0,3}$ & 4\\ \hline
    \ref{item:no_local_tang} & $\wt{\cF}^1_4$ & on-axis line tangency &  $\exists t\in \Sfree$, $\phi(t)\in \cL$, $\phi'(t)$
    is tangent to $\cL$ & $\mJ^{1}$ & 2\\ \hline
    \ref{item:perp} & $\wt{\cF}^1_5$ & on-axis perpendicular tangency & $\exists t\in \Sfree$, $\phi(t)\in \cL$, $\phi'(t)$
    is perpendicular to $\cL$ & $\mJ^{1}$ & 2\\\hline
    \ref{item:nocross} & $\wt{\cF}^1_6$ & on-axis double point & $\exists t_1\neq t_2 \in \Sfree$, $t_1\neq \sigma(t_2)$ such that $\phi(t_1)=\phi(t_2)\in \cL$
		       & $\mJ^{0,2}$ & 3\\ \hline
    \ref{item:no_end_tang} & $\wt{\cF}^1_7$ & fixed-point cusp & $\exists t\in \Sfix$ such that $\phi'(t)=0$ & $\mJ_{\Z_2}^{1, (0, 1)}$ & 1\\ \hline
    \ref{item:no_pass_end} & $\wt{\cF}^1_{8}$ & fixed double point & $\exists t_1\in \Sfree$, $t_2\in \Sfix$ such that $\phi(t_1)=\phi(t_2)$ & $\mJ_{\Z_2}^{0, (1, 1)}$ & 2
    \\\hline
  \end{tabular}
  \\
  \caption{The strata $\smash{\wt{\cF}^1_i}$. We refer to $\smash{\wt{\cF}^1_6}$ and $\smash{\wt{\cF}^1_8}$ as double points, even though taking into account all symmetric branches makes them into quadruple points and triple points, respectively.}\label{tab:cpl}
\end{table}
\begin{thm}\label{thm:open_dense}
  The subset $\cF^0$ of regular maps is open-dense in $\cF$.
\end{thm}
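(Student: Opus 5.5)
We follow the proof of Theorem~\ref{thm:cf0}. Since $\cF^0=\cF\setminus(\wt{\cF}^1_1\cup\dots\cup\wt{\cF}^1_8)$ is a finite intersection, it suffices to show that the complement of each $\wt{\cF}^1_i$ is open and dense in $\cF$.

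\emph{Density.} For each $i$ we express membership in $\wt{\cF}^1_i$ as the relevant equivariant multijet extension hitting a submanifold $D_i\subset\mJ^{k,(m,n)}_{\Z_2}(\cS,\R^2)$, with the jet space indicated in Table~\ref{tab:cpl}. We then check that $\pcodim(D_i)=\codim(D_i)-m\geq 1$, so Lemma~\ref{lem:codimension} with $r=0$ (or directly Theorem~\ref{thm:ejet}) shows that maps missing $D_i$ form a residual set.

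The counts follow the table:
\begin{itemize}
\item $\wt{\cF}^1_1$: codimension $2$, $m=1$.
\item $\wt{\cF}^1_2$: codimension $3$, $m=2$.
\item $\wt{\cF}^1_3$: codimension $4$, $m=3$.
\item $\wt{\cF}^1_4$ and $\wt{\cF}^1_5$: the conditions $x(\phi(t))=0$ together with $x'(t)=0$ (respectively $y'(t)=0$) give codimension $2$, with $m=1$.
\item $\wt{\cF}^1_6$: $\phi(t_1)=\phi(t_2)$ gives two equations and $\phi(t_1)\in\cL$ a third, so codimension $3$, with $m=2$.
\item $\wt{\cF}^1_7$: in $\mJ^{1,(0,1)}_{\Z_2}$ the jet is $(\alpha_1t+\alpha_3t^3,\beta_0+\beta_2t^2)$. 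Here $\phi'(0)=(\alpha_1,0)$ automatically, so the condition is the single equation $\alpha_1=0$. The codimension is $1$ and $m=0$.
\item $\wt{\cF}^1_8$: $\phi(t_1)=\phi(t_2)$ with $\phi(t_2)\in\cL$ already forced by equivariance gives two equations, with $m=1$.
\end{itemize}
In every case $\pcodim=1>0$.

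The care needed is in verifying that each $D_i$ is a genuine submanifold, or at least a finite union of submanifolds, of the correct codimension. The open conditions $\phi(t)\notin\cL$, $t_1\neq\sigma(t_2)$, etc.\ only cut out open subsets, which does not affect codimension. For $\wt{\cF}^1_2$, the linear-dependence locus $\det(\phi'(t_1),\phi'(t_2))=0$ is singular where a derivative vanishes. We handle this by splitting it into the smooth part (codimension $3$) and the part where some $\phi'(t_j)=0$, which lies inside the closure of $\wt{\cF}^1_1$-type conditions and has even larger codimension. For $\wt{\cF}^1_8$ we should note that the equivariant jet at $t_2\in\Sfix$ has target constrained to $\cL$, so the count is taken in that jet space.

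The main subtlety is that in Definition~\ref{def:half-knot} the space $\cF$ itself carries the open condition that distinct fixed points have distinct images. Residual subsets of the space of all equivariant maps therefore intersect $\cF$ in residual, hence dense, subsets, because $\cF$ is open in a Baire space.

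\emph{Openness.} The step most in need of care is openness, since the strata are defined via existence of points in the possibly noncompact free part $\Sfree$. We use Theorem~\ref{thm:generic_half_knot}: $\cF^0$ equals the set of equivariant maps that are regular in the sense of Definition~\ref{def:generic_R2}. That set is open in $C^\infty(\cS,\R^2)$ by Theorem~\ref{thm:cf0}, since $\cS$ is compact. Intersecting with the subspace of equivariant maps gives that $\cF^0$ is open in $\cF$. This avoids checking openness stratum by stratum. Combined with density of each complement, and hence of their intersection via the residual argument above, $\cF^0$ is open and dense.
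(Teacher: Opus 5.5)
Your density argument is the paper's own: the same case-by-case count from Table~\ref{tab:cpl}, giving $\pcodim=1$ in every case, with genuinely equivariant jets needed only for $\wt{\cF}^1_7$ and $\wt{\cF}^1_8$. Your remarks on the singular part of the determinant locus, and on $\cF$ being open in the Baire space of equivariant maps, are refinements the paper leaves implicit.

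The real difference is openness. The paper's proof only shows that a generic map misses each $\wt{\cF}^1_i$ and says nothing about openness; in the analogous Theorem~\ref{thm:cf0} openness is asserted stratum by stratum. You instead get it in one step from Theorem~\ref{thm:generic_half_knot} together with the openness of the full non-equivariant regular set on the compact curve $\cS$.

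Your route is the sounder one, because stratumwise openness is actually false:
\begin{itemize}
\item The complement of $\wt{\cF}^1_3$ is not open. An off-axis $(2,3)$-cusp struck through by another branch has no triple point, but the perturbation of Section~\ref{ssub:sA2} with $\lambda_1=\lambda_2>0$ creates one.
\item The complement of $\wt{\cF}^1_8$ is not open either. The fixed-point cusp $(t^3,t^4)$ perturbs to $(t^3+\lambda t,\,t^4+\lambda t^2)$, which for $\lambda<0$ sends $t=\pm\sqrt{-\lambda}$ to the image of the fixed point.
\end{itemize}
So you should delete your opening reduction to ``each complement is open and dense''. What you actually prove is exactly what is needed: $\cF^0$ is open as a whole, and it is dense because it contains a finite intersection of residual subsets of $\cF$. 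This uses only the openness of the whole non-equivariant regular set, which is true by stability of self-transverse immersions of a compact curve, even though the per-stratum claims in the proof of Theorem~\ref{thm:cf0} are not.
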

\begin{proof}
It is enough to show that a generic map misses $\wt{\cF}^1_i$ for $1 \leq i \leq 8$.
This is done through a case-by-case analysis. In the last two columns of Table~\ref{tab:cpl}, we give the relevant multijet spaces
and the (jet space) codimension of the appropriate submanifold. Note that for the first six entries, the conditions do not involve points of $\Sfix$, so by the discussion in Section~\ref{sub:ejet}, we may phrase our analysis in terms of non-equivariant jet spaces.

We thus only comment on $\wt{\cF}^1_7$ and $\wt{\cF}^1_{8}$. For $\wt{\cF}^1_7$, the relevant condition is codimension~1 since in the equivariant jet space, one component of the derivative already automatically vanishes at points of $\Sfix$. The source is of dimension~0, so the stratum is missed by a generic map. For $\wt{\cF}^1_{8}$, we relevant condition is codimension~2. The source is of dimension~1, since $t_1$ varies in a space of dimension~1 while $t_2$ varies in a space of dimension~0, so the stratum is missed by a generic map.
\end{proof}
Theorem~\ref{thm:perestrojka} and Corollary~\ref{cor:Reidemeister} can be promoted to the equivariant setting. We will do this in
Subsection~\ref{sub:reg_path}, when we use the machinery of versality and normal forms to provide more rigorous proofs than those
in the non-equivariant setting.

\section{Deformations of singularities}\label{sec:def_versal}

In this section, we introduce the language needed to study deformations of singularities. Our main goal will be to introduce the notion of a versal deformation. Roughly speaking, this is a deformation of a singularity which induces any other deformation, up to reparameterization. We use versal deformations to classify families of singularities in the next section. 

\subsection{RL equivalence}\label{sub:cA}
We begin with a general discussion of equivalence of germs, starting with the non-equivariant case. For $\{t_1, \dots, t_n\} \subset \cS$, let $\cO(t_1,\dots,t_n)$ be the space of multigerms of smooth functions from $\{t_1, \dots, t_n\}$ to $\R^2$. Clearly,
\[
\cO(t_1,\dots,t_n) = \cO({t_1})\oplus\dots\oplus \cO({t_n}),
\]
where each $\cO({t_i})$ is the germ of smooth functions into $\R^2$ at $t_i$. We refer to the restriction of $\phi \in \cO(t_1, \dots, t_n)$ to a neighborhood of $t_i$ as the \textit{branch} of $\phi$ through $t_i$.
\begin{defn}[see \cite{AVG}]\label{def:RLnoneq}
Let $f \in \cO(t_1, \cdots, t_n)$ and $g \in \cO(t_1', \cdots, t_n')$. We say that $f$ and $g$ are \textit{right-left (RL) equivalent} if there exist open neighborhoods $U$ and $U'$ in $\cS$ and $V$ and $V'$ in $\R^2$, together with germs of diffeomorphisms $h_R$ and $h_L$, making the following diagram commute:
\[\begin{tikzcd}
	U & V \\
	U' & V'
	\arrow["{f}", from=1-1, to=1-2]
	\arrow["{h_R}"', from=1-1, to=2-1]
	\arrow["{h_L}", from=1-2, to=2-2]
	\arrow["{g}", from=2-1, to=2-2]
\end{tikzcd}\]
Here, $U$ and $U'$ are open neighborhoods of $\{t_1, \ldots, t_n\}$ and $\{t_1', \ldots, t_n'\}$, respectively, while $V$ and $V'$ are open neighborhoods of $\{f(t_1),\dots,f(t_n)\}$ and $\{g(t'_1),\dots,g(t'_n)\}$, respectively. The map $h_R \colon U \rightarrow U'$ is the germ of a diffeomorphism sending $t_i$ to $t'_i$ for all $i=1,\dots,n$, while the map $h_L \colon V \rightarrow V'$ is the germ of a diffeomorphism sending $f(t_i)$ to $g(t'_i)$ for $i=1,\dots,n$. It is clear that RL equivalence is an equivalence relation. 
\end{defn}

Now consider the equivariant case. For a subset of points $\{t_1, \dots, t_n\}$ which is setwise preserved by $\sigma$, we have the space of equivariant multigerms, which we denote by
\[
\cO(t_1,\dots,t_n)^{\Z_2} = (\cO({t_1})\oplus\dots\oplus \cO({t_n}))^{\Z_2} 
\]
We usually group $t_1,\dots,t_n$ in such a way so that $\sigma t_i=t_{i+1}$ for $i=1,3,\dots,2\ell-1$ and $\sigma t_i=t_i$ for $i=2\ell+1,\dots,n$. The equivariant multigerm space then decomposes as
\[
\cO(t_{1}, t_{2})^{\Z_2} \oplus\cdots \oplus \cO(t_{{2\ell-1}}, t_{{2\ell}})^{\Z_2} \oplus \cO(t_{{2\ell+1}})^{\Z_2}\oplus\dots\oplus \cO(t_{n})^{\Z_2}.
\]
Since the involution $\sigma$ on $\cS$ interchanges neighborhoods of $t_i$ and $t_{i+1}$, it is clear that 
\[
\cO(t_{2i-1}, t_{2i})^{\Z_2} \cong \cO(t_{{2i-1}})
\]
by taking $f \in \cO({t_{2i-1}})$ to $(f, \tau f \sigma)$. We refer to the restriction of $\phi$ to a neighborhood of a two-element orbit $\{t_i, \sigma t_i\}$ as a \textit{symmetric pair of branches}, while the restriction of $\phi$ to a neighborhood of a one-element orbit $t_i = \sigma t_i$ is called a \textit{fixed-point branch}. We usually abuse terminology and still refer to a symmetric pair of branches as a single branch. When the context is clear, we will correspondingly write $f$ in place of $(f, \tau f \sigma)$. 

\begin{defn}\label{def:RLeq}
Let $f \in \cO(t_1, \cdots, t_n)^{\Z_2}$ and $g \in \cO(t_1', \cdots, t_n')^{\Z_2}$. We say that $f$ and $g$ are \textit{right-left (RL) equivalent} if there exist data as in Definition~\ref{def:RLnoneq} which are equivariant. That is, the neighborhoods $U$ and $U'$ are required to be preserved by $\sigma$, the neighborhoods $V$ and $V'$ are required to be preserved by $\tau$, and $h_R$ and $h_L$ are required to be equivariant germs of diffeomorphisms.
\end{defn}

Let $D$ be a subset of some non-equivariant or equivariant multijet space. Up to RL equivalence, we would like to explicitly describe the space of germs appearing in $D$. By this, we mean the following: recall that a multijet is an equivalence class of germs over some $\{t_1, \dots, t_n\}$. Our aim is to give a model family of germs such that any germ in any equivalence class from $D$ is RL equivalent to some germ in our family.  

\begin{defn}
Fix $\{t_1, \dots, t_n\} \subset \cS$ and let $\Lambda$ be a smooth manifold, which we think of as a parameter space. A \textit{family} of germs at $\{t_1, \dots, t_n\}$ is a smooth map $\Phi \colon \Lambda \rightarrow \cO(t_1, \dots, t_n)$. The definition extends to the equivariant case by replacing $\cO(t_1, \dots, t_n)$ with $\smash{\cO(t_1, \dots, t_n)^{\Z_2}}$. We denote $\phi_{\lambda} = \Phi(\lambda)$. 
\end{defn}

\begin{defn}\label{def:normalform}
Suppose $D$ is a subset of some non-equivariant or equivariant multijet space. A \textit{normal form} for $D$ is a family of germs $(\Phi, \Lambda)$ such that any germ in any equivalence class from $D$ is RL equivalent to $\Phi(\lambda)$ for a finite (non-empty) set of $\lambda \in \Lambda$. 
\end{defn}

The point of Definition~\ref{def:normalform} is the following. Let $\phi$ be a map from $\cS$ to $\R^2$ whose jet extension hits $D$. Each such intersection corresponds to a multijet over a particular set of points $\{t_1, \cdots, t_n\}$. The assertion of Definition~\ref{def:normalform} is that up to local reparameterization of the domain and codomain, the behavior of $\phi$ at $\{t_1, \cdots, t_n\}$ is determined modulo some set of parameters from the parameter space $\Lambda$. An especially salient case will be when there are no parameters; e.g., $\Lambda$ consists of a single point. In this case, we refer to $D$ as being \textit{simple}. 

\begin{defn}
Let $D$ be a subset of some non-equivariant or equivariant multijet space. We say that $D$ is \emph{simple} if it has a normal form with $\Lambda$ a single point. Equivalently, this means that whenever $\phi_1 \in \cO(t_1, \cdots, t_n)$ and $\phi_2 \in \cO(t_1', \cdots, t_n')$ are two multigerms whose jet extensions lie in $D$, we have that $\phi_1$ and $\phi_2$ are RL equivalent.
\end{defn}

The reader should think of a simple subset of jet space as consisting of (the jet extensions of) precisely one RL orbit. The archetypal example is the following. 

\begin{example}
Consider the subset $D$ of $\mJ^{3}(\cS, \R^2)$ defined by the condition that there exists (exactly) one point $t$ such that $\phi'(t) = 0$, and that at this point the vectors $\phi''(t)$ and $\phi'''(t)$ are linearly independent. This is the defining jet space for the stratum $\cF^1_1$ discussed in Section~\ref{sec:noneqreidemeister}: if the jet extension of $\phi$ hits $D$, then this means $\phi$ has a $(2, 3)$-cusp. Now consider the germ 
\[
\phi(t) = (t^2, t^3).
\]
As we show in Section~\ref{sub:cusp}, this is a normal form for $D$. That is, up to RL equivalence, every $(2, 3)$-cusp locally looks like $\phi$. Since the normal form consists of only one point, $D$ is simple.
\end{example}

However, not every subspace of jet space is simple. 

\begin{example}
  Consider the subset $D$ of $\mJ^{0, 4}(\cS, \R^2)$ that detects an ordinary quadruple point, i.e. the $X_9$ singularity of \cite{AVG}.
It can be shown, see \cite{David,FiedlerKurlin} that
a normal form for $D$ is given by the family of multigerms 
\[
\Phi(\lambda)(s_1, s_2, s_3, s_4) = ((s_1, 0), (0, s_2), (s_3, s_3), (s_4, \lambda s_4)).
\]
This multigerm looks like four lines intersecting at the origin: the claim is that the slopes of three of the lines can be set, but different choices for the slope of the fourth line will in general lead to RL-inequivalent multigerms. In particular, $D$ is not simple.
\end{example}

\subsection{Versal deformations}\label{sub:versal}

We now use our formalism to discuss deformations of germs. By a deformation of a germ $\phi$, we of course mean a family of germs $(\Phi, \Lambda)$ such that $\Phi(\lambda_0) = \phi$ for some $\lambda_0 \in \Lambda$. Usually $\Lambda$ will be a neighborhood of the origin in some Euclidean space and $\lambda_0$ will be the origin.

\begin{defn}
Let $(\Phi_1, \Lambda_1)$ and $(\Phi_2, \Lambda_2)$ be two deformations of $\phi = \Phi_1(\lambda_1) = \Phi_2(\lambda_2)$. We say that $\Phi_1$ is \emph{induced} from $\Phi_2$ if there exists open neighborhoods $U_1,U_2$ of $\lambda_1$ and $\lambda_2$ in $\Lambda_1$, respectively $\Lambda_2$, together with a smooth map $h\colon U_1\to U_2$ with $h(\lambda_1) = \lambda_2$ such that $\Phi_1(\lambda)$ and $\Phi_2(h(\lambda))$ are RL equivalent for all $\lambda\in U_1$. The reparametermizations realizing the RL equivalence
are required to smoothly depend on $\lambda$, see \cite[Section 8]{AVG}. A deformation is \emph{versal} if any other deformation can be induced from
it.
\end{defn}

A key tool to understanding versal deformations is the notion of infinitesimal versality, see \cite[Section 8.2]{AVG}. For this, fix a germ $\phi \in \cO(t_1, \dots, t_n)$. Write $[\phi]$ for space of germs which are RL equivalent to $\phi$. We think of $[\phi]$ by first forming the orbit of $\phi$ in $\cO(t_1, \dots, t_n)$ under germs of diffeomorphisms of the domain that fix $\{t_1, \dots, t_n\}$ and arbitrary germs of diffeomorphisms of the codomain. In a small neighborhood $U$ of $\{t_1, \dots, t_n\}$, it is clear that we may apply small translations of the $t_i$ and speak of the space of all germs at $n$ distinct points in $U$. This describes the local structure of $[\phi]$, which we think of as a smooth manifold. 

\begin{definition}\label{def:infinitesimallyversal}
Let $(\Phi, \Lambda)$ be a deformation of $\phi = \Phi(\lambda_0)$. We say that $\Phi$ is \emph{infinitesimally versal}
if it is transverse to $[\phi]$ at $\lambda_0$.
\end{definition}

The importance of infinitesimal versality is that it can be verified algebraically. Indeed, let $\phi \in \cO(t_1, \dots, t_n) = \cO(t_1) \oplus \cdots \oplus \cO(t_n)$. Choose local coordinates $s_i$ around each $t_i$ such that $t_i$ is taken to zero, and let $\phi_i$ be restriction of $\phi$ near $t_i$. Then $\phi$ is given by an $n$-tuple of pairs of power series
\[
\phi_i(s_i) = (x_i(s_i), y_i(s_i)).
\]
The tangent space to $[\phi]$ at $\phi$ is given by the space of germs in $\cO(t_1, \dots, t_n)$ of the form
\begin{equation}\label{eq:RL_versal}
  \left(h_1(s_1)\frac{d\phi_1}{ds_1},\dots,h_n(s_n)\frac{d\phi_n}{ds_n}\right)+(k(\phi_1),\dots,k(\phi_m)),
\end{equation}
where
\begin{enumerate}
\item Each $h_i$ is a germ at $t_i$ of a map into $\R$; and,
\item The map $k$ is the germ at $\{\phi(t_1), \ldots, \phi(t_n)\}$ of a map into $\R^2$.
\end{enumerate}

To see this, consider reparameterizations of the domain. Let the reparameterization near $t_i$ be given by $s_i \mapsto s_i + \epsilon h_i(s_i)$. (Strictly speaking, we have two kinds of reparameterizations: those that fix $t_i$ correspond to the case $h_i(t_i) = 0$, while the case $h_i(t_i) \neq 0$ is achieved by translating $t_i$.) To first order in $\epsilon$, such a reparameterization clearly alters $\phi$ by $\epsilon$ times the first term in \eqref{eq:RL_versal}. Likewise, consider a reparameterization of the codomain of the form $(x, y) \mapsto (x, y) + \epsilon k(x, y)$. Here, $k(x, y)$ is the germ at $\{\phi(t_1), \ldots, \phi(t_n)\}$ of a map into $\R^2$. Note that we treat $k$ as a single germ -- as opposed to a tuple of distinct germs -- since the codomain points $\phi(t_i)$ may not be distinct. This gives the second term in \eqref{eq:RL_versal}. 

We thus obtain the following; see \cite[Section 8.2, Theorem]{AVG}.
\begin{thm}\label{thm:d_deform}
Let $(\Phi, \Lambda)$ be a deformation of $\phi = \Phi(\lambda_0) \in \cO(t_1, \ldots, t_n)$. Then $\Phi$ is infinitesimally versal if and only if any $\nu\in\cO(t_1, \ldots, t_n)$ can be written as
  \begin{equation}\label{eq:d_deform}\nu=
  \left(h_1(s_1)\frac{d\phi}{ds_1},\dots,h_n(s_n)\frac{d\phi}{ds_n}\right)+(k(\phi_1),\dots,k(\phi_n))+\sum_{j=1}^m c_j\frac{\partial \Phi}{\partial\lambda_j}(\lambda_0),\end{equation}
  where $h_i,k$ are any germs as above, $\lambda_1,\dots,\lambda_m$ are local coordinates on $\Lambda$ near $\lambda_0$,
  and $c_j$ are real numbers.
\end{thm}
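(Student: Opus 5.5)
The plan is to unwind Definition~\ref{def:infinitesimallyversal}. Infinitesimal versality means that $\Phi$ is transverse to the orbit $[\phi]$ at $\lambda_0$. Concretely, this says that the image of $d\Phi_{\lambda_0}$ together with the tangent space $T_\phi[\phi]$ spans the ambient space $\cO(t_1,\ldots,t_n)$. The image of $d\Phi_{\lambda_0}$ is exactly the span of the vectors $\partial\Phi/\partial\lambda_j(\lambda_0)$, for $j=1,\dots,m$, in the local coordinates on $\Lambda$. So the statement reduces to a single claim: the tangent space $T_\phi[\phi]$ is precisely the set of germs of the form \eqref{eq:RL_versal}. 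Granting this, transversality is literally the decomposition \eqref{eq:d_deform} for every $\nu$, and both directions of the equivalence follow at once.

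To identify $T_\phi[\phi]$, I would first show that every germ of the form \eqref{eq:RL_versal} is tangent to the orbit. Given $h_i$ and $k$, consider the one-parameter family $\phi^\epsilon_i(s_i)=(\mathrm{id}+\epsilon k)\circ\phi_i\circ(s_i+\epsilon h_i(s_i))$. For small $\epsilon$ the maps $s_i\mapsto s_i+\epsilon h_i(s_i)$ are germs of diffeomorphisms; when $h_i(t_i)\neq 0$ they move the base point slightly, which is allowed by the local description of $[\phi]$ given above. Likewise $\mathrm{id}+\epsilon k$ is a germ of a diffeomorphism near the finite set $\{\phi(t_i)\}$. Hence $\phi^\epsilon\in[\phi]$. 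Differentiating at $\epsilon=0$ by the chain rule gives $h_i\,d\phi_i/ds_i+k(\phi_i)$, with one common $k$ since the codomain diffeomorphism is a single germ.

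Conversely, any smooth curve in $[\phi]$ through $\phi$ has the form $H_L^\epsilon\circ\phi\circ H_R^\epsilon$, with $H^0=\mathrm{id}$. Here the smooth dependence on $\epsilon$ is part of the manifold structure on the orbit, as in \cite[Section 8]{AVG}. Writing $h_i=\partial_\epsilon H_R^\epsilon|_0$ and $k=\partial_\epsilon H_L^\epsilon|_0$ and differentiating recovers \eqref{eq:RL_versal}. Thus $T_\phi[\phi]$ is exactly the space in \eqref{eq:RL_versal}.

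The main obstacle is foundational rather than computational. The orbit $[\phi]$ lives in an infinite-dimensional space of germs, so "tangent space" and "transverse" must be given meaning. The cleanest route is to take the definitions algebraically, as \cite[Section 8.2]{AVG} does. The space of germs of the form \eqref{eq:RL_versal} is defined to be the tangent space; the converse inclusion above then serves only as motivation, and transversality means the sum spans. Under that convention the theorem is a tautological restatement, and I would cite \cite[Section 8.2, Theorem]{AVG} for the identification. The equivariant analogue follows by the same argument, using equivariant $h_i$ and $k$.
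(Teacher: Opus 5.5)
Your proposal is correct and follows the same route as the paper, which identifies the tangent space to $[\phi]$ with the germs of the form \eqref{eq:RL_versal} by linearizing $s_i\mapsto s_i+\epsilon h_i(s_i)$ and $(x,y)\mapsto(x,y)+\epsilon k(x,y)$. The paper then reads off transversality as the spanning condition \eqref{eq:d_deform}, citing \cite[Section 8.2, Theorem]{AVG}. Your reverse inclusion (that every curve in the orbit has derivative of this form) is, as you say, only heuristic. It is not needed once the tangent space is taken in this algebraic sense, which is how the paper treats it.
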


The above notions generalize in a straightforward manner to the equivariant setting by requiring all germs to be equivariant. We now give several examples to clarify our discussion. 

\begin{example}\label{ex:t25}
Consider a $(2, 5)$-cusp with germ $\phi(s)=(s^2,s^5)$. The tangent space to $[\phi]$ at $\phi$ consists of germs $\nu = (\nu^x, \nu^y)$ of the form
\[
(\nu^x(s), \nu^y(s)) = h(s) (2s, 5s^4) + (k_1(s^2, s^5), k_2(s^2, s^5))
\]
for germs $h(s)$ and $k(x, y) = (k_1(x, y), k_2(x, y))$. Consider which power series can be obtained for $\nu^x(s)$. By an appropriate choice of $k_1$, we can set any coefficient of $\nu^x(s)$ whose exponent lies in the semigroup generated by $2$ and $5$; this leaves only the coefficients of $s$ and $s^3$. These remaining coefficients can be set by an appropriate choice of $h(s)$. Likewise, we can simultaneously set any coefficient of $\nu^y(s)$ other than those of $s$ and $s^3$ by an appropriate choice of $k_2$. These last two coefficients cannot be altered by our choice of $h$ or $k$. Hence the tangent space to $[\phi]$ consists of germs $\nu = (\nu^x, \nu^y)$ such that the linear and cubic coefficients of $\nu^y$ are zero.

An example of an infinitesimally versal deformation is thus given by 
\[
\Phi(\lambda_1, \lambda_2)(s) = (s^2,s^5+\lambda_1s^3+\lambda_2s).
\]
\end{example}

\begin{example}\label{ex:R3_tangent}
  Consider a triple point with multigerm $\phi_1(s_1)=(s_1,0)$, $\phi_2(s_2)=(0,s_2)$, $\phi_3(s_3)=(s_3,s_3)$.
  The tangent space to $[\phi]$ at $\phi$ consists of multigerms $(\nu_1 = (\nu_1^x, \nu_1^y), \nu_2 = (\nu_2^x, \nu_2^y), \nu_3 = (\nu_3^x, \nu_3^y))$ of the form
  \begin{align*}
  &(\nu_1^x(s_1), \nu_1^y(s_1)) = h_1(s_1)(1, 0) + (k_1(s_1, 0), k_2(s_1, 0)) \\
  &(\nu_2^x(s_2), \nu_2^y(s_2)) = h_2(s_2)(0, 1) + (k_1(0, s_2), k_2(0, s_2)) \\
  &(\nu_3^x(s_3), \nu_3^y(s_3)) = h_3(s_3)(1, 1) + (k_1(s_3, s_3), k_2(s_3, s_3))
  \end{align*}
  for germs $h_i(s_i)$ and $k(x, y) = (k_1(x, y), k_2(x, y))$. An examination of the above shows that $\nu_1^y(0) = k_2(0, 0)$ and $\nu_2^x(0) = k_1(0, 0)$, and that $\nu_3^x(0) = h_3(0) + k_1(0, 0)$ and $\nu_3^y(0) = h_3(0) + k_2(0, 0)$. Hence we have the equality of constant terms
\[
\nu_3^x(0) - \nu_3^y(0) = \nu_2^x(0) - \nu_1^y(0).
\]
A rather tedious analysis shows that every other coefficient of $\nu_i^x$ and $\nu_i^y$ can be set via appropriate choices of $h_i$ and $k$. Hence the tangent space $[\phi]$ is given by multigerms satisfying the single constraint above.

An example of an infinitesimally versal deformation is thus given by 
 \[\Phi(\lambda)(s_1,s_2,s_3)=((s_1,0),(0,s_2),(s_3,s_3+\lambda)).
 \]

\end{example}
\begin{example}\label{ex:defining}
We now give an equivariant example. Consider an on-axis $(3, 4)$-cusp with germ $\phi(s) = (s^3, s^4)$. The tangent space to $[\phi]$ at $\phi$ consists of germs $\nu = (\nu^x, \nu^y)$ of the form
\[
(\nu^x(s), \nu^y(s)) = h(s) (3s^2, 4s^3) + (k_1(s^3, s^4), k_2(s^3, s^4))
\]
where now $h(s)$ and $k(x, y) = (k_1(x, y), k_2(x, y))$ are equivariant germs. That is: 
\begin{enumerate}
\item $h(-s) = - h(s)$, which means $h$ is a power series with only odd exponents; and,
\item $k_1(-x, y) = -k_1(x, y)$ and $k_2(-x, y) = k_2(x, y)$, which means every term in $k_1$ has an odd exponent of $x$ and every term of $k_2$ has an even exponent of $x$.
\end{enumerate}
Consider which power series can be obtained for $\nu^x(s)$. By an appropriate choice of $k_1$, we can set any coefficient of $\nu^x(s)$ whose exponent is the sum of an odd multiple of $3$ and a multiple of $4$. Note that we are computing the tangent space inside the set of equivariant germs, so $\nu^x(s)$ must satisfy $\nu^x(-s) = - \nu^x(s)$. Hence we only consider $\nu^x(s)$ consisting of power series with odd exponents. Among such power series, our choice of $k_1$ thus allows us to set any coefficient other than that of $s$ and $s^5$. The coefficient of $s^5$ can then be set by an appropriate choice of $h(s)$. Likewise, we can set any coefficient of $\nu^y(s)$ whose exponent is the sum of an even multiple of $3$ and a multiple of $4$. Since $\nu^y(s)$ must satisfy $\nu^y(-s) = \nu^y(s)$, we only consider $\nu^y(s)$ consisting of power series with even exponents; thus the only remaining coefficient is that of $s^2$. Hence the tangent space to $[\phi]$ consists of germs $\nu = (\nu^x, \nu^y)$ such the linear term of $\nu^x$ and the quadratic term of $\nu^y$ vanish.

An example of an infinitesimally versal deformation is thus given by 
 \[
 \Phi(\lambda_1, \lambda_2)(s) = (s^3+\lambda_1s ,s^4+\lambda_2s^2).
 \]
\end{example}

The following result is due to Martinet \cite{Martinet}, see also \cite[Section 8.3]{AVG}. 
\begin{theorem}[Versality Theorem]\label{thm:versality_theorem}
  An infinitesimally versal deformation is versal.
\end{theorem}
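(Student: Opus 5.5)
The plan is the standard Martinet argument by the homotopy method, carried out so that every step has an evident equivariant analogue. The only non-formal input is the Malgrange Preparation Theorem, used in its equivariant form (as the introduction says, this is deferred to the appendix).

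The first step is to reduce to one-parameter extensions. Let $(\Phi,\Lambda)$ be infinitesimally versal at $\lambda_0$ and $(\Psi,M)$ an arbitrary deformation of $\phi$ with $\Psi(\mu_0)=\phi$. Form the combined family
\[
F(\lambda,\mu)=\Phi(\lambda)+\Psi(\mu)-\phi \quad \text{on } \Lambda\times M.
\]
It restricts to $\Phi$ on $\Lambda\times\{\mu_0\}$ and to $\Psi$ on $\{\lambda_0\}\times M$. It therefore suffices to show that $F$ is induced from its restriction to $\Lambda\times\{\mu_0\}$. Composing with $\mu\mapsto(\lambda_0,\mu)$ then shows that $\Psi$ is induced from $\Phi$. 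Adding one coordinate of $\mu$ at a time reduces this to the following claim: if $F$ is a deformation over $\Lambda\times\R$ whose restriction to $\Lambda\times\{0\}$ is infinitesimally versal, then $F$ is induced from that restriction.

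For the one-parameter step, I look for a vector field
\[
\partial_\mu+\sum_j a_j(\lambda,\mu)\,\partial_{\lambda_j}
\]
on the parameter space. I also look for families of vector fields $h=(h_1,\dots,h_n)$ on the source and $k$ on the target, depending smoothly on $(\lambda,\mu)$, satisfying the infinitesimal homotopy equation
\[
\frac{\partial F}{\partial\mu}=\Bigl(h_i\frac{dF_i}{ds_i}\Bigr)_i+\bigl(k(F_i)\bigr)_i-\sum_j a_j\frac{\partial F}{\partial\lambda_j}.
\]
Integrating these vector fields then produces smoothly varying diffeomorphism germs $h_R,h_L$ and a parameter map. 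These trivialize $F$ along the $\mu$-direction, which gives the required inducing map.

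At $(\lambda_0,0)$, the homotopy equation is exactly the decomposition \eqref{eq:d_deform} of Theorem~\ref{thm:d_deform}, with $\nu=\partial F/\partial\mu$. The main obstacle is to solve it with coefficients that depend smoothly on all parameters at once. I would set up the finiteness argument as follows.
\begin{itemize}
\item Regard the relevant spaces as modules over the ring of germs in the target and parameter variables, via $F$.
\item Show, using the infinitesimal versality hypothesis at the central point together with Nakayama's lemma, that the module of germs $\cO(t_1,\dots,t_n)$ in the unfolded variables is generated by the $dF_i/ds_i$, the $\partial F/\partial\lambda_j$, and the pullbacks of target germs.
\item Obtain the needed finite generation over the target ring from the Malgrange Preparation Theorem.
\end{itemize}
For multigerms I would use the multigerm version of the Preparation Theorem, since the target points may coincide.

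To make the argument equivariant, every germ is taken $\Z_2$-equivariant and the rings are replaced by their invariant subrings $\cE^{\Z_2}$. The equivariant Preparation Theorem is then needed, that is, finite generation of $\cE^{\Z_2}$-modules over the invariant ring of the target. Given that, any solution $(h,k,a)$ of the homotopy equation can be averaged over $\Z_2$, replacing $h$ by $\tfrac12(h+\sigma h\sigma)$ and similarly for $k$. This yields equivariant vector fields, whose flows are equivariant diffeomorphisms. Averaging preserves the equation because $F$ is equivariant, and the parameter coefficients $a_j$ are invariant by construction. The rest of the argument then goes through verbatim.
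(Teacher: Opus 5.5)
Your overall route is the paper's: the sum-of-deformations reduction to a one-parameter extension (Martinet's reduction lemma, which the paper cites rather than proves), the homotopy equation, and the Preparation Theorem, with its equivariant version as the only new input. However, your finiteness step as set up does not produce the equation you need. Working over the ring of germs on $\R^2\times\Lambda\times\R$ via $F$ and applying Nakayama there gives
\[
\frac{\partial F}{\partial\mu}=\Bigl(h_i\frac{dF_i}{ds_i}\Bigr)_i+\bigl(k(F_i,\lambda,\mu)\bigr)_i-\sum_j a_j(F,\lambda,\mu)\,\frac{\partial F}{\partial\lambda_j}.
\]
Here the coefficients of $\partial F/\partial\lambda_j$ are pullbacks under $F$ of germs on target $\times$ parameters, not germs on $\Lambda\times\R$. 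With such $a_j$, the expression $\partial_\mu+\sum_j a_j\partial_{\lambda_j}$ is not a vector field on the parameter space. The flow you would integrate lives on $\R^2\times\Lambda\times\R$ and mixes target coordinates with $\lambda$, so it does not give RL-equivalences fibred over a base change in the parameters, which is what ``induced'' means in the paper. Your remark that the $a_j$ are invariant ``by construction'' presupposes exactly this parameter-only dependence, which has not been established. For right equivalence of functions the analogous module is already a module over the source ring, so one can project straight to the parameters. In the RL setting the $k$-term is only a module over the target ring, and that forces an extra step.

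The missing step is a second application of the Preparation Theorem. Write $\hat\Lambda=\Lambda\times\R$, let $\theta(F)$ be the module of (equivariant) multigerms on source $\times\hat\Lambda$, and set $T=\{(h_i\,dF_i/ds_i)_i\}$ and $\Omega=\{(k(F_i,\lambda,\mu))_i\}$. Both are submodules over the (invariant) ring of $\R^2\times\hat\Lambda$ acting via $F$. So $N=\theta(F)/(T+\Omega)$ is a module over that ring, and by your first step it is finitely generated, in fact by the classes $[\partial F/\partial\lambda_j]$. Now apply the Preparation Theorem to the projection $\R^2\times\hat\Lambda\to\hat\Lambda$. The quotient $N/\mm_{\hat\Lambda}N$ is the space of (equivariant) multigerms modulo the tangent space to $[\phi]$, and by infinitesimal versality it is spanned by the $\partial\Phi/\partial\lambda_j(\lambda_0)$. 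Hence $N$ is generated over $\cO(\hat\Lambda)$ by the $[\partial F/\partial\lambda_j]$, which is precisely the homotopy equation with $a_j=a_j(\lambda,\mu)$. Since $\Z_2$ acts trivially on $\hat\Lambda$, this goes through with invariant rings and the equivariant Preparation Theorem. Your averaging step is harmless but redundant once everything is solved inside the equivariant modules. It also could not replace the equivariant Preparation Theorem, because non-equivariant infinitesimal versality usually fails. For example, for the fixed-point $(3,4)$-cusp of Lemma~\ref{lem:dot} the non-equivariant theory also needs the odd term $t$ in the second coordinate, so there is no non-equivariant solution to average.
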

\begin{proof}
The proof in \cite{Martinet} is given for the non-equivariant case, but applies in the equivariant setting
with minor changes. 
The situation is reduced to the following problem. Suppose $\Phi\colon(\Lambda,\lambda_0)\to\cO$ 
is infinitesimally versal, and $\Phi'$ is a deformation from $\Lambda\times I$ (where $I$ is an interval), extending
$\Phi$. If we can show that $\Phi'$ is induced from $\Phi$, then a quick argument gives Theorem~\ref{thm:versality_theorem}.

The latter statement is the reduction lemma \cite[Lemma I.2.2]{Martinet}, \cite[Section 8.3, Lemma]{AVG} and constitutes
the nontrivial part of the proof. The reduction is first done formally, at the level of power series. Finally, the Malgrange preparation
theorem is applied to perform the reduction in the smooth category.

In this line, only the last step does not immediately carry over for the equivariant setting. In the last step,
we have to use the Malgrange theorem in the presence of a $\Z_2$-action. We explain this briefly in the appendix;
see Section~\ref{sec:app}. The basic tool is
the Whitney theorem \cite{Whitney}, stating that an even smooth function $f\colon\R\to\R$ can be written
as $f(x)=g(x^2)$ where $g$ is smooth. That is, the local ring of equivariant smooth functions is isomorphic
to the local ring of germs of smooth function, hence the equivariant version of Malgrange preparation theorem is deduced
from the classical, non-equivariant version.
\end{proof}

We use deformations to study singularities of maps in the following manner. Let $\phi$ be a multigerm that lies in one of the singular strata discussed in Sections~\ref{sec:warmup} or \ref{sec:mutt}. Suppose that $\Phi$ is a deformation of $\phi$. For generic $\lambda \in \Lambda$, we will usually have that $\phi_\lambda$ is regular in the sense of Definition~\ref{def:generic_R2}. However, depending on the dimension of $\Lambda$ and the nature of the singular stratum containing $\phi$, it may be that $\phi_\lambda$ continues to be singular on some subset of $\Lambda$ of positive codimension. We call this subset the \textit{discriminant}. 

\begin{definition}\label{def:discriminant}
Let $(\Phi, \Lambda)$ be a deformation of $\phi$. The discriminant of $(\Phi, \Lambda)$ is the subset $\Delta \subset \Lambda$ on which $\phi_\lambda$ is not regular. That is, for any $\lambda \in \Lambda$, the germ $\phi_\lambda$ is regular in the sense of Definition~\ref{def:generic_R2} if and only if $\lambda\notin\Delta$.
\end{definition}

The discriminant set is usually stratified, with strata of codimension~$1$ corresponding to singularities of codimension~1. Note that $\phi_\lambda$ for $\lambda \in \Lambda \setminus \Delta$ is regular, so within each connected component of $\Lambda \setminus\Delta$, the qualitative behavior
of $\phi_\lambda$ is the same.

\begin{example}\label{ex:codim1}
Suppose $\phi$ lies in a codimension~1 singular stratum and $(\Phi, \Lambda)$ is a $1$-parameter deformation. By shrinking $\Lambda$, we can usually assume that the discriminant consists of the single point $\lambda_0$ corresponding to $\phi$. Then $\Lambda \setminus \Delta$ has two connected components. In this case we think of the deformation as a path of germs $\phi_\lambda$ such that $\phi_\lambda$ crosses the singular stratum at exactly one point and has different behavior for $\lambda$ in the two components of $\Lambda \setminus \Delta$.

\end{example}

\begin{example}\label{ex:codim2}
Suppose $\phi$ lies in a codimension~2 singular stratum and $(\Phi, \Lambda)$ is a $2$-parameter deformation. We think of $\Lambda$ as a disk centered at $\lambda_0$. The discriminant locus will usually consist of $\lambda_0$ together with strata of dimension~$1$ that radiate outwards from $\lambda_0$ and divide $\Lambda$ into sectors. In most cases, for sufficiently small $r>0$, the circle $C_r$ with center $\lambda_0$ and radius $r$ (in some Riemannian metric), is transverse to each stratum of $\Delta$. That is, $C_r$ crosses $\Delta$ at finitely many points, corresponding to codimension~1 singularities. Going around $C_r$ leads to a loop of Reidemeister moves. We will return to this in Section~\ref{sec:codim2}.
\end{example}

We end this section by relating transversality in spaces of germs to transversality in jet space. Suppose $D$ is a subset of some non-equivariant or equivariant multijet space. Let $\Phi \colon \Lambda \rightarrow \cO(t_1,\dots,t_n)^{\Z_2}$ be a family of maps over a smooth parameter space $\Lambda$. Then we may consider the jet extension of $\Phi$; this is the map from $\Lambda$ into jet space obtained by taking the jet extension of $\Phi(\lambda)$ at each $\lambda$. 

\begin{thm}\label{thm:inf_vers}
	If the jet extension of $\Phi$ is transverse to a simple subset $D$ of jet space, then $\Phi$ is infinitesimally versal, hence versal.
\end{thm}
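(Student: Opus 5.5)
The plan is to reduce the statement to a comparison of tangent spaces, using the fact that for a simple $D$ the RL orbit $[\phi]$ and the preimage of $D$ under the jet map coincide locally, so they have the same codimension. Fix $\lambda_0$ and $\phi=\Phi(\lambda_0)$, with jet extension landing in $D$ over $\{t_1,\dots,t_n\}$. Let $\cD\subset\cO(t_1,\dots,t_n)^{\Z_2}$ be the set of multigerms whose multijet, at some nearby tuple of points, lies in $D$. In other words, $\cD$ is the preimage of $D$ under the multijet map, with the base points allowed to move. By Definition~\ref{def:infinitesimallyversal}, we must show $\Phi$ is transverse to $[\phi]$ at $\lambda_0$; versality then follows from Theorem~\ref{thm:versality_theorem}.

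\emph{Step 1: $[\phi]=\cD$ near $\phi$.} Simplicity of $D$ says every multigerm in $\cD$ is RL equivalent to $\phi$, so $\cD\subset[\phi]$. Conversely, $D$ is invariant under RL equivalence, because the defining conditions (vanishing derivatives, coincident images, linear dependence, incidence with $\cL$) are preserved by equivariant diffeomorphisms of source and target. This invariance is implicit in the setup and I would check it case by case. It gives $[\phi]\subset\cD$, so the two sets agree locally.

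\emph{Step 2: transversality in jet space gives transversality in germ space.} The multijet map $\mj\colon\cO(t_1,\dots,t_n)^{\Z_2}\times(\text{positions of base points})\to\mJ$ is a submersion, since jets of germs can be prescribed arbitrarily, including in the equivariant setting by Wall's description. Hence $T_\phi\cD=(d\,\mj)^{-1}(T D)$, projected to germ space after accounting for translations of the $t_i$. The codimension of $\cD$ in germ space therefore equals $\pcodim(D)$: the codimension of $D$ minus the number of free source parameters $m$. If the jet extension of $\Phi$, taken as a map from $\Lambda$ times the source tuples, is transverse to $D$, then pulling back along the submersion shows that the image of $d\Phi(\lambda_0)$, together with the source translations, spans the normal space to $D$.

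\emph{Step 3: absorb the translations.} The source translations are already included in the tangent space to $[\phi]$ through the terms $h_i(s_i)\,d\phi_i/ds_i$ with $h_i(t_i)\ne0$ in \eqref{eq:RL_versal}. So $\mathrm{im}\,d\Phi(\lambda_0)+T_\phi[\phi]$ is all of $\cO^{\Z_2}$, which is exactly the criterion of Theorem~\ref{thm:d_deform}.

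The main obstacle is that these are infinite-dimensional germ spaces, so Steps 1 and 2 need care: $[\phi]$ should be treated as a submanifold with tangent space \eqref{eq:RL_versal}. I would handle this by truncating to a sufficiently high jet order $k$, using finite determinacy of the simple singularity so that $\mm^{k+1}$ lies in the tangent space. Then all the arguments above take place in the finite-dimensional spaces $\mJ^k$, and $T[\phi]$ is the preimage of $T D$ under truncation.
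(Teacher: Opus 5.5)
Your proposal is sound, and its first half matches the paper. Simplicity places $D$ inside a single RL orbit in jet space, so $T D$ lies in the truncation of \eqref{eq:RL_versal}. Transversality then makes $\nu=h\phi'+k(\phi)+\sum_j c_j\,\partial\Phi/\partial\lambda_j$ solvable modulo $\mathfrak{m}^{k+1}$. Only the inclusion $\cD\subset[\phi]$ is needed for this. The reverse inclusion in your Step~1 is superfluous, and it can fail for an arbitrary simple $D$ (e.g.\ a lower-dimensional piece of an orbit), though transversality to such a $D$ still implies transversality to the orbit.

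Where you genuinely differ is in upgrading from jets to germs. The paper pulls $D$ back to every $\mJ^{k'}$ and obtains solvability modulo every power. It then passes to a formal solution, and from there to a smooth one via the Malgrange preparation theorem. You instead absorb the tail at a single jet level using $\mathfrak{m}^{k+1}\cO\subset T[\phi]$.

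The determinacy you need comes for free. By the paper's definition of simple, any germ with the same $k$-jet as $\phi$ has jet in $D$, hence is RL equivalent to $\phi$. But the step from $k$-determinacy to that containment is not formal for RL equivalence. The space \eqref{eq:RL_versal} is not an $\cO$-module, so Nakayama does not apply. What you are invoking is the finite $\mathcal{A}$-determinacy theorem of Mather and Gaffney, whose proof itself uses the preparation theorem.

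You also need that containment equivariantly. One way to get it is to note that $(h,k)\mapsto h\phi'+k\circ\phi$ commutes with the $\Z_2$-actions. Averaging then identifies the equivariant tangent space with the invariant part of the non-equivariant one. So the non-equivariant containment passes to invariants, at least for the strata the paper uses, which are also finitely determined non-equivariantly.

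Your route therefore does not avoid Malgrange; it packages it inside a cited determinacy theorem. In exchange, the whole argument lives at one finite jet level, with no passage through formal power series. The paper's route needs only the preparation theorem itself, whose equivariant version it sketches in the appendix.
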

\begin{proof}[Sketch of proof]
  We give a sketch in the non-equivariant case, when $D\subset\mJ^k(\cS,\R^2)$. The general case is analogous, but involves more notation.
  Suppose $\dim\Lambda=r$.
  Suppose $\Phi\colon\Lambda\to\cO(t_1)$ is such that $\mj^k\Phi\colon\Lambda\to\mJ^k(\cS,\R^2)$ is transverse to $D$ at $\lambda_0$
  and set $\phi=\Phi(\lambda_0)$. Simplicity of $D$ means that for any 
  $\nu\in T_{\mj^k\phi}D$, there exists a 1-parameter family of RL-equivalence
  whose direction at the origin is $\nu$. Formally, in local coordinates, this means that for any $\nu\in\cO(t_1,\dots,t_n)$,
  there exists $h,k$, such that
  \[\nu(t)=h(t)\frac{d\phi}{dt}+k(\phi)\bmod t^{k+1},\]
  where we restrict modulo $t^{k+1}$, because we deal with transversality to the jet space.

  Next, transversality of $\Phi$ to $D$ implies that any direction transverse to $D$ can be realized by a derivative of $\Phi$
  along a parameter $\lambda$. Together with simplicity, this implies that for any $\nu\in\cO$, the equation
  \[\nu(t)=h(t)\frac{d\phi}{dt}+k(\phi)+\sum c_i\frac{\partial\Phi}{\partial \lambda_i}\]
  with unknowns $h,k,c_1,\dots,c_r$, has solution modulo $t^{k+1}$.
  This means that \eqref{eq:d_deform} is satisfied up to order $k+1$.
   We can also consider $D$ as a subset of a higher-order jet space by pulling back $D$
  via the projection map 
  \[
    \mJ^{k'}(\cS, \R^2) \rightarrow \mJ^{k}(\cS, \R^2)
  \]
  for any $k' > k$. Then $\mj^{k'}\Phi$ is still transverse to $D$. 

  This means that \eqref{eq:d_deform} is satisfied for all polynomials up to degree $t^{k'+1}$. By limiting procedure,
  we can find a solution  at the level of formal power series. By the Malgrange preparation
  theorem, it is satisfied at the level of smooth functions. Hence, the deformation $\Phi$ is infinitesimally versal, so it is versal.
\end{proof}
\begin{remark}
  Theorem~\ref{thm:inf_vers} is extensively used in Section~\ref{sec:patterns} to find versal deformations of simple singularities.
  In fact, we usually find a set of equations defining $D$. Then, a deformation that is transverse to the defining equation,
  is versal.
\end{remark}


\section{Singularities of equivariant maps}\label{sec:patterns}

We now determine normal forms and versal deformations of several classes of equivariant singularities. Our ultimate goal will be to list all singularities of $\pcodim \leq 2$ and find normal forms and versal deformations for each. We first describe coincidences and strikethrough singularities; these are general constructions by which more complicated singularities can be formed from simpler ones. We then move on to cusps and tangencies.

\subsection{Coincidences}\label{sub:coin}

We start by defining a coincidence of two singularities. This occurs when a map acquires two singularities independently. Formally, let $D_1$ and $D_2$ be subsets of $\mJ^{k_1, s_1}(\cS, \R)$ and $\mJ^{k_2, s_2}(\cS, \R)$, respectively. For $i = 1, 2$, let
\[
\pi_i \colon \mJ^{\max(k_1, k_2), s_1 + s_2}(\cS, \R) \rightarrow \mJ^{k_i, s_i}(\cS, \R) 
\]
be the projection map. This sends a multigerm over $s_1 + s_2$ points to its restriction over its first $s_1$ points (for $i = 1$) or its last $s_2$ points (for $i = 2$), and also throws out the information of the derivatives of order higher than $k_1$ (when $i = 1$) or $k_2$ (when $i = 2$). We also define
\[
(\R^2)^{s_1 \neq s_2} = \{(y_1,\dots,y_{s_1 + s_2})\in (\R^2)^{s_1 + s_2} \colon y_i\neq y_{j} \textrm{ if } i\le s_1\textrm{ and } j>s_1\}.
\]
to be the set consisting of $(s_1 + s_2)$-tuples of points in $\R^2$ such that none of the first $s_1$ points in $\R^2$ coincide with any of the last $s_2$ points in $\R^2$.  

\begin{defn}\label{def:coincidence}
Define the \textit{coincidence of $D_1$ and $D_2$} to be
\[
D_1 \wtimes D_2 = (\pi_1^{-1}(D_1) \cap \pi_2^{-1}(D_2)) \cap \mb^{-1}((\R^2)^{s_1 \neq s_2}).
\]
This is a subset of $\mJ^{\max(k_1, k_2), s_1 + s_2}(\cS, \R)$. Here, $\mb$ is the target map
\[
\mb \colon \mJ^{\max(k_1, k_2), s_1 + s_2}(\cS, \R) \rightarrow (\R^2)^{s_1 + s_2}.
\] 
\end{defn}

Explicitly, $D_1 \wtimes D_2$ consists of equivalence classes of multigerms $\phi$ such that: 
\begin{enumerate}
\item The ($k_1$th jet extension of the) restriction of $\phi$ to the first $s_1$ points lies in $D_1$;
\item The ($k_2$th jet extension of the) restriction of $\phi$ to the last $s_2$ points lies in $D_2$; and,
\item The image points under $\phi$ of its first $s_1$ domain points are disjoint from the image points of its last $s_2$ points.
\end{enumerate}
The space $D_1 \wtimes D_2$ is a relatively open subset of $\pi_1^{-1}(D_1) \cap \pi_2^{-1}(D_2)$. The difference corresponds to disallowing pairs of singularities occurring at the same image point. For example, if $D_1$ and $D_2$ describe triple points, then $D_1 \wtimes D_2$ describes a map with two triple points at two different places. On the other hand, in general $\pi_1^{-1}(D_1) \cap \pi_2^{-1}(D_2)$ allows singularities where the two triple points overlap; i.e., a 6-tuple point.

\begin{defn}
Definition~\ref{def:coincidence} clearly extends to the case of equivariant jet spaces, where if
\[
D_1 \subset \mJ_{\Z_2}^{k_1, (m_1, n_1)}(\cS, \R^2) \quad \text{and} \quad D_2 \subset \mJ_{\Z_2}^{k_2, (m_2, n_2)}(\cS, \R^2)
\]
then
\[
D_1 \wtimes D_2 \subset \mJ_{\Z_2}^{\max(k_1, k_2), (m_1 + m_2, n_1 + n_2)}(\cS, \R^2).
\]
The disjointness condition for the two halves of the domain is interpreted as follows: a germ $\phi$ in $D_1 \wtimes D_2$ has the property that no image points of $\phi$ associated to the first half of the domain (or the reflections of these points) coincide with any image points of $\phi$ associated to the second half of the domain (or the reflections of these points).
\end{defn}

The following computation is clear:

\begin{lemma}\label{lem:coincidence}
  We have $\pcodim(D_1 \wtimes D_2) = \pcodim(D_1) + \pcodim(D_2)$.
\end{lemma}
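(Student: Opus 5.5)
The plan is to compute both the jet-space codimension and the source dimension of $D_1 \wtimes D_2$, and to observe that each is additive in $D_1$ and $D_2$. By Definition~\ref{def:codimension}, $\pcodim(D) = \codim(D) - m$, where $m$ is the number of free-orbit ($\lambda_0$) factors. For $D_1 \wtimes D_2$ this number is $m_1 + m_2$, which is additive by construction. So it suffices to show that
\[
\codim(D_1 \wtimes D_2) = \codim(D_1) + \codim(D_2)
\]
inside $\mJ_{\Z_2}^{k,(m_1+m_2,n_1+n_2)}(\cS,\R^2)$, where $k=\max(k_1,k_2)$.

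First, replace $D_i$ by its preimage under the truncation $\mJ^{k} \to \mJ^{k_i}$. Truncation is a locally trivial fibration (a vector-bundle projection on fibers), so this does not change codimension. We may therefore assume $k_1=k_2=k$.

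Next, note that the multijet space over $m_1+m_2$ free orbits and $n_1+n_2$ fixed points is an open subset of the product
\[
\mJ_{\Z_2}^{k,(m_1,n_1)}(\cS,\R^2)\times \mJ_{\Z_2}^{k,(m_2,n_2)}(\cS,\R^2).
\]
The only difference is the open condition that the orbits in the first group are distinct from those in the second. This works because a multijet over distinct orbits is just a tuple of jets; equivariance is imposed orbit by orbit, as in Section~\ref{sub:ejet}. Under this identification, $\pi_1$ and $\pi_2$ are the two coordinate projections, so
\[
\pi_1^{-1}(D_1)\cap\pi_2^{-1}(D_2) = (D_1\times D_2)\cap(\text{open set}).
\]
A product of submanifolds has codimension equal to the sum of the codimensions.

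Finally, $D_1\wtimes D_2$ is obtained from $\pi_1^{-1}(D_1)\cap\pi_2^{-1}(D_2)$ by intersecting with $\mb^{-1}$ of the open set where the image points of the two groups (and their $\tau$-reflections) are disjoint. This is a relatively open condition, so the codimension is unchanged. One point needs a brief check: if this open subset is empty, the statement is vacuous.

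The only mildly delicate step is the product identification in the equivariant setting with fixed-point factors, in particular that the fixed-point jet spaces $\mJ^{k,\lambda_1}_{\Z_2}$ split as a product in the same way. I expect this to follow directly from Wall's construction, since the orbit-type strata simply appear as separate factors.
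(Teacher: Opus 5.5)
Your proposal is correct and follows the paper's proof: the jet-space codimension is additive because $\pi_1^{-1}(D_1)$ and $\pi_2^{-1}(D_2)$ are preimages under independent factor projections, and the disjointness condition only cuts out an open set. The source dimension $m_1+m_2$ is additive as well. The product identification you flag as delicate is in fact immediate from the paper's definition of $\mJ_{\Z_2}^{k,\Lambda}(\cS,\R^2)$ as an open subset of the product of single-orbit-type jet spaces.
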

\begin{proof}
Clearly, 
\[
\codim(D_1 \wtimes D_2) = \codim(\pi_1^{-1}(D_1)) + \codim(\pi_2^{-1}(D_2)) = \codim(D_1) + \codim(D_2). 
\]
Moreover, the dimension of the domain for multigerms in $D_1 \wtimes D_2$ is the sum of the dimensions of the domains of multigerms in $D_1$ and $D_2$, respectively. This holds in both the non-equivariant case, where the dimension is $s_1 + s_2$, and in the equivariant case, where the dimension is $m_1 + m_2$. The claim follows.
\end{proof}

Given two multigerms $\phi_1$ and $\phi_2$ over disjoint sets of points and with disjoint images, we denote the obvious multigerm over the union of the domain points by $(\phi_1, \phi_2)$.

\begin{lemma}\label{lem:ver_coi}
Let $\Phi_1 \colon \Lambda_1 \rightarrow \cO_1$ and $\Phi_2 \colon \Lambda_2 \rightarrow \cO_2$ be versal deformations for $\phi_1$ and $\phi_2$, respectively. Then the obvious deformation
\[
\Phi_1 \times \Phi_2 \colon \Lambda_1 \times \Lambda_2 \rightarrow \cO_1 \oplus \cO_2
\]
is a versal deformation for $(\phi_1, \phi_2)$
\end{lemma}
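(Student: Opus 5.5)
We reduce to infinitesimal versality and use Theorem~\ref{thm:versality_theorem} (Martinet). By that theorem, $\Phi_1$ and $\Phi_2$ are versal. The point is to show that versality implies infinitesimal versality, which is the easy converse. Then we check infinitesimal versality of $\Phi_1\times\Phi_2$ via the criterion of Theorem~\ref{thm:d_deform}, and conclude again by Theorem~\ref{thm:versality_theorem}.

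For the converse, take $\nu\in\cO_1$ and the one-parameter deformation $\phi_1+\epsilon\nu$. Versality induces it from $\Phi_1$ via RL equivalences depending smoothly on $\epsilon$. Differentiating at $\epsilon=0$ writes $\nu$ in the form \eqref{eq:d_deform}: the $h_i$ and $k$ come from the derivatives of the reparametrizations, and the $c_j$ from the derivative of the inducing map $h$. The same argument works for $\Phi_2$.

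Now let $\nu=(\nu_1,\nu_2)\in\cO_1\oplus\cO_2$. Write $\nu_1 = (h_i\, d\phi_1/ds_i)_i + k^{(1)}(\phi_1)+\sum c_j\partial_{\lambda_j}\Phi_1$, and similarly for $\nu_2$ with data $h'_i$, $k^{(2)}$, $c'_j$. The domain terms combine directly, because the $h_i$ live on disjoint domain points. The codomain germ is the key step. The germ $k^{(1)}$ is defined near the image set $\phi_1(\{t_i\})$, and $k^{(2)}$ near $\phi_2(\{t'_i\})$. These image sets are disjoint by hypothesis (in the equivariant case, together with their $\tau$-reflections, as in the definition of $\wtimes$). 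So there is a single germ $k$ at the union of the image points that agrees with $k^{(1)}$ near the first set and with $k^{(2)}$ near the second. A germ at a finite set of points is just a tuple of germs at each point, so no bump functions are needed.

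In the equivariant case, $k^{(1)}$ and $k^{(2)}$ are $\tau$-equivariant germs at $\tau$-invariant point sets. The disjointness of the $\tau$-saturated image sets makes the glued $k$ equivariant. The parameter terms split because $\partial_{\lambda_j}(\Phi_1\times\Phi_2)=(\partial_{\lambda_j}\Phi_1,0)$ for $\lambda_j\in\Lambda_1$, and similarly for $\Lambda_2$. So $\nu$ has the required form, $\Phi_1\times\Phi_2$ is infinitesimally versal, and hence it is versal by Theorem~\ref{thm:versality_theorem}.

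The main obstacle is the step showing that versality implies infinitesimal versality. There one needs the smooth dependence of the RL equivalences on the parameter, as required in the definition of induced deformations, in order to differentiate. If that is awkward, an alternative is to assume $\Phi_i$ are given as infinitesimally versal, which is how they arise in practice via Theorem~\ref{thm:inf_vers}. Everything else is bookkeeping.
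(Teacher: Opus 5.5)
Your argument is correct. The paper gives no proof at all (``We leave the verification to the reader''), so the comparison is with the direct verification the authors presumably have in mind, and that verification works at the level of versality itself.

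Given any deformation $\Psi$ of $(\phi_1,\phi_2)$ over a base $M$, restrict it to the domain points of $\phi_1$ and of $\phi_2$ to get deformations $\Psi_1,\Psi_2$. Versality of $\Phi_1,\Phi_2$ gives inducing maps $h_1,h_2$ together with smooth families of RL equivalences. Then $(h_1,h_2)\colon M\to\Lambda_1\times\Lambda_2$ induces $\Psi$ from $\Phi_1\times\Phi_2$. The right reparametrizations are glued over the disjoint domain points, and the left ones are glued as germs at the disjoint ($\tau$-invariant) image sets. This is exactly the observation you use to glue $k^{(1)}$ and $k^{(2)}$.

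Your route instead goes through the tangent-space criterion of Theorem~\ref{thm:d_deform}. That makes the gluing a transparent linear statement and matches how the paper actually certifies versality in practice (Theorem~\ref{thm:inf_vers}). It costs two extra inputs: the converse ``versal $\Rightarrow$ infinitesimally versal'', and Martinet's theorem, hence the equivariant Malgrange preparation theorem of Section~\ref{sec:app}.

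For the converse, smooth dependence on the parameter is not quite enough. You also need the families of RL equivalences to be the identity at $\epsilon=0$, i.e.\ deformations of the identity as in the AVG definition the paper cites. Otherwise the base-point pair is only an element of the stabilizer of $\phi_1$. Differentiating then places $\nu$ only in the orbit tangent space plus the image of the span of the $\partial\Phi_1/\partial\lambda_j$ under that stabilizer element, not the span itself.

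With that convention made explicit, both arguments are complete. The direct one is shorter and needs no preparation theorem.
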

\begin{proof}
We leave the verification to the reader.
\end{proof}

\subsection{Strikethroughs}\label{sub:strike}
We now turn to the case of strikethroughs. In a strikethrough singularity, we add an extra domain point to our multigerm $\phi$, and we require that the image of this point be equal to an already-existing image point (for concreteness, the image of the first domain point). Compare
the discussion in \cite[Section 2.2]{Stevens2}. For example, a strikethrough of a double point is a triple point, a strikethough of a triple point is a quadruple point, and so on. Formally, let $D \subset \mJ^{k, s}(\cS, \R^2)$. Let
\[
\pi \colon \mJ^{k, s + 1}(\cS, \R^2) \rightarrow \mJ^{k, s}(\cS, \R^2)
\]
be the projection and let
\[
\Delta = \left\{(u_1,\dots,u_{s+1})\in(\R^2)^{s+1}\colon u_1=u_{s+1}\right\}
\]
be the set of $(s+1)$-tuples of points in $\R^2$ where the first and last points coincide.

\begin{definition}\label{def:strikethrough}
Define the \textit{strikethrough} of $D$ to be
\[
\cancel{D} = \pi^{-1}(D) \cap \mb^{-1}(\Delta).
\]
This is a subset of $\mJ^{k, s + 1}(\cS, \R^2)$. Here, $\mb$ is the target map
\[
\mb \colon \mJ^{k, s + 1}(\cS, \R^2) \rightarrow (\R^2)^{s+1}.
\]
We say that a strikethrough is \textit{ordinary} if $\phi'(t_{s+1})$ is not parallel to any of the generalized tangent lines at the other $\phi(t_i)$. (Recall that the generalized tangent line to $\phi$ at $t_i$ is $\phi^{(m)}(t_i)$ for the least $m \geq 1$ for which this is nonzero.) In particular, note that $\phi'(t_{s+1})$ is required to be nonzero.
\end{definition}

Explicitly, $\cancel{D}$ consists of multigerms $\phi$ such that the restriction of $\phi$ to its first $s$ points lies in $D$, while the image of the $(s+1)$st point coincides with the image of the first point. (Usually, we think of $\phi$ as a multigerm with only one image point.) An ordinary strikethrough further has the property that the germ over $t_{s+1}$ has nonvanishing derivative and is not tangent to any of the other branches of the multigerm.

\begin{defn}
In the equivariant case, we will deal exclusively with the case that the new domain point $t_{s+1}$ lies in $\Sfree$, and so constitutes an orbit $\{t_{s+1}, t_{s+2} = \sigma t_{s+1}\}$. Thus, if
\[
D \subset \mJ^{k, (m, n)}_{\Z_2}(\cS, \R^2)
\]
then
\[
\cancel{D} \subset \mJ^{k, (m +1, n)}_{\Z_2}(\cS, \R^2).
\]
The requirement that $\phi(t_{s+1}) = \phi(t_1)$ is interpreted in the appropriate sense of orbits; that is, we require $\{\phi(t_{s+1}), \phi(t_{s+2}) = \tau \phi(t_{s+1})\}$ to intersect $\{\phi(t_1), \tau \phi(t_1)\}$ nontrivially, where the the latter may consist of one or two points. We say that such a strikethrough is ordinary if $\phi'(t_{s+1})$ is nonzero and is not parallel to any of the generalized tangent lines at the other $\phi(t_i)$, or their reflections. If $\phi(t_{s+1})$ lies on $\cL$, we further require that $\phi'(t_{s+1})$ is neither parallel nor perpendicular to $\cL$.
\end{defn}

The following computation is clear:

\begin{lemma}\label{lem:strikethrough}
  We have $\pcodim(\cancel{D}) = \pcodim(D) + 1$. Moreover, a non-ordinary strikethrough has codimension at least $\pcodim(D) + 2$.
\end{lemma}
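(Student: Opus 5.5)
The plan is a direct count of defining conditions against the dimension of the source, as in Definition~\ref{def:codimension} and the non-equivariant analogue. First I would show $\pcodim(\cancel{D}) = \pcodim(D)+1$. Since $\pi\colon \mJ^{k,s+1}(\cS,\R^2)\to\mJ^{k,s}(\cS,\R^2)$ is a locally trivial fibration (forgetting the jet over the last point), $\codim \pi^{-1}(D) = \codim D$. The fibre of $\pi$ records the full $k$-jet at $t_{s+1}$, including the target value $\phi(t_{s+1})\in\R^2$, and that value is free. Hence the extra condition $\mb^{-1}(\Delta)$, namely $\phi(t_{s+1})=\phi(t_1)$, is transverse to $\pi^{-1}(D)$ and imposes exactly two independent equations. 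So $\codim\cancel{D}=\codim D+2$. The source dimension goes up by one, since we add one point of $\cS$ (non-equivariantly), respectively one free orbit, so $m\mapsto m+1$ (equivariantly). Therefore
\[
\pcodim(\cancel{D}) = \codim D + 2 - (s+1)\dim\cS = \pcodim(D)+1,
\]
and similarly with $m+1$ in the equivariant case.

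In the equivariant case I need to check the orbit interpretation of ``$\phi(t_{s+1})$ meets $\{\phi(t_1),\tau\phi(t_1)\}$''. Locally this is either $\phi(t_{s+1})=\phi(t_1)$ or $\phi(t_{s+1})=\tau\phi(t_1)$. Each is again two independent equations on the free value $\phi(t_{s+1})$, because the jet over the free orbit $\{t_{s+1},\sigma t_{s+1}\}$ is a non-equivariant jet over $\Sfree/\Z_2$. So $\cancel{D}$ is a finite union of pieces, each of codimension $\codim D+2$. If $\phi(t_1)\in\cL$, the two conditions coincide and nothing changes.

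For the second claim, a non-ordinary strikethrough lies in $\cancel{D}$ and satisfies at least one further closed condition on the first derivative $\phi'(t_{s+1})$. This derivative is also a free coordinate of the fibre of $\pi$, independent of the target value and of the jet data over $t_1,\dots,t_s$. Each failure mode adds at least one more equation.
\begin{itemize}
\item $\phi'(t_{s+1})=0$ gives two equations.
\item $\phi'(t_{s+1})$ parallel to the generalized tangent line at some $\phi(t_i)$, or its reflection, gives one equation. This is a determinant condition, and the generalized tangent direction is a smooth function on the relevant stratum of $D$. If needed, I would stratify $D$ further by the order $m$ of the first nonvanishing derivative.
\item If $\phi(t_{s+1})\in\cL$, being parallel or perpendicular to $\cL$ gives one equation. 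Moreover $\phi(t_{s+1})\in\cL$ itself is already an extra condition unless $\phi(t_1)\in\cL$.
\end{itemize}
Each such locus is a finite union of submanifolds of codimension at least $\codim D+3$, so $\pcodim\ge\pcodim(D)+2$.

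The main obstacle is the tangent-line condition when the generalized tangent line varies with the jet, that is, when the order $m$ jumps. I would handle it by working on each stratum of $D$ where $m$ is fixed, so that the line is a smooth function of the jet. On the strata where $m$ jumps, $D$ itself already has higher codimension, so those strata only help the inequality.
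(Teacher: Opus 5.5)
Your argument is correct and is essentially the paper's proof: $\codim \pi^{-1}(D)=\codim D$, the coincidence condition $u_1=u_{s+1}$ adds two independent equations while the source dimension increases by one, and non-ordinariness imposes at least one further equation on $\phi'(t_{s+1})$. Your additional remarks only make explicit what the paper treats as ``clear'': transversality coming from the free target value in the fibre of $\pi$, the orbit interpretation in the equivariant case, and stratifying $D$ by the order of the generalized tangent line.
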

\begin{proof}
  We have $\codim(\pi^{-1}(D)) = \codim(D)$, while $\cancel{D}$ adds two more conditions ($u_1=u_{s+1}$). However, we also increase the dimension
  of the domain by one, in both the non-equivariant and equivariant cases. This gives the first claim. If we moreover require that the strikethrough is non-ordinary, then clearly the codimension further increases by at least one.
\end{proof}

We now aim to find versal deformations of strikethrough singularities. In general, this is difficult (compare \cite{Stevens2}), so we focus on a few specific cases that will be sufficient for our purposes. We first work in the non-equivariant setting.

\begin{definition}\label{def:uhs-noneq}
An \emph{unbalanced homogenous singularity} is a multigerm which is RL equivalent to $\phi = (\phi_1, \phi_2, \phi_3)$, where:
  \begin{itemize}
    \item $\phi_1(s_1)=(s_1,0)$ parameterizes $\{u = 0\}$;
    \item $\phi_2(s_2)=(0,s_2)$ parameterizes $\{v = 0\}$;
    \item $\phi_3(s_3)=(s_3^q,s_3^p)$ parameterizes $\{u^p - v^q = 0\}$. We require $\gcd(p, q) = 1$ and $p,q\ge 1$, but $p\neq q$. 
  \end{itemize}
  We also allow our multigerm to be RL equivalent to a multigerm consisting of some subset of these branches; e.g., $\phi = (\phi_1, \phi_3)$. 
\end{definition}

We now have the following result. 

\begin{thm}\label{thm:unb}
Let $\phi$ be an unbalanced homogeneneous singularity and $(\Phi, \Lambda)$ be a versal deformation of $\phi$. Let $\cancel{\phi}$ be an ordinary strikethrough of $\phi$. Construct a deformation $(\cancel{\Phi}, \cancel{\Lambda})$ of $\cancel{\phi}$ by setting $\cancel{\Lambda} = \Lambda \times I$, and defining $\cancel{\Phi}$ to be $\Phi$ on $\Lambda$ and on $I$ moving the strikethrough branch in any direction transverse to its tangent. Then $(\cancel{\Phi}, \cancel{\Lambda})$ is a versal deformation of $\cancel{\phi}$.
\end{thm}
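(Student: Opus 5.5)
We will verify infinitesimal versality via Theorem~\ref{thm:d_deform} and then invoke Theorem~\ref{thm:versality_theorem}. Write $\cancel{\phi}=(\phi,\psi)$, where $\psi(s)$ is the strikethrough branch through the origin. After an RL equivalence we may take $\psi(s)=(as,bs)$ to be a line through $0$. Ordinarity means $(a,b)$ is not parallel to $(1,0)$, to $(0,1)$, or to the generalized tangent line of $\phi_3$. That tangent line is one of the coordinate axes, since $p\neq q$. The deformation direction along $I$ is $\partial\cancel{\Phi}/\partial\mu=(0,\nu_0)$, where $\nu_0$ is a constant vector transverse to $(a,b)$.

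Fix a target $(\nu,\eta)\in\cO(\phi)\oplus\cO(\psi)$. We must solve
\[
(\nu,\eta)=\bigl(h\,\phi'+k(\phi)+\textstyle\sum_j c_j\partial_{\lambda_j}\Phi,\; h_\psi\,\psi'+k(\psi)+c\,\nu_0\bigr).
\]
The key step is to decouple the two components, in two moves. First, use the versality of $\Phi$ to solve the $\phi$-component with some $h_i,k,c_j$. Second, correct $k$ by a germ $\tilde k$ that vanishes to all orders on $\phi(U)$, so that the $\phi$-component is unchanged while $\tilde k$ restricted to the line $\psi$ takes prescribed values. The natural candidate is $\tilde k=uv\,g(u,v)$ in coordinates where $\phi_1,\phi_2$ are the axes, modified by the factor $(u^p-v^q)$ when $\phi_3$ is present. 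In general $\tilde k=F\cdot g$, where $F$ is the product of the defining equations of the branches of $\phi$.

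The second move requires solving $\eta-k(\psi)-c\nu_0=h_\psi\psi'+F(\psi(s))\,g(\psi(s))$ for $h_\psi,g,c$. Along the line, $F(\psi(s))=s^{N}\cdot(\text{unit})$, where $N$ is the sum of the intersection multiplicities of $\psi$ with the branches. Homogeneity together with ordinarity make this a unit times a monomial, because $(a,b)$ avoids the tangent directions. Hence every coefficient of $s^m$ with $m\ge N$ can be absorbed by $g(\psi(s))$, and $g$ can be chosen as a function of $s$ extended to $\R^2$. The component along $\psi'$ is absorbed by $h_\psi$ at every order.

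This leaves the transverse component of the low-order coefficients $s^0,\dots,s^{N-1}$. Handling these is the main obstacle. The constant term is handled by $c\nu_0$. The terms $s^1,\dots,s^{N-1}$ must instead be absorbed by freedom already present in $k$. We will therefore not fix $k$ in the first move. We will add to it germs $\ell(u,v)$ that are tangent to the RL-orbit of $\phi$, for instance $\ell(\phi)=h'\phi'$ for suitable $h'$, or vector fields tangent to the branches. For the unbalanced homogeneous germ these are the weighted-homogeneous Euler-type fields together with $u\partial_u$ and $v\partial_v$-type fields. We then show that their restrictions to the line $\psi$ span the transverse components in degrees $1,\dots,N-1$.

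We expect this to reduce to a finite linear-algebra check in weighted degrees below $N$. In that check, ordinarity ensures the relevant determinant (for example, $a^{i}b^{j}$ type monomials) is nonzero. Carrying out this low-degree bookkeeping, separately for each subset of the branches $\phi_1,\phi_2,\phi_3$, is where the work lies. Finally, since all constructions can be chosen formally and then smoothly by the Malgrange argument, as in the proof of Theorem~\ref{thm:inf_vers}, infinitesimal versality follows, and Theorem~\ref{thm:versality_theorem} gives versality.
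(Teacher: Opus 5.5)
Your overall architecture is the paper's. You realize the $\phi$-components by versality of $\Phi$. You then correct the strikethrough branch by target vector fields tangent to every branch of $\phi$, so their effect on $\phi$ is cancelled by domain reparametrizations. Finally, $h_\psi$ absorbs the tangential parts and the $I$-parameter absorbs the constant normal term.

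The gap is that the step carrying all the content is only announced as an expected linear-algebra check. That step is realizing the normal coefficients of $s^1,\dots,s^{N-1}$ along $\psi$, and the reason you give for why it should work points at the wrong hypothesis. You attribute the nondegeneracy to ordinarity (``$a^ib^j$''). Among your candidate fields, $u\partial_u$ and $v\partial_v$ are not tangent to $\phi_3=(s^q,s^p)$ when that branch is present.

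What makes the step work is $p\neq q$, and without it the step is false. Take $p=q=1$, which gives four concurrent lines (the $X_9$ case the paper treats separately). The linear part of any vector field tangent to three concurrent lines is a multiple of the identity, hence tangent to $\psi$. So the normal $s^1$-coefficient, i.e.\ the slope of $\psi$, cannot be realized; this is exactly the modulus. Until you exhibit where $p\neq q$ enters, the key step is unproved.

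The missing ingredient is to multiply the weighted Euler field by an arbitrary function, which the paper does in one line. For any germ $\theta(u,v)$, set $k_\theta=-\theta\cdot(qu,pv)$, $h_1=qs_1\theta(s_1,0)$, $h_2=ps_2\theta(0,s_2)$, and $h_3=s_3\theta(s_3^q,s_3^p)$. Then $h_i\phi_i'+k_\theta(\phi_i)=0$ for each branch present, so the fields $k_\theta$ never disturb $\phi$.

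On $\psi$ with $\psi'(0)=(a,b)$ and $ab\neq0$, one has $\det\bigl(\psi'(s),(q\psi^x(s),p\psi^y(s))\bigr)=ab(p-q)s+O(s^2)$. Hence the normal component of $k_\theta(\psi(s))$ is $s\,\theta(\psi(s))$ times a unit, nonzero precisely because $p\neq q$. Since $\theta\circ\psi$ can be an arbitrary germ in $s$, every normal coefficient of degree $\geq 1$ is realized at once. For $\psi=(s,s)$ the paper takes $\theta(s,s)=(\nu^x-\nu^y)/(s(p-q))$.

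This single family has three further consequences:
\begin{itemize}
\item It makes your $F\cdot g$ stage and the degree-by-degree bookkeeping unnecessary.
\item It works equally for curved $\psi$, so the straightening of $\psi$, which you asserted without justification, is not needed.
\item It produces smooth germs directly by Hadamard division, so no formal-then-Malgrange step is required.
\end{itemize}
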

\begin{proof}
Let $\phi_1(s_1)=(s_1,0)$, $\phi_2(s_2)=(0,s_2)$, and $\phi_3(s_3)=(s_3^q,s_3^p)$. The case where the original singularity has
fewer components is an obvious simplification of this case. Let $\phi_4$ be the new branch of the strikethrough. For simplicity, we assume that $\phi_4(s_4)=(s_4,s_4)$, but the proof is valid for $\phi_4$ whose derivative at zero is neither horizontal nor vertical. 

To verify that $\cancel{\Phi}$ is a versal deformation, we must first understand the tangent space to the RL orbit $[\cancel{\phi}]$ of $\cancel{\phi}$. Recall that the tangent space to $[\cancel{\phi}]$ is a subspace of the space of all multigerms $(\nu_1, \nu_2, \nu_3, \nu_4)$, where each $\nu_i$ may be thought of as a pair of power series. We claim that for any pair of power series $\nu_4$ with $\nu_4^x(0) = \nu_4^y(0)$, we can find an RL equivalence which fixes the branches $\phi_1$, $\phi_2$, and $\phi_3$, but moves $\phi_4$ in the direction of $\nu_4$.

  To begin with, notice that for any $\lambda\in\R\setminus\{0\}$, the change $s_1\mapsto \lambda^q s_1$, $s_1\mapsto\lambda^p s_2$,
  $s_3\mapsto\lambda s_3$, followed by $u\mapsto \lambda^{-q}u$, $v\mapsto\lambda^{-p}v$, preserves both the horizontal and vertical lines,
  as well as the weighted homogenous singularity parameterized by $\phi_3$. The infinitesimal generator of this action is obtained by taking
  $\lambda=1+\varepsilon$ and considering the part of the action linear in $\varepsilon$: in this way we obtain $h_1,h_2,h_3,k$, which can be used to calculate the corresponding tangent vector to the orbit of the action. In this case, $h_1(s_1)=q s_1$, $h_2(s_2)=ps_2$, $h_3(s_3)=1$, $k(u,v)=(-qu,-pv)$.

  A more general action is given by allowing the parameter $\lambda$ to vary. For any smooth function $\theta(u, v)$ and for $\varepsilon$ sufficiently small, consider the local diffeomorphism sending
\[
s_1 \mapsto (1+\varepsilon\theta(s_1,0))^q s_1, \quad s_2 \mapsto (1+\varepsilon\theta(0,s_2))^ps_2, \quad s_3\mapsto(1+\varepsilon\theta(s_3^q,s_3^p))s_3
\]
in the domain and
\[
  u \mapsto u(1+\varepsilon\theta(u,v))^{-q}, \quad v \mapsto v(1+\varepsilon\theta(u,v))^{-p}
\]
in the codomain. It is straightforward to check that this fixes the germs $\phi_1,\phi_2,\phi_3$. The functions $h_1,h_2,h_3,k$ corresponding to this 1-parameter family of diffeomorphisms are obtained by taking
the derivative with respect to $\varepsilon$ and setting $\varepsilon$ to $0$:
\[
h_1(s_1)=q s_1\theta(s_1,0),\quad h_2(s_2)=ps_2\theta(0,s_2),\quad h_3(s_3)=s_3\theta(s_3^q,s_3^p)
\]
and
\[k(u,v)=(-qu\theta(u,v),-pv\theta(u,v)).\]
As explained in \eqref{eq:RL_versal}, the corresponding tangent vector to the orbit is given by
\[
\nu_j=h_j(s_j)\frac{d\phi_j}{ds_j}+k(\phi_j)
\]
as a function of $s_j$. It is straightforward to verify that we indeed have
\begin{align*}
  h_1(s_1)\frac{d\phi_1}{ds_1}+k(\phi_1)&=(qs_1\theta(s_1,0),0)+(-qs_1\theta(s_1,0),0)=(0,0)\\
  h_2(s_2)\frac{d\phi_2}{ds_2}+k(\phi_2)&=(0,ps_2\theta(0,s_2))+(0,-ps_2\theta(0,s_2))=(0,0)\\
  h_3(s_3)\frac{d\phi_3}{ds_3}+k(\phi_3)&=s_3\theta(s_3^q,s_3^q)(qs_3^{q-1},ps_3^{p-1})+(-qs_3^q\theta(s_3^q,s_3^p),-ps_3^p\theta(s_3^q,s_3^p))=(0,0).
\end{align*}
Now, for any function $\kappa(s_4)$, let
\[
h_4(s_4)=\kappa(s_4)+ps_4 \theta(s_4, s_4).
\]
The tangent vector corresponding to this $h_4$ is given by
\begin{align*}
\nu_4&=h_4(\phi)(1,1)+(-qs_4 \theta(s_4, s_4),-ps_4\theta(s_4, s_4))\\
&=(\kappa(s_4)+(p-q)s_4 \theta(s_4, s_4),\kappa(s_4)).
\end{align*}
This shows that any $\nu_4$ with $\nu_4^x(0) = \nu_4^y(0)$ can be achieved with an appropriate choice of $\kappa$ and $\theta$. Indeed, simply choose $\kappa(s_4) = \nu_4^y(s_4)$ and choose $\theta(u, v)$ such that 
\[
\theta(s_4, s_4) = (\nu_4^x(s_4) - \nu_4^y(s_4))/(s_4(p - q)),
\]
where we have used the fact that $\nu_4^x(0) = \nu_4^y(0)$ to see that we may divide by $s_4$. By our discussion above, for any choice of $\theta(u, v)$ we have a corresponding $h_1$, $h_2$, $h_3$ so that $\nu_1 = \nu_2 = \nu_3 = 0$. This gives the claim asserted at the beginning of the proof.

The theorem follows quickly. To say that a deformation is infinitesimally versal is to say that we can achieve any vector in the tangent space of all multigerms by summing together the tangent space to the deformation with the tangent space to the RL orbit. Let
\[
\cancel\Phi(\lambda_1,\lambda_2,\lambda_3,\lambda_4)=(\Phi(\lambda_1,\lambda_2,\lambda_3), \phi_4 + \lambda_4 v),
\]
where $v$ is any vector transverse to $(1, 1)$; e.g. $v = (1, 0)$. Then we can achieve any tangent direction for $\phi_4$ by varying $\lambda_4$ and summing this together with the tangent space to $[\cancel{\phi}]$. We have already supposed we can achieve any tangent direction for $\phi_1$, $\phi_2$, and $\phi_3$ using $\Phi$. This completes the proof.
\end{proof}

In the equivariant setting, there are three cases which we will need for future reference. First, we could have an equivariant germ with a pair of symmetric copies of Definition~\ref{def:uhs-noneq}: that is, an equivariant germ with six branches, with three of them meeting on one side of $\cL$ at a singularity RL equivalent to Definition~\ref{def:uhs-noneq}, and three of them meeting on the other side. (As in Definition~\ref{def:uhs-noneq}, we also allow ourselves to select some subset of these three branches.) The strikethrough of this is formed as in Definition~\ref{def:uhs-noneq} and consists of a pair of symmetric strikethroughs at points on either side of the axis. It is clear that this does not differ substantially from Theorem~\ref{thm:unb}. 

Second, we will be interested in the case where we have a branch of $\phi$ which is based at $t \in \Sfree$ and is RL equivalent to $\phi_3$ from Definition~\ref{def:uhs-noneq}, but now specifically $\phi(t) \in \cL$. The symmetric branch is then also RL equivalent to $\phi_3$, and the two branches meet at a point on $\cL$. An ordinary strikethrough consists of a pair of symmetric curves passing through the point of intersection, such that none of these four curves are tangent to each other. 

Finally, we will be interested in the case where $\phi$ has a single branch based at $t \in \Sfix$ and is RL equivalent to an equivariant copy of $\phi_3$ from Definition~\ref{def:uhs-noneq}. An ordinary strikethrough at this point consists of a pair of symmetric curves passing through $\phi(t) \in \cL$.

\begin{definition}\label{def:euh}
An \emph{equivariant unbalanced homogenous singularity} is an equivariant multigerm which is RL equivalent to one of the following:
  \begin{itemize}
    \item A germ $\phi \in \cO(t_1, \sigma t_1, t_2, \sigma t_2, t_3, \sigma t_3)^{\Z_2}$, where $\phi(t_3) \notin \cL$ and the germ over $t_1, t_2, t_3$ is RL equivalent to the triple of germs from Definition~\ref{def:uhs-noneq} (or some subset). We call this an \textit{off-axis homogenous singularity}.
    \item A germ $\phi \in \cO(t, \sigma t)^{\Z_2}$, where $\phi(t) \in \cL$ and $\phi$ is (equivariantly) RL equivalent to the symmetric pair $(s_1^q,s_1^p)$ and $(-s_2^q, s_2^p)$, where $s_2 = \sigma s_1$. We call this an \textit{on-axis homogenous singularity}. 
    \item A germ $\phi \in \cO(t = \sigma t)^{\Z_2}$ which is (equivariantly) RL equivalent to the equivariant germ $\phi(s) = (s^q, s^p)$ with $q$ odd and $p$ even. We call this a \textit{fixed-point homogenous singularity}.
\end{itemize}
In all cases, we require $\gcd(p, q) = 1$ and $p,q\ge 1$, but $p\neq q$.
\end{definition}
\begin{thm}\label{thm:euh_deform}
Let $\phi$ be an on-axis or fixed-point equivariant unbalanced homogeneneous singularity and $(\Phi, \Lambda)$  be a versal deformation of $\phi$. Let $\cancel{\phi}$ be an ordinary strikethrough of $\phi$. Construct a deformation $(\cancel{\Phi}, \cancel{\Lambda})$ of $\cancel{\phi}$ by setting $\cancel{\Lambda} = \Lambda \times I$, and defining $\cancel{\Phi}$ to be $\Phi$ on $\Lambda$ and on $I$ moving the strikethrough branch along $\cL$. Then $(\cancel{\Phi}, \cancel{\Lambda})$ is a versal deformation of $\cancel{\phi}$.
\end{thm}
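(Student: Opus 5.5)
We mimic the proof of Theorem~\ref{thm:unb}, now with all germs equivariant. The goal is to show that the equivariant RL orbit of $\cancel{\phi}$ already contains every equivariant perturbation of the strikethrough branch, up to one dimension. That missing dimension is exactly the one supplied by moving the branch along $\cL$.

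Concretely, we place the singular point at the origin, so it lies on $\cL=\{u=0\}$. In the fixed-point case the old branch is $\phi_3(s)=(s^q,s^p)$ with $q$ odd and $p$ even. In the on-axis case the old branches are the symmetric pair $(s_1^q,s_1^p)$, $(-s_2^q,s_2^p)$. The new branch is the symmetric pair $\phi_4(s_4)$, $\tau\phi_4\sigma$, with $\phi_4'(0)$ neither horizontal nor vertical; for concreteness take $\phi_4(s_4)=(s_4,s_4)$. Since the symmetric branch is determined by $\phi_4$, equivariant perturbations of the new branch are just arbitrary pairs of power series $\nu_4=(\nu_4^x,\nu_4^y)$ in $s_4$.

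First we check that the weighted-homogeneous family of diffeomorphisms is equivariant for suitable $\theta$. We take $\theta(u,v)$ with $\theta(-u,v)=\theta(u,v)$. Then the codomain map $u\mapsto u(1+\varepsilon\theta)^{-q}$, $v\mapsto v(1+\varepsilon\theta)^{-p}$ commutes with $\tau$. The domain maps must also be equivariant.
\begin{itemize}
\item In the fixed-point case, $s\mapsto(1+\varepsilon\theta(s^q,s^p))s$ is odd in $s$, because $q$ odd and $p$ even make $\theta(s^q,s^p)$ even.
\item In the on-axis case, we define the domain map on $s_1$ and transport it to $s_2=\sigma s_1$ by equivariance.
\end{itemize}
As in Theorem~\ref{thm:unb}, these diffeomorphisms fix $\phi_3$ and its mirror exactly. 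Their infinitesimal generators $h_3,k$ then give $\nu_3=0$, and also $\nu=0$ on the mirror branch.

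On the new branch we take $h_4=\kappa+ps_4\theta(s_4,s_4)$ with $\kappa$ arbitrary, and $k=(-qu\theta,-pv\theta)$. This gives
\[
\nu_4=\bigl(\kappa(s_4)+(p-q)s_4\theta(s_4,s_4),\ \kappa(s_4)\bigr).
\]
So the achievable $\nu_4$ are those with $\nu_4^x(0)=\nu_4^y(0)$, provided we can prescribe $\theta$ along the diagonal $\{u=v\}$ arbitrarily subject to $\theta(-u,v)=\theta(u,v)$.

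This prescription is the step I expect to be the main obstacle. The diagonal line $u=v$ and its mirror $u=-v$ meet only at the origin, so there is no conflict away from $0$. We define $\theta(u,v)=g(v)$ for a germ $g$ in one variable: it is automatically even in $u$, and it restricts to $g(s_4)$ on the diagonal. The only condition is that $g$ be smooth, which follows from the division by $s_4$ exactly as in Theorem~\ref{thm:unb}. More generally, one may use Whitney's theorem on even functions (already invoked for Theorem~\ref{thm:versality_theorem}). Hence the codimension of the orbit within perturbations of the new branch is one, given by the functional $\nu_4\mapsto\nu_4^x(0)-\nu_4^y(0)$.

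Finally, we check that moving the strikethrough branch along $\cL$ is transverse to this orbit. The deformation is $\phi_4+\lambda_4(0,1)$, together with its mirror $\tau\phi_4\sigma+\lambda_4(0,1)$, which is equivariant. Its derivative is $\nu_4=(0,1)$, on which the functional above equals $-1\neq 0$. Adding $\Phi$, which by assumption spans the complementary directions for the old branches, every equivariant multigerm perturbation is a sum of an orbit tangent vector and a parameter derivative. So $\cancel\Phi$ is infinitesimally versal, hence versal by Theorem~\ref{thm:versality_theorem}.

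A remaining point to verify is that the new-branch perturbation can be chosen independently of the old branches. This holds because the chosen diffeomorphisms act trivially on the old branches, while $\kappa$ affects only $h_4$.
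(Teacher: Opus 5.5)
Your proposal is correct and follows the paper's proof essentially step for step. Both arguments use the same weighted-homogeneous family of equivariant diffeomorphisms with $\tau$-invariant $\theta$, which fixes the old branches; the same choice $h=\kappa+ps\,\theta(s,s)$ on the strikethrough branch, which leaves only the obstruction $\nu^x(0)-\nu^y(0)$; and the same observation that the shift along $\cL$ is transverse to that obstruction. Your explicit choice $\theta(u,v)=g(v)$ and your treatment of the fixed-point case fill in details that the paper calls clear or leaves to the reader.
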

\begin{proof}
We give a proof of the on-axis case, leaving the fixed-point case to the reader. Let $\phi_1(s_1)=(s_1^{q},s_1^p)$ and $\phi_2(s_2)=(-s_2^q, s_2^p)$. An equivariant diffeomorphism preserving $\phi_1$
  also preserves $\phi_2$. For any power series $\theta(x,y)$ in $x^2$ and $y$, consider the infinitesimal change of coordinates in the domain given by 
  \[
  h_1(s_1) = s_1\theta(s_1^q, s_1^p), \quad h_2(s_2) = s_2\theta(s_2^q, s_2^p)
  \]
  and
  \[
  k(u, v) = (-qu\theta(u, v), -p v\theta(u, v))
  \]
  in the codomain. These are as in the proof of Theorem~\ref{thm:unb}, but note that $\theta$ is invariant under the action of $\tau$ in the codomain. As before, these fix $\phi_1$ and $\phi_2$.
  
 Let $\phi_3$ and $\phi_4$ be the symmetric pair of strikethrough branches. For simplicity, assume that $\phi_3(s_3)=(s_3,s_3)$ and $\phi_4(s_4)=(-s_4,s_4)$; the general case is similar. As before, for any $\kappa(s_3)$, consider
\[
h_3(s_3)=\kappa(s_3)+ps_3 \theta(s_3, s_3).
\]
The tangent vector corresponding to this $h_3$ is given by
\begin{align*}
\nu_3&=h_3(\phi)(1,1)+(-qs_3 \theta(s_3, s_3),-ps_3\theta(s_3, s_3))\\
&=(\kappa(s_3)+(p-q)s_3 \theta(s_3, s_3),\kappa(s_3)).
\end{align*}
Just as in Theorem~\ref{thm:unb}, we attempt to achieve different tangent vectors $\nu_3$ by choosing $\kappa(s_3) = \nu_3^y(s_3)$ and $\theta(u, v)$ such that 
\[
\theta(s_3, s_3) = (\nu_3^x(s_3) - \nu_3^y(s_3))/(s_3(p - q)).
\]
The only additional requirement is that $\theta$ is a power series in $x^2$ and $y$, but it is clear that this does not present any difficulty. The proof thus proceeds as in the proof of Theorem~\ref{thm:unb}.
\end{proof}

\subsection{Cusps}\label{sub:cusp}
We now discuss cusps. Our goal will be to enumerate the cusps of small codimension and write down a normal form and versal deformation for each. We begin with the non-equivariant case.

\begin{defn}\label{def:noneqcusp}
A \textit{cusp} is a germ $\phi \in \cO(t_0)$ with $\phi'(t_0) = 0$. We say that a cusp has \textit{multiplicity $m > 1$} if $\smash{\phi^{(i)}(t_0) = 0}$ for all $i < m$ and $\smash{\phi^{(m)}(t_0) \neq 0}$. 
\end{defn}

We say that a cusp $\phi$ is in \textit{standard form} if we have chosen local coordinates centered at $t_0$ and $\phi(t_0)$ such that
\[
\phi(t) = (x(t), y(t)),
\]
where $x(0) = y(0) = x'(0) = y'(0) = 0$ and the leading degree of $x(t)$ does not divide the leading degree of $y(t)$ (or vice-versa). It is straightforward to check that every cusp can be put into standard form: start with any coordinate system and select whichever of $x(t)$ and $y(t)$ has lower leading degree. Without loss of generality, suppose $x(t)$ has leading degree $p$ and $y(t)$ has leading degree $k \geq p$. If $p$ divides $k$, then perform the coordinate change in the codomain which subtracts a multiple of $x^{k/p}$ from $y$ to cancel its leading term. Repeating this procedure, we either end up with $y(t) = 0$ or we put $\phi$ in standard form. The former case corresponds to degenerate cusps such as $(t^3, 0)$ or $(t^2, t^2)$; these situations are of infinite codimension in jet space (requiring the vanishing of infinitely many coefficients) and are generally disallowed.

Although the standard form for $\phi$ is not unique, the leading degrees of a cusp in standard form are well-defined. To see this, suppose $\phi$ has leading degrees $p$ and $k$ in some fixed coordinate system. Consider an RL equivalence that preserves the fact that $\phi$ is in standard form. An RL equivalence in the domain (which replaces $t$ by some power series in $t$ with linear leading term) does not change leading degrees. An RL equivalence in the codomain replaces $x$ and $y$ with linear functions of $x$ and $y$, plus higher-order power series in $x$ and $y$. In general, this can only change the leading degrees to $p$ and an element of $\{np \colon 1 \leq n \leq \lfloor q/p \rfloor\} \cup \{k\}$. Thus, under the supposition that the new expression for $\phi$ is still in standard form, the pair of degrees $p$ and $k$ is well-defined.

\begin{definition}\label{def:cusppq}
Let $p, q \geq 2$ with $\gcd(p, q) = 1$. We say that $\phi$ is \textit{$(p, q)$-cusp} if, when put into standard form, it has leading degrees $p$ and $q$. 
\end{definition}

Note that a $(p, q)$-cusp has multiplicity $\min(p, q)$. Not all cusps are $(p,q)$-cusps; for example, $(t^4,t^6+t^7)$. We single out the case when $p$ and $q$ are coprime since the theory for non-coprime cusps is more involved, and we only use the case when
the multiplicity is at most $3$,
because cases with cusps with multiplicity greater than $3$ have higher codimension.

\begin{example}\label{ex:23cusp}
We express the condition of being a $(2, 3)$-cusp in terms of jet space. Consider a germ $\phi$ of the form
\[
x(t) = a_0 + a_1 t + a_2 t^2 + a_3 t^3 + \cdots \quad \text{and} \quad y(t) = b_0 + b_1 t + b_2 t^2 + b_3 t^3 + \cdots.
\]
Without loss of generality we may assume $a_0 = b_0 = 0$. Clearly, $\phi$ is a cusp if and only if $a_1 = b_1 = 0$, and it has multiplicity $2$ if and only if at least one of $a_2$ and $b_2$ is nonzero. Without loss of generality suppose $a_2 \neq 0$. Consider the linear change of coordinates in the codomain replacing $y$ with $y - b_2/a_2 x$; the leading term of $y(t)$ is then $(b_3 - (b_2/a_2) a_3) t^3$. Hence $\phi$ is a $(2, 3)$-cusp if and only if this term is nonzero; that is, $a_2 b_3 - b_2 a_3 \neq 0$. More invariantly, this is equivalent to the condition that $\phi''(t_0)$ and $\phi'''(t_0)$ are linearly independent.
\end{example}

In the equivariant setting, we further distinguish cusps depending on their behavior with respect to the axis of symmetry. We have the following three cases.

\begin{defn}\label{def:eqcusp} Let $\phi \in \cO(t_0, \sigma t_0)^{\Z_2}$. We say $\phi$ is an \textit{off-axis cusp} if it satisfies Definition~\ref{def:noneqcusp} at $t_0$ (and thus also $\sigma t_0$) and $\phi(t_0) \notin \cL$.
\end{defn}

\begin{defn}
Let $\phi \in \cO(t_0, \sigma t_0)^{\Z_2}$. We say $\phi$ is an \textit{on-axis cusp} if it satisfies Definition~\ref{def:noneqcusp} at $t_0$ (and thus also $\sigma t_0$) and $\phi(t_0) \in \cL$. We consider three further subcases:
\begin{itemize}
\item An \textit{on-axis tangency cusp} occurs if the generalized tangent line at $t_0$ is parallel to $\cL$.
\item An \textit{on-axis perpendicular cusp} occurs if the generalized tangent line at $t_0$ is perpendicular to $\cL$.
\item An \textit{on-axis oblique cusp} occurs if the neither of the above happen.
\end{itemize}
\end{defn}

\begin{defn}
Let $\phi \in \cO(t_0 = \sigma t_0)^{\Z_2}$. We say $\phi$ is a \textit{fixed-point cusp} if it satisfies Definition~\ref{def:noneqcusp} at $t_0$. In this case, the generalized tangent line is automatically either parallel or perpendicular to $\cL$.
\end{defn}

An equivariant cusp has multplicity $m$ if it has multiplicity $m$ in the same sense as Definition~\ref{def:noneqcusp}. An equivariant $(p, q)$-cusp is an equivariant cusp which is a $(p, q)$-cusp in the sense of Definition~\ref{def:cusppq}. Note that the RL equivalence putting the cusp in standard form is \textit{not} required to be equivariant.

Our goal is to identify equivariant cusps of small codimension. Our starting point is the following calculation.

\begin{lemma}\label{lem:ext_nu}
 A non-equivariant $(p,q)$-cusp has parametric codimension
\[
\pcodim = p+q-\left\lfloor\frac{q}{p}\right\rfloor-3.
\] 
\end{lemma}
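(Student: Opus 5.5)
We will compute the jet space codimension of the locus of $(p,q)$-cusps directly in coordinates and then subtract the dimension of the source. We assume without loss of generality that $p<q$, so that $p$ is the multiplicity; the other case is symmetric. Since $\gcd(p,q)=1$ and $p\ge 2$, $p$ does not divide $q$. Let $D\subset \mJ^{q}(\cS,\R^2)$ be the set of $q$-jets at a point $t_0$ which are $(p,q)$-cusps. The source of $\mj^q\phi$ is one-dimensional, so $\pcodim(D)=\codim(D)-1$. It therefore suffices to show that $D$ is a submanifold of codimension $p+q-\lfloor q/p\rfloor-2$.

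\emph{Setting up coordinates.} Write the germ as in Example~\ref{ex:23cusp}, with $x(t)=\sum a_it^i$ and $y(t)=\sum b_it^i$, and $a_0=b_0=0$ after translating the target. Multiplicity $p$ imposes $a_i=b_i=0$ for $1\le i\le p-1$, which is $2(p-1)$ independent linear conditions. It also imposes the open condition $(a_p,b_p)\neq(0,0)$. We cover this open set by the two charts $\{a_p\neq 0\}$ and $\{b_p\neq 0\}$. These are exchanged by swapping the coordinates $x$ and $y$, so we work in the chart $a_p\neq0$.

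\emph{Running the standard-form procedure in jets.} Here we follow the procedure described before Definition~\ref{def:cusppq}. For $n=1,\dots,\lfloor q/p\rfloor$ we successively replace $y$ by $y-c_n x^n$, where $c_n$ is chosen to kill the coefficient of $t^{np}$. Each $c_n$ is a rational function of the coefficients $a_i$ and $b_i$, with only powers of $a_p$ in the denominator. Because $p\nmid q$, the multiples of $p$ in $[p,q-1]$ are exactly $np$ for $1\le n\le \lfloor q/p\rfloor$. Let $\tilde b_k$ denote the coefficient of $t^k$ in the modified $y$, for $p\le k\le q$. The jet is a $(p,q)$-cusp exactly when two things hold. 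First, $\tilde b_k=0$ for every $k\in[p,q-1]$ that is not a multiple of $p$; there are $(q-p)-\lfloor q/p\rfloor$ such $k$. Second, $\tilde b_q\neq 0$, which is an open condition. Any other codomain change yields the same leading degrees, by the well-definedness argument given before Definition~\ref{def:cusppq}. Hence these equations cut out $D$ exactly, and not merely some subset of it.

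\emph{Independence of the conditions.} This is the main point, and the step we expect to be the main obstacle. We must check that the $\tilde b_k$ for non-multiples $k$ are independent functions on the chart. The key observation is that $\tilde b_k=b_k+P_k$, where $P_k$ is a polynomial in $b_j$ for $j<k$, in $a_j$, and in $a_p^{-1}$. The correction terms come from $c_nx^n$, and $c_n$ is determined by $b_{np}$ and lower coefficients; since $np<k$ for the relevant $k$, no $b_j$ with $j\ge k$ enters $P_k$. The Jacobian of the map $(b_k)_{k}\mapsto(\tilde b_k)_k$ is therefore unitriangular, so the defining map is a submersion and $D$ is a submanifold. Its codimension is
\[
2(p-1)+(q-p)-\left\lfloor\tfrac qp\right\rfloor=p+q-\left\lfloor\tfrac qp\right\rfloor-2 .
\]
Subtracting $1$ for the source gives the stated value of $\pcodim$. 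As a sanity check, for $(p,q)=(2,3)$ this recovers $\pcodim=1$, matching Example~\ref{ex:23cusp}. For $(2,5)$ it gives $2$, matching the two versal parameters found in Example~\ref{ex:t25}.
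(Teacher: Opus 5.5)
Your proof is correct and uses the same count as the paper: $p+q-2$ coefficient conditions (your $2(p-1)+(q-p)$), minus the $\lfloor q/p\rfloor$ inessential ones in degrees $np$ that are absorbed by $y\mapsto y-c_nx^n$, minus one for the source. The paper only sketches this count and defers to \cite{BZ} the fact that these conditions cut out a submanifold of that codimension, whereas your observation that $\tilde b_k-b_k$ depends only on lower coefficients proves it directly. One small correction: the formula is not symmetric in $p,q$ (for $(3,2)$ it would give $2$ instead of $1$), so the case $p>q$ is not ``symmetric''. The lemma must be read with the paper's convention that $p<q$, i.e.\ that $p$ is the multiplicity.
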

\begin{proof}
  The calculation is done in \cite{BZ} in the complex setting, and holds in the real case as well. The relevant conditions in jet space are given by vanishing of so-called \emph{Puiseux coefficients}. To get an idea for this codimension, we first approximate the condition of being a $(p, q)$-cusp by requiring the first $p - 1$ coefficients of $x(t)$ to vanish and the first $q - 1$ coefficients of $y(t)$ to vanish, leading to $p + q - 2$ conditions. However, as discussed after Definition~\ref{def:noneqcusp}, requiring the coefficients in $y(t)$ of degrees lying in $\{np \colon 1 \leq n \leq \lfloor q/p \rfloor\}$ is not actually an essential condition; so in fact there are $p + q - \lfloor q/p \rfloor - 2$ conditions. Since the domain is dimension $1$, this gives the calculation of $\pcodim$.
\end{proof}
\begin{remark}
  The proof of Lemma~\ref{lem:ext_nu} lets us find the conditions in the jet space specifying a $(p,q)$-cusp. This is helpful for finding
  a versal deformation: it essentially consists of modifying $x$ up to order $p$ and changing the 
  exponents of $y$ less than $q$ that are not divisible by $p$.
\end{remark}

The calculation of $\pcodim$ an off-axis cusp is the same. For an on-axis cusp, the calculation is similar, but we increase the codimension by one since we now have the condition that the cusp lies on $\cL$; we increase it by a further one if the cusp is required to be a tangency cusp or perpendicular cusp. The calculation for a fixed-point cusp is different (and will be done only in small cases), because in the fixed-point case only germs $(x(t), y(t))$ with odd powers of $t$ and even powers of $t$ (for $x(t)$ and $y(t)$, respectively) are allowed.

\begin{lemma}\label{lem:cusplist}
The only equivariant cusps with $\pcodim \leq 2$ are the following:
\begin{itemize}
\item an on-axis $(2, 3)$-cusp has $\pcodim = 1$, while an on-axis $(2, 5)$-cusp has $\pcodim = 2$;
\item an off-axis oblique $(2, 3)$-cusp has $\pcodim = 2$; and,
\item a fixed-point $(2, 3)$-cusp has $\pcodim = 1$, while a fixed-point $(2, 5)$-cusp or fixed-point $(3, 4)$-cusp has $\pcodim = 2$.
\end{itemize}
\end{lemma}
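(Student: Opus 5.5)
The plan is to compute $\pcodim$ for every type of equivariant cusp directly from Lemma~\ref{lem:ext_nu}, using the adjustments described after it, and then discard every type whose value exceeds $2$. The cusps split into those based at a point of $\Sfree$ (an orbit $\{t_0,\sigma t_0\}$) and those based at a point of $\Sfix$. I treat the two families separately because their jet spaces differ: $\mJ^{k,(1,0)}_{\Z_2}$ has a one-dimensional source, while $\mJ^{k,(0,1)}_{\Z_2}$ has a zero-dimensional source and carries only odd coefficients in $x$ and even coefficients in $y$.

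\emph{Cusps at points of $\Sfree$.} Near $t_0$ the germ is an arbitrary non-equivariant germ, so Definition~\ref{def:codimension} lets me count as in the non-equivariant case. Lemma~\ref{lem:ext_nu} gives $p+q-\lfloor q/p\rfloor-3$. This equals $1$ for $(2,3)$ and $2$ for $(2,5)$. It is at least $3$ for every other coprime pair with $p<q$, since for $p=2$ it is $q-\lfloor q/2\rfloor -1\ge 3$ once $q\ge 7$, and for $p\ge 3$ it is $(3,4)\mapsto 3$, increasing from there. Requiring the image point to lie on $\cL$ imposes the single equation that the $x$-coordinate of $\phi(t_0)$ vanishes, which raises $\pcodim$ by one. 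Requiring the generalized tangent line to be parallel or perpendicular to $\cL$ adds one more equation: for a $(2,q)$-cusp this says that a fixed component of $\phi''(t_0)$ vanishes. These conditions are independent of the Puiseux conditions, because they constrain the constant term and the direction of $\phi''(t_0)$, while the Puiseux conditions constrain the coefficients of $y$ of odd degree below $q$ after normalization. The resulting values are as follows.
\begin{itemize}
\item With no condition on the position of the image, the $(2,3)$-cusp has $\pcodim=1$ and the $(2,5)$-cusp has $\pcodim=2$.
\item With image on $\cL$, the oblique $(2,3)$-cusp has $\pcodim=2$.
\item Tangency or perpendicular cusps on the axis, and axis cusps of type $(2,5)$, all have $\pcodim\ge 3$.
\end{itemize}
I match these values to the first two bullets of the statement: the $(2,3)$ and $(2,5)$ values belong to the bullet whose count carries no axis condition, and the value $2$ belongs to the oblique $(2,3)$ case, whose count includes the incidence with $\cL$. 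As the labels are printed, the first bullet says ``on-axis'' and the second says ``off-axis''. Checked against the text after Lemma~\ref{lem:ext_nu}, which adds one to the count for the on-axis case, the computation above assigns the incidence condition to the oblique $(2,3)$-cusp. I would flag this labelling when writing the proof up, rather than change the counts.

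\emph{Fixed-point cusps.} Write $x(t)=\alpha_1 t+\alpha_3t^3+\alpha_5t^5+\cdots$ and $y(t)=\beta_0+\beta_2t^2+\beta_4t^4+\cdots$. The source has dimension $0$, so $\pcodim$ is just the number of conditions. A cusp requires $\alpha_1=0$, which gives $\pcodim=1$ for the generic case $\alpha_3\beta_2\ne0$; this is a $(2,3)$-cusp. The degenerations of codimension $2$ come from one further vanishing.
\begin{itemize}
\item If $\alpha_3=0$ with $\beta_2\ne0$, subtracting odd multiples such as $x\,y^k$ from $x$ kills the $t^5$ term only if $\alpha_5$ is absorbed. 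So I normalize $\alpha_5\neq 0$ to get a $(2,5)$-cusp, and the lowest surviving equivariant term is $t^5$.
\item If $\beta_2=0$ with $\alpha_3\ne0$, I get leading degrees $3$ and $4$, that is, a $(3,4)$-cusp.
\end{itemize}
Everything else requires at least three vanishing conditions. This reproduces the third bullet.

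The main obstacle is showing that the listed conditions are the complete set of independent defining equations, so that no hidden normalization lowers a count. In the fixed-point case this means checking that the equivariant coordinate changes cannot absorb $\alpha_3$ or $\beta_2$. They cannot: equivariant codomain changes add to $x$ only terms odd in $x$, and to $y$ only terms even in $x$, and then the same semigroup argument as in Example~\ref{ex:defining} applies.
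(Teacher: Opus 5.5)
Your proposal is correct and follows essentially the paper's own argument. Cusps at points of $\Sfree$ come from Lemma~\ref{lem:ext_nu}, plus one condition for lying on $\cL$ and one more for a tangency or perpendicular cusp, while fixed-point cusps come from counting the vanishing of $\alpha_1,\alpha_3,\beta_2$ in the equivariant jet. Your flag on the labels is also right: the first two bullets interchange ``on-axis'' and ``off-axis'' (off-axis $(2,3)$ and $(2,5)$ give $\pcodim$ $1$ and $2$, on-axis oblique $(2,3)$ gives $2$), which is how the paper uses the lemma when defining $\cF^1_1$ and when listing the codimension-$2$ on-axis cusps.
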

\begin{proof}
The enumeration of on-axis and off-axis cusps with $\pcodim \leq 2$ follows immediately from Lemma~\ref{lem:ext_nu} and the subsequent discussion. To analyze the case of a fixed-point cusp, we first claim that a fixed-point cusp with multiplicity $4$ or more has codimension at least $3$. To see this, consider the expansion
\[
x(t) = \sum_{i \text{ odd}} \alpha_i t^i \quad \text{and} \quad y(t) = \sum_{i > 0 \text{ even}} \beta_i t^i.
\]
Then $(x(t),y(t))$ has multiplicity at least $4$ if and only if $\alpha_1 = \alpha_3 = \beta_2 = 0$. Hence we may limit our search to fixed-point cusps of multiplicities $2$ and $3$. A fixed-point $(2, 3)$-cusp is defined by the condition $\alpha_1 = 0$, a fixed-point $(2, 5)$ cusp is defined by the conditions $\alpha_1 = \alpha_3 = 0$, and a fixed-point $(3, 4)$-cusp is defined by the conditions $\alpha_1 = \beta_2 = 0$. By direct inspection, we see that
  these are the only cases with $\pcodim \leq 2$.
\end{proof}

Having listed the cusps of small codimension, we now seek to establish a normal form and versal deformation for each. We begin with the on-axis $(2,3)$- and $(2,5)$-cusps, which are handled by the following slightly more general lemma.

\begin{lemma}\label{lem:normal_cusp}
An off-axis $(2, 2k+1)$-cusp is equivariantly RL equivalent to $(t^2, t^{2k+1})$. A versal deformation of such a cusp is provided by 
  \[
  \Phi(\lambda_1,\dots,\lambda_k)(t) = (t^2, t^{2k+1} + \lambda_1 t^{2k-1} + \cdots + \lambda _{k-1} t^3 + \lambda_k t).
  \]
 \end{lemma}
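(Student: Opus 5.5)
Since $t_0\in\Sfree$ and $\phi(t_0)\notin\cL$, the plan is to reduce everything to the classical non-equivariant problem and then compute by hand. By the identification $\cO(t_0,\sigma t_0)^{\Z_2}\cong\cO(t_0)$, $f\mapsto(f,\tau f\sigma)$, an equivariant germ is determined by its branch at $t_0$. Moreover, since $\{t_0\}$ and $\{\sigma t_0\}$ (resp. $\phi(t_0)$ and $\tau\phi(t_0)$) have disjoint neighborhoods, any germ of diffeomorphism at $t_0$ (resp. at $\phi(t_0)$) extends uniquely to an equivariant germ by conjugating with $\sigma$ (resp. $\tau$). Hence equivariant RL equivalence of off-axis germs is the same as ordinary RL equivalence of the branch at $t_0$. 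The same holds for deformations and for the tangent space to the orbit $[\phi]$. So it suffices to prove both claims for a non-equivariant $(2,2k+1)$-cusp and then symmetrize.

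For the normal form, I would first put the cusp in standard form with $x$ of leading degree $2$ and $y$ of leading degree $2k+1$. Then I would reparametrize the domain so that $x(t)=t^2$ exactly: write $x(t)=t^2u(t)$ with $u(0)\neq0$, flip the sign of $x$ if needed, and use the new parameter $t\sqrt{u(t)}$. Next, split $y(t)=y_{\mathrm{ev}}(t)+y_{\mathrm{odd}}(t)$. By Whitney's theorem, the even part is a smooth function of $t^2=x$, so it is removed by the codomain change $y\mapsto y-g(x)$. The odd part has the form $t\,H(t^2)$, and its leading degree $2k+1$ forces $H(x)=x^kc(x)$ with $c(0)\neq0$. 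Thus $y=t^{2k+1}c(t^2)=t^{2k+1}c(x)$, and the codomain change $y\mapsto y/c(x)$ yields $(t^2,t^{2k+1})$.

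For versality I would invoke Theorem~\ref{thm:d_deform} together with the Versality Theorem~\ref{thm:versality_theorem}. Following Example~\ref{ex:t25}, I would compute the tangent space to the orbit, consisting of the vectors $h(t)(2t,(2k+1)t^{2k})+(k_1(t^2,t^{2k+1}),k_2(t^2,t^{2k+1}))$.
\begin{itemize}
\item In the $x$-component, $k_1(t^2,\cdot)$ yields every even power, and $2t\,h(t)$ with $h$ even yields every odd power.
\item The only price of the second step is an extra $y$-term $(2k+1)t^{2k}h(t)$ with $h$ even. This contributes only even powers, which are already reachable through $k_2(x,0)$, so it can be corrected.
\item In the $y$-component, $k_2$ yields all even powers and all odd powers of degree at least $2k+1$, since $t^{2k+1}\cdot(\text{function of }t^2)$ is available. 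Choosing $h$ odd contributes only even powers in $y$ and odd powers in $x$, and the latter are absorbed by $k_1$.
\end{itemize}
Hence the tangent space is exactly the set of pairs whose $y$-coefficients of $t,t^3,\dots,t^{2k-1}$ vanish. The derivatives $\partial\Phi/\partial\lambda_j$ at $0$ are $(0,t^{2k+1-2j})$, which span precisely this $k$-dimensional complement. So $\Phi$ is infinitesimally versal, and therefore versal.

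I expect the main obstacle to be this careful bookkeeping of the tangent space: one must check that the cross-contributions of $h$ to the $y$-component never reach the low odd degrees $t^1,\dots,t^{2k-1}$. The parity splitting of $h$ into even and odd parts is what makes this transparent. The remaining point is to confirm that the symmetrization step really preserves versality. This follows because the equivariant tangent space and the equivariant deformation space are both the symmetric images of their non-equivariant counterparts at $t_0$.
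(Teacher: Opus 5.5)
Your proposal is correct. The normal-form half matches the paper's proof step for step. The paper also observes that off the axis, equivariant and ordinary RL equivalence coincide, and then repeats the classical argument of \cite{David}: normalize $x=t^2$ via $t\sqrt{w}$, remove the even part of $y$ by $y\mapsto y-p(x)$, and divide by the unit $r(x)$. Your appeal to Whitney's theorem makes explicit the smoothness of $p$ and $q$, which the paper passes over by writing power series.

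For versality the routes differ slightly. The paper gives no computation: it cites the literature and Theorem~\ref{thm:inf_vers}, which says that transversality of the family to the defining stratum of a simple singularity implies versality. You instead verify infinitesimal versality directly via Theorem~\ref{thm:d_deform} and the Versality Theorem, extending Example~\ref{ex:t25} from $k=2$ to all $k$. Your route is self-contained and determines the tangent space to the orbit exactly. The paper's route is shorter but rests on the sketched proof of Theorem~\ref{thm:inf_vers}.

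One small slip in your last bullet: an odd $h$ contributes even powers to the $x$-component and odd powers of degree at least $2k+1$ to the $y$-component, not the reverse. This affects nothing, since both kinds of terms already lie in the span of the $k_1$- and $k_2$-contributions. Moreover, even $h$ alone suffices for the surjectivity argument.
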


\begin{proof}
Since away from the axis, non-equivariant and equivariant RL equivalence coincide, the proof is the same as in the non-equivariant case. We thus repeat the proof of \cite[Lemma 4.1]{David}. Following Definition~\ref{def:cusppq}, we may choose coordinates so that our $(2, 2k+1)$-cusp is of the form
\[
x(t) = \sum_{i = 2}^\infty a_i t^i \quad \text{and} \quad y(t) = \sum_{i = 2k+1}^\infty b_i t^i
\]
and re-scale so that $a_2 = b_{2k+1} = 1$. Then $x(t) = t^2 w(t)$ where $w(0) = 1$. The function $\sqrt{w}$ is smooth near zero and hence we may define a new coordinate in the domain by $t \sqrt{w}$. After this change-of-coordinates we have that $x(t) = t^2$. Note that $y(t)$ still has leading term $t^{2k+1}$. Separate $y(t)$ into its even and odd parts by writing
\[
y(t) = p(t^2) + tq(t^2)
\]
for power series $p$ and $q$. Perform the coordinate change in the codomain which replaces $y$ with $y - p(x)$. After this coordinate change, $y(t) = tq(t^2)$. Write $q(t^2) = t^{2k} r(t^2)$ for some invertible power series $r$. Perform the further coordinate change in the codomain which replaces $y$ with $y/r(x)$. After this coordinate change, we finally have $y(t) = t^{2k+1}$, as desired. 

A versal deformation of such a cusp is described in \cite{David}, see also \cite{AVG,Wall_Pgen}. The construction of the versal deformation may be carried out using Theorem~\ref{thm:inf_vers}.
\end{proof}

We now turn to the case of the on-axis oblique $(2, 3)$-cusp. The proof of Lemma~\ref{lem:normal_cusp} does not immediately carry over to this situation, since the changes of variables in the codomain are not necessarily equivariant. Since the analysis of oblique cusps is more complicated, we will content ourselves with the specific case of the $(2, 3)$-cusp.

\begin{lemma}\label{lem:normal_bound}
An on-axis oblique $(2, 3)$-cusp is equivariantly RL equivalent to $(t^2, t^3 + t^2)$. A versal deformation of such a cusp is provided by 
\[
 \Phi(\lambda_1,\lambda_2)(t) = (t^2+\lambda_1, t^3 + t^2 + \lambda_2 t + \lambda_1 t).
 \]
\end{lemma}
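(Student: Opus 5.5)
The plan is to work with one branch only. Equivariance determines the symmetric branch through $\sigma t_0$ once the branch through $t_0$ is fixed, since an equivariant multigerm in $\cO(t_0,\sigma t_0)^{\Z_2}$ is the same as a germ in $\cO(t_0)$. On the domain side we may therefore use an \emph{arbitrary} reparametrization $t\mapsto h_R(t)$ of the branch at $t_0$. On the codomain side we are restricted to $\tau$-equivariant diffeomorphisms near a point of $\cL$, which have the form
\[
(x,y)\mapsto \bigl(x\,A(x^2,y),\,B(x^2,y)\bigr), \qquad A(0,0)\neq 0,\ \partial_yB(0,0)\neq 0 .
\]
This restriction is the only new feature compared with Lemma~\ref{lem:normal_cusp}: we may no longer subtract $p(x)$ from $y$ unless $p$ is even.

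Place the cusp at the origin and write $x(t)=\sum_{i\ge2}a_it^i$ and $y(t)=\sum_{i\ge 2}b_it^i$. Obliqueness means $a_2\neq0\neq b_2$, and the $(2,3)$-condition of Example~\ref{ex:23cusp} means $a_2b_3-a_3b_2\neq 0$.

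\textbf{Step 1: normalize the $3$-jet.} Reparametrizing $t\mapsto t+\beta t^2$ shifts $(a_3,b_3)$ by $2\beta(a_2,b_2)$, so we can choose $\beta$ with $a_3=0$. Then $b_3\neq0$ by the $(2,3)$-condition. Next rescale $t\mapsto\alpha t$ and apply the equivariant linear map $\mathrm{diag}(1/(a_2\alpha^2),\,1/(b_2\alpha^2))$. This turns the $3$-jet into $(t^2,\ t^2+(b_3\alpha/b_2)t^3)$, and choosing $\alpha=b_2/b_3$ gives exactly the $3$-jet of $(t^2,t^3+t^2)$.

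\textbf{Step 2: finite determinacy.} We must show that every equivariant germ with this $3$-jet is equivariantly RL equivalent to $\phi_0(t)=(t^2,t^3+t^2)$. To do so we compute the tangent space to the equivariant RL orbit, exactly as in Example~\ref{ex:defining}. It consists of
\[
h(t)\,\phi_0'(t)+\bigl(x\,a(x^2,y),\,b(x^2,y)\bigr)\big|_{\phi_0(t)},
\]
with $h$ arbitrary and $a,b$ arbitrary. The $y$-component can absorb every power of $t$ that is generated by $y(t)=t^2+t^3$ together with $x(t)^2=t^4$. Using $y$, $y^2$ and $x^2$, this is everything in $\mm^4$, and the odd-order terms come from the $t^3$ in $y$. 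The $x$-component is then adjusted with $h$ and $x\,a$.

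Concretely, the plan is to show that the orbit tangent space contains $\mm^{4}\cO(t_0)$, and in fact contains every germ whose $3$-jet lies in an explicit subspace. The standard homotopy argument (the Mather/Martinet reduction quoted in the proof of Theorem~\ref{thm:versality_theorem}, in its equivariant form from the appendix) then shows that the germ is $3$-determined.

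I expect this step to be the main obstacle. Because $A,B$ depend on $x^2$ rather than $x$, the generators available in the codomain are weaker than in the non-equivariant case. One has to check carefully that the even and odd powers of $t$ are both reached, using the oblique term $t^2$ in $y$, which mixes parities along the branch. If the direct computation proves awkward, the fallback is a step-by-step normalization by degree: at order $k\ge4$, kill the $t^k$ terms in $x$ and $y$ using $h=ct^{k-1}$ and the monomials $x\cdot y^j$, $x^3$, $y^j$, $x^2y^j$, and then pass to a formal limit, which is smoothed as in the proof of Theorem~\ref{thm:inf_vers}.

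\textbf{Step 3: versality.} The same tangent-space computation identifies the normal space. At the level of $3$-jets the orbit tangent space has codimension $2$ in the space of equivariant multigerms, namely the codimension of the stratum defined by the two conditions (cusp, and cusp point on $\cL$) minus $1$ for the domain. A complement is spanned by the two directions $(1,t)$ and $(0,t)$. The direction $(0,t)$ resolves the cusp. The direction $(1,t)$ moves the cusp point off $\cL$, which on the symmetric pair corresponds to translating the branch horizontally. Since
\[
\partial_{\lambda_1}\Phi=(1,t) \quad\text{and}\quad \partial_{\lambda_2}\Phi=(0,t),
\]
Theorem~\ref{thm:d_deform} shows that $\Phi$ is infinitesimally versal. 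Alternatively, the jet extension of $\Phi$ is transverse to the simple stratum of on-axis oblique $(2,3)$-cusps, so Theorem~\ref{thm:inf_vers} applies. In either case Theorem~\ref{thm:versality_theorem} then gives versality.
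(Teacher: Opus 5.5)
Your proposal is correct, but it argues infinitesimally where the paper argues constructively.

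\textbf{The paper's route.} It first reparametrizes the free branch so that $x(t)=t^2$ exactly and $y(t)=t^2+t^3+\cdots$. It then writes down the equivariant target change $(x,y)\mapsto(x,A(x^2,y))$ directly. The function $A$, with $A(t^4,t^2+t^3)=y(t)$, is built by hand from the polynomials $p_0,\dots,p_3$, whose pullbacks have leading terms $t^4,t^5,t^6,t^7$. This is done formally and then made smooth with Malgrange. Versality is read off afterwards from Theorem~\ref{thm:inf_vers}.

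\textbf{Your route.} Your Step~2 rests on the same algebraic fact: $\mathfrak{m}^4$ lies in the image of $b\mapsto b(x(t)^2,y(t))$. You package it as the inclusion $\mathfrak{m}^4\oplus\mathfrak{m}^4\subseteq T$ in the orbit tangent space, combined with the homotopy method. Your fallback is essentially the paper's proof.

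\textbf{Comparison.} The advantage of your route is that one computation gives both halves of the lemma. It shows $T=\{(\nu^x,\nu^y)\colon \nu^x(0)=0,\ [\nu^y]_1=[\nu^x]_1\}$, where $[\,\cdot\,]_1$ denotes the coefficient of $t$. This immediately confirms that $\partial_{\lambda_1}\Phi=(1,t)$ and $\partial_{\lambda_2}\Phi=(0,t)$ span a complement. That explicit description, rather than a condition count, is the right justification for your Step~3. The paper's route is more elementary, needs no determinacy theory, and also yields the analytic statement.

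\textbf{The step to tighten.} Do not leave the determinacy step to the phrase ``standard homotopy argument''. For RL-equivalence, the target part of the tangent space is only a module over functions pulled back from the target. So the inclusion at $\phi_0$ does not propagate to nearby germs by Nakayama alone. You need it at every $f_s=\phi_0+s(g-\phi_0)$, smoothly in $s$, with vector fields vanishing at the base points.

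It does hold here, because only the $3$-jet enters.
\begin{itemize}
\item Write $f_s=(g_x,g_y)$. Then $g_x'=2t(1+O(t^2))$, so you may take $a=0$ and $h=\nu^x/g_x'\in\mathfrak{m}^3$.
\item Let $N$ be the image of $b\mapsto b(g_x^2,g_y)$. Since $g_x^2=t^4+O(t^6)$ and $g_y^2-g_x^2=2t^5+O(t^6)$, you get $\mathfrak{m}^4\subseteq N+g_y\mathfrak{m}^4$, with $g_y\mathfrak{m}^4=\mathfrak{m}^6$.
\item Malgrange applies to $t\mapsto(g_x^2,g_y)$, since the ideal $(g_x^2,g_y)=(t^2)$ has finite codimension. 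Followed by Nakayama, this gives $\mathfrak{m}^4\subseteq N$ in the smooth category.
\item The resulting $b$ vanishes at the origin because $\nu^y-g_y'h\in\mathfrak{m}^4$, and everything depends smoothly on $s$.
\end{itemize}
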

\begin{proof}
Let $\phi$ be an oblique $(2, 3)$-cusp, which we write as $\phi(t) = (x(t), y(t))$ in the usual rectilinear coordinates $x$ and $y$ for the codomain. (We do \textit{not} allow ourselves to immediately choose coordinates as in Definition~\ref{def:cusppq}, as this coordinate change will not in general be equivariant.) Then $x'(0) = y'(0) = 0$ and at least one of $x''(0)$ and $y''(0)$ are nonzero, as $\phi$ has multiplicity two. Moreover, since the tangent line to $\phi$ is neither parallel nor perpendicular to $\cL$, we in fact have that both $x''(0)$ and $y''(0)$ must be nonzero. We can thus linearly re-scale the $x$ and $y$ variables in such a way that the tangent line is parallel to $(1,1)$; this is obviously equivariant. Write
\[
x(t) = t^2 + \cdots \quad \text{and} \quad y(t) = t^2 + \cdots.
\]
Reparameterize $t$ as in the proof of Lemma~\ref{lem:normal_cusp} in such a way so that $x(t)=t^2$. (Note that since $t_0 \in \Sfree$, coordinate changes in the domain have no restriction.) This changes $y(t)$ to $y(t) = t^2 + at^3 + \cdots$. We can further linearly re-scale $t,x,y$ in such a way so that $a=1$; this is again obviously equivariant. We thus have
\[
x(t) = t^2 \quad \text{and} \quad y(t) = t^2 + t^3 + \cdots.
\]
We aim to remove the higher order terms in $y(t)$ by an equivariant change of variables. 
 
We claim that for any power series $y(t)=t^2+t^3+\cdots$, there exists a power series $A$ in two variables such that $y(t) =A(t^4,t^2+t^3)$. Assuming the claim, we conclude the first part of the proof: the map $(x,y)\mapsto (x,A(x^2,y))$ is an equivariant diffeomorphism taking a curve parameterized by $(t^2,t^2+t^3)$ to the curve parameterized by $(x(t),y(t))$. The inverse of this map is the required change of variables.

To prove the claim, write $y(t)=t^2+t^3+c_4t^4+\cdots$. Set 
\[
p_0(u,v)=u, \quad p_1(u,v)=\frac12(v^2-u), \quad p_2(u,v)=uv, \quad \text{and} \quad p_3(u,v)=\frac12(v^3-vu)
\]
and define $q_i(t) = p_i(t^4, t^2 + t^3)$, so that
\[
q_0=t^4, \quad q_1=t^5+\frac12t^6, \quad q_2=t^6+t^7, \quad \text{and} \quad q_3=t^7+\frac32t^8+\frac12t^9.
\]
For any four real numbers $a_1,a_2,a_3,a_4$, define
  \begin{align*}
    P(a_1,a_2,a_3,a_4)&=a_1p_1+(a_2-\frac12a_1)p_2+(a_3-a_2+\frac12a_1)p_3+(a_4-\frac32a_3+\frac32a_2-\frac34a_1)p_0^2\\
    Q(a_1,a_2,a_3,a_4)&=a_1q_1+(a_2-\frac12a_1)q_2+(a_3-a_2+\frac12a_1)q_3+(a_4-\frac32a_3+\frac32a_2-\frac34a_1)q_0^2.
\end{align*}
Since $q_i(t) = p_i(t^4, t^2 + t^3)$, it is clear that 
\begin{equation}\label{eq:QP}
Q(a_1, a_2, a_3, a_4)(t) = P(a_1, a_2, a_3, a_4)(t^4, t^2 + t^3). 
\end{equation}
It is routine to check
\begin{equation}\label{eq:Q_values}
  Q(a_1,a_2,a_3,a_4)(t)=a_1t^5+a_2t^6+a_3t^7+a_4t^8+\left(\frac12a_3-\frac12a_2+\frac14a_1\right)t^9.
\end{equation}
Using \eqref{eq:Q_values}, we can easily show that 
\[
y(t)=t^2+t^3+c_4q_0+\sum_{j=0}^\infty Q(c_{1j},c_{2j},c_{3j},c_{4j})(t)t^{4j},
\]
for some sequences $c_{1j},c_{2j},c_{3j},c_{4j}$. Define
\[A(u,v)=v+c_4u^2+\sum_{j=0}^\infty P(c_{1j},c_{2j},c_{3j},c_{4j})(u^2,v)u^{2j}.\]
Then \eqref{eq:QP} shows that $A(t^2, t^2 + t^3) = y(t)$. 

This proves the claim in the category of formal power series. The Malgrange preparation theorem allows us to show that if $y(t)$ is smooth, then $y(t)=A(t^4,t^2+t^3)$ for smooth $A$. While it is not needed, we might at this point indicate that the proof works in the analytic category as well, i.e. if $y$ is a convergent power series, then so is $A$. Indeed, convergence of the power series defining $A$ can be proved by observing that the coefficients of the power series defining $A$ are bounded by the partial sums $|c_4|+\dots+|c_n|$ of the coefficients for $y$. A standard argument as in \cite[Section 5.4]{Softy}, allows us to state that if the power series for $y(t)$ has convergence radius $R>0$, then the power series $\sum_{j} c_{1j} t^{4j+1}+c_{2j}t^{4j+2}+c_{3j}t^{4j+3}+c_{4j}t^{4j+4}$ has convergence radius at least $\min(1,R)$. Thus the power series for $A$ is convergent in a neighborhood of $(0,0)$. We leave the details to the reader. 

Finally, a versal deformation for the germ $(t^2, t^3 + t^2)$ can be constructed by perturbing the defining equation according to Theorem~\ref{thm:inf_vers}.
\end{proof}

We are now left with the fixed-point cusps listed in Lemma~\ref{lem:cusplist}. We start with the $(2, 3)$- and $(2, 5)$-cusps. It will be no more difficult to consider fixed-point $(2, 2k+1)$-cusps in general.

\begin{lemma}\label{lem:fixedcusp}
A fixed-point $(2, 2k+1)$-cusp is equivariantly RL equivalent to $(t^{2k +1}, t^2)$. A versal deformation of such a cusp is given by
 \[
 \Phi(\lambda_1,\dots,\lambda_k)(t) = (t^{2k+1} + \lambda_1 t^{2k-1} + \cdots + \lambda _{k-1} t^3 + \lambda_k t,t^2).
 \]
\end{lemma}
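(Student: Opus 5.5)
We mimic the proof of Lemma~\ref{lem:normal_cusp}, interchanging the roles of $x$ and $y$ and keeping track of parity throughout. Write the fixed-point germ in the equivariant coordinates of the jet space as
\[
x(t)=\sum_{i\ \mathrm{odd}}\alpha_i t^i,\qquad y(t)=\sum_{i>0\ \mathrm{even}}\beta_i t^i .
\]
Here we have translated along $\cL$ so that $y(0)=0$. The $(2,2k+1)$ condition, as in the proof of Lemma~\ref{lem:cusplist}, says the following. First, $\beta_2\neq 0$. Second, after removing from $x$ every term expressible through $y$, the leading term of $x$ is in degree $2k+1$. Since $x$ is odd while powers of $y$ are even, no odd term of $x$ can be cancelled by a function of $y$ alone. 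Hence the condition is simply $\alpha_1=\dots=\alpha_{2k-1}=0$ and $\alpha_{2k+1}\neq0$.

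\textbf{Step 1: normalize $y$.} Write $y(t)=\beta_2 t^2 w(t^2)$ with $w(0)=1$, and rescale $y$ so that $\beta_2=1$ (up to the sign of $\beta_2$). Then take the new parameter $t\sqrt{w(t^2)}$. This map is odd in $t$, so it is an equivariant reparameterization of the domain near the fixed point. After it, $y(t)=t^2$. If $\beta_2<0$, this sign cannot be removed by $y\mapsto -y$ alone, since $\tau$ commutes with $y\mapsto -y$; in fact $y\mapsto -y$ is equivariant, so we may use it to arrange $\beta_2>0$.

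\textbf{Step 2: normalize $x$.} The function $x(t)$ is now odd with leading term $\alpha_{2k+1}t^{2k+1}$. Write $x(t)=t^{2k+1}r(t^2)$ with $r(0)\neq0$. Apply the codomain change $(x,y)\mapsto (x/r(y),y)$. This is a diffeomorphism near the origin, and it is equivariant because $x/r(y)$ is odd in $x$ while $y$ is unchanged. After it, $x(t)=t^{2k+1}$. A final rescaling $x\mapsto \pm x/\alpha_{2k+1}$ is again equivariant. This gives the normal form $(t^{2k+1},t^2)$. The case of smooth, rather than formal, functions needs no extra work, since $w$ and $r$ are smooth by Whitney's theorem on even functions quoted in the proof of Theorem~\ref{thm:versality_theorem}.

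\textbf{Step 3: versality.} Use the equivariant analogue of Theorem~\ref{thm:d_deform}, following Example~\ref{ex:defining}. The tangent space to the equivariant RL orbit of $(t^{2k+1},t^2)$ consists of all
\[
h(t)\bigl((2k+1)t^{2k},\,2t\bigr)+\bigl(k_1(t^{2k+1},t^2),\,k_2(t^{2k+1},t^2)\bigr),
\]
where $h$ is odd, $k_1$ is odd in $x$, and $k_2$ is even in $x$.

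For the $y$-component, any even series is of the form $k_2(0,t^2)$, so this component is unrestricted.

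For the $x$-component, we need odd series. The terms $k_1$ produce $t^{2k+1}\cdot(\text{series in }t^2)$, which covers every odd exponent $\ge 2k+1$. The terms $h\cdot(2k+1)t^{2k}$ only reach exponents $\ge 2k+1$. The freedom in $h$ is already used to adjust $y$, but this does not matter: taking $k_2=-2t h$ cancels its effect on $y$, so $h$ may be chosen freely.

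Hence the orbit tangent space has codimension exactly $k$ among equivariant germs. Its complement is spanned by $(t,0),(t^3,0),\dots,(t^{2k-1},0)$. These are precisely the $\partial\Phi/\partial\lambda_j$ at $\lambda=0$, so $\Phi$ is infinitesimally versal, and therefore versal by Theorem~\ref{thm:versality_theorem}.

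The main obstacle is making sure that the lower odd monomials $t,\dots,t^{2k-1}$ are genuinely outside the orbit tangent space. This is where Step 3's bookkeeping of which equivariant $h$, $k_1$ are allowed is essential. Everything else is routine.
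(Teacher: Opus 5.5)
Your proof is correct and follows the paper's argument: the equivariant domain reparameterization $t\mapsto t\sqrt{w(t^2)}$ makes $y=t^2$, the equivariant codomain change $x\mapsto x/r(y)$ makes $x=t^{2k+1}$, and versality comes from the equivariant tangent-space computation of Example~\ref{ex:defining}, which you carry out in more detail than the paper does. Two small cosmetic fixes: your sentence about the sign of $\beta_2$ contradicts itself and is unnecessary, since any rescaling $y\mapsto y/\beta_2$ is already equivariant; and the ``final rescaling'' in Step 2 should be dropped, because $r(0)=\alpha_{2k+1}$ already gives $x=t^{2k+1}$ exactly, so rescaling again would spoil the normal form.
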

\begin{proof}
It suffices to check that the analogue of the change of variables used in Lemma~\ref{lem:normal_cusp} is equivariant. We may assume our germ is of the form
\[
x(t) = \sum_{i \text{ odd}} \alpha_i t^i \quad \text{and} \quad y(t) = \sum_{i > 0 \text{ even}} \beta_i t^i.
\]
Then $\phi''(0)=(0,\beta_2)$ and since $\phi$ is multiplicity two, we have $\beta_2\neq 0$. Rescaling $y$ linearly, we may assume that $\beta_2=1$. Following Lemma~\ref{lem:normal_cusp}, write $y(t)=t^2w(t)$, but now note that $w$ is a power series in $t^2$. Thus $\sqrt{w}$
  is a power series in $t^2$ and the local reparameterization $t\mapsto t\sqrt{w}$ is equivariant. This turns $y$ into $t^2$ and preserves the fact that $x$ has
  only odd powers of $t$ in its expansion. Write $x(t)=t^{2k+1}r(t)$ for $r(t)$ an invertible power series in $t^2$. The coordinate change in the codomain replacing $x$ with $x/r(y)$ is clearly equivariant, since $r(y)$ is fixed by the action of $\tau$, and turns $x(t)$ into $t^{2k+1}$.
  
Again, a versal deformation may be carried out as in Examples~\ref{ex:t25} and \ref{ex:defining}.
\end{proof}
We are left with the case of the fixed-point $(3,4)$-cusp.

\begin{lemma}\label{lem:dot}
A fixed-point $(3, 4)$-cusp is equivariantly RL equivalent to $(t^3, t^4)$. A versal deformation of such a cusp is given by
 \[
 \Phi(\lambda_1, \lambda_2)(t) = (t^3 + \lambda_1 t, t^4 + \lambda_2 t^2).
 \]
\end{lemma}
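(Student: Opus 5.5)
We prove the normal form first and then obtain the versal deformation from Theorem~\ref{thm:inf_vers}, or directly from the computation already done in Example~\ref{ex:defining}. Write the germ in the usual rectilinear coordinates as
\[
x(t) = \sum_{i \text{ odd}} \alpha_i t^i, \qquad y(t) = \sum_{i>0 \text{ even}} \beta_i t^i .
\]
A fixed-point $(3,4)$-cusp means $\alpha_1 = \beta_2 = 0$, while $\alpha_3 \neq 0$ and $\beta_4 \neq 0$. By the proof of Lemma~\ref{lem:cusplist}, $\beta_4\neq 0$ is exactly what separates it from higher-order cusps. Equivariant linear rescalings of $t$, $x$, $y$ let us assume $\alpha_3 = \beta_4 = 1$.

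The normalization proceeds in two stages.

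\emph{Stage 1: make $y = t^4$.} Write $y(t) = t^4 w(t)$, where $w$ is an invertible power series in $t^2$. Its fourth root $w^{1/4}$ is again an even power series, so the reparametrization $t \mapsto t\,w(t)^{1/4}$ is odd and hence equivariant. After it, $y = t^4$, and $x$ still has only odd powers with leading term $t^3$.

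\emph{Stage 2: make $x = t^3$.} This is the main obstacle. The equivariant coordinate changes available in the target are $x \mapsto x\,r(x^2,y)$ and $y \mapsto$ (function of $x^2$, $y$). Dividing $x$ by a function of $y = t^4$ alone only kills exponents $3+4j$. The exponents $5+4j$ are not of this form, since $x^2y^j$ does not have an odd $x$-power in the right way. I would therefore argue as in the oblique-cusp claim of Lemma~\ref{lem:normal_bound}, aiming to show the following.

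\emph{Claim.} Every odd series $x(t) = t^3 + \cdots$ can be written as $t^3\,u(t^6,t^4) + t\,v(t^6,t^4)$, where $v$ has no low-order terms, and the remaining $t^{5+4j}$ terms can be absorbed.

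The exponents $5+4j$ with $j \geq 1$ (that is, $9, 13, \dots$) equal $3+6+4(j-1)$, so they arise from $x \cdot x^2 y^{j-1}$. Hence the change $x \mapsto x\,(1 + c\,x^2 y^{j-1} + \dots)$ removes them. The only exponent not reachable this way is $t^5$, which the reparametrization must handle. I would use an odd reparametrization $t \mapsto t + c t^3$. This changes $y = t^4$ to $t^4 + 4c t^6 + \cdots$, and the $t^6$ term is removed by $y \mapsto y - 4c\,x^2$, since $x^2 = t^6 + \cdots$. Meanwhile the coefficient of $t^5$ in $x$ shifts by $3c$, so we can cancel it. We then iterate degree by degree, alternating small odd reparametrizations with target changes in $x^2$ and $y$, to obtain a formal solution. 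Exactly as in Lemma~\ref{lem:normal_bound}, the equivariant Malgrange preparation theorem (appendix) upgrades the formal solution to a smooth one. Alternatively, one can verify finite determinacy from the tangent space computation of Example~\ref{ex:defining}: that computation shows every jet above order $5$ lies in the tangent space of the orbit.

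\emph{Versality.} Example~\ref{ex:defining} shows that the tangent space to $[\phi]$ at $(t^3,t^4)$ has codimension two, complemented by the coefficient of $t$ in $\nu^x$ and the coefficient of $t^2$ in $\nu^y$. The deformation $(t^3+\lambda_1 t,\; t^4+\lambda_2 t^2)$ has $\partial_{\lambda_1}\Phi = (t,0)$ and $\partial_{\lambda_2}\Phi = (0,t^2)$, which span this complement. Theorem~\ref{thm:d_deform} therefore gives infinitesimal versality, and Theorem~\ref{thm:versality_theorem} gives versality.
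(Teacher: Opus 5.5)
Your route is viable, but it differs from the paper's in a way that is exactly what creates your ``main obstacle''. The paper normalizes in the opposite order. It writes $x(t)=t^3w(t)$ with $w$ even and applies the odd reparametrization $t\mapsto t\,w(t)^{1/3}$, which makes $x=t^3$ exactly. It then notes that any even $y(t)$ with leading term $t^4$ splits as $t^4p(t^4)+t^6q(t^4)$, because every even integer $\ge 4$ lies in the semigroup $\langle 4,6\rangle$. The single equivariant diffeomorphism $(x,y)\mapsto (x,\,yp(y)+x^2q(y))$ sends $(t^3,t^4)$ to $(t^3,y(t))$, and its inverse is the required RL-equivalence. There is no iteration, and every function involved is smooth ($p,q$ by Whitney's theorem on even functions), so no formal-to-smooth step arises.

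In your order, once $y=t^4$ is frozen, the target corrections of $x$ lie in $x\cdot\mathbb{R}[[x^2,y]]$, with exponents $3+\langle 4,6\rangle$; these miss $5$. That gap is what forces your coupled domain/target iteration. Your formal bookkeeping is right: $t\mapsto t+ct^3$ removes the $t^5$ term, $y\mapsto y-4cx^2$ repairs the resulting $t^6$ term in $y$, and $x\mapsto x(1+c\,x^{2i}y^j)$ removes the remaining terms. Your versality argument is the same as the paper's appeal to Example~\ref{ex:defining}.

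The cost shows up in your last step, which is not ``exactly as in Lemma~\ref{lem:normal_bound}''. There the formal claim is linear in the unknown: one given function is a composite $A(t^4,t^2+t^3)$, which is the kind of statement a preparation-theorem argument upgrades. You instead have an infinite composite of nonlinear coordinate changes. To make it smooth you need Borel's lemma to realize it, and then a determinacy result saying that a germ agreeing with $(t^3,t^4)$ to infinite order is RL-equivalent to it.

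Your fallback does not provide this as stated. The tangent space to an RL-orbit is not a module over the source ring, so ``all high-order jets lie in the tangent space'' does not by itself give finite determinacy. You would need a genuine finite-determinacy theorem for RL-equivalence (of Bruce--du Plessis--Wall type, in an equivariant version), which the paper does not supply.

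The cheapest repair is to normalize $x$ first, as the paper does. Your Stage~1 then becomes unnecessary, and Stage~2 is replaced by the explicit closed-form step above.
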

\begin{proof}
After normalizing, we may assume
\[
x(t) = t^3 + \cdots \quad \text{and} \quad y(t) = t^4 + \cdots
\]
where $x(t)$ is a power series in odd powers of $t$ and $y(t)$ is a power series in even powers of $t$. Write $x(t)=t^3w(t)$ for $w$ a power series in $t^2$. Then $\sqrt[3]w$ is a power series in $t^2$; replacing $t$ by $t\sqrt[3]w$ is an equivariant coordinate change in the domain and gives $x(t)=t^3$. Note that $y(t)$ still has leading term $t^4$. Write
 \[
 y(t)=t^4p(t^4)+t^6q(t^4)
 \]
for power series $p$ and $q$ with $p(0) = 1$. Consider the map sending $(x,y)$ to $(x,yp(y) + x^2 q(y))$. The germ of this is an equivariant change of coordinates in the codomain which sends the curve $(t^3,t^4)$ to the curve $(t^3,t^4p(t^4)+t^6q(t^4))$. The inverse of this map thus gives the desired RL equivalence.

A versal deformation of this germ was given in Example~\ref{ex:defining}.
\end{proof}

\subsection{Tangencies}\label{sub:tang}
We now turn to tangencies, beginning with the non-equivariant case. 

\begin{defn}\label{def:noneqtan}
A germ $\phi \in \cO(t_1, t_2)$ is a \textit{tangency} if $\phi(t_1) = \phi(t_2)$ and the derivatives $\phi'(t_1)$ and $\phi'(t_2)$ are nonzero and parallel.
\end{defn}

We say that a tangency $\phi$ is in \textit{standard form} if we have chosen coordinates centered at $t_1$, $t_2$, and $\phi(t_1) = \phi(t_2)$ so that the branches through $t_1$ and $t_2$ are given by 
\[
\phi_1(t) = (t, 0) \quad \text{and} \quad \phi_2(s) = (s, h(s))
\]
for some power series $h$ with $h(0) = h'(0) = 0$. It is straightforward to check that every cusp can be put into standard form: first, rotate the codomain so that $\phi_1'(0)$ and $\phi_2'(0)$ are horizontal. By reparameterizing the domains of the two branches, we may then assume $\phi_1(t) = (t, p(t))$ and $\phi_2(s) = (s, q(s))$ for some power series $p$ and $q$ with $p(0) = q(0) = p'(0) = q'(0) = 0$. Now perform a reparameterization of the codomain sending $y$ to $y - p(x)$. This makes $\phi_1(t) = (t, 0)$ and transforms $\phi_2(s) = (s, h(s))$. We usually disregard the case that $h = 0$, which corresponds to the situation where the two branches exactly coincide; this is of infinite codimension in jet space.

Although the standard form for $\phi$ is not unique, the leading degree of $h$ is well-defined. To see this, suppose we have a coordinate system in which $\phi_1(t) = (t, 0)$ and $\phi_2(s) = (s, h(s))$. Consider the possible RL equivalences that preserve the form of $\phi_1$. To preserve the fact that the second component of $\phi_1$ is zero, such an RL equivalence must reparameterize the codomain by replacing $y$ with a power series in $y$ only, rather than a power series in $x$ and $y$. This preserves the leading degree of $h$. An RL equivalence of the domain around $t_2$ also preserves the leading degree of $h$; hence this leading degree is well-defined.

\begin{defn}\label{def:tangency-fold}
  Let $k \geq 1$. We say $\phi \in \cO(t_1, t_2)$ is a \textit{$k$-fold tangency}, or an \textit{order $k$ tangency}, if, when put into standard form, it is given by $\phi_1(t) = (t, 0)$ and $\phi_2(s) = (s, h(s))$ for $h$ of leading degree $k + 1$.
\end{defn}

As in the case of cusps, in the equivariant setting we classify tangencies differently depending on their behavior with respect to the axis of symmetry. The different cases of equivariant tangencies are displayed in Figure~\ref{fig:tangencies}. 

\begin{figure}
  \begin{tikzpicture}
    \draw (-2,0) node[rectangle, minimum width=2.5cm] {\includegraphics[height=2.5cm]{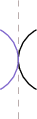}}; 
    \draw (2,0) node[rectangle,minimum width=2.5cm] {\includegraphics[height=2.5cm]{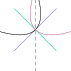}}; 
    \draw (-2,-3) node[rectangle,  minimum width=2.5cm]  {\includegraphics[height=2.5cm]{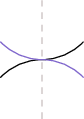}}; 
    \draw (2,-3) node[rectangle, minimum width=2.5cm] {\includegraphics[height=2.5cm]{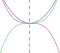}}; 
  \end{tikzpicture}
  \caption{Top: a central tangency and an oblique tangency. Bottom: a perpendicular tangency and a fixed-point tangency.}\label{fig:tangencies}
\end{figure}

The first case is when the point of tangency occurs off of the axis of symmetry.

\begin{definition}\label{def:interior}
Let $\phi \in \cO(t_1, \sigma t_1, t_2, \sigma t_2)^{\Z_2}$. We say $\phi$ is an \textit{off-axis tangency} if it satisfies Definition~\ref{def:noneqtan} at either $\{t_1, t_2\}$ or $\{t_1, \sigma t_2\}$ and $\phi(t_1) \notin \cL$.
\end{definition}

The second case occurs when the point of tangency lies on the axis of symmetry, but all domain points are still in $\Sfree$.

\begin{definition}
We say that a tangency is \textit{on-axis} if the point of tangency lies on $\cL$. There are three subcases, depending on whether the tangency is between a branch and its symmetric copy, or between two unrelated branches. In the former case, the derivatives of the branches are necessarily either vertical or horizontal.
\begin{itemize}
\item Let $\phi \in \cO(t_1, \sigma t_1)^{\Z_2}$. We say $\phi$ is an \textit{on-axis line tangency} if it satisfies Definition~\ref{def:noneqtan} at $\{t_1, \sigma t_1\}$ and $\phi'(t_1)$ is vertical.
\item Let $\phi \in \cO(t_1, \sigma t_1)^{\Z_2}$. We say $\phi$ is an \textit{on-axis perpendicular tangency} if it satisfies Definition~\ref{def:noneqtan} at $\{t_1, \sigma t_1\}$ and $\phi'(t_1)$ is horizontal.
\item Let $\phi \in \cO(t_1, \sigma t_1, t_2, \sigma t_2)^{\Z_2}$. We say $\phi$ is an \textit{on-axis oblique tangency} if it satisfies Definition~\ref{def:noneqtan} at either $\{t_1, t_2\}$ or $\{t_1, \sigma t_2\}$ and $\phi(t_1) \in \cL$. We also require that $\phi'(t_0)$ is neither horizontal nor vertical.
\end{itemize}
\end{definition}

Finally, we have the most subtle case, in which a branch of $\phi$ through a point $t_1 \in \Sfree$ is tangent to a branch of $\phi$ through a point $t_2 \in \Sfix$. Note that this means the branch through $\sigma t_1$ is also tangent to the branch through $t_2$, and the branches through $t_1$ and $\sigma t_1$ are necessarily tangent to teach other.

\begin{definition}\label{def:Tbound}
  Let $\phi \in \cO(t_1, \sigma t_1, t_2 = \sigma t_2)^{\Z_2}$. We say $\phi$ has a \textit{fixed-point tangency} if the branches of $\phi$ through $t_1$ and $t_2$ are tangent and the branches through $t_1$ and $\sigma t_1$ are tangent. In this case the tangent vectors are necessarily horizontal.
\end{definition}

Defining the order of a tangency in the equivariant case is slightly subtle. In the case of an off-axis or on-axis oblique tangency, an order-$k$ tangency is simply an equivariant tangency of order $k$ in the non-equivariant sense. Note that the RL equivalence in Definition~\ref{def:tangency-fold} is \textit{not} required to be equivariant. In the case of an on-axis line or perpendicular tangency, the same definition holds, but with a few simplifications and caveats. 

For a line tangency, we claim the order of tangency can be calculated as the order of tangency between $\phi$ and $\cL$. To see this, note that $\phi'(t_1)$ is already vertical, so after performing a pair of symmetric linearizations, we may assume $\phi$ is given by a symmetric pair of branches
\[
\phi_1(t) = (h(t), t) \quad \text{and} \quad \phi_2(s) = (-h(s), s).
\]
To calculate the order of tangency of these two branches with each other, we put $\phi$ into standard form. Consider the change of variables sending $x$ to $x - h(y)$. This gives
\[
\phi_1(t) = (0, t) \quad \text{and} \quad \phi_2(s) = (-2h(s), s)
\] 
and shows the order of tangency is the leading degree of $-2h(s)$ minus one. This is obviously equal to the leading degree of $h(s)$ minus one, which is the order of tangency between $\phi_1$ and $\cL$, as desired.

We claim that the order of a perpendicular tangency is always even. To see this, note that $\phi'(t_1)$ is already horizontal, so after performing a pair of symmetric linearizations, we may assume $\phi$ is given by a symmetric pair of branches
\[
\phi_1(t) = (t, h(t)) \quad \text{and} \quad \phi_2(s) = (-s, h(s)).
\]
Reparameterize the branch $\phi_2$ by precomposing with the map sending $s$ to $-s$; this makes $\phi_2(s) = (s, h(-s))$. To calculate the order of tangency, we perform the change of variables in the codomain sending $y$ to $y - h(x)$. This turns our germ into
\[
\phi_1(t) = (t, 0) \quad \text{and} \quad \phi_2(s) = (s, h(-s) - h(s)).
\]
The order of tangency is thus the leading degree of $h(-s) - h(s)$ minus one. As the leading degree of $h(-s) - h(s)$ is odd, the order of the perpendicular tangency is even.

Finally, let us consider the case of a fixed-point tangency. This will actually be described by \textit{two} tangency orders: the tangency between the fixed-point branch and either of the two free branches, and the tangency between the two free branches. We say that a fixed-point tangency is order $(k, 2\ell)$ if the branches through $t_1$ and $t_2 = \sigma t_2$ are tangent to order $k$, and the branches through $t_1$ and $\sigma t_1$ are tangent to order $2 \ell$. Note that the branches through $t_1$ and $\sigma t_1$ constitute a perpendicular tangency of order $2\ell$, as in the previous paragraph.

We now calculate the codimensions of different equivariant tangencies. Our starting point is the following calculation.

\begin{lem}\label{lem:noneqtancalc}
A non-equivariant $k$-fold tangency has $\pcodim = k$.
\end{lem}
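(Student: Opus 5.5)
We count defining conditions in the multijet space $\mJ^{k+1,2}(\cS,\R^2)$ over the pair $(t_1,t_2)$, and then subtract the dimension $2$ of the domain $\cS^{(2)}$, following the parametric codimension definition. The approach mirrors Lemma~\ref{lem:ext_nu}: express the condition of being a $k$-fold tangency as the vanishing of specific coefficients once the germ is put into standard form, and check that these conditions are independent.

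First, the condition $\phi(t_1)=\phi(t_2)$ imposes two independent equations on the target map. Next, we need the derivatives $\phi'(t_1)$ and $\phi'(t_2)$ to be nonzero and parallel. On the open set where both are nonzero, this is the single equation $\det(\phi'(t_1),\phi'(t_2))=0$. With these three conditions, $\phi$ is at least a $1$-fold tangency, which recovers the codimension~$3$ count in the proof of Theorem~\ref{thm:cf0}.

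To capture higher orders, we use the standard form procedure described after Definition~\ref{def:noneqtan}. We rotate so the common tangent is horizontal, reparameterize both branches as graphs $(t,p(t))$ and $(s,q(s))$, and subtract $p(x)$ from $y$. This gives $h(s)=q(s)-p(s)$. Because the graph coefficients of $p$ and $q$ up to order $j$ are smooth functions of the $j$-jets of the branches on this open set, $h$ has leading degree at least $k+1$ exactly when the coefficients of $s^2,\dots,s^k$ in $q-p$ vanish. That is $k-1$ further equations. They are the equalities of curvature and higher-order derivatives of the two graphs, i.e. $p^{(j)}(0)=q^{(j)}(0)$ for $2\le j\le k$.

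We then have to show independence, so that the locus is a submanifold of codimension $2+1+(k-1)=k+2$. Independence holds because perturbing the $j$-th Taylor coefficient of the $y$-component of the branch at $t_2$ changes $q^{(j)}(0)$ while leaving all lower conditions unchanged. Thus the Jacobian of the defining map is triangular with nonzero diagonal, and we may invoke the implicit function theorem. Requiring leading degree exactly $k+1$ is an open condition inside this locus. Hence $\pcodim=(k+2)-2=k$.

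The main obstacle is checking that the defining equations are genuinely coordinate-independent and smooth in the jet variables, since standard form uses a rotation and reparameterizations that depend on the jet. To handle this, we argue locally on the open set where $\phi'(t_1)$ is not vertical (the other chart being symmetric). On that set, the graph description $y=p(x)$ is obtained via the inverse function theorem, with coefficients polynomial in the jet entries divided by powers of $x'(t_1)$. The well-definedness of the leading degree, already established before Definition~\ref{def:tangency-fold}, ensures that the resulting locus is intrinsic.
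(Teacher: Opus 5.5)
Your proposal is correct and is essentially the paper's argument. Both count $k+2$ conditions in standard form, namely two from $\phi(t_1)=\phi(t_2)$ and $k$ coefficient conditions (your $\det(\phi'(t_1),\phi'(t_2))=0$ is the paper's $a_1=0$ and your $k-1$ matching conditions are its $a_2=\cdots=a_k=0$), subtract the two-dimensional domain, and your triangular-Jacobian independence check and chart-wise smoothness discussion just fill in details the paper delegates to \cite{BZ}.
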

\begin{proof}
This is shown in \cite{BZ}, where it is also implicitly explained how a $k$-fold tangency can be defined by a stratum
in jet space. Roughly speaking, the reader may think that a $k$-fold tangency is defined by putting $\phi$ into the form
\[
\phi_1(t) = (t, 0) \quad \text{and} \quad \phi_2(s) = (s, a_1 s + a_2 s^2 + a_3 s^3 + \cdots)
\]
and then requiring the vanishing of the $k$ coefficients $a_1 = \cdots = a_{k} = 0$. In addition, there are two extra conditions from requiring $\phi(t_0) = \phi(t_1)$, while the domain is two-dimensional. This gives the desired calculation.
\end{proof}

This extends to the equivariant setting as follows. 

\begin{lem}\label{lem:complicatedtan}
We have the following:
\begin{itemize}
\item The space of off-axis $k$-fold tangencies has $\pcodim = k$.
\item The space of on-axis $k$-fold line tangencies has $\pcodim = k$.
\item The space of on-axis $2k$-fold perpendicular tangencies has $\pcodim = k$.
\item The space of on-axis $k$-fold oblique tangencies has $\pcodim = k + 1$.
\item The space of fixed-point $(k, 2\ell)$-fold tangencies has 
\[
\pcodim = k +\max(\ell - \lfloor (k + 1)/2 \rfloor, 0) + 1.
\]
\end{itemize}
\end{lem}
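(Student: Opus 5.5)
Each item is a jet-space count in the equivariant multijet space, following Lemma~\ref{lem:noneqtancalc}: put the germ into standard form by an equivariant (or, away from $\cL$, arbitrary) RL change of coordinates, list the vanishing conditions on Taylor coefficients, and subtract from $\codim$ the number $m$ of free orbits, as in Definition~\ref{def:codimension}.

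For an \emph{off-axis tangency}, the germ lives over two free orbits and the multijet is just a non-equivariant multijet over $\Sfree/\Z_2$. Lemma~\ref{lem:noneqtancalc} therefore applies verbatim and gives $\pcodim=k$.

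For an \emph{on-axis oblique tangency}, we are again over two free orbits, so this is the same count with one extra condition, namely that $\phi(t_1)\in\cL$ (one equation, the $x$-coordinate vanishing). This gives $k+1$.

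For \emph{line and perpendicular tangencies}, we have one free orbit $\{t_1,\sigma t_1\}$, so $m=1$. The conditions are imposed on a single germ $\phi_1$.
\begin{itemize}
\item \emph{Line tangency.} We need $x(t_1)=0$, and by the computation preceding the lemma the order is the contact order of $\phi_1$ with $\cL$. Writing $\phi_1(t)=(h(t),t)$ after reparametrizing, the conditions are $h(0)=h'(0)=\dots=h^{(k)}(0)=0$. These are $k+1$ conditions, so $\pcodim=k$.
\item \emph{Perpendicular tangency.} Write $\phi_1(t)=(x(t),t\cdot)$, in the form $(t,h(t))$ after shifting $x$. We need $x(t_1)=0$, plus vanishing of the odd coefficients of $h$ up to order $2k-1$, by the formula $h(-s)-h(s)$ computed above. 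The order is $2k$ exactly when the odd coefficients $h_1,h_3,\dots,h_{2k-1}$ vanish. That is $k$ conditions, plus $1$ for lying on $\cL$, giving codimension $k+1$ and $\pcodim=k$.
\end{itemize}

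For the \emph{fixed-point tangency}, $m=1$ (the orbit of $t_1$) and $n=1$. The fixed branch is $(\sum\alpha_i t^i$ odd$,\ \beta_0+\sum\beta_i t^i$ even$)$ with $\alpha_1\ne0$. Equivariantly reparametrize and change coordinates ($y\mapsto y-g(x^2)$, which is allowed since $g(x^2)$ is $\tau$-invariant) to make it $(t,0)$. The free branch is written $(s+c,\,h(s))$ near $\phi(t_1)$. The conditions are:
\begin{itemize}
\item $\phi(t_1)=\phi(t_2)$, which is two conditions;
\item vanishing of $h'(0),\dots$ up to order $k$ relative to the fixed branch.
\end{itemize}
Since the fixed branch is now the $x$-axis, the tangency with it is governed by the Taylor coefficients $h_1,\dots,h_k$ at $s=0$, where $c=0$. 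So the total so far is $k+2$, minus $m=1$, giving $k+1$. The perpendicular tangency of order $2\ell$ between $t_1$ and $\sigma t_1$ requires the odd coefficients $h_1,h_3,\dots,h_{2\ell-1}$ to vanish. Among these, the ones of index $\le k$ are already forced, and there are $\lfloor (k+1)/2\rfloor$ of them. This leaves $\max(\ell-\lfloor(k+1)/2\rfloor,0)$ new conditions, which gives the stated formula.

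The main obstacle is justifying that these conditions are independent and define submanifolds, so that the counts really are codimensions. This is particularly delicate in the fixed-point case, where one must check that the equivariant normalization of the fixed branch does not impose or absorb any further conditions on $h$. I would handle this by noting that, after normalization, the remaining coordinate changes preserving $(t,0)$ are of the form $y\mapsto y\,u(x^2,y)$. These only rescale $h$ and do not kill any of its coefficients, so the leading-order conditions are essential, exactly as in the standard-form well-definedness argument before Definition~\ref{def:tangency-fold}.
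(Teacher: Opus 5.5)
Your proposal is correct and follows essentially the same route as the paper: in each case you count the vanishing Taylor coefficients in a normalized form, add the incidence conditions ($\phi(t_1)\in\cL$ or $\phi(t_1)=\phi(t_2)$), and subtract the number of free orbits. The only cosmetic difference is in the fixed-point case, where you first straighten the fixed branch to $(t,0)$ via $y\mapsto y-g(x^2)$, whereas the paper writes both branches as graphs and compares $h$ with the even function $g$; the two are equivalent.
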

\begin{proof}
The off-axis case is essentially the same as in Lemma~\ref{lem:noneqtancalc}. For a line tangency, we require $k$ coefficients to vanish and impose one additional condition (that $\phi(t_1) \in \cL$), while the domain is one-dimensional. For an oblique tangency, we require $k$ coefficients to vanish and impose three conditions (that $\phi(t_1) = \phi(t_2) \in \cL$), while the domain is two-dimensional. The case of a perpendicular tangency is only slightly more complicated: as in the discussion above, we put our tangency in the standard form 
\[
\phi_1(t) = (t, 0) \quad \text{and} \quad \phi_2(s) = (s, h(-s) - h(s)).
\]
The even-degree coefficients of $h(-s) - h(s)$ automatically vanish. Thus, the condition that $h(-s) - h(s)$ have leading degree $2k + 1$ is equivalent to the vanishing of the $k$ odd-degree coefficients $a_1 = a_3 = \cdots = a_{2k - 1} = 0$ of $h$. There is one additional condition (that $\phi(t_1) \in \cL$), while the domain is one-dimensional.

For the last case, parameterize the branches through $t_1$, $\sigma t_1$, and $t_2$ by 
\[
\phi_1(t) = (t, h(t)), \quad \phi_2(s) = (s, h(-s)), \quad \phi_3(r) = (r, g(r)).
\]
Here, $g$ is an even power series (with vanishing constant term) since $\phi_3$ is equivariant. The condition that $\phi_1$ and $\phi_3$ are tangent to order $k$ means that the coefficients $c_1$ through $c_k$ of $g(r) - h(r)$ vanish; that is, the odd-indexed coefficients of $h(r)$ between $1$ and $k$ must vanish, and the even-indexed coefficients between $1$ and $k$ must coincide with $g(r)$. This constitutes $k$ conditions. As in the previous paragraph, the condition that $\phi_1$ and $\phi_2$ are tangent to order $2 \ell$ means that the $\ell$ odd-indexed coefficients of $h(s)$ between $1$ and $2\ell$ must vanish. Since we have already imposed that the odd-indexed coefficients between $1$ and $k$ vanish, this imposes $\max(\ell - \lfloor (k + 1)/2 \rfloor, 0)$ new conditions. Finally, there are two additional conditions (that $\phi(t_1) = \phi(t_2)$), while the domain is one-dimensional. This gives the desired calculation.
\end{proof}

For future reference, we single out a specific case of the most complicated family:

\begin{corollary}
The only fixed-point tangency with $\pcodim \leq 2$ is the fixed-point $(1, 2)$-fold tangency, which has $\pcodim = 2$.
\end{corollary}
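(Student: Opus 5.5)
This is a direct consequence of the last formula in Lemma~\ref{lem:complicatedtan}. The plan is to minimize
\[
\pcodim = k + \max(\ell - \lfloor (k+1)/2\rfloor, 0) + 1
\]
over the admissible orders $(k, 2\ell)$ with $k \geq 1$ and $\ell \geq 1$. First I would record why these ranges are correct. The tangency between the free branch and the fixed-point branch has order at least one by Definition~\ref{def:Tbound}. The tangency between the branches through $t_1$ and $\sigma t_1$ is a perpendicular tangency, so by the discussion preceding Lemma~\ref{lem:complicatedtan} its order is even and positive. This justifies writing it as $2\ell$ with $\ell \geq 1$.

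The minimization then splits into two cases on $k$.
\begin{itemize}
\item If $k \geq 2$, the first and last terms already give $\pcodim \geq k + 1 \geq 3$, so these tangencies are excluded.
\item If $k = 1$, then $\lfloor (k+1)/2 \rfloor = 1$, and the formula becomes $\pcodim = 2 + \max(\ell - 1, 0) = \ell + 1$. This is at most $2$ exactly when $\ell = 1$.
\end{itemize}
So the unique surviving type is $(k, 2\ell) = (1, 2)$, and its parametric codimension is exactly $2$.

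As a sanity check against the proof of Lemma~\ref{lem:complicatedtan}, I would recount the $(1,2)$ case directly:
\begin{itemize}
\item order-one tangency between $\phi_1$ and $\phi_3$: one condition, the vanishing of the linear coefficient of $h$;
\item order-two tangency between $\phi_1$ and $\phi_2$: no new condition, since the same linear coefficient is the only odd coefficient below degree $2$;
\item the coincidence $\phi(t_1) = \phi(t_2)$: two conditions.
\end{itemize}
This gives three conditions on a one-dimensional domain, so $\pcodim = 3 - 1 = 2$, as claimed.

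The only point requiring care, and the main obstacle, is justifying that no degenerate fixed-point tangencies escape the $(k, 2\ell)$ labelling. Two situations need to be ruled out:
\begin{itemize}
\item Infinite-order contact, where two branches coincide. This is excluded by the convention that such situations have infinite codimension.
\item One of the branches being singular, i.e.\ a cusp. These configurations are not tangencies in the sense of Definition~\ref{def:noneqtan}, which requires nonzero derivatives. By Lemma~\ref{lem:coincidence}-type counting they would add at least one further condition on top of the fixed double point of Table~\ref{tab:cpl}.
\end{itemize}
I would remark that such configurations fall outside the scope of the statement, so the corollary as stated follows from the arithmetic above.
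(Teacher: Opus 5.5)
Your proposal is correct and is the paper's argument: the corollary follows by substituting $k,\ell\ge 1$ into the last formula of Lemma~\ref{lem:complicatedtan}, and your split into $k\ge 2$ and $k=1$ (together with the direct recount for $(1,2)$) just writes out the arithmetic the paper leaves implicit. The closing paragraph on cusps and infinite-order contact is unnecessary, since Definition~\ref{def:Tbound} together with Definition~\ref{def:noneqtan} already requires nonzero derivatives; also, Lemma~\ref{lem:coincidence} is not the right reference there, because coincidences concern singularities at distinct image points.
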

\begin{proof}
Keeping in mind that $k$ and $\ell$ are strictly positive, this follows immediately from Lemma~\ref{lem:complicatedtan}.
\end{proof}

We now derive normal forms and versal deformations for equivariant tangencies. We suppress writing the symmetric branches when it is clear from context.

\begin{lemma}\label{lem:normal_tang}
An off-axis $k$-fold tangency is equivariantly RL equivalent to the (equivariant) multigerm $\phi = (\phi_1, \phi_2)$ with $\phi_1(t) = (t, 0)$ and $\phi_2(t) = (s, s^{k+1})$. A versal deformation of this is given by keeping the first branch fixed and deforming the second branch to
\[
\Phi(\lambda_1, \cdots, \lambda_k)(s) = (s, s^{k+1} + \lambda_{1} s^{k-1} + \lambda_{2} s^{k-2} + \cdots + \lambda_{k}).
\]
\end{lemma}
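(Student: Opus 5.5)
The plan is to reduce to the non-equivariant statement, as was done for Lemma~\ref{lem:normal_cusp}. An off-axis tangency lives over two free orbits $\{t_1,\sigma t_1\}$ and $\{t_2,\sigma t_2\}$ whose images avoid $\cL$. Choose small neighborhoods $U_1,U_2$ of $t_1,t_2$ and a neighborhood $V$ of $\phi(t_1)$ with $V\cap\tau V=\emptyset$. Any RL equivalence defined on $U_1\sqcup U_2$ and $V$ then extends uniquely to an equivariant one on $U_i\cup\sigma U_i$ and $V\cup\tau V$, by conjugating with $\sigma$ and $\tau$. So equivariant RL equivalence of the multigerm is the same as non-equivariant RL equivalence of the pair $(\phi|_{U_1},\phi|_{U_2})$. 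The same holds for deformations, since the identification $\cO(t,\sigma t)^{\Z_2}\cong\cO(t)$ is compatible with parameters.

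For the normal form, start from the standard form discussed before Definition~\ref{def:tangency-fold}: $\phi_1(t)=(t,0)$ and $\phi_2(s)=(s,h(s))$, where $h$ has leading degree $k+1$. Write $h(s)=s^{k+1}w(s)$ with $w(0)\neq 0$, and rescale $y$ so that $w(0)=1$; if $w(0)<0$, first flip $y\mapsto -y$. The branch $\phi_1$ is the curve $\{y=0\}$, so the goal is to straighten $\phi_2$ while preserving that line. Reparametrize $\phi_2$ by $s\mapsto s\,w(s)^{1/(k+1)}$; for $k+1$ even this needs $w>0$, which the sign normalization ensures. Then apply a codomain change $(x,y)\mapsto(X(x,y),y)$ with $X(x,0)=x$, chosen so that $X(s,h(s))$ equals the new parameter. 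A concrete choice is $X(x,y)=x+y\,\psi(x)$ with a suitable smooth $\psi$ solving $s\,w(s)^{1/(k+1)}=s+s^{k+1}w(s)\psi(s)$. This requires dividing $s\big(w^{1/(k+1)}-1\big)$ by $s^{k+1}w$, and that is not possible in general. So instead I would follow the route in \cite{David}/\cite{AVG}. Let $\Phi_u$ interpolate between $\phi$ and the model $(s,s^{k+1})$, and note that along the whole family the first branch stays $(t,0)$ and the contact order stays $k+1$. Then use Mather's homotopy method: solve the infinitesimal equation \eqref{eq:RL_versal} with $\nu=\partial_u\Phi_u$ and integrate the resulting vector fields. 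Solvability comes from the computation of the orbit tangent space below, since $\partial_u\Phi_u$ is $O(s^{k+1})$ on the second branch and zero on the first. Alternatively, one can cite the known non-equivariant normal form \cite{David,FiedlerKurlin} and transfer it using the first paragraph.

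For versality, apply Theorem~\ref{thm:d_deform}. The tangent space of the orbit consists of pairs $(h_1(t)(1,0)+k(t,0),\ h_2(s)(1,s^{k+1})'+k(s,s^{k+1}))$.
\begin{itemize}
\item Choosing $k_1$ and $h_1,h_2$ gives arbitrary $x$-components on both branches and removes any constraint there.
\item Choosing $k_2(x,y)=a(x)+y\,b(x,y)$, the term $a(x)$ sets the $y$-component of the first branch arbitrarily.
\item On the second branch the $y$-component then becomes $a(s)+s^{k+1}b+(k+1)s^kh_2$. Once $a$ is fixed, the free part is the ideal generated by $s^k$, since $h_2$ is arbitrary.
\end{itemize}
So the cokernel is spanned by $(0,s^j)$ on the second branch for $0\le j\le k-1$. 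These are exactly the $\partial\Phi/\partial\lambda_i$ directions, and the monomials $\lambda_1 s^{k-1},\dots,\lambda_k$ cover them. Hence $\Phi$ is infinitesimally versal, and it is versal by Theorem~\ref{thm:versality_theorem}. Equivariance is automatic by the transfer in the first paragraph.

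The main obstacle is the normal form step, which requires killing all higher-order terms of $h$ while fixing the first branch. I would handle it by the homotopy method, using the orbit tangent space computation just described, rather than by explicit coordinate changes.
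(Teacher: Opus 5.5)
Your proposal is correct. For the normal form, however, you take a much heavier route than the paper, and the ``main obstacle'' you identify is not actually there.

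After the same reduction to the non-equivariant case as your first paragraph, the paper writes $h(s)=s^{k+1}w(s)$ and applies the single codomain diffeomorphism $(x,y)\mapsto(x,\,y/w(x))$. This map fixes $\{y=0\}$ pointwise and sends $(s,s^{k+1}w(s))$ to $(s,s^{k+1})$. It needs no reparametrization of $s$, no sign normalization and no $(k+1)$-st root. Your explicit attempt failed only because you reparametrized $s$ first and then insisted on $X(x,0)=x$. Without that constraint, even $(x,y)\mapsto(x\,w(x)^{1/(k+1)},y)$, followed by a reparametrization of the first branch, works.

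Your homotopy argument is valid. With $w(0)=1$ and $w_u=(1-u)w+u$, the infinitesimal equation is solved by $h_1=h_2=0$, $k=(0,\,y(1-w(x))/w_u(x))$. This solution vanishes at the base point, as the homotopy method requires. You should say this explicitly rather than appeal to your tangent-space computation, which allows $h_2(0)\neq 0$. But integrating this field gives exactly the paper's map $(x,y)\mapsto(x,y/w(x))$, so the machinery buys nothing here.

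For versality the two routes genuinely differ. The paper takes the $(k+1)$-parameter family varying all coefficients of degree $\le k$, which is transverse to the defining equations via Theorem~\ref{thm:inf_vers}. It then removes the $s^k$ coefficient with the translation $s\mapsto s+c$. You instead compute the orbit tangent space directly via Theorem~\ref{thm:d_deform} and identify the cokernel as spanned by $(0,s^j)$, $0\le j\le k-1$. Your version is more self-contained and actually verifies what the paper only asserts.

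One presentational fix there: $h_2$ cannot be spent both on the $x$-components and on the ideal $(s^k)$. Say instead that $h_1$ and $k_1$ absorb the $x$-components for any choice of $h_2$, which leaves $h_2$ free for the $y$-component.
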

\begin{proof}
Since off-axis tangencies are essentially the same as non-equivariant tangencies, it suffices to establish the claim in the non-equivariant setting. Let $\phi_1(t) = (t, 0)$ and $\phi_2(s) = (s, h(s))$ for $h$ of leading degree $k + 1$. Write $h(s) = s^{k+1} w(s)$ for $w$ an invertible power series. The change of coordinates sending $(x, y)$ to $(x, y/w(x))$ gives $\phi$ the desired form.

The obvious versal deformation varies all $k + 1$ coefficients of $\phi_2$ with degrees less than $k + 1$ (modifying the defining equation
of the singularity as in Theorem~\ref{thm:inf_vers}). However, the reparameterization of the domain replacing $s$ with $s + c$ allows us to set one of these coefficients to any given constant. This results in the versal deformation in the statement of the lemma. 
\end{proof}

Similar arguments hold in the case of a line tangency.

\begin{lemma}\label{lem:normal_int_tang}
A $k$-fold line tangency is equvariantly RL equivalent to the (equivariant) germ $\phi(t) = (t^{k+1}, t)$. A versal deformation of this is given by 
\[
\Phi(\lambda_1, \cdots, \lambda_k)(t) = (t^{k+1} + \lambda_{1} t^{k-1} + \lambda_{2} t^{k-2} + \cdots + \lambda_{k}, t).
\]

\end{lemma}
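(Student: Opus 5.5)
Following the proof of Lemma~\ref{lem:normal_tang}, the plan is to first reduce to a symmetric pair of branches and then normalize with an equivariant change of coordinates in the codomain. Let $\phi$ be a $k$-fold line tangency at $t_1,\sigma t_1$. Since $t_1\in\Sfree$, the domain can be reparameterized near $t_1$ without restriction; the reparameterization near $\sigma t_1$ is then determined by equivariance. Because $\phi'(t_1)$ is vertical and nonzero, we may arrange that the branch through $t_1$ is $\phi_1(t)=(h(t),t+c)$, with $c$ the $y$-coordinate of the point of tangency. The branch through $\sigma t_1$ is then $\phi_2(s)=(-h(s),s+c)$. Translating in $y$ is equivariant, so we may take $c=0$. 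As computed in the discussion preceding Lemma~\ref{lem:complicatedtan}, the order of tangency equals the order of tangency of $\phi_1$ with $\cL$. Hence $h$ has leading degree $k+1$, and we write $h(t)=t^{k+1}w(t)$ with $w(0)\neq 0$.

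The normalization is the codomain change $(x,y)\mapsto (x/w(y),y)$. It is equivariant because it commutes with $\tau(x,y)=(-x,y)$: the factor $w(y)$ is $\tau$-invariant and $x\mapsto -x$ is respected. It sends $\phi_1$ to $(t^{k+1},t)$, and by equivariance it sends $\phi_2$ to $(-s^{k+1},s)$. This is exactly the equivariant germ $\phi(t)=(t^{k+1},t)$ in the paper's convention of suppressing the symmetric branch. A final rescaling of $x$ normalizes the sign and constant, completing the RL equivalence.

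For versality I will follow Theorem~\ref{thm:inf_vers}. The $k$-fold line tangency stratum is cut out in jet space by $k+1$ conditions: $h(0)=0$ (the point lies on $\cL$) and the vanishing of the coefficients of $t,\dots,t^{k}$ in $h$. By Lemma~\ref{lem:complicatedtan} this gives $\pcodim=k$, and the first part shows the stratum is simple. The full deformation $(t^{k+1}+\mu_0+\mu_1t+\dots+\mu_kt^k,t)$, with $\mu_j$ indexing coefficient $t^j$, is transverse to the stratum and therefore versal.

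It remains to remove one parameter, and this is the step I expect to be the main obstacle. I will use the equivariant domain translation $t\mapsto t+c$, which is compensated by the equivariant codomain translation $y\mapsto y-c$. This shifts the coefficient of $t^k$ by $(k+1)c$, so we may set $\mu_k=0$. The point to check carefully is that this parameter reduction can be done smoothly in the family, and that after a Tschirnhaus-type shift the remaining $\mu$'s are reparameterized by a local diffeomorphism of parameter space. Relabeling then yields $\lambda_1t^{k-1}+\dots+\lambda_k$, as stated. Equivalently, one can verify infinitesimal versality directly via Theorem~\ref{thm:d_deform}. The tangent space to the orbit contains $h(t)\,d\phi/dt+k(\phi)$ with $k_1$ odd in $x$ and $k_2$ even in $x$. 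Restricted to $x=t^{k+1}$, the $x$-component is $h(t)(k+1)t^k+k_1(t^{k+1},t)$. Since $k_1(x,y)=x\cdot(\text{even in }x)$, its restriction covers the multiples of $t^{k+1}$. The term $h(t)(k+1)t^k$ covers the $t^k$ direction. Thus only $1,t,\dots,t^{k-1}$ remain, and these are supplied by the $\lambda_i$.
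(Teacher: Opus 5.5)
Your proposal is correct and follows the paper's route. The paper proves this lemma by rerunning Lemma~\ref{lem:normal_tang} with $x$ and $y$ interchanged. That means exactly your equivariant codomain change $(x,y)\mapsto(x/w(y),y)$, then versality of the full $(k+1)$-parameter family via Theorem~\ref{thm:inf_vers}, then removal of one coefficient by a domain translation. Your extra direct infinitesimal-versality check is also correct and avoids the parameter-reduction step you flagged as the main obstacle. There, the term $h(t)(k+1)t^k$ alone already fills all multiples of $t^k$ in the $x$-component, and $k_2(x,y)=\nu^y(y)-h(y)$ absorbs the $y$-component.
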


\begin{proof}
The proof is as in Lemma~\ref{lem:normal_tang}, with the $x$- and $y$-coordinates interchanged.
\end{proof}

For perpendicular tangencies we have the following. Here we write out the symmetric branch to emphasize the tangency between the two branches.

\begin{lemma}\label{lem:normal_perp}
A $2k$-fold perpendicular tangency is equivariantly RL equivalent to the (equivariant) multigerm $\phi = (\phi_1, \phi_2)$ with $\phi_1(t) = (t,t^{2k+1})$ and $\phi_2(s) = (-s, s^{2k+1})$, where $\phi_2$ is parameterized by $s = \sigma t$. A versal deformation of this is given by 
\[
\Phi(\lambda_1, \cdots, \lambda_k)(t) = (t, t^{2k+1} + \lambda_{1} t^{2k-1} + \lambda_{2} t^{2k-3} + \cdots + \lambda_{k} t)
\]
while simultaneously deforming the symmetric branch via
\[
\Phi(\lambda_1, \cdots, \lambda_k)(s) = (-s, s^{2k+1} + \lambda_{1} s^{2k-1} + \lambda_{2} s^{2k-3} + \cdots + \lambda_{k} s).
\]
\end{lemma}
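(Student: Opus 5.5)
The plan has two parts: first derive the normal form, then establish versality by the algebraic criterion of Theorem~\ref{thm:d_deform}, in the same way as Lemmas~\ref{lem:normal_tang} and~\ref{lem:normal_int_tang}.

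\emph{Normal form.} Since $t_1\in\Sfree$, we may freely reparameterize the domain near $t_1$; we then define the reparameterization near $\sigma t_1$ by conjugating with $\sigma$, which keeps it equivariant. The tangent line at $\phi(t_1)\in\cL$ is horizontal, so after an equivariant linear rescaling and a reparameterization of $t$ we have $\phi_1(t)=(t,h(t))$ with $h(0)=h'(0)=0$. The symmetric branch is then $\phi_2(s)=(-s,h(s))$.
\begin{itemize}
\item The even part $h_{\mathrm{ev}}(x)$ is a function of $x^2$. Hence the codomain change $y\mapsto y-h_{\mathrm{ev}}(x)$ is $\tau$-equivariant, and it kills all even-degree terms.
\item This leaves $h(t)=t\,q(t^2)$. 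By the discussion preceding Lemma~\ref{lem:complicatedtan}, a $2k$-fold perpendicular tangency means $h(-s)-h(s)$ has leading degree $2k+1$. So $h(t)=t^{2k+1}r(t^2)$ with $r(0)\neq 0$, and after rescaling we may take $r(0)=1$.
\item Since $r(x^2)$ is $\tau$-invariant, the change $y\mapsto y/r(x^2)$ is equivariant. It sends $\phi_1$ to $(t,t^{2k+1})$ and automatically sends $\phi_2$ to $(-s,s^{2k+1})$.
\end{itemize}

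\emph{Versality.} Equivariant multigerms over $\{t_1,\sigma t_1\}$ are identified with germs at $t_1$, so it suffices to work with the single branch $\phi_1(t)=(t,t^{2k+1})$. Given a target $\nu=(\nu^x,\nu^y)$, we must solve
\[
\nu=h_1(t)\,(1,(2k+1)t^{2k})+k(t,t^{2k+1})+\sum_j c_j\,(0,t^{2k+1-2j}),
\]
where $h_1$ is an arbitrary germ and $k=(k_1,k_2)$ satisfies $k_1$ odd in $x$ and $k_2$ even in $x$.

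\emph{First component.} Choose $h_1$ to match $\nu^x$ after fixing $k_1$; for instance, take $k_1=0$ and $h_1=\nu^x$. This changes the second component by the known term $\nu^x(t)(2k+1)t^{2k}$, which we absorb into the new target $\tilde\nu^y$.

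\emph{Second component.} We need $\tilde\nu^y(t)=k_2(t,t^{2k+1})+\sum_j c_j t^{2k+1-2j}$, with $k_2$ a function of $x^2$ and $y$.
\begin{itemize}
\item Even powers of $t$ come from $k_2(x^2,0)$.
\item Odd powers $t^{m}$ with $m\ge 2k+1$ come from $y\cdot x^{m-2k-1}$. Here $m-2k-1$ is even, so this monomial is allowed.
\item The odd powers $t,t^3,\dots,t^{2k-1}$ cannot be reached from the orbit directions. They are supplied exactly by the $\lambda_j$-directions $t^{2k+1-2j}$, $j=1,\dots,k$.
\end{itemize}

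\emph{Formal to smooth.} The solution obtained this way is formal. The passage to smooth germs is handled by the equivariant Malgrange argument, as in Theorem~\ref{thm:inf_vers} and Theorem~\ref{thm:versality_theorem}.

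\emph{Main obstacle.} The only real point is the second-component bookkeeping: checking that equivariance of $k$ exactly removes the odd powers below $2k+1$ and nothing else. This matches the count $\pcodim=k$ from Lemma~\ref{lem:complicatedtan}. The constant term is also harmless: a vertical translation $y\mapsto y+c$ is equivariant, so it is absorbed by $k_2$.
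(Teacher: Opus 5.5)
Your proof is correct. The normal-form half is the same as the paper's: kill the even part of $h$ by the equivariant change $y\mapsto y-h_{\mathrm{ev}}(x)$, then divide by the invariant unit $r(x^2)$. You also need the equivariant vertical translation that moves $\phi(t_1)$ to the origin, but that is harmless.

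The difference is in the versality half. The paper settles it in one line via Theorem~\ref{thm:inf_vers}. There, the normal form shows the stratum is simple, and by the proof of Lemma~\ref{lem:complicatedtan} the stratum is cut out by the vanishing of the odd coefficients $a_1,a_3,\dots,a_{2k-1}$ of $h$; the $\lambda_j$-family is visibly transverse to these equations.

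You instead solve the equation of Theorem~\ref{thm:d_deform} directly, in the style of Examples~\ref{ex:t25} and~\ref{ex:defining}. This is more self-contained: Theorem~\ref{thm:inf_vers} is only sketched, and using it requires the defining equations in equivariant jet space. It also gives more. For every orbit vector, the odd part of the second component, $h_1(t)(2k+1)t^{2k}+k_2(t,t^{2k+1})$, is $O(t^{2k+1})$. So the normal space is exactly spanned by $(0,t),(0,t^3),\dots,(0,t^{2k-1})$, the deformation is miniversal, and this matches $\pcodim=k$.

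Your formal-to-smooth caveat is unnecessary, because your solution is already smooth. Take $h_1=\nu^x$ and $k_1=0$. Let $P$ be the degree $\le 2k-1$ Taylor polynomial of $\tilde\nu^y_{\mathrm{odd}}$, and let $g$ be the smooth even function with $\tilde\nu^y_{\mathrm{odd}}-P=t^{2k+1}g$. Then $k_2(x,y)=\tilde\nu^y_{\mathrm{ev}}(x)+y\,g(x)$, together with the $c_j$ read off from $P$, solves the equation in smooth germs. Malgrange then enters only through Theorem~\ref{thm:versality_theorem}.
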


\begin{proof}
As discussed above, we may assume that after a symmetric pair of linearizations we have $\phi_1(t) = (t, h(t))$ and $\phi_2(s) = (-s, h(s))$ for some power series $h$. Write $h(t) = p(t^2) + t q(t^2)$ and perform the equivariant coordinate change in the codomain replacing $y$ with $y - p(x^2)$. This changes our germ into $\phi_1(t) = (t, tq(t^2))$ and $\phi_2(s) = (-s, s q(s^2))$. Write $q(t^2) = t^{2k+1} r(t^2)$ for some invertible power series $r$. Then the equivariant coordinate change replacing $y$ with $y/r(x^2)$ gives the desired form for $\phi$.

A versal deformation is obtained by finding a family transverse to the defining set as in Theorem~\ref{thm:inf_vers}.
\end{proof}

Next is the normal form for an oblique tangency. 

\begin{lemma}\label{lem:oblique_form}
  A $k$-fold oblique tangency is equivariantly RL equivalent to the singularity with one local branch parameterized by $x(t)=t$, $y(t)=t$,
  and the other branch parameterized by $x(s)=s$, $y(s)=s+s^{k+1}$. The symmetric branches are parameterized symmetrically 
  (i.e. $t\mapsto (-t,t)$, and $s\mapsto (-s,s\pm s^{k+1})$.

  A deformation with parameters $\lambda_0,\dots,\lambda_{k}$ keeping the first branch fixed and modifying the second to
  \[x(s)=s, y(s)=\lambda_0+s\lambda_1+\dots+\lambda_ks^k\pm s^{k+1}\]
  is versal.
\end{lemma}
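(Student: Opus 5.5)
We reduce the oblique tangency to the model by a sequence of equivariant coordinate changes, following Lemma~\ref{lem:normal_tang} but keeping track of the symmetric branches. The key observation is that near a point of $\cL$ the four branches come in two symmetric pairs. The branch through $t_1$ and the branch through $t_2$ (or $\sigma t_2$) share a tangent line which is neither horizontal nor vertical. Their reflections share the reflected tangent line, which is transverse to the first one.

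\textbf{Step 1: linear normalization.} Since the common tangent line at $t_1$ is oblique, an equivariant diagonal rescaling of $(x,y)$ makes it parallel to $(1,1)$. Reparameterizing the domains near $t_1$ and $t_2$, which is unrestricted since these points lie in $\Sfree$, we may write the first branch as $(t,t+a(t))$ and the second as $(s,s+b(s))$, with $a,b$ of order at least $2$.

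\textbf{Step 2: straighten the first branch equivariantly.} We seek an equivariant diffeomorphism of the form $(x,y)\mapsto(x,\,y-F(x,y))$, with $F$ even in $x$, taking $(t,t+a(t))$ to $(t,t)$. Near the line $\{y=x\}$ we can prescribe $F(x,x)=a(x)$, extended arbitrarily off the line. We then symmetrize, requiring $F(-x,y)=F(x,y)$. The reflected line $\{y=-x\}$ is a different line through the origin, so equivariance only forces $F(-x,x)=a(x)$ there. This extra condition sits on a transverse line, and it could in principle conflict with prescribing the data along $y=x$ only at the origin.

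\textbf{Step 3: the main obstacle — compatibility of the two conditions.} This is where Lemma~\ref{lem:normal_bound} needed its power-series bookkeeping. It should be easier here, since the two lines $\{y=x\}$ and $\{y=-x\}$ are transverse smooth curves. Concretely, take $F(x,y)=G(x^2,y)$ and ask for $G(u^2,u)=a(u)$. The map $u\mapsto(u^2,u)$ is an embedding, so $G$ is determined on a smooth curve and extends smoothly; no conflict arises. After this change the first branch is $(t,t)$, its mirror is $(-t,t)$, and the second branch becomes $(s,s+h(s))$ with $h$ of leading degree $k+1$ by Definition~\ref{def:tangency-fold}.

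\textbf{Step 4: fix the leading coefficient and the higher terms.} Write $h(s)=c\,s^{k+1}w(s)$ with $w(0)=1$. An equivariant rescaling normalizes $|c|=1$; the sign is an invariant, which gives the $\pm$ in the statement. We remove $w$ with a coordinate change $y\mapsto x+(y-x)\,R(x^2,y)^{-1}$, where $R$ is even in $x$ and satisfies $R(s^2,s)=w(s)$ along the branch. This change preserves $\{y=x\}$ pointwise. Its symmetric version preserves $\{y=-x\}$ only setwise, so we must check that the mirror line stays straight. That holds because the change acts by rescaling $y-x$ and, symmetrically, $y+x$. If this fails, we compose with a domain reparameterization of the mirror branch, which is always allowed on $\Sfree$.

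\textbf{Versality.} We apply Theorem~\ref{thm:inf_vers}. By Lemma~\ref{lem:complicatedtan} the stratum has codimension $k+1$ and is defined by the vanishing of the coefficients of $s^0,\dots,s^k$ in the second branch relative to the first (together with $\phi(t_1)\in\cL$). The family varying $\lambda_0,\dots,\lambda_k$ is transverse to these defining equations. Unlike Lemma~\ref{lem:normal_tang}, we cannot remove $\lambda_0$ by translating $s$: that translation is needed to keep the intersection on $\cL$, which accounts for the extra parameter.
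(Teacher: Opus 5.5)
\textbf{The gap is in Step~4, which is where the real content of the lemma lies.} For a codomain change $(x,y)\mapsto(X,Y)$ to commute with $\tau(x,y)=(-x,y)$, $X$ must be odd in $x$ and $Y$ even in $x$. Your $Y=x+(y-x)R(x^2,y)^{-1}$ has odd part $x\bigl(1-R(x^2,y)^{-1}\bigr)$, which vanishes only if $R\equiv 1$.

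Concretely, your map sends the mirror line $\{y=-x\}$ to $\{Y=x(1-2/R(x^2,-x))\}$. That is not the reflection of the image $\{Y=X\}$ of the first branch, so the result is not even an equivariant multigerm.

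Neither of your fallbacks helps:
\begin{itemize}
\item You cannot use the change near one pair of branches and a ``symmetric version'' $\tau\Psi\tau$ near the other. All four branches pass through the same point of $\cL$, so the left equivalence must be a single $\tau$-equivariant germ at that point. That trick only works off the axis, where the two symmetric pairs sit at different points.
\item Reparameterizing the domain of the mirror branch changes its parameterization, not its image.
\end{itemize}
A side remark: the sign of the leading coefficient is not an invariant when $k$ is odd, since $-\mathrm{Id}$ is equivariant and flips it. This is harmless, because the statement allows $\pm$.

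\textbf{How the paper handles it.} Use $u=\tfrac12(y+x)$, $v=\tfrac12(y-x)$, in which $\tau$ swaps $u$ and $v$. The first branch is $\{v=0\}$, its mirror is $\{u=0\}$, and after your rescaling the second branch is $v=g(u)$ with $g(u)=\pm u^{k+1}w(u)$, $w(0)=1$. The paper uses only the changes $(u,v)\mapsto(P(u),P(v))$, which are automatically equivariant and preserve both lines. The requirement $P(g(s))=\pm P(s)^{k+1}$ is then a B\"ottcher-type conjugation problem, solved by the iteration in Lemma~\ref{lem:calculus1}.

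\textbf{A simpler repair in the spirit of your Step~3.} Every map $(u,v)\mapsto\bigl(u\,\alpha(u,v),\,v\,\alpha(v,u)\bigr)$ is equivariant and preserves $\{u=0\}$ and $\{v=0\}$. It carries $\{v=g(u)\}$ onto $\{v=\pm u^{k+1}\}$ as soon as $\alpha(s,g(s))=1$ and $\alpha(g(s),s)=1/w(s)$. These two conditions prescribe $\alpha$ on the second branch and on its mirror. Those are two smooth curves meeting transversally only at the origin, with matching value $1$ there, so a smooth $\alpha$ exists. An explicit choice is $\alpha(u,v)=1/w(\beta(v-g(u)))$, where $\beta$ is the inverse of $s\mapsto s-g(g(s))$. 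The paper's diagonal ansatz $\alpha(u,v)=P(u)/u$ couples the two prescriptions, which is why it needs the functional equation.

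\textbf{Two smaller points.}
\begin{itemize}
\item In Steps~2--3 the interpolation condition should be $G(t^2,t+a(t))=a(t)$ along the branch, not $G(u^2,u)=a(u)$ along the line. In fact a change of $y$ alone, $y\mapsto\theta(y)$ with $\theta$ inverse to $t\mapsto t+a(t)$, already straightens the first branch, as in the paper.
\item Your versality argument is the same as the paper's.
\end{itemize}
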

\begin{proof}
  Consider the first branch. We shift the coordinate system vertically so that the branch passes through $(0,0)$. As the branch is
  neither tangent nor perpendicular to the central line, we know that $x'(0)\neq 0$. In particular, $t\mapsto x(t)$ has local inverse.
  By changing reparametrizing $t$, we can
  assume that $x(t)=t$. Next, reparameterizing $y$, we assume that $y(t)=t$. We remark that reparametrizing $x$ is more difficult, because we are only allowed to reparameterize in a $\tau$-equivariant way.

  Having specified the first branch, we pass to the other one. It is convenient to choose $u=\frac12(x+y)$, $v=\frac12(x-y)$.
  Note that then $\tau u=v$, $\tau v=u$.
  The first branch is given by $u(t)=t$, $v(t)=0$. The second branch is given by $u(s),v(s)$ with $u(0)=v(0)$. We can
  reparameterize $s$ in such a way that $u(s)=s$.
  The fact that the two branches are tangent up to order $k$ implies that $v(s)=v_ss^{k+1}+\dots$ with $v_s\neq 0$. We aim to cancel the
  higher order terms. We will look for change of variables $v\mapsto P(v)$, $u\mapsto P(u)$, $t\mapsto P^{-1}(t)$, $s\mapsto P^{-1}(s)$.
  This change commutes with $\tau$, and fixes the form of the first branch. We have the following result, whose proof we defer to the end
  of the proof of Lemma~\ref{lem:oblique_form}.
  \begin{lemma}\label{lem:calculus1}
    Suppose $k>1$. Let $H(t)=H_{k}t^k+\dots$ be a germ of a real-valued smooth function in one variable. Then, there exist smooth functions $Q,R$ defined
    near $0\in\R$ such that $R\circ Q(t)=t$
    and $R(Q(t)^k)=\epsilon H(t)$. Here $\epsilon$ is the sign of $H_k$.
  \end{lemma}
  Given the lemma, we choose $P=Q$ with $H(s)=y(s)$. Then, the new $y$ is equal to $\pm s^k$.

  This finishes the proof of the first part of Lemma~\ref{lem:oblique_form}, in particular we show that the oblique tangency is simple. Then,
  the deformation described in the statement of lemma is transverse to the defining equation, so by Theorem~\ref{thm:inf_vers} it is versal.
  The latter observation concludes the proof of Lemma~\ref{lem:oblique_form}, up to the proof of Lemma~\ref{lem:calculus1}, which we now give.
\end{proof}
\begin{proof}[Proof of Lemma~\ref{lem:calculus1}]
  Suppose $h_k>0$, so that we can assume that $\epsilon=1$. It is enough to show that there exists a local diffeomorphism $Q$ taking $0$ to $0$
  such that $Q(t)^k=Q(H(t))$. As a local diffeomorphism, $Q$ is divisible by $t$ so $Q(t)=tq(t)$ for some smooth function $q$ non-vanishing at $0$. Also, by subsequent application of Hadamard lemma, write $H(t)=t^kh(t)$ with $h(0)\neq 0$. Equation $Q(t)^k=Q(H(t))$ translates into
  \[ q(t)^k=h(t)q(H(t)).\]
  We want to solve it for $q$. As both sides are non-zero for $t=0$, with $c(t)=\log h(t)$ and $r(t)=\log q(t)$, we have
  \begin{equation}\label{eq:on_rt}
    r(t)=\frac1k c(t)+\frac1k r(H(t)).
  \end{equation}
  We apply an iterative method. That is, \eqref{eq:on_rt} yields.
  \begin{equation}\label{eq:on_rt2}
    r(t)=\frac1k c(t)+\frac{1}{k^2}c(H(t))+\frac{1}{k^2} r(H(H(t)).
  \end{equation}
  With $H^{(n)}(t)$ denoting the $n$-fold composition of $H$, define now
  \begin{equation}\label{eq:on_rt3}
    r_\infty(t)=\sum_{j=0}^\infty \frac{1}{k^{j+1}} c(H^{(n)}(t)).
  \end{equation}
  Taking the convergence of the series in \eqref{eq:on_rt3} for granted, we see that $r_\infty$ satisfies \eqref{eq:on_rt}. So
  with $q(t)=\exp r_\infty(t)$, we obtain the statement. It remains to show that for some $\varepsilon>0$,
  the series \eqref{eq:on_rt3} converges on $[-\varepsilon,\varepsilon]$ uniformly with all derivatives.

  The Taylor expansion of $H$ starts with the term $H_{k+1}t^{k+1}$, and $k>1$. Hence, there exists $\varepsilon>0$ such that
  for $t\in[-\varepsilon,\varepsilon]$, $|H(t)|<C|t|^{k+1}$ with $C=2|H_{k+1}|$. We may assume that $C|t|^{k+1}\le \varepsilon$
  if $|t|\le\varepsilon$, hence $H$ takes $[-\varepsilon,\varepsilon]$ to itself. Consequently:
  \[\sup_{t\in[-\varepsilon,\varepsilon]} |c(H(t))|\le c_0:=\sup_{t\in[-\varepsilon,\varepsilon]} |c(t)|.\]
  This implies that $|c(H^{(n)})(t)|\le c_0$, so by Weierstrass criterion, the series \eqref{eq:on_rt3} converges uniformly.

  Convergence of all derivatives follows from a bound on the derivatives of $H^{(n)}$. Iterating $H(t)<C|t|^{k+1}$ implies
  that $H^{(2)}(t)<C^{k+2}|t|^{(k+1)^2}$ and consequently, $|H^{(n)}|$ vanishes at $t$ up to an order expanding superexponentially.
  From this, a straightforward but tedious estimate involving Faa di Bruno's formula shows that for any $\ell\ge 1$, the $\ell$-th derivative
  of $c(H^{(n)})$ is small enough to ensure uniform convergence of the $\ell$-th derivative of \eqref{eq:on_rt3}. 
\end{proof}

Finally, we have the most complicated case, which is for a fixed-point tangency. Note that the first case $(k=2,\ell=1)$
leads to a rather complicated singularity, consisting of 3 branches with tangency orders $(2,3,3)$. Even that case is not a simple
singularity. Finding a normal form of the fixed-point
tangency is beyond the scope of this article. Instead, we content ourselves with a description of a deformation

\begin{lemma}\label{lem:fixed_point_tan}
  Suppose $k\le 2\ell+1$. A $(k, 2\ell)$-fold fixed-point tangency is equivariantly RL equivalent to the (equivariant) multigerm 
  $x(s_1)=s_1, y(s_1)=s_1^{2\ell+1}$, $x(s_2)=s_2$, $y(s_2)=h(s_2)$ (here $\sigma s_2=s_2$, but $\sigma s_1\neq s_1$),
  where $h(s_2)=h_{k+1}s_2^{k+1}+\dots$.
  The deformation 
  \begin{align*}
    y(s_1)&\mapsto \lambda_1s_1+\lambda_2s_1^3+\dots+\lambda_{\ell-1}s_1^{2\ell-1}+s_1^{2\ell+1}\\
    y(s_2)&\mapsto \lambda_{\ell}s_2^2+\dots+\lambda_{\ell+k/2-2}s_2^{k-1}+h(s_2)
  \end{align*}
  depending on parameters $(\lambda_1,\dots,\lambda_{\ell+k/2-2})$ is transverse to the defining equation.
\end{lemma}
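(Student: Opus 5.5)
Two things have to be shown: first a partial normal form, in which the free branch is brought exactly to $(s_1,s_1^{2\ell+1})$ while the fixed-point branch keeps the shape $(s_2,h(s_2))$; second, that the stated family is transverse to the defining stratum. Following the proof of Lemma~\ref{lem:complicatedtan}, I start from the three branches
\[
\phi_1(t)=(t,h_1(t)),\quad \phi_2(s)=(s,h_1(-s)),\quad \phi_3(r)=(r,g(r)),
\]
where $g$ is even. I then normalize $\phi_1$ using equivariant coordinate changes of the codomain that preserve $x$:
\[
(x,y)\mapsto (x,\,y-p(x^2)), \qquad (x,y)\mapsto (x,\,y/r(x^2)).
\]
Write $h_1(t)=p(t^2)+t\,q(t^2)$. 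The first change removes the even part of $h_1$. The tangency between $\phi_1$ and $\phi_2$ has order $2\ell$, so as in Lemma~\ref{lem:normal_perp} we have $t\,q(t^2)=t^{2\ell+1}r(t^2)$ with $r(0)\ne 0$. The second change then divides this out, giving $\phi_1(t)=(t,t^{2\ell+1})$ up to a sign, which can be absorbed by $y\mapsto -y$. Both changes preserve the form $(r,g(r))$ of the fixed-point branch, and they preserve its evenness because $p(x^2)$ and $r(x^2)$ are $\tau$-invariant. After the reparametrization $r\mapsto r$ the third branch is $(s_2,h(s_2))$ with $h$ even.

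Next I compute the order of vanishing of $h$. The tangency between $\phi_1$ and $\phi_3$ has order $k$, so $h(s)-s^{2\ell+1}$ vanishes to order $k+1$. Since $k\le 2\ell+1$, the odd term $s^{2\ell+1}$ has degree at least $k$. I expect to need a short case check here. When $k<2\ell+1$, the leading terms of $h$ are even and begin in degree $k+1$ (so $k$ is odd), giving $h=h_{k+1}s^{k+1}+\cdots$. The boundary case has to be read off directly from the leading coefficient. This settles the first assertion. I do not try to remove the higher terms of $h$, since the text warns that the singularity is not simple.

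For transversality I work in the jet coordinates that already appeared in the codimension count of Lemma~\ref{lem:complicatedtan}. The defining equations are:
\begin{itemize}
\item the vanishing of the odd coefficients $a_1,a_3,\dots,a_{2\ell-1}$ of the $\phi_1$-branch;
\item the agreement of the low-degree coefficients of $g-h_1$ up to order $k$, which after normalization amounts to the vanishing of the even coefficients of $h$ in degrees $2,\dots,k-1$;
\item the two incidence conditions $\phi(t_1)=\phi(t_2)$.
\end{itemize}
The parameters $\lambda_1,\dots,\lambda_{\ell-1}$ move the odd coefficients $a_1,\dots,a_{2\ell-3}$ independently. The parameters $\lambda_\ell,\dots$ move the even coefficients of $h$ independently. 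The derivative of the family is therefore a block-diagonal map onto the normal space of the stratum, once the incidence conditions and the remaining coefficient are accounted for by translations in the domain and by the remaining coordinate freedom, exactly as the constant term was absorbed in Lemma~\ref{lem:normal_tang}.

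I expect the main obstacle to be the parameter count itself: checking that $\ell+k/2-2$ parameters exactly match $\pcodim$ minus the directions that equivariant RL equivalences can absorb (translation along $\cL$, horizontal translation of the free branch, and the top odd coefficient). I would first verify this count directly in the smallest case $k=1$, $\ell=1$, and then argue by induction on the explicit list of coefficients.
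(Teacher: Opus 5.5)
Your normalization is the paper's route (Lemma~\ref{lem:normal_perp} for the free pair, whose codomain changes are $\tau$-invariant and keep the fixed-point branch an even graph), and that part is fine except for a parity slip. Because $h$ is even, $h(s)-s^{2\ell+1}$ always has coefficient $-1$ in degree $2\ell+1$. So $k\le 2\ell$ holds automatically, and the ``boundary case'' $k=2\ell+1$ that you defer is empty. For $k<2\ell$ one gets $k$ odd and $h=h_{k+1}s^{k+1}+\cdots$ with $h_{k+1}\neq 0$. For $k=2\ell$, which your case check misses, one has $h=O(s^{2\ell+2})$. The tangency therefore contributes $\lfloor k/2\rfloor$ conditions, namely the even coefficients of $h$ in degrees $2,\dots,2\lfloor k/2\rfloor$. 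In particular $k/2$ in the stated indices is usually not an integer, which already shows the stated parameter range cannot be matched literally.

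The genuine gap is the transversality step. There you say the incidence conditions and the top coefficient $a_{2\ell-1}$ are ``accounted for by translations in the domain and by the remaining coordinate freedom''; this fails.
\begin{itemize}
\item Transversality of the multijet extension to the stratum $D$ is a statement about the normal space of $D$. By Lemma~\ref{lem:complicatedtan} with $k\le 2\ell$, that space has dimension $\codim D=\pcodim+1=\ell+\lfloor k/2\rfloor+2$.
\item Equivariant RL equivalences preserve $D$, so their infinitesimal directions are tangent to $D$ and cover no normal direction.
\item The only extra direction is moving the source point $t_1$, since $t_2\in\Sfix$ is isolated. This moves $\phi(t_1)$ off $\cL$. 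But the tangent is horizontal and the branch is $y=s^{2\ell+1}$, so to first order it changes neither the height of $\phi(t_1)$ nor $a_{2\ell-1}$. Note also that $\pcodim$ has already subtracted this one source direction, so nothing further may be subtracted from it.
\item This is exactly where the analogy with Lemma~\ref{lem:normal_tang} breaks. There both source points are free, and translating the second one is what supplies the missing coefficient.
\end{itemize}
Hence any transverse family needs at least $\pcodim=\ell+\lfloor k/2\rfloor+1$ parameters:
\begin{itemize}
\item one for each of $a_1,a_3,\dots,a_{2\ell-1}$;
\item one for each even coefficient of $h$ in degrees $2,\dots,2\lfloor k/2\rfloor$;
\item one constant vertical shift of the free pair relative to the fixed-point branch.
\end{itemize}
With that family, your block-diagonal argument goes through.

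Your own test case $k=\ell=1$ exposes the problem. The count $\ell+k/2-2$ gives no parameters, while $\pcodim=2$. In Section~\ref{sub:tang2} the paper deforms the fixed-point $(1,2)$-fold tangency by $y=x^3+\lambda_1x+\lambda_2$. That is precisely the top odd coefficient and the vertical offset that you propose to absorb.
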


\begin{proof}
  Apply Lemma~\ref{lem:normal_perp} to find a coordinate system such that $x(s_1)=s_1$, $y(s_1)=s_1^{2\ell+1}$. Reparameterize the coordinate $s_2$
  in such a way that $x(s_2)=s_2$. Then, as the branches are tangent up to order $k$, and $k\le 2\ell$, we know that
  \[y(s_2)=h(s_2),\]
  with $h(s_2)$ starting with degree $k+1$.

  This coordinate change can be done in a family, so that we can find a deformation transverse to the defining set,
  even if the singularity is not simple.  To see this,
  suppose $\Psi\colon \Xi\to \cO(s_1,\sigma s_1,s_2)$ is a family of maps parametrizing a fixed-point singularity of order $(k,2\ell)$.
  For $\xi\in\Xi$, we denote the local parameterization by $(x_\xi(s_1),y_\xi(s_1))$ and $(x_\xi(s_2),y_\xi(s_2))$.
  Fix $\xi_0\in\Xi$ and suppose we have found a coordinate system in which $x_{\xi_0},y_{\xi_0}$ have the form as above. 
  
  The first branch singularity (the perpendicular singularity) is simple. Therefore, we can find coordinate systems (both in the domain and in the codomain), possibly depending on $\xi$, such that $x_{\xi}(s_1)=s_1$ and $y_{\xi}(s_1)=s_1^{2\ell+1}$. On reparametrizing the coordinate $s_2$
  (by a local diffeomorphism of $s_2$-variable, possibly dependent on $\xi$), we arrive at $x_{\xi}(s_2)=s_2$. Then, the tangency restriction implies that  $y_{\xi}(s_2)=h_{\xi}(s_2)$, where $h_{\xi}(s_2)$ has Taylor expansion starting at $k+1$.

  That is, in the local coordinates, with $x(s_1)=s_1$, $x(s_2)=s_2$, with $y(s_1)=b_1s_1+b_2s_2+\dots$ and $y(s_2)=\beta_1s_2^2+\beta_2s_2^4+\dots$, the defining equations are $b_1=b_3=\dots=b_{\ell-1}=0$, $\beta_1=\dots=\beta_{k/2-2}=0$. 
\end{proof}


\section{One-parameter deformations of equivariant link diagrams}\label{sec:one_para}

We now apply the formalism of the last few sections to give a rigorous proof of the equivariant Reidemeister theorem. This will arise from understanding the behavior of one-parameter families of maps in $\cF$. In general, a one-parameter family will not stay within the subset of regular maps, but will instead cross the codimension $1$ subset
\[
\wt{\cF}^1 = \cF \setminus \cF^0.
\]
We thus begin by identifying a particular subset $\cF^1 \subset \wt{\cF}^1$ whose complement in $\wt{\cF}^1$ has codimension $2$. This will consist of the mildest possible singularities within $\smash{\wt{\cF}^1}$. Any one-parameter family of maps can then be perturbed so that all intersections with $\smash{\wt{\cF}^1}$ occur at points of $\cF^1$. 

Importantly, each of these intersections corresponds to a codimension $1$ singularity which is sufficiently constrained so that we can write down a normal form and versal deformation. This gives a local model for how our one-parameter family crosses $\cF^1$. As discussed in Example~\ref{ex:codim1}, the before-and-after pictures for these versal deformations describe the qualitative change in $\phi$ as $\cF^1$ is crossed. As in Section~\ref{sec:noneqreidemeister}, these give rise to the different equivariant Reidemeister moves.

To give an exhaustive list of codimension $1$ singularities, recall that 
\[
\wt{\cF}^1 = \wt{\cF}^1_1 \cup \cdots \cup \wt{\cF}^1_8,
\]
where each of the strata $\wt{\cF}^1_i$ describes the violation of a particular regularity condition. For the sake of organization, we note that these may be separated into three types:
\begin{itemize}
  \item Off-axis singularities; that is, singularities involving domain points in $\Sfree$ and image points off the axis. These are described by $\wt{\cF}^1_1,\wt{\cF}^1_2$, and $\wt{\cF}^1_3$.
  \item On-axis singularities; that is, singularities involving domain points in $\Sfree$ and image points on the axis. These are described by $\wt{\cF}^1_4,\wt{\cF}^1_5$, and $\wt{\cF}^1_6$. 
  \item Fixed-point singularities; that is, singularities involving domain points in $\Sfix$. These are described by $\wt{\cF}^1_7$ and $\wt{\cF}^1_{8}$.
\end{itemize}
The reader should refer back to Table~\ref{tab:cpl} for a description of these singularities. We then construct $\cF^1$ as the union
\[
\cF^1 = \cF^1_1 \cup \cdots \cup \cF^1_8,
\]
where each $\wt{\cF}^1_i \setminus \cF^1_i$ has codimension $2$. As a rule, we define $\cF^1_i$ to be the complement in $\wt{\cF}^1_i$ of the following kinds of singularities:
\begin{itemize}
  \item coincidences, as in Definition~\ref{def:coincidence}. This means that if the definition of $\wt{\cF}_i$ specifies a singularity, then no other singularity occurs at the same time.
  \item strikethroughs, as in Definition~\ref{def:strikethrough}. This means that if the definition of $\wt{\cF}_i$ involves points $t_1,\dots,t_k$,
    then no other branch of $\phi$ passes through the images of these points.
  \item higher-order cusps. This means that if the definition of $\wt{\cF}_i$ involves some differentials vanishing, then they vanish only to the required order and not a higher order.
  \item higher-order tangencies. This means that if the definition of $\wt{\cF}_i$ involves two branches that intersect, then they intersect
    with the prescribed tangency and not to a tangency of higher order.
\end{itemize}
In Lemmas~\ref{lem:coincidence} and~\ref{lem:strikethrough}, we already proved that a coincidence or a strikethrough of a codimension~1 singularity
has codimension at least $2$. 

\subsection{Off-axis singularities}\label{sub:off_the_axis}
Off-axis singularities involve points on $\Sfree$ that are mapped to $\R^2\setminus\cL$. 
Each of these thus actually constitutes a symmetric orbit of singularities and leads to a symmetric orbit of Reidemeister moves. These result in symmetrized versions of the usual three Reidemeister moves.

\subsubsection{Off-axis cusp}
Define $\cF^1_1$ to be the space of off-axis $(2,3)$-cusps with no coincidences or strikethroughs. Then $\smash{\wt{\cF}^1_1\setminus\cF^1_1}$ consists
of strata describing a coincidence, a strikethrough, or a higher-order cusp. By Lemma~\ref{lem:cusplist}, an off-axis cusp is a codimension~1 if and only if it is a $(2,3)$-cusp, so this is a union of strata of codimension $2$. By Lemma~\ref{lem:normal_cusp}, a one-parameter versal deformation of the off-axis $(2, 3)$-cusp is given by $(t^2, t^3 + \lambda t)$.
For $\lambda>0$, this describes a smooth curve, while for $\lambda<0$ there is a self-intersection at $t=\pm\sqrt{-\lambda}$. The bifurcation
diagram is given in Figure~\ref{fig:outer_bifurcation}.

\begin{figure}
  \begin{tikzpicture}
    \draw (-4,3) node[rectangle, minimum width=3cm] {\includegraphics[height=2cm]{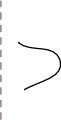}};
    \draw (0,3) node[rectangle, minimum width=3cm] {\includegraphics[height=2cm]{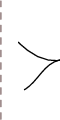}};
    \draw (4,3) node[rectangle, minimum width=3cm] {\includegraphics[height=2cm]{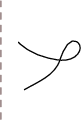}};

    \draw (-4,0) node[rectangle, minimum width=3cm] {\includegraphics[height=2cm]{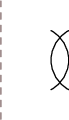}};
    \draw (0,0) node[rectangle, minimum width=3cm] {\includegraphics[height=2cm]{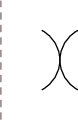}};
    \draw (4,0) node[rectangle, minimum width=3cm] {\includegraphics[height=2cm]{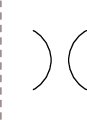}};

    \draw (-4,-3) node[rectangle, minimum width=3cm] {\includegraphics[height=2cm]{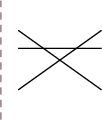}};
    \draw (0,-3) node[rectangle, minimum width=3cm] {\includegraphics[height=2cm]{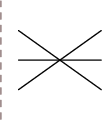}};
    \draw (4,-3) node[rectangle, minimum width=3cm] {\includegraphics[height=2cm]{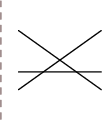}};
  \end{tikzpicture}
  \caption{Top: bifurcation of an outer cusp. The middle picture shows the cusp singularity. Middle: bifurcation diagrams
  of an outer tangency. Bottom: bifurcation diagrams of an outer triple point. }\label{fig:outer_bifurcation}
\end{figure}

\subsubsection{Off-axis tangency}
Define $\cF^1_2$ to be the space of $1$-fold tangencies with no coincidences or strikethroughs. Then $\smash{\wt{\cF}^1_2\setminus\cF^1_2}$ consists
of strata describing a coincidence, a strikethrough, or a higher-order tangency. By Lemma~\ref{lem:complicatedtan}, an outer tangency is codimension~1 if and only if it is a $1$-fold tangency, so this is a union of strata of
codimension $2$. By Lemma~\ref{lem:normal_tang}, our tangency has normal form given by the two branches $(t, 0)$ and $(s, s^2)$. A one-parameter versal deformation is given by perturbing the second branch to $(s, s^2 + \lambda)$. For $\lambda<0$ the branches intersect twice, while for $\lambda>0$, they
do not intersect. The bifurcation
diagram is given in Figure~\ref{fig:outer_bifurcation}.

\subsubsection{Off-axis triple points}
Define $\cF^1_3$ to be the space of ordinary triple points. Note that this comes from an ordinary strikethrough of the ordinary double point and hence has codimension $1$. Any extra tangency between branches leads to a higher codimension phenomenon, as well as any strikethroughs or coincidences. Thus the complement $\smash{\wt{\cF}^1_3\setminus\cF^1_3}$ consists of strata of codimension at least $2$. A versal deformation for the ordinary triple point was given in Example~\ref{ex:R3_tangent}. The bifurcation diagram is given in Figure~\ref{fig:outer_bifurcation}.

\subsection{On-axis singularities.}
On-axis singularities involve domain points on $\Sfree$ but image points lying on the axis of symmetry. In an on-axis singularity we have two symmetric branches of our map that intersect at a point on $\cL$. 

\subsubsection{On-axis line tangency}\label{ssub:central}
Define $\cF^1_4$ to be the space of $1$-fold line tangencies with no coincidences or strikethroughs. Then $\smash{\wt{\cF}^1_4\setminus\cF^1_4}$ consists
of strata describing a coincidence, a strikethrough, or a higher-order line tangency. By Lemma~\ref{lem:complicatedtan}, a line tangency is codimension~$1$ if and only if it is order $1$, so this is a union of strata of codimension $2$. By Lemma~\ref{lem:normal_int_tang}, a $1$-fold line tangency has normal form given by $(t^2, t)$. A one-parameter versal deformation is given by $(t^2 + \lambda, t)$. The bifurcation diagram is given in Figure~\ref{fig:central_bifurcat}.

\begin{figure}
  \begin{tikzpicture}
    \draw (-3,-3) node[rectangle, minimum width=3cm] {\includegraphics[height=2cm]{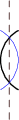}};
    \draw (0,-3) node[rectangle, minimum width=3cm] {\includegraphics[height=2cm]{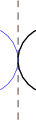}};
    \draw (3,-3) node[rectangle, minimum width=3cm] {\includegraphics[height=2cm]{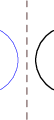}};
    \draw (-3,-6) node[rectangle, minimum width=3cm] {\includegraphics[height=2cm]{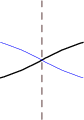}};
    \draw (0,-6) node[rectangle, minimum width=3cm]  {\includegraphics[height=2cm]{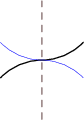}};
    \draw (3,-6) node[rectangle, minimum width=3cm]  {\includegraphics[height=2cm]{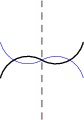}};
    \draw (-3,-9) node[rectangle, minimum width=3cm] {\includegraphics[height=2cm]{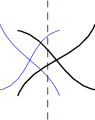}};
    \draw (0,-9) node[rectangle, minimum width=3cm] {\includegraphics[height=2cm]{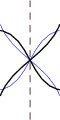}};
    \draw (3,-9) node[rectangle, minimum width=3cm] {\includegraphics[height=2cm]{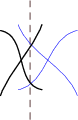}};
  \end{tikzpicture}
  \caption{Top: Bifurcation of central tangency. Middle: Bifurcation of perpendicularity tangency.
  Bottom: Bifurcaton of central double point.}\label{fig:central_bifurcat}.
\end{figure}

\subsubsection{On-axis perpendicular tangency}\label{ssub:perp_tang}
Define $\cF^1_5$ to be the space of $4$-fold perpendicular tangencies with no coincidences or strikethroughs. Then $\smash{\wt{\cF}^1_5\setminus\cF^1_5}$ consists
of strata describing a coincidence, a strikethrough, or a higher-order perpendicular tangency. By Lemma~\ref{lem:complicatedtan}, a perpendicular tangency is codimension~$1$ if and only if it is order $4$ as a tangency, so this is a union of strata of codimension $2$. By Lemma~\ref{lem:normal_perp}, a $4$-fold perpendicular tangency has normal form given by the two symmetric branches $(t, t^3)$ and $(-s, s^3)$. A versal deformation is given by simultaneously perturbing $(t, t^3 + \lambda t)$ and $(-s, s^3 + \lambda s)$. The bifurcation diagram is given in Figure~\ref{fig:central_bifurcat}.

\subsubsection{On-axis double point}\label{ssub:dcentral}
Define $\cF^1_6$ to be the space of on-axis double points with no coincidences or strikethroughs, satisfying the additional condition that there are no tangencies between the four branches and that none of them are parallel or perpendicular to $\cL$. More precisely, recall that $\smash{\wt{\cF}^1_6}$ consists of germs such that there exist $t_1,t_2\in \Sfree$ not in the same orbit of $\sigma$ with $\phi(t_1)=\phi(t_2)\in\cL$. To define $\cF^1_6$, we additionally require that any two vectors among $\{\phi'(t_1), \phi'(t_2), \tau \phi'(t_1), \tau \phi'(t_2)\}$ are linearly independent and that none of them are horizontal or vertical. Negating this condition adds an extra equation, so it is clear that $\smash{\wt{\cF}^1_6\setminus\cF^1_6}$ consists of strata of codimension~2. A versal deformation of $\cF^1_6$ is given by shifting the branches through $t_1$ and $t_2$ to the right according to $\phi(t)\mapsto
\phi(t)+(\lambda, 0)$, and shifting their symmetric branches to the left. The bifurcation diagram is somewhat complex,
because it involves four branches. See Figure~\ref{fig:central_bifurcat}.

\subsection{Fixed-point singularities} Like on-axis singularities, fixed-point singularities involve image points on the axis of symmetry, but (unlike on-axis singularities) involve domain points from $\Sfix$. 

\subsubsection{Fixed-point cusp}\label{ssub:bcusp}
Define $\cF^1_7$ to be the space of fixed-point $(2, 3)$-cusps with no coincidences or strikethroughs. Then $\smash{\wt{\cF}^1_7\setminus\cF^1_7}$ consists
of strata describing a coincidence, a strikethrough, or a higher-order fixed-point cusp. By Lemma~\ref{lem:cusplist}, a fixed-point cusp is codimension~$1$ if and only if it is a $(2, 3)$-cusp, so this is a union of strata of codimension $2$. By Lemma~\ref{lem:fixedcusp}, a versal deformation of the $(2, 3)$-fixed point cusp is given by $(t^3 + \lambda t, t^2)$. The bifurcation diagram is given in Figure~\ref{fig:bcusp}. 
\begin{figure}
  \begin{tikzpicture}
    \draw (-3,-6) node[rectangle, minimum width=3cm] {\includegraphics[height=2cm]{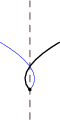}};
    \draw (0,-6) node[rectangle, minimum width=3cm] {\includegraphics[height=2cm]{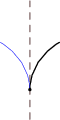}};
    \draw (3,-6) node[rectangle, minimum width=3cm] {\includegraphics[height=2cm]{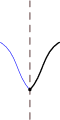}};
    \draw (-3,-9) node[rectangle, minimum width=3cm] {\includegraphics[height=2cm]{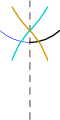}};
    \draw (0,-9) node[rectangle, minimum width=3cm] {\includegraphics[height=2cm]{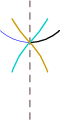}};
    \draw (3,-9) node[rectangle, minimum width=3cm] {\includegraphics[height=2cm]{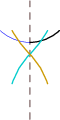}};
  \end{tikzpicture}
  \caption{Top: Bifurcation of a fixed point $(2,3)$ cusp. Bottom: Bifurcation of a fixed double point.}\label{fig:bcusp}
\end{figure}
\subsubsection{Fixed double point}\label{ssub:bdoublep}
Define $\cF^1_8$ to be the space of fixed double points with no coincidences or strikethroughs, satisfying the additional condition that none of the branches to the double point are tangent to each other. More precisely, recall that $\smash{\wt{\cF}^1_{8}}$ consists of germs such that for some $t_1 \in \Sfree$ and $t_2 \in \Sfix$, we have $\phi(t_1) = \phi(t_2)$. To define $\cF^1_8$, we require that none of $\{\phi'(t_1), \tau \phi'(t_1), \phi'(t_2)\}$ are parallel to each other (and that all of them are nonzero). Negating this condition  adds an extra equation, so it is clear that $\smash{\wt{\cF}^1_8\setminus\cF^1_8}$ consists of strata of codimension~2. A versal deformation of $\cF^1_8$ is given by shifting the branch through $t_1$ (and thus the symmetric branch through $\sigma t_1$) up according to $\phi(t) + (0, \lambda)$. The bifurcation diagram is given in Figure~\ref{fig:bcusp}. 

\subsection{Regular paths and equivariant Reidemeister moves in $\R^3$}\label{sub:reg_path}
Having described the strata $\cF^1_1,\dots,\cF^1_{8}$, we now define regularity of paths. Following Section~\ref{sec:noneqreidemeister}, let
\[
\cF^1 = \cF^1_1 \cup \cdots \cup \cF^1_{8}.
\]

\begin{defn}\label{def:regular_path_half}
  A path $\phi_s, s\in[0,1]$ in $\cF$ is called \emph{regular} if:
  \begin{enumerate}[label=(S-\arabic*)]
    \item $\phi_0,\phi_1$ are regular maps; \label{item:boundary_regular}
    \item $\phi_s$ belongs to $\cF^0\cup\cF^1$ for all $s$; \label{item:no_big_deal}
    \item there are finitely many values $s_1,\dots,s_m$ such that $\phi_{s_i}\in\cF^1$; for other
      values $\phi_s\in\cF^0$;\label{item:finitely_many_events}
    \item regarded as a path in $C^\infty(\cS,\R^2)$, $\phi_s$ crosses $\cF^1$ transversely.\label{item:fitrans}
  \end{enumerate}
\end{defn}

As in Section~\ref{sec:noneqreidemeister}, we set $\wt{\cF}^2_i = \wt{\cF}^1_i\setminus\cF^1_i$ for each $i$ and denote
\[
\wt{\cF}^2 = \wt{\cF}^2_1 \cup \cdots \cup \wt{\cF}^2_8 = \cF \setminus (\cF^0 \cup \cF^1).
 \]

\begin{thm}\label{thm:complem}
  Let $\phi_s$, $s\in[0,1]$, be any path in $\cF$ with $\phi_0,\phi_1\in\cF^0$. 
  Then there exists an arbitrarily close path $\phi'_s$,
  equal to $\phi_s$ at the ends, which is regular.
\end{thm}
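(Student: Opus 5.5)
The plan is to follow the proof of Theorem~\ref{thm:perestrojka}, replacing the Thom and multijet transversality theorems with Wall's equivariant multijet transversality (Theorem~\ref{thm:ejet}) and the Equivariant Parameter Counting Lemma~\ref{lem:codimension}. We view the path as an equivariant map $F\colon[0,1]\times\cS\to\R^2$, with $[0,1]$ acting trivially. Condition~\ref{item:boundary_regular} holds by hypothesis. Since $\cF^0$ is open (Theorem~\ref{thm:open_dense}), all perturbations can be taken to vanish on small neighbourhoods of $s=0,1$; this is done by multiplying the perturbation by a cutoff in $s$. Lemma~\ref{lem:codimension} is then applied to the restricted parameter space, so the endpoints are fixed. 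Since residual sets are dense, we can choose the new path arbitrarily close to $\phi_s$.

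The main step is \ref{item:no_big_deal}. Here I would show that each $\wt{\cF}^2_i=\wt{\cF}^1_i\setminus\cF^1_i$ is a finite union of submanifolds $D$ of the equivariant multijet spaces $\mJ^{k,(m,n)}_{\Z_2}(\cS,\R^2)$ with $\pcodim(D)\ge 2$. Lemma~\ref{lem:codimension} with $r=1$ then shows that a residual set of paths misses them. The strata come in the four types listed at the start of this section.
\begin{enumerate}
\item Coincidences have $\pcodim\ge 2$ by Lemma~\ref{lem:coincidence}, since each factor has $\pcodim\ge1$.
\item Strikethroughs, ordinary or not, have $\pcodim\ge2$ by Lemma~\ref{lem:strikethrough}.
\item Higher-order cusps have $\pcodim\ge2$ by Lemma~\ref{lem:cusplist}. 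This covers off-axis cusps other than $(2,3)$, on-axis cusps (every one has $\pcodim\ge2$, since lying on $\cL$ costs one extra condition), and fixed-point cusps other than $(2,3)$.
\item Higher-order tangencies have $\pcodim\ge2$ by Lemma~\ref{lem:complicatedtan}. This covers off-axis $k$-fold with $k\ge2$, line $k$-fold with $k\ge2$, perpendicular $2k$-fold with $k\ge2$, any oblique tangency, and any fixed-point tangency.
\end{enumerate}
The extra genericity conditions imposed in $\cF^1_3$, $\cF^1_6$ and $\cF^1_8$ also need checking: tangent branches, or branches parallel or perpendicular to $\cL$. Each adds at least one equation without enlarging the domain, so these strata also have $\pcodim\ge2$.

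For \ref{item:finitely_many_events} and \ref{item:fitrans}, each $\cF^1_i$ is the preimage of a submanifold $D_i$ with $\pcodim(D_i)=1$. By Theorem~\ref{thm:ejet}, applied to $F$ on $[0,1]\times(\Sfree)^m\times(\Sfix)^n$ as in the proof of Lemma~\ref{lem:codimension}, generic $F$ is transverse to $D_i'=\Pi^{-1}(D_i)\cap\Delta'$. This $D_i'$ has codimension equal to the dimension of the source. Its preimage is therefore a discrete set, finite by compactness, and it stays away from $s=0,1$. This gives finitely many events. Transversality of the jet extension of $F$ to $D_i'$, combined with the simplicity of $D_i$ and Theorem~\ref{thm:inf_vers}, means that near each event the path is a versal one-parameter deformation. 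That is exactly transverse crossing of $\cF^1$ in the sense of \ref{item:fitrans}. A finite intersection of residual sets is residual, so a single path satisfies every condition at once.

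I expect the main obstacle to be completeness in \ref{item:no_big_deal}: making sure that every way of leaving $\cF^1_i$ inside $\wt{\cF}^1_i$ is covered by one of the listed strata. The subtle cases are mixed strata involving both $\Sfix$ and $\Sfree$ points, such as a cusp of the free branch in $\cF^1_8$ or a fixed-point tangency, and the requirement that each degeneracy locus is a genuine submanifold rather than just a semialgebraic set. I would handle the second point by stratifying each degeneracy locus, which is semialgebraic in finite jet space, into finitely many submanifolds. Each piece has at least the computed number of defining conditions, and the parameter count is then applied piece by piece.
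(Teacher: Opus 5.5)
Your proposal is correct and follows the paper's own argument. The paper simply notes that $\wt{\cF}^2$ is a union of strata of $\pcodim\ge 2$ (checked when each $\cF^1_i$ was defined) and applies Theorem~\ref{thm:ejet} as in Lemma~\ref{lem:codimension} to miss them and meet $\cF^1$ transversely; you spell the same thing out stratum by stratum. One minor correction: \ref{item:fitrans} needs only transversality of the jet extension to $D_i'$, not simplicity together with Theorem~\ref{thm:inf_vers} --- which matters, since $\cF^1_6$ is not simple (the ratio of the slopes of the two non-symmetric branches through the axis point is preserved by the diagonal linear parts of equivariant diffeomorphisms).
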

\begin{proof}
  By construction, $\wt{\cF}^2$ is the union of codimension $2$ strata. Using the Equivariant Transversality Theorem~\ref{thm:ejet} applied as in Lemma~\ref{lem:codimension}, there is a path $\phi'_s$, equal to $\phi_s$ on the boundary (so that \ref{item:boundary_regular} holds),
  as close to $\phi_s$ as we please, missing the codimension $2$ strata (so \ref{item:no_big_deal} holds) and intersecting $\cF^1$ transversely. The
  latter amounts to saying that \ref{item:fitrans} holds; this implies \ref{item:finitely_many_events}.
\end{proof}

Now suppose $\wt{\phi}_s$ is a path of equivariant maps from $\cS$ to $\R^3$,
such that $\phi_s = \pi\circ\wt{\phi}_s$ is regular. Then $\phi_s$ crosses $\cF^1$ at finitely many parameters $s$. Lifting these crossings to $\smash{\wt{\phi}_s}$ gives the following behaviors:

\begin{defn}\label{def:equivar-reide}
  The following list of moves \ref{IR-1}--\ref{M-3} constitutes the set of \emph{equivariant Reidemeister moves}.  

\begin{enumerate}[label=(\wackyenum*)]
  \item Crossing $\cF^1_1$ (off-axis cusp) 
    leads to the (IR-1) move.\label{IR-1}
  \item Crossing $\cF^1_2$ (off-axis tangency) leads to the (IR-2) move.\label{IR-2}
  \item Crossing $\cF^1_3$ (off-axis triple point) leads to the (IR-3) move.\label{IR-3}
  \item Crossing $\cF^1_7$ (fixed-point cusp) leads to the (R-1) move.\label{R-1}
  \item Crossing $\cF^1_4$ (on-axis line tangency) leads to the (R-2) move.\label{R-2}
  \item Crossing $\cF^1_{8}$ (fixed double point) leads to the  (M-1) move.\label{M-1}
  \item Crossing $\cF^1_5$ (on-axis perpendicular tangency) leads to the (M-2) move.\label{M-2}
  \item Crossing $\cF^1_6$ (on-axis double point) leads to the (M-3) move.\label{M-3}
\end{enumerate}

See Figure~\ref{fig:AllReidemeister}. We remark that these figures should be considered as patterns, not as an exhaustive list of examples, in the sense that the signs of crossings may be changed (as long as the change is consistent). For future reference, we also consider the moves where the tunnels and bridges are not specified. A move among \ref{IR-1}--\ref{M-3} considered as a move on a diagram in $\R^2$, where the crossings are not specified, is called a \emph{planar equivariant Reidemeister move}.
\begin{figure}
  \begin{tikzpicture}
    \draw (-3.5,4.3) node{\textbf{IR-1 move:}};
    \draw (3.7,4.3) node{\textbf{IR-2 move:}};
    \draw (0,1.2) node{\textbf{IR-3 move:}};
    \draw (-3,-1.7) node{\textbf{R-1 move:}};
    \draw (3,-1.7) node{\textbf{R-2 move:}};
    \draw (-3,-4.7) node{\textbf{M-1 move:}};
    \draw (3,-4.7) node{\textbf{M-2 move:}};
    \draw (0,-7.7) node{\textbf{M-3 move:}};
    \draw (-5,3) node[rectangle, minimum width=1.5cm] {\includegraphics[height=2cm]{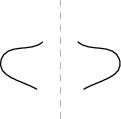}};
    \draw (-2,3) node[rectangle, minimum width=1.5cm] {\includegraphics[height=2cm]{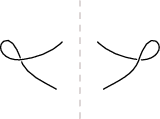}};
    \draw (2.3,3) node[rectangle, minimum width=1.5cm] {\includegraphics[height=2cm]{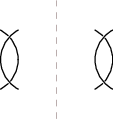}};
    \draw (5.5,3) node[rectangle, minimum width=1.5cm] {\includegraphics[height=2cm]{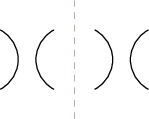}};
    \draw (-2,0) node[rectangle, minimum width=3cm] {\includegraphics[height=2cm]{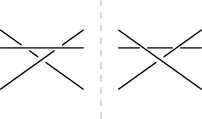}};
    \draw (2,0) node[rectangle, minimum width=3cm] {\includegraphics[height=2cm]{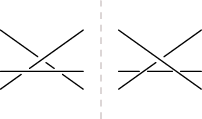}};

    \draw (-4,-3) node[rectangle, minimum width=3cm] {\includegraphics[height=2cm]{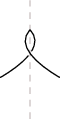}};
    \draw (-2,-3) node[rectangle, minimum width=3cm] {\includegraphics[height=2cm]{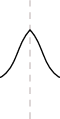}};
    \draw (2,-3) node[rectangle, minimum width=3cm] {\includegraphics[height=2cm]{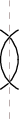}};
    \draw (4,-3) node[rectangle, minimum width=3cm] {\includegraphics[height=2cm]{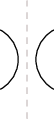}};

    \draw (-4,-6) node[rectangle, minimum width=2.5cm] {\includegraphics[height=2cm]{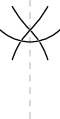}};
    \draw (-2,-6) node[rectangle, minimum width=2.5cm] {\includegraphics[height=2cm]{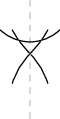}};
    \draw (2,-6) node[rectangle, minimum width=2.5cm] {\includegraphics[height=2cm]{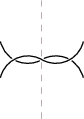}};
    \draw (4,-6) node[rectangle, minimum width=2.5cm] {\includegraphics[height=2cm]{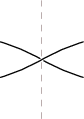}};
    \draw (-2,-9) node[rectangle, minimum width=3cm] {\includegraphics[height=2cm]{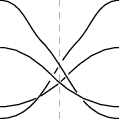}};
    \draw (2,-9) node[rectangle, minimum width=3cm] {\includegraphics[height=2cm]{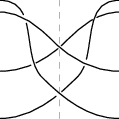}};
  \end{tikzpicture}
  \caption{The equivariant Reidemeister moves.}\label{fig:AllReidemeister}
\end{figure}
\end{defn}

This quickly gives the equivariant Reidemeister theorem.

\begin{thm}\label{thm:equiv_reid}
  Let $L_1$ and $L_2$ be involutive links whose projections to $\mathbb{R}^2$ give transvergent diagrams $D_1$ and $D_2$.  If $L_1$ and $L_2$ are isotopic as involutive links, then $D_1$ and $D_2$ can be connected by a sequence of equivariant Reidemeister moves as in Definition~\ref{def:equivar-reide}.
\end{thm}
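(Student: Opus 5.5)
The argument follows the proof of Corollary~\ref{cor:Reidemeister}, with Theorem~\ref{thm:complem} and the versal deformations of Section~\ref{sec:patterns} used in place of their non-equivariant counterparts.

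Let $\wt{\phi}_s\colon \cS\to\R^3$, $s\in[0,1]$, be an equivariant isotopy from $L_1$ to $L_2$. After an initial equivariant perturbation we may assume the endpoint diagrams $D_1,D_2$ are regular in the sense of Definition~\ref{def:regular-half-knot}. We may also assume that $\wt{\phi}_s(\cS)$ meets $\wt{\cL}$ only at images of points of $\Sfix$ for every $s$: an involutive embedded curve meets the axis exactly at its fixed points, so $\phi_s=\pi\circ\wt{\phi}_s$ lies in $\cF$ of Definition~\ref{def:half-knot}. By Theorem~\ref{thm:complem} we obtain a regular path $\phi_{s,n}$ in $\cF$ with
\[
\phi_{s,n}=\phi_s \text{ for } s=0,1 \quad\text{and}\quad \|\phi_{s,n}-\phi_s\|_{C^1}\le 1/n .
\]
Lift it by $\wt{\phi}_{s,n}=\wt{\phi}_s+(\phi_{s,n}-\phi_s,0)$. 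This lift is equivariant, because $\phi_{s,n}-\phi_s$ is $\tau$-equivariant and $\wt{\tau}$ acts on the first two coordinates by $\tau$. By compactness of $[0,1]\times\cS$ and stability of embeddings, $\wt{\phi}_{s,n}$ is still an equivariant isotopy for $n$ large, and its diagram path is $\phi_{s,n}$.

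By \ref{item:finitely_many_events}, the path $\phi_{s,n}$ lies in $\cF^0$ except at finitely many parameters $s_1<\dots<s_m$. On each interval between consecutive $s_i$, the diagram is regular. By Theorem~\ref{thm:generic_half_knot} it is also regular as a non-equivariant diagram, so its crossings persist, and the over/under data, being determined by the embedding, cannot change. The diagrams on such an interval therefore differ by an equivariant planar isotopy.

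At each $s_i$, the map $\phi_{s_i,n}$ lies in exactly one stratum $\cF^1_j$. The path crosses that stratum transversely by \ref{item:fitrans}, which by Theorem~\ref{thm:inf_vers} means that, locally near the singular orbit, the germ of the path is induced from the one-parameter versal deformation written down for $\cF^1_j$ in Section~\ref{sec:one_para}. These are the deformations of Lemmas~\ref{lem:normal_cusp}, \ref{lem:normal_tang}, \ref{lem:normal_int_tang}, \ref{lem:normal_perp} and \ref{lem:fixedcusp}, Example~\ref{ex:R3_tangent}, and the explicit shifts given for $\cF^1_6$ and $\cF^1_8$. Hence the planar diagrams at $s_i\pm\varepsilon$ agree, up to equivariant RL equivalence, with the before/after pictures of that versal deformation, and away from a small invariant neighbourhood of the singular orbit they differ by an equivariant planar isotopy. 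Reading off the over/under data from $\wt{\phi}_{s,n}$ then identifies each crossing with one of \ref{IR-1}--\ref{M-3}, as listed in Definition~\ref{def:equivar-reide}.

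The step I expect to be the main obstacle is this last identification: turning the local RL-equivalence into a genuine equivariant isotopy of diagrams, with consistent crossing information. The RL-equivalence only gives germs of diffeomorphisms depending smoothly on $\lambda$. To globalize, I would first extend these germs to equivariant ambient isotopies of $\R^2$ supported near the singular image point and its reflection, using equivariant cutoff functions. I would then check that the crossing signs in the model are forced by the embedding: off-axis singularities occur in symmetric pairs; for on-axis singularities, $\wt{\tau}$ reverses the $z$-coordinate, which fixes the relative heights of symmetric branches. This consistency is exactly the rule stated after Definition~\ref{def:equivar-reide}, that crossings in the pictures may be changed only consistently.
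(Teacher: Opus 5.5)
Your proposal is correct and is essentially the paper's proof. Both perturb $\pi\circ\wt{\phi}_s$ rel endpoints to a regular path via Theorem~\ref{thm:complem}, lift by adding $(\phi_{s,n}-\phi_s,0)$, invoke stability of embeddings, and read off one move of Definition~\ref{def:equivar-reide} at each crossing of $\cF^1$; your versality and over/under discussion only makes explicit what the paper leaves implicit. One small caveat (shared with the paper): Theorem~\ref{thm:inf_vers} needs a simple stratum, but $\cF^1_6$ is not simple, since the four tangent lines at the on-axis double point carry a cross-ratio modulus. There you should freeze that modulus as in Section~\ref{ssub:X9}, which does not change the before/after pictures.
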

\begin{proof}
Let $\wt{\phi}_s$ be an equivariant isotopy from $L_1$ to $L_2$ and set $\phi_s=\pi \circ \wt{\phi}_s$. Theorem~\ref{thm:complem} allows us to perturb $\phi_s$ to a path $\phi'_s$ that is regular. We can lift this perturbation to an isotopy of equivariant links by setting 
\[
\wt{\phi'}_s=\wt{\phi}_s+\phi'_s-\phi_s
\]
where $\phi'_s - \phi_s$ is considered as a map into $\R^3$. This is an equivariant isotopy, provided that the perturbation is sufficiently small. Each time $\phi'_s$ crosses a stratum $\cF^1_i$ yields a corresponding Reidemeister move, as described in Definition~\ref{def:equivar-reide}.
\end{proof}

\subsection{Undoing the \ref{R-1} move}\label{sub:undoing}
Two transvergent diagrams of the same strongly invertible knot in $\R^3$ are related by a sequence of equivariant Reidemeister moves.
These moves include (R-1)-moves performed at the two fixed points of the knot. We
assume that these points are $(0,1)$ and $(0,2)$. Although it will not play a large role in the present paper, it is sometimes helpful to know that we can choose one of these two points and perform
equivariant Reidemeister moves only at the second.

\begin{thm}\label{thm:undoing}
  Suppose $D$ and $D'$ are two diagrams of the same strongly invertible knot $K$ such that $D$ and $D'$ differ by a single \ref{R-1} move
  performed at the point $(0,1)$. Then, there is a sequence of Reidemeister moves connecting $D$ and $D'$ which may contain the \ref{R-1} move
  at $(0,2)$, but does not involve any \ref{R-1} move at $(0,1)$.
\end{thm}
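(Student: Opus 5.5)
Here $D$ and $D'$ differ by a single \ref{R-1} move at $(0,1)$: they agree outside a small equivariant disk $B$ centred at $(0,1)$. Inside $B$ one of them (say $D'$) has an extra symmetric kink through the fixed point, i.e.\ a pair of symmetric small loops meeting $\cL$ at $(0,1)$ with a single on-axis crossing whose over/under data is prescribed. The plan is to transport this kink along the knot from $(0,1)$ to $(0,2)$ and back, so that the twist is created and removed at $(0,2)$ while $(0,1)$ only ever sees moves other than \ref{R-1}.

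First I will recast the R-1 move in terms of the lift. The kink at $(0,1)$ corresponds to adding one equivariant full twist of the framing-type ribbon near the fixed point: a half-twist on each symmetric side, which together give a crossing on $\cL$. Concretely, a fixed-point $(2,3)$-cusp perturbs to the germ $(t^3+\lambda t, t^2)$. For $\lambda<0$ the three-branch picture of Figure~\ref{fig:bcusp} has a crossing on the axis between $\phi(t)$ and $\phi(-t)$.

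The key construction is an equivariant ``slide'' of this kink. Choose one of the two arcs $\gamma$ of $K$ from the fixed point at $(0,1)$ to the one at $(0,2)$; $\sigma$ maps $\gamma$ to the other arc $\sigma\gamma$. The kink at $(0,1)$ is equivalent to a pair of symmetric ordinary curls: an ordinary R-1 curl on $\gamma$ near $(0,1)$ together with its mirror on $\sigma\gamma$. To see this, pull the on-axis crossing of the kink off the axis, using an \ref{M-2} move (perpendicular tangency) followed by an \ref{IR-2}-type adjustment. Recall that the R-1 move is the crossing of $\cF^1_7$ and that the perpendicular-tangency and fixed-double-point strata give the other local moves. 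In the local model, moving in the $(\lambda_1,\lambda_2)$ plane of the fixed-point $(2,5)$-cusp deformation $(t^5+\lambda_1t^3+\lambda_2t,t^2)$ of Lemma~\ref{lem:fixedcusp} gives paths from ``kink'' to ``two off-axis curls''. Going around the $\cF^1_7$ wall through the $(2,5)$ region should avoid $\cF^1_7$ entirely, passing only through off-axis cusps and perpendicular tangencies. Next, slide the symmetric pair of curls along $\gamma$ and $\sigma\gamma$ toward $(0,2)$. Passing a curl past crossings of the diagram uses \ref{IR-2} and \ref{IR-3}. If $\gamma$ crosses the axis elsewhere, the pair of curls must also be pushed through on-axis double points, which uses \ref{M-3}, \ref{M-1} and \ref{M-2}. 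Once the curls are near $(0,2)$, recombine them by reversing the first step into a fixed-point kink at $(0,2)$ and remove it with a single \ref{R-1} move there. Doing this at both ends of the comparison, namely on $D'$, gives a sequence $D'\to\cdots\to D$ containing no \ref{R-1} at $(0,1)$.

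The main obstacle is the first step: showing that a fixed-point kink at $(0,1)$ can be converted into a symmetric pair of off-axis curls without an \ref{R-1} move there. The local analysis of the $(2,5)$ deformation may not suffice, because the writhe/framing contributions must match. Two off-axis curls change the writhe by $\pm 2$, while the fixed-point kink adds one on-axis crossing. So I expect the correct statement to involve a full kink at $(0,1)$ realized by crossing through the fixed-point $(2,5)$ or $(3,4)$ cusp region of Lemmas~\ref{lem:fixedcusp} and~\ref{lem:dot}.

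If this parity mismatch persists, I would first try a global trick: rotate the entire arc $\gamma\cup\sigma\gamma$ by a full turn about $\cL$ near $(0,2)$. This is an equivariant isotopy that adds a twist at $(0,2)$ and removes it at $(0,1)$. I would then verify, using the regular-path perturbation of Theorem~\ref{thm:complem}, that a perturbation can be chosen whose fixed-point cusp events all occur at $(0,2)$. Since $\cF^1_7$ is codimension one and supported at fixed points, it suffices to localize the fixed-point twisting of the path at the $(0,2)$ fixed point.
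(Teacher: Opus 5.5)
\textbf{Main route.} Your curl-sliding argument cannot work, and the parity obstruction you flag is real, not a technicality. Forgetting the symmetry, \ref{IR-1} is a symmetric pair of ordinary R1 moves. All the other moves except \ref{R-1} (that is, \ref{IR-2}, \ref{IR-3}, \ref{R-2}, \ref{M-1}, \ref{M-2}, \ref{M-3}) are cusp-free tangency or multiple-point events, so they preserve the writhe. Since $\wt{\tau}$ preserves crossing signs, \ref{IR-1} changes the writhe by $\pm 2$, while \ref{R-1} changes it by $\pm 1$. Hence no sequence free of \ref{R-1} moves can turn the on-axis kink at $(0,1)$ into a symmetric pair of off-axis curls, nor recombine such a pair into a kink at $(0,2)$.

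The detour you propose in the fixed-point $(2,5)$ family $(t^5+\lambda_1t^3+\lambda_2t,t^2)$ also does not exist. There $x'(0)=\lambda_2$, so the fixed-point cusp persists along the entire line $\{\lambda_2=0\}$. This line separates the parameter disk, so every path between its two sides crosses $\cF^1_7$. That is why loop L-7 of Table~\ref{tab:local_loops} contains two \ref{R-1} moves.

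\textbf{Fallback.} Your fallback has the right idea, which is the paper's, but as stated it is wrong and the decisive step is missing. The correct isotopy is as follows.
\begin{itemize}
\item Keep a small equivariant neighbourhood $C_1$ of the fixed point at $(0,1)$ stationary.
\item Rotate each point of the knot about the axis by angle $s\kappa$, $s\in[0,1]$. Here $\kappa$ is a cutoff equal to $0$ on $C_1$ and to $\pi$ outside a slightly larger neighbourhood $C_2$.
\end{itemize}
It must be a half turn. Rotation by $\pi$ about the axis is $\wt{\tau}$, which preserves $K$, so outside $C_2$ the diagram at $s=1$ is unchanged. The half twist produced in $C_2\setminus C_1$ is exactly the \ref{R-1} kink, with the rotation direction chosen to match its crossing sign. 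A full turn would instead produce a full twist near $(0,1)$, changing the writhe by $\pm 2$, and would not connect $D$ to $D'$.

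The reason no \ref{R-1} move occurs at $(0,1)$ is also not a matter of ``localizing the twisting''. Because the isotopy is stationary on $C_1$, condition \ref{item:no_end_tang} holds at that fixed point along the whole path. Since it is an open condition, a sufficiently small regularizing perturbation from Theorem~\ref{thm:complem} preserves it. The perturbed path lifts to an equivariant isotopy via $\wt{\phi}_s+\phi'_s-\phi_s$, and its \ref{R-1} moves can only occur at $(0,2)$.
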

\begin{proof}
The rough idea is that any time the \ref{R-1} move would be performed at $(0, 1)$, we simply rotate the rest of the diagram instead, keeping the diagram around $(0, 1)$ fixed. We include the details here for the reader who wishes to see the argument written out explicitly. Let $\smash{\wt{\phi}} \colon S^1 \rightarrow \R^3$ be a parameterization of $K$. Then $\smash{\phi = \pi \circ \wt{\phi}}$ is a regular diagram parameterizing $D$ and satisfies the conditions listed in Theorem~\ref{thm:generic_half_knot}. In particular, $\phi$ satisfies item~\ref{item:no_end_tang}, which states that $\phi'$ is nonzero at every point in $\Sfix$.

Choose cylindrical coordinates $(r,z,\theta)$ in $\R^3$. 
Fix any $r_1,r_2 > 0$ with $r_1 < r_2$. Let $C_1$ and $C_2$ be the concentric cylinders centered at $(0, 0, 1)$ given by
  $\{r\le r_i\}$ and $|z_i-1|\le r_i$ for $i=1,2$. We assume $r_1$ and $r_2$ are sufficiently small so that they intersect $K$ in small arc. Let $\kappa$ be any smooth function which is zero inside $C_1$ and $\pi$ outside of $C_2$. For $s\in [0,1]$, let
  \[\wt{\phi}_s\colon S^1\to\R^3 \quad \text{by} \quad \wt{\phi}_s(u)=R_{s\kappa(\wt{\phi}(u))} \circ \wt{\phi}(u),\]
where $R_w$ is the rotation of $\R^3$ about the $z$-axis by angle $w$. Clearly, $\wt{\phi}_s$ consists of taking a knot and rotating all of it, except for the part contained in $C_1$. The endpoint of this rotation gives the same diagram as if we had performed an \ref{R-1} move at $(0, 1)$.

Perturb the projection $\phi_s$ of this equivariant isotopy to be regular. Note, that $\phi_s$ already satisfies~\ref{item:no_end_tang} at $(0,1)$ for all $s$; since \ref{item:no_end_tang} is an open condition, our perturbation has this property also. Now, the \ref{R-1} move at $(0,1)$ arises precisely from
  violations of~\ref{item:no_end_tang} at $(0,1)$. We have thus constructed a regular path consists of equivariant Reidemeister moves without the \ref{R-1} move at $(0,1)$, as desired.
\end{proof}

It is not difficult to show that this procedure does, in fact, create an \ref{R-1} move at $(0, 2)$. In fact, it is easily seen that the parity of the number of \ref{R-1} moves needed to connect to equivariant diagrams is invariant.

\subsection{Equivariant Reidemeister moves in $S^3$}\label{sub:ES3}
In the non-equivariant setting, if $L_0$ and $L_1$ are two links in $\R^3$ that are isotopic as links in $S^3$, then they are isotopic as links in $\R^3$. This is evident from the usual dimension counting argument. Indeed, since $\cS$ is one-dimensional and hitting $\infty$ is a codimension 3 condition, a generic one-parameter family of maps from $\cS$ into $S^3$ can be assumed to avoid $\infty$. Unfortunately, this does not hold in the equivariant setting, leading to a difference between equivariant Reidemeister moves in $\R^3$ and $S^3$.

To investigate this, we must understand the possible singularities of a diagram at $\infty$. Let $\iota\colon S^3\to S^3$ be an inversion of the sphere that commutes with the symmetry action and moves $\infty$ to $0$. Write $\pi^{\iota}\colon\R^3\to\R^2$ for the composition $\pi\circ\iota$ defined on $\R^3=S^3\setminus\{0\}$. We then set 
\[
\phi^\iota = \pi^{\iota}\circ\wt{\phi} \colon\cS\to\R^2
\]
for any map $\wt{\phi} \colon \cS \rightarrow S^3$. This gives a diagram of $\wt{\phi}$ around $\infty$. We abuse of notation slightly since $\phi^\iota$ is not defined on $\smash{\wt{\phi}^{-1}(0)}$, but this will
not pose any problems. 

\begin{defn}\label{defn:reg_at_inft} Singularities of a map $\wt{\phi} \colon \cS \rightarrow S^3$ at $\infty$ are defined in terms of the behavior of $\phi^\iota$ near the origin.
  \begin{itemize}
    \item An equivariant map $\wt{\phi}$ is regular at infinity if $\phi^{\iota} \colon \cS \rightarrow \R^2$ misses the origin.
      
    \item A family of equivariant maps $\wt{\phi}_s$ is regular at infinity if the map $\phi^\iota_s \colon \cS \times I \rightarrow \R^2$ is transverse to the origin. In particular, $\phi^\iota_s$ hits the origin 
  only for finitely many parameters $s$. At such parameters, we require that the diagram $\phi^\iota_s$ is regular at the origin in the sense of Definition~\ref{def:regular-half-knot}. For example, $\phi^\iota_s$ can have at most a double point at the origin, and so on.

  \end{itemize}
\end{defn}

\begin{thm}\label{thm:reg_at_inf}
  Regular maps and paths are residual in the space of equivariant maps from $\cS$ to $S^3$. 
\end{thm}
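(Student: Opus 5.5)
The plan is to reduce both claims to the equivariant transversality machinery of Section~\ref{sec:mutt}, applied to the inverted diagram $\phi^\iota$ near the origin. First, since $\iota$ commutes with $\wt{\tau}$, the map $\pi^\iota$ is $\Z_2$-equivariant from $\R^3\setminus\{0\}$ to $\R^2$, with respect to $\wt{\tau}$ and $\tau$. Hence $\phi^\iota$ is an equivariant map wherever it is defined. Moreover, $\wt{\phi}$ passes through $\infty$ exactly when $\phi^\iota$ is undefined, i.e.\ when $\iota\circ\wt{\phi}$ hits $0$. Hitting a point in $S^3$ is a codimension~$3$ condition on the source orbit, which has dimension $1$ on $\Sfree$ and $0$ on $\Sfix$. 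So generic maps, and generic one-parameter families, avoid $\infty$ itself, and we may restrict to a neighborhood of $\infty$ where $\phi^\iota$ is well defined.

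For single maps, we note that $\phi^\iota(t)=0$ means $\iota\wt{\phi}(t)$ lies on the $z$-axis, which is not fixed by $\wt\tau$ (it is reversed). This is two conditions in the equivariant $0$-jet space.
\begin{itemize}
\item For $t\in\Sfree$, the source is one-dimensional, so $\pcodim=1$ and a generic map misses it.
\item For $t\in\Sfix$, the image already lies on the $y$-axis $\wt{\cL}$, so only one condition remains (the $y$-coordinate of $\iota\wt\phi(t)$ must vanish) and the source is $0$-dimensional. Again $\pcodim=1$.
\end{itemize}
By Lemma~\ref{lem:codimension} (in its version for maps to $S^3$, i.e.\ Theorem~\ref{thm:ejet} with target $S^3$), regular maps are residual. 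They are also open, since missing a closed set by a compact image is an open condition.

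For paths, we apply the same parameter count with $r=1$. The condition $\phi^\iota_s(t)=0$ now has $\pcodim=1$, so generic paths meet it transversely at finitely many $s$. This is the statement that $\phi^\iota\colon \cS\times I\to\R^2$ is transverse to the origin, obtained from Theorem~\ref{thm:ejet} applied to the family as in the proof of Theorem~\ref{thm:pc_argument}. At those parameters we must ensure that $\phi^\iota_s$ is regular at the origin in the sense of Definition~\ref{def:regular-half-knot}. Each violation of (ER-1)--(ER-8) located at the origin is a coincidence of a stratum of $\pcodim\ge1$ from Table~\ref{tab:cpl} with the extra requirement that the singular image point be the origin. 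This adds at least one condition, as in the strikethrough count of Lemma~\ref{lem:strikethrough}, so the total $\pcodim\ge2$ and a generic path misses it. Intersecting these countably many residual sets gives residuality.

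The main obstacle is making the jet-space setup rigorous for the composite $\pi^\iota\circ\wt\phi$. The transversality statements are about jets of $\wt\phi$, while the conditions are phrased for $\phi^\iota$. I would handle this by noting that $\pi^\iota$ is an equivariant submersion near the relevant points away from $0\in\R^3$. Conditions on jets of $\phi^\iota$ therefore pull back to submanifolds of the equivariant jet space $\mJ^{k}_{\Z_2}(\cS,S^3)$ of the same codimension. The fixed-point orbit type requires a separate check: the $y$-coordinate condition must be independent of the automatic equivariance constraints. This holds because the equivariant $0$-jets at $\Sfix$ are exactly points of $\wt\cL$.
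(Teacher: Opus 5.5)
Your core argument is the paper's proof. It consists of the counts in your second and third paragraphs, together with your remark that $\pi^\iota$ is an equivariant submersion on $S^3\setminus\{0\}$, so conditions on $\phi^\iota$ pull back to equivariant jet conditions on $\wt\phi$ of the same codimension. The count runs as follows:
\begin{itemize}
\item hitting the origin costs two conditions on a one-dimensional source over $\Sfree$, and one condition on a zero-dimensional source over $\Sfix$, so $\pcodim=1$;
\item a failure of regularity at the origin costs one more condition, so $\pcodim\ge 2$;
\item Lemma~\ref{lem:codimension} finishes.
\end{itemize}
You supply more detail than the paper on pulling back along $\pi^\iota$, which is fine.

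Your first paragraph, however, makes two false claims, and they contradict your own fixed-point count.

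\begin{itemize}
\item $\phi^\iota$ is undefined on $\wt\phi^{-1}(0)$, the preimage of the origin of $\R^3$, which $\iota$ sends to $\infty$. It is not undefined where $\wt\phi$ passes through $\infty$: if $\wt\phi(t)=\infty$, then $\iota\wt\phi(t)=0$ and $\phi^\iota(t)=\pi(0)=0$ is perfectly well defined.
\item For $t\in\Sfix$, the point $\wt\phi(t)$ is confined to the fixed circle $\wt\cL\cup\{\infty\}$. So hitting $\infty$ is one condition on a zero-dimensional source, i.e.\ $\pcodim=1$, not $3$. Generic maps miss it, but generic one-parameter families do carry fixed points through $\infty$ at isolated parameters. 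This is precisely the I-move, which the paper shows cannot be produced by the other moves.
\end{itemize}

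Your own condition for $t\in\Sfix$ (the $y$-coordinate of $\iota\wt\phi(t)$ vanishes, hence $\iota\wt\phi(t)=0$) is exactly the event $\wt\phi(t)=\infty$. So the first paragraph asserts that generic paths avoid an event your third paragraph correctly says they meet transversely. Only free points avoid $\infty$ in generic paths: three conditions against a two-dimensional family of sources.

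The localization you want needs only the submersion remark, since a neighborhood of $\infty$ lies in $S^3\setminus\{0\}$. Delete those two claims, keeping the correct observation that $\pi^\iota$ is equivariant. With that change your proof is correct and coincides with the paper's.
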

\begin{proof}
The space of all equivariant maps is the disjoint union $\cI^0\cup\cI^1\cup\cI^2\cup\cI^3$, where:
  \begin{itemize}
    \item $\cI^0$ consists of maps such that $\phi^\iota$ misses the origin.
    \item $\cI^1$ consists of maps such that $\phi^{\iota}$ hits the origin and the diagram is regular at the origin in the sense of Definition~\ref{def:regular-half-knot}.
    \item $\cI^2$ consists of all remaining maps.
  \end{itemize}
We claim that $\cI^1 \cup \cI^2 \cup \cI^3$ has $\pcodim = 1$; and that $\cI^2$ has $\pcodim = 2$.

Consider the first claim. The union $\cI^1 \cup \cI^2$ consists of maps where $\phi^\iota$ hits the origin. This can happen two ways: either $\phi^\iota(\cS^-)$ hits the origin, which requires two conditions and has domain of dimension $1$; or $\phi^{\iota}(\cS^{\Z_2})$ hits the origin, which requires one condition and has domain of dimension $0$. Hence in both cases the situation is of codimension 1. Imposing the failure of regularity increases this codimension by $1$, showing the second claim; imposing the failure of a condition from $\cF^1$ increases the codimension further by $1$. This gives the third claim. The theorem then follows from the Equivariant Parameter Counting Lemma~\ref{lem:codimension}.
\end{proof}

A path of equivariant maps $\wt{\phi}_s$ from $\cS$ to $S^3$ may thus be assumed to lie in $\cI^0 \cup \cI^1$ and cross $\cI^1$ transversely at a finite number of parameters. As before, we attempt to understand the possible before-and-after pictures associated to these crossings. By construction, at each such parameter $s_i$, the diagram $\phi^\iota_{s_i}$ hits the origin at some $t_i \in \cS$. There are two possibilities:

\begin{itemize}
  \item If $t_i \in \Sfix$, then a fixed-point branch of $\phi_{s_i}$ hits $\infty$. As $s$ varies, this fixed point sweeps over $\infty$ as indicated in Figure~\ref{fig:reg_at_inf}.
  \item If $t_i \in \Sfree$, then there are two branches of $\phi_{s_i}$ that hit $\infty$, corresponding to a double point over $\infty$. As $s$ varies, the pair of arcs forming this double point sweep over $\infty$ as indicated in Figure~\ref{fig:reg_at_inf}.
\end{itemize}
This gives rise to the following moves at $\infty$: 

\begin{defn}
  We define the \textit{I-move} to be the move which takes a strand of the knot diagram and passes it through $\infty$ in an equivariant manner. See Figure~\ref{fig:ISmove}.
\end{defn}

\begin{defn}
  We define the \textit{S-move} to be the move which takes a double point in the diagram and passes it through $\infty$ in an equivariant manner. See Figure~\ref{fig:ISmove}.
\end{defn}

\begin{figure}
  \begin{tikzpicture}
    \draw (-4,0) node {\includegraphics[height=2.5cm]{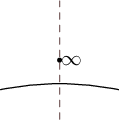}};
    \draw (0,0) node {\includegraphics[height=2.5cm]{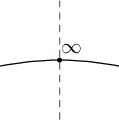}};
    \draw (4,0) node {\includegraphics[height=2.5cm]{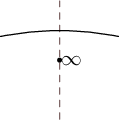}};
    \draw (-4,-3) node {\includegraphics[height=2.5cm]{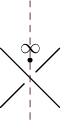}};
    \draw (0,-3)  node {\includegraphics[height=2.5cm]{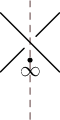}};
    \draw (4,-3)  node {\includegraphics[height=2.5cm]{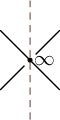}};
  \end{tikzpicture}
  \caption{Crossing the point at infinity. Top: the fixed branch crosses the infinity point. Bottom: a pair of branches crossing
  the infinity point.}\label{fig:reg_at_inf}
\end{figure}
\begin{figure}
  \begin{tikzpicture}
    \draw (-5,0) node {\includegraphics[width=2.5cm]{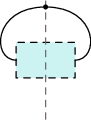}};
    \draw (-2,0) node {\includegraphics[width=2.5cm]{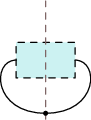}};
  \node at (2,0) {\includegraphics[width=2.5cm]{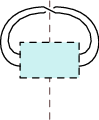}};
\node at (5,0) {\includegraphics[width=2.5cm]{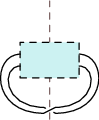}};
  \end{tikzpicture}
  \caption{Left: The I-move. The shaded part is the remaining diagram, which is not changed by the move. Right: the S-move.}\label{fig:ISmove}
\end{figure}

It is not hard to check that the I-move cannot be replicated by the composition of any other equivariant Reidemeister moves, since the I-move interchanges the order of the fixed points on the axis of symmetry. In contrast, any two diagrams related by the S-move are related by a different sequence of equivariant Reidemeister moves. To see this, consider Figure~\ref{fig:ISmove}. Take the top strand of the double point and move it over the rest of the diagram, while simultaneously moving the bottom strand of the double point under the rest of the diagram. This gives an equivariant isotopy in $\R^3$ that replicates the effect of the S-move. Perturbing this isotopy so that it is regular gives the desired sequence of equivariant Reidemeister moves. Nevertheless, the S-move will be useful when discussing loops of Reidemeister moves in Section~\ref{sec:carter_saito}.

We thus finally obtain a proof of Theorem~\ref{thm:intro4}:

\introfour*

\begin{proof}
This follows immediately from Theorem~\ref{thm:equiv_reid}, together with the above discussion of the I-move and S-move.
\end{proof}

Towards further applications in Section~\ref{sub:iloop}, we analyze codimension~2 singularities at infinity. We consider two-parameter families of equivariant embeddings
$\wt{\phi}_{s,s'}\colon\cS\to S^3$. The map $\phi_{s,s'}$ is the projection, and $\phi^{\iota}_{s,s'}$ is the projection
at infinity. The fact that $\phi_{s,s'}$ and $\phi^{\iota}_{s,s'}$ are defined only on a subset of $\cS$ (e.g. $\phi_{s,s'}$ is not
defined at $t\in\cS$ for which $\wt{\phi}_{s,s'}(t)\in S^3\setminus\R^3$), will not pose significant problems. 

These will essentially turn out to be the codimension~1 on-axis singularities of $\phi^\iota$ taking place at the point $(0,0)\in\cL$.
\begin{defn}\label{def:two_regular}
 For a two-parameter family $\phi_{s,s'}$, $(s,s')\in[0,1]^2$ its infinity locus
  \[\Sigma=\{(s,s')\in[0,1]^2\colon \phi^\iota_{s,s'}\textrm{ hits }(0,0)\in\cL\}\]
  is decomposed as a union of $\Sigmafix$ and $\Sigmafree$ corresponding to whether ${\phi^\iota}^{-1}_{s,s'}(0,0)$ belongs to $\Sfix$
  or $\Sfree$ (the spaces $\Sigmafix$ and $\Sigmafree$ may intersect).

  We say that the family
 is \textit{regular at infinity} if the infinity locus
  has the following properties.
  \begin{enumerate}[label=($\Sigma$-\arabic*)]
    \item $\Sigmafix$ is a smooth closed $1$-manifold and $\Sigmafree$ is an immersed $1$-manifold with boundary with possibly ordinary
      double points as self-intersections;\label{item:Sigma_smooth}
    \item the boundary of $\Sigmafree$ belongs to $\Sigmafix$;\label{item:Sigma_bound}
    \item the intersection of $\Sigmafree$ and $\Sigmafix$ is transverse; \label{item:Sigma_transverse}
    \item for $(s,s')\in\Sigma$ the diagram of $\phi^\iota_{s,s'}$ at $(0,0)$ is either regular, or has a codimension~1 singularity.\label{item:Sigma_codim}
    \item The function $(s,s')\mapsto s'$ restricts to a Morse function on $\Sigmafix$ and on $\Sigmafree$.\label{item:Sigma_Morse}.
  \end{enumerate}
\end{defn}
\begin{lemma}\label{lem:generic_at_infty}
  Any two-parameter family $\phi_{s,s'}$ can be $C^\infty$-perturbed to a family that is regular at infinity.
\end{lemma}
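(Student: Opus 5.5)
Each condition ($\Sigma$-1)--($\Sigma$-5) is either a transversality condition for the map $\Phi^\iota\colon [0,1]^2\times\cS\to\R^2$, $\Phi^\iota(s,s',t)=\phi^\iota_{s,s'}(t)$, or a parameter-counting condition in the sense of the Equivariant Parameter Counting Lemma~\ref{lem:codimension} with $r=2$. I would therefore prove the lemma by applying Wall's equivariant multijet transversality (Theorem~\ref{thm:ejet}) to the two-parameter family, viewed as an equivariant map on $R\times\cS$ with $R=[0,1]^2$. Finitely many conditions, each residual and open near the compact parameter square, are then achieved simultaneously by a single small $C^\infty$ perturbation. All perturbations are made at the level of the projected diagram near $\infty$, i.e.\ near $\wt{\phi}^{-1}_{s,s'}(\infty)$. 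They are lifted to $\wt{\phi}_{s,s'}$ exactly as in the proof of Theorem~\ref{thm:equiv_reid}: add the perturbation in the $x,y$-directions in the inverted chart. Since the perturbation is small, embeddedness and equivariance survive.

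\textbf{The two loci.} First, $\Sigmafix$ is the set of $(s,s')$ for which some fixed point $t\in\Sfix$ satisfies $x(\phi^\iota_{s,s'}(t))=0$ and $y(\phi^\iota_{s,s'}(t))=0$. By equivariance the $x$-coordinate vanishes automatically on $\Sfix$, so this is one equation $y=0$ on $R\times\Sfix$, a 2-manifold (finitely many copies of $R$). Transversality makes it a closed smooth $1$-manifold, giving the first half of ($\Sigma$-1). Second, $\Sigmafree$ is the image in $R$ of $Z=(\Phi^\iota)^{-1}(0)\cap(R\times\Sfree)$. This is two equations on a 3-manifold, so transversality makes $Z$ a $1$-manifold. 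Its projection to $R$ is an immersion away from points where $\partial_t\phi^\iota=0$ or where $\phi^\iota$ is tangent to the axis; those are codimension~$1$ additional conditions and are handled below.

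\textbf{Double points, boundary, transversality, Morse.} Self-intersections of $\Sigmafree$ come from two orbits in $\Sfree$ hitting the origin at the same parameter. That is $4$ equations with $m=2$, so $\pcodim=2$: such points are isolated in $R$, and the standard double-transversality makes them ordinary. Coincidences of three or more orbits have $\pcodim\ge3$ by Lemma~\ref{lem:coincidence}, so they are avoided. For ($\Sigma$-2): $Z$ has no boundary in the interior of $R\times\Sfree$, so its ends must accumulate at $\Sfix$, i.e.\ onto $\Sigmafix$. For ($\Sigma$-3), I would show that near a fixed point $t_0$ the normal form of Lemma~\ref{lem:fixedcusp} ($(t^3+\lambda t,t^2)$ type) controls $Z$, and that transversality of the two curves is an open-dense condition on the $1$-jets in the parameters. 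For ($\Sigma$-4), $\phi^\iota_{s,s'}$ at the origin is a two-parameter family. Codimension~$2$ singularities of the diagram occur at isolated parameters, and requiring them to also lie over $(0,0)$ adds codimension at least $1$, so generically they are missed. Finally, ($\Sigma$-5) is the statement that a generic function restricted to a generic curve is Morse. I would obtain it by applying a one-jet condition along $Z$ and $R\times\Sfix$, or alternatively by a further small perturbation of the family in the $s'$-direction.

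\textbf{Main obstacle.} The hardest step is the local analysis where $\Sigmafree$ ends on $\Sigmafix$. There a free orbit degenerates into a fixed point, so the equivariant jet space changes orbit type, and Theorem~\ref{thm:ejet} applies to each orbit type separately rather than across the boundary. My plan is to work in equivariant coordinates near $t_0\in\Sfix$ with $t\mapsto -t$, and to write $\phi^\iota(t)=(t\,a(t^2,s,s'),\,b(t^2,s,s'))$. The free zeros with $t\ne0$ are then the zeros of $(a,b)$ in the variable $u=t^2>0$. This turns $\Sigmafree$ near $\Sigmafix=\{b(0,\cdot)=0\}$ into the projection of the curve $\{a=b=0,\ u\ge0\}$, whose boundary lies at $u=0$. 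Ordinary (non-equivariant) transversality in the variables $(u,s,s')$ then gives both ($\Sigma$-2) and ($\Sigma$-3), using Whitney's even-function theorem as in the proof of Theorem~\ref{thm:versality_theorem}.
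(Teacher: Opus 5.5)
Your route is essentially the paper's: zero sets of $\Phi^\iota$ on $(\Sfree/\Z_2)\times[0,1]^2$ and on $\Sfix\times[0,1]^2$, projection to the parameter square, parameter counting for ($\Sigma$-4), an end argument for ($\Sigma$-2), and a perturbation for ($\Sigma$-5). Your substitution $u=t^2$ near a fixed point is more careful than the paper, which only shows that limit points of $\Sigmafree$ lie on $\Sigmafix$.

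\textbf{Gap in the first half of ($\Sigma$-1).} For each $t_j\in\Sfix$, transversality makes $C_j=\{(s,s')\colon y(\phi^\iota_{s,s'}(t_j))=0\}$ a smooth closed curve. But $\Sigmafix=\bigcup_j C_j$ is a union over the finitely many fixed points, and curves coming from different $t_j$ could cross. Genericity cannot rule this out. Two fixed points hitting the origin at the same parameter means two equations $y(\phi^\iota(t_1))=y(\phi^\iota(t_2))=0$ with no free domain dimension, so $\pcodim=2$. A generic two-parameter family would therefore exhibit it at isolated parameters, just as two free orbits do; this is why $\Sigmafree$ may have ordinary double points. The missing input is geometric. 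For $t_j\in\Sfix$, the point $\wt{\phi}_{s,s'}(t_j)$ lies on the fixed circle $\wt{\cL}\cup\{\infty\}$, and $\phi^\iota_{s,s'}(t_j)=(0,0)$ exactly when $\wt{\phi}_{s,s'}(t_j)=\infty$. Each $\wt{\phi}_{s,s'}$ is an embedding and stays one after your small perturbation. Hence two distinct fixed points are never sent to $\infty$ at the same parameter, and the $C_j$ are pairwise disjoint. This is the argument the paper gives. You already note that embeddedness survives the perturbation, but you need to use it here.

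\textbf{Smaller correction: the immersion count.} The projection of $Z$ to $[0,1]^2$ fails to be an immersion exactly where $\partial_t\phi^\iota_{s,s'}=0$ at a zero, i.e.\ at a cusp sitting at the origin. That is two further equations on the curve $Z$ (four on the three-dimensional space $(\Sfree/\Z_2)\times[0,1]^2$), so it is avoided outright, consistent with ($\Sigma$-4). Tangency to the axis at the origin (an R-2 at infinity) does not obstruct immersion at all. You called these codimension-one conditions along $Z$, to be handled later. If that were so, they would occur at isolated points of $Z$ and $\Sigmafree$ would fail to be immersed there. So the count needs fixing, even though the conclusion you want is true.
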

\begin{proof}
  The condition that $\phi^{\iota}_{s,s'}$ has
  a singularity at $(0,0)$ of codimension 2 or more is of codimension at least $3$ in $\cF$: in fact, to the standard dimension count behind
  the decomposition of $\wt{\cF}^1\setminus\cF^1$, we add one for an extra condition that the singularity occurs at $(0,0)$. This shows
  \ref{item:Sigma_transverse}.

  Next, consider the space $\wt{\Sigma}\in\cS\times[0,1]^2$ given by $(t,s,s')$ such that $\phi^\iota_{s,s'}(t)=0$. 
  This space can be regarded as the preimage $\Phi^{-1}(0,0)$ under the map $\Phi\colon \cS\times[0,1]^2\to\R^2$,
  $\Phi(t,s,s')=\phi^\iota_{s,s'}(t)$. Take $\cS'=\Sfree/\Z_2$, and let $\Phi'$ be the restriction of $\Phi$ to $\cS'$ (it is well-defined,
  because $\Phi$ is $\Z_2$-equivariant with respect to the $t$ variable).
  Also, let $\Phi^{\Z_2}$ be the restriction of $\Phi$ to $\Sfix$. On perturbing $\Phi$, we may assume that $(0,0)$ is a regular
  value of both $\Phi^{\Z_2}$ and $\Phi'$. Write $\wt{\Sigma}^{\Z_2}$ and $\wt{\Sigma}'$ as preimages. These are clearly smooth 1-dimensional manifolds, and $\wt{\Sigma}^{\Z_2}$ has no boundary. Let $\Sigmafix$ be the projection of $\wt{\Sigma}^{\Z_2}$ to $[0,1]^2$ under the projection map $\Sfix\times[0,1]^2\to[0,1]^2$, likewise
  we let $\Sigma'$ be the projection of $\wt{\Sigma}'$ to $[0,1]^2$. By equivariance of $\Phi$, $\Sigma'$ agrees with $\Sigmafree$.

  For a dense subset of functions $\Phi$, the projections $\Sigmafix$ and $\Sigmafree$ are regular in the sense of Definition~\ref{def:generic_R2}. That is, $\Sigmafix$ and $\Sigmafree$ have at most double points as singularities. Moreover, if the projection is regular,
  the only singularities of the intersection of $\Sigmafix$ and $\Sigmafree$ are transverse double points proving \ref{item:Sigma_transverse}.

  We claim that $\Sigmafix$ has no double points. Suppose it has at $(s_0,s'_0)$. As $\wt{\Sigma}^{\Z_2}$ is smooth, a double point corresponds
  to the pair $t_1,t_2\in\Sfix$ such that $t_1\neq t_2$ and $\phi^\iota_{s_0,s'_0}(t_i)=(0,0)$. But then, $\wt{\phi}_{s_0,s'_0}$ hits the infinity point at two values, $t_1$ and $t_2$. This leads to a contradiction, proving \ref{item:Sigma_smooth}.

  Next, suppose that $(s_n,s_n')\in\Sigmafree$ converges to $(s_0,s_0')$ that is not in $\Sigmafree$. By construction, there exists
  a sequence $t_n\in\Sfree$ such that $\Phi(t_n,s_n,s_n')=(0,0)$. On passing to a subsequence, we assume that $t_n$ converges to $t_0$.
  Clearly $\Phi(t_0,s_0,s_0')=(0,0)$.
  If $t_0\in\Sfree$, then $(s_0,s_0')\in\Sigmafree$, contradiction. Hence, $t_0\in\Sfix$, so $(s_0,s_0')\in\Sigmafix$, proving \ref{item:Sigma_bound}.

  Finally, we prove~\ref{item:Sigma_Morse}. We show more: on perturbing $\Phi$, we may assume that the functions $\cS'\times[0,1]^2\to[0,1]$,
  and $\Sfix\times[0,1]^2\to[0,1]$ given by
  $(t,s,s')\mapsto s'$ restrict to Morse functions on $\wt{\Sigma}'$ and $\wt{\Sigma}^{\Z_2}$, respectively. This proves \ref{item:Sigma_Morse},
  and also gives insight on what is the Morse condition on the set of double points of $\Sigmafree$, namely,
  the function restricts to a Morse function
  on each branch through a double points. Also, we may assume that there are no local minima or maxima at these double points.
\end{proof}

Suppose now $\wt{\phi}_{s,s'}$ is regular at infinity. Fix $s'\in[0,1]$. As $s$ goes from $0$ to $1$, the path
$\gamma_s\colon[0,1]\to[0,1]^2$, $\gamma_s=(s,s')$ can intersect $\Sigmafree\cup\Sigmafix$. Each intersection correspond either to an $S$-move
or an $I$-move, or to a Reidemeister move at infinity. There is a partial translation of the intersection type and the type of bifurcation
that occurs.

\begin{itemize}
  \item If $\gamma_s$ intersects $\Sigmafree$ transversely at a point disjoint from the closure of $\Sigmafix$, 
    then this corresponds to either to an S-move along the path $s\mapsto \phi_{s,s'}$;
  \item If $\gamma_s$ intersects $\Sigmafix$ transversely at a point disjoint from $\Sigmafree$, 
    then this corresponds either to an I-move along the path $s\mapsto\phi_{s,s'}$, or to an M-2 move (for finitely many values of $s'$);
  \item If $\gamma_s$ intersects a smooth point of $\Sigmafree$ non-transversally, then this can be either a cancellation of a pair of S-moves, or a tangency at infinity (R-2 move);
  \item If $\gamma_s$ intersects a smooth point of $\Sigmafix$ non-transversally, then this is a cancellation of a pair of I-moves;
  \item If $\gamma_s$ intersects a double point of $\Sigmafree$, then $\phi^{\iota}_{s,s'}$ has two pairs of branches intersection $(0,0)$.
    This leads to the M-3 move at infinity;
  \item If $\gamma_s$ intersects a point in $\Sigmafix$ intersected with a point in the relative interior of $\Sigmafree$, then
    this is a move involving one fixed branch and a pair of branches interchanged by $\tau$, but the double points do not disappear. 
    This is the M-1 move;
  \item If $\gamma_s$ intersects a point of the relative boundary of $\Sigmafree$ (this point is necessarily on $\Sigmafix$, then
    this corresponds to a Reidemeister move where a double point disappears. This is the R-1 move.
\end{itemize}


\section{Two-parameter deformations of equivariant link diagrams}\label{sec:codim2}
We now lay the groundwork for the proof of Theorem~\ref{thm:intro4} by studying two-parameter deformations of equivariant link diagrams. We have seen in Section~\ref{sec:one_para} that equivariant Reidemeister moves arise from studying one-parameter families of maps which cross $\cF^1$. The movie moves of Theorem~\ref{thm:intro4} will analogously arise from understanding two-parameter families of maps. Explicitly, a two-parameter family of maps will not stay within $\cF^0 \cup \cF^1$, but will intersect the codimension $2$ subset
\[
\wt{\cF}^2 = \cF \setminus (\cF^0 \cup \cF^1).
\]
We thus begin by identifying a subset $\smash{\cF^2 \subset \wt{\cF}^2}$ whose complement in $\wt{\cF}^2$ has codimension $3$. This will consist of the mildest possible singularities within $\smash{\wt{\cF}^2}$. Any two-parameter family of maps can then be perturbed such that all intersections with $\smash{\wt{\cF}^2}$ occur at points of $\cF^2$. 

Once again, each of these intersections corresponds to a codimension $2$ singularity which is sufficiently constrained so that we can write down a normal form and versal deformation. This provides a local model for how our two-parameter family crosses $\cF^2$. As described in Example~\ref{ex:codim2}, such a versal deformation is parameterized over the disk, with the center of the disk giving the codimension $2$ singularity. The discriminant locus of the deformation will consist of arcs radiating out from the center of the disk, along which our family has codimension $1$ singularities. Traveling around the boundary of the disk gives a cyclic sequence of Reidemeister moves, with each move coming from crossing one of the arcs in the discriminant locus. These are precisely the sequences from Theorem~\ref{thm:intro4}.

The general strategy will be to show that any codimension $2$ singularity lies in one of $18$ individual codimension $2$ strata $\cF^2_i$. (There is also an additional stratum $\cF^2_0$ corresponding to coincidence of codimension $1$ singularities.) We then simply define 
\[
\cF^2 = \cF^2_0 \cup \cF^2_1 \cup \cdots \cup \cF^2_{18}.
\]
These are enumerated with the help of Section~\ref{sec:patterns}, which we also use to provide normal forms and versal deformations. The bulk of this section will thus consist of understanding the two-parameter bifurcation diagram in each case.

\subsection{Towards a classification of codimension 2 singularities}
To begin with, we categorize potential codimension~2 singularities into several groups:
\begin{itemize}
  \item Off-axis singularities, see Section~\ref{sub:off2};
  \item On-axis or fixed-point cusps, see Section~\ref{sub:cusp2};
  \item On-axis or fixed-point strikethroughs, see Section~\ref{sub:strike2};
  \item On-axis or fixed-point tangencies, see Section~\ref{sub:tang2};
  \item Coincidences of two codimension 1 singularities.
\end{itemize}
Coincidences of singularities are easily understood and we treat them directly in the proof of Theorem~\ref{thm:two_loops} below. We treat off-axis cusps, strikethroughs, and tangencies together in Section~\ref{sub:off2}, since this case is the same as in the non-equivariant setting, and is thus straightforward.

The following theorem asserts that all codimension $2$ singularities are of one of the forms discussed in Section~\ref{sec:patterns}, except for one special case. This occurs when a fixed-point branch passes through an on-axis line tangency. We have not covered this possibility in Section~\ref{sub:strike}, as we prefer not to give the general theory of strikethroughs by fixed-point branches. We subsume this case in Section~\ref{sub:tang2}. 

\begin{thm}\label{thm:complete}
  Each singularity with $\pcodim = 2$ that is not a coincidence is either:
  \newcounter{Senum}
  \begin{enumerate}[label=(S-\arabic*)]
    \item an off-axis singularity;\label{item:Soff}
    \item an on-axis or fixed-point cusp; \label{item:Scusp}
    \item an on-axis or fixed-point (ordinary) strikethrough; \label{item:Sstrike} 
    \item an on-axis or fixed-point tangency; \label{item:Stang}
      \setcounter{Senum}{\value{enumi}}
     \end{enumerate}
     or the special case:
     \begin{enumerate}[label=(S-\arabic*)]
       \setcounter{enumi}{\value{Senum}}
        \item a fixed-point branch of $\phi$ intersecting an on-axis line tangency of order $1$. \label{item:Sltang_bound}
  \end{enumerate}
\end{thm}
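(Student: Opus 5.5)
Since a singularity of $\pcodim=2$ is not regular, the plan is to start from the fact that it lies in $\wt{\cF}^1=\wt{\cF}^1_1\cup\dots\cup\wt{\cF}^1_8$, and more precisely in $\wt{\cF}^2=\wt{\cF}^1\setminus\cF^1$. By the construction of the strata $\cF^1_i$ in Section~\ref{sec:one_para}, a germ in $\wt{\cF}^1_i\setminus\cF^1_i$ fails to lie in $\cF^1_i$ for one of four reasons: a coincidence, a strikethrough, a higher-order cusp, or a higher-order tangency. For $\cF^1_6$ and $\cF^1_8$ there is one further reason, namely an extra parallelism or perpendicularity among the branches. Coincidences are excluded by hypothesis, so the proof reduces to a case analysis over $i=1,\dots,8$ and these failure modes, each time placing the singularity into one of \ref{item:Soff}--\ref{item:Sltang_bound}.

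First I dispose of the cases where all image points lie off $\cL$. Any violation of $\cF^1_1,\cF^1_2,\cF^1_3$ is then an off-axis singularity, which is \ref{item:Soff}. The one point to check is that a strikethrough of an off-axis singularity by a branch whose image lies on $\cL$ cannot happen, because the singular image point itself is off the axis.

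Next come the on-axis strata $i=4,5,6$ and the fixed-point strata $i=7,8$.
\begin{itemize}
\item \emph{Higher-order cusps and tangencies.} For $\wt{\cF}^1_7$, a higher-order cusp is an on-axis or fixed-point cusp, which is \ref{item:Scusp}; Lemma~\ref{lem:cusplist} shows that only the fixed-point $(2,5)$- and $(3,4)$-cusps have $\pcodim=2$. For $\wt{\cF}^1_4$ and $\wt{\cF}^1_5$, a higher-order tangency is \ref{item:Stang} by Lemma~\ref{lem:complicatedtan}. If instead one of the tangent branches degenerates to a cusp, I am in \ref{item:Scusp} as an on-axis cusp (tangency or perpendicular type).
\item \emph{Strata $\wt{\cF}^1_6$ and $\wt{\cF}^1_8$.} The extra non-genericity means two of the four branches $\phi'(t_1),\phi'(t_2),\tau\phi'(t_1),\tau\phi'(t_2)$ (or three, in the fixed case) become parallel, or a branch becomes horizontal or vertical. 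I interpret each of these as a tangency or strikethrough of lower order. Two parallel branches meeting at a point of $\cL$ give an on-axis oblique tangency, or a fixed-point tangency when a fixed branch is involved. A horizontal or vertical free branch gives a strikethrough of an on-axis line or perpendicular tangency. An ordinary configuration with an extra branch gives an ordinary strikethrough of an on-axis double point, which is \ref{item:Sstrike}.
\item \emph{Strikethroughs.} I use Lemma~\ref{lem:strikethrough}: a non-ordinary strikethrough of a codimension~1 singularity has $\pcodim\ge 3$, so only ordinary strikethroughs by free branches survive, which is \ref{item:Sstrike}. Strikethroughs by a fixed-point branch must be handled separately, since Definition~\ref{def:strikethrough} in the equivariant setting only covers free new branches. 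A fixed branch passing through a codimension~1 on-axis singularity adds one condition, because the fixed branch lies on $\cL$ and its height must match. I would check case by case. Through a line tangency this gives exactly \ref{item:Sltang_bound}. Through a perpendicular tangency, the fixed branch is horizontal, hence tangent to the free branches, so it is a fixed-point tangency (\ref{item:Stang}). Through an on-axis double point it gives a fixed double point strikethrough, which is \ref{item:Sstrike}. Through a fixed double point it would force two fixed points to have the same image, which is excluded by Definition~\ref{def:half-knot}.
\end{itemize}

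The main obstacle is making this case list provably exhaustive rather than plausible. In particular, I must confirm that every mixed degeneration, such as a cusp coinciding with a tangency or a tangency acquiring an extra branch on the axis, has $\pcodim\ge 3$ unless it falls in the list. To do this, for each candidate I would count conditions in the appropriate equivariant multijet space $\mJ^{k,(m,n)}_{\Z_2}$ and subtract $m$ (Definition~\ref{def:codimension}). Each additional independent degeneracy adds at least one condition, while a new free branch adds two conditions and one domain dimension, and a new fixed branch adds one condition and no domain dimension. This yields $\pcodim\ge 3$ for everything outside \ref{item:Soff}--\ref{item:Sltang_bound}.
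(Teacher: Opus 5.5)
Your proposal is correct, but it organizes the case analysis differently from the paper.

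\textbf{Your route.} You go stratum by stratum through $\wt{\cF}^1_i\setminus\cF^1_i$, using the failure modes that define $\cF^1_i$: strikethrough, higher-order cusp or tangency, and the extra parallelism for $i=6,8$. Only at the end do you invoke a condition count to certify exhaustiveness.

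\textbf{The paper's route.} The paper never passes through the strata. It fixes the single singular image point $w$ and sends $w\notin\cL$ to (S-1). For $w\in\cL$ it lets $n$ be the number of branches (orbits) through $w$ and $f\in\{0,1\}$ the number of these lying in $\Sfix$; there is at most one by Definition~\ref{def:half-knot}. The incidence conditions alone give
\[
\pcodim=(2n-2)+(1-f)-(n-f)=n-1,
\]
so $n\le 3$. The cases $n=3,2,1$ then leave a budget of $0,1,2$ extra conditions respectively. For $n=2$, the single extra condition is either between the two branches, giving a tangency, or internal to one branch. In the latter case, whether the remaining branch is free or fixed is what separates the ordinary strikethroughs from (S-5) and from the fixed-point tangency.

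\textbf{Comparison.} This is exactly the count you sketch in your last paragraph: a free branch adds two conditions and one domain dimension, a fixed branch adds one condition and none. Used as the organizing principle rather than as a final check, it buys two things.
\begin{itemize}
\item Exhaustiveness is automatic, and it does not depend on the somewhat informal description of $\wt{\cF}^1_i\setminus\cF^1_i$ as ``higher-order cusps/tangencies''.
\item It avoids the redundancy in your scheme, where one singularity is reached from several strata. For example, the strikethrough of a fixed-point cusp arises from both $\wt{\cF}^1_7$ and $\wt{\cF}^1_8$, and (S-5) from both $\wt{\cF}^1_4$ and $\wt{\cF}^1_8$.
\end{itemize}
Your organization has its own merit: it records which codimension-$1$ strata each codimension-$2$ singularity degenerates from. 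That is the information the bifurcation diagrams later use.

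\textbf{Two minor slips.} Neither affects the conclusion, since both cases are recovered elsewhere in your list.
\begin{itemize}
\item The on-axis cusp with $\pcodim=2$ is the oblique $(2,3)$-cusp. The tangency-type and perpendicular-type on-axis cusps have $\pcodim=3$.
\item In $\wt{\cF}^1_8$, a vertical free branch meeting the fixed branch gives (S-5), not a strikethrough in the sense of Definition~\ref{def:strikethrough}. Also, you omit there the case where the fixed branch acquires zero derivative, which is the strikethrough of a fixed-point cusp.
\end{itemize}
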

\begin{proof}
  If the singularity occurs at two distinct points in the codomain, then it is a coincidence. Thus, suppose we have a codimension $2$ singularity occurring at $w\in\R^2$. If $w\notin\cL$, then the singularity is off-axis and we are in the situation~\ref{item:Soff}. Thus we assume $w\in\cL$. Let the corresponding domain points be $t_1, \ldots, t_n \in \cS$ and their orbits. We proceed by analyzing the possible values of $n$, together with what possibilities there are for the $t_i$ (i.e., whether they lie in $\Sfree$ or $\Sfix$). As usual, we speak of $\phi$ as having $n$ branches, even though if $t_i \neq \sigma t_i$ the branch through $t_i$ generates a symmetric copy.
    
Firstly, note that at most one of the $t_i$ lies in $\Sfix$, since different points of $\Sfix$ are mapped to different points of $\cL$. Let $f$ be the number of $t_i$ in $\Sfix$, so that either $f = 0$ or $f = 1$. The coincidence $\phi(t_1) = \phi(t_2) = \cdots = \phi(t_n)$ constitutes $2n - 2$ equations. We require that these lie on $\cL$; this gives one extra equation if $f = 0$ and is automatic if $f = 1$. The dimension of the domain is given by $n - f$. Hence the parametric codimension, even before imposing extra conditions like tangency or the singularity of branches, is given by $(2n - 2) + (1 - f) - (n - f) = n - 1$. Thus $n$ is at most $3$.

If $n = 3$, then to have $\pcodim = 2$ there must be no additional conditions. Since at least one branch is through a point of $\Sfree$, our singularity is a strikethrough of an on-axis or fixed-point singularity with two branches and $\pcodim = 1$. This means our overall singularity is a strikethrough of an on-axis double point or a fixed double point. In either case, we are in the setting of \ref{item:Sstrike}.

If $n = 2$, then to have $\pcodim = 2$ we must have exactly one additional condition. One possibility is that this condition is between the two branches; i.e., the two branches are tangent. This leads to an on-axis or fixed-point tangency, and we are in case \ref{item:Stang}. The other possibility is that the additional condition induces a singularity within a single branch (or between a branch and its symmetric copy). If the nonsingular branch is through a point of $\Sfree$, then our singularity is a strikethrough of an on-axis or fixed-point singularity with one branch and $\pcodim = 1$. This means our overall singularity is a strikethrough of an on-axis line tangency, on-axis perpendicular tangency, or fixed-point cusp. In each case we are in the setting of \ref{item:Sstrike}. It is also possible that the nonsingular branch is a fixed-point branch. In this case, our singularity must consist of a fixed-point branch intersecting an on-axis tangency. If this is a line tangency, then we have the special case \ref{item:Sltang_bound}; if it is a perpendicular tangency, then in fact the branches form a fixed-point tangency and we are in the case of \ref{item:Stang}.

Finally, if $n = 1$, then to have $\pcodim = 2$ we need exactly two additional conditions. If the branch is through a point of $\Sfree$, then we have a higher-order on-axis cusp or higher-order on-axis tangency; these are cases \ref{item:Scusp} and \ref{item:Stang}. If the branch is through a point of $\Sfix$, then we have a higher-order fixed-point cusp; this is case \ref{item:Scusp}.
\end{proof}

\subsection{Off-axis singularities}\label{sub:off2}

The enumeration of codimension $2$ off-axis singularities is essentially the same as in the non-equivariant case. It follows from our discussion in Section~\ref{sec:patterns} (see also \cite{David,Wall_Pgen}) that these consist of:  

\begin{itemize}
  \item off-axis $(2,5)$-cusp, called the $A_4$ singularity;
  \item off-axis $2$-fold tangency, called the $A_5$ singularity;
  \item ordinary strikethrough of an off-axis $(2,3)$-cusp;
  \item ordinary strikethrough of an off-axis $1$-fold tangency;
  \item ordinary strikethrough of an off-axis ordinary triple point (resulting in an off-axis quadruple point), called the  $X_9$ singularity.
\end{itemize}
We display these in Figure~\ref{fig:off-the-axis}. The bifurcation diagrams are described in \cite{FiedlerKurlin}, but for completeness we recall these here. Throughout, we alter various conventions throughout Section~\ref{sec:patterns} to more closely match \cite{FiedlerKurlin}.

\begin{figure}
  \begin{tikzpicture}
    \draw (-6,0) node  {\includegraphics[width=2.2cm]{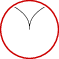}};
    \draw (-3.5,0) node  {\includegraphics[width=2.2cm]{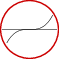}};
    \draw (-1,0) node  {\includegraphics[width=2.2cm]{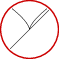}};
    \draw (1.5,0) node  {\includegraphics[width=2.2cm]{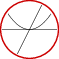}};
    \draw (4.0,0) node  {\includegraphics[width=2.2cm]{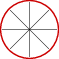}};
  \end{tikzpicture}
  \caption{Off-axis singularities. From left-to-right: a $(2,5)$-cusp, $2$-fold tangency, strikethrough of a $(2,3)$-cusp, strikethrough of a $1$-fold tangency, and quadruple point.}\label{fig:off-the-axis}
\end{figure}

\subsubsection{$(2,5)$-cusp}\label{ssub:A4}
By Lemma~\ref{lem:normal_cusp}, a $(2,5)$-cusp has normal form $(x = t^5, y = t^2)$ and versal deformation $(x = t^5+\lambda_1 t^3+\lambda_2 t, y = t^2)$. We explain the changes in the topology of the curves in this family by first finding the discriminant set. This procedure will be used for the remaining 17 cases of codimension 2 singularities.

We check for which values of $\lambda_1$ and $\lambda_2$ this map has a codimension $1$ singularity. In this case, the only possibilities are an off-axis cusp, off-axis tangency, or off-axis triple point. For cusps: a $(2, 3)$-cusp occurs along the locus $\{\lambda_2 = 0\}$.\footnote{If in addition $\lambda_1 = 0$, this further degenerates to the $(2, 5)$-cusp. Throughout this section, we abuse language slightly and implicitly assume $\lambda_1$ and $\lambda_2$ are not simultaneously zero.} For tangencies: we start by understanding when our deformation has a self-intersection. Suppose $x(t) = x(t')$ and $y(t) = y(t')$. The second equality forces $t = - t'$; the first equality then gives $x(t)=0$. A $1$-fold tangency will occur for values of $\lambda_1$ and $\lambda_2$ at which $x(t)$ has a double real root, which is equivalent to $t^4+\lambda_1t^2+\lambda_2$ having a double real root. This happens along the locus $\{\lambda_1^2 = 4\lambda_2, \lambda_1 \leq 0\}$. For triple points: we see that there are none, since $y(t)=y(t')=y(t'')$ implies that two out the three parameters $t,t',t''$ are equal.

The diagram is drawn in Figure~\ref{fig:A4}, see also \cite[Figure 8v]{FiedlerKurlin}.\footnote{There is a slight formal mistake in the figure in \cite{FiedlerKurlin} due to the omission of the condition $\lambda_1 \leq 0$ accompanying the relation $\lambda_1^2 = 4\lambda_2$.} The parameters $\lambda_1$, $\lambda_2$ vary over the disk bounded by the blue circle, which we take to be $\lambda_1^2 + \lambda_2^2 = \varepsilon$ for some small $\varepsilon$. We indicate the discriminant set (i.e., the set of parameters for which a singularity occurs) in red. This consists of $\{\lambda_2=0\}$ and $\{\lambda_1^2 = 4 \lambda_2,\lambda_1\leq0\}$. Note that the discriminant set intersects the boundary of the disk in a finite number of points and divides it into a collection of arcs. At each such point, we draw a singular diagram (circled in red), while for each arc, we draw a regular diagram (circled in blue) for a representative point within that arc.

As we travel around the central circle, we go from regular diagram to regular diagram by passing through the indicated singular diagrams. Two regular diagrams on either side of a singular diagram are related by the equivariant Reidemeister move associated to that singular diagram. Crossing the locus $\{\lambda_2 = 0\}$ corresponds to an \ref{IR-1} move, while crossing the locus $\{\lambda_1^2 = 4 \lambda_2,\lambda_1\leq0\}$ corresponds to an \ref{IR-2} move.

\begin{figure}
  \includegraphics[width=10cm]{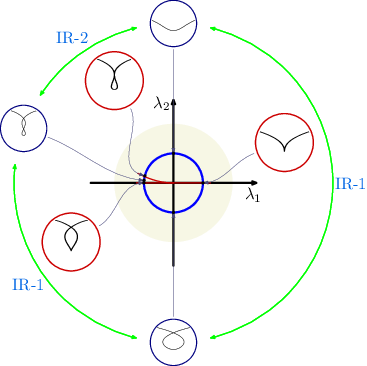}
  \caption{Off-axis $(2,5)$-cusp.}\label{fig:A4}
\end{figure}

\subsubsection{$2$-fold tangency}\label{ssub:A5}
By Lemma~\ref{lem:normal_tang}, a $2$-fold tangency has normal form with two branches $y = 0$ and $(x = t, y = t^3)$ and versal deformation which perturbs the second branch to $(x = t, y = t^3+\lambda_1 t+\lambda_2)$. 

There are no cusps or triple points regardless of $\lambda_1,\lambda_2$. 
A $1$-fold tangency will occur if $y(t)$ has a double root, which occurs along the locus $\{4\lambda_1^3=-27\lambda_2^2\}$ and leads to an \ref{IR-2} move.
The diagram is drawn in Figure~\ref{fig:A5}, see also \cite[Figure 8iv]{FiedlerKurlin}. 
\begin{figure}
  \includegraphics[width=10cm]{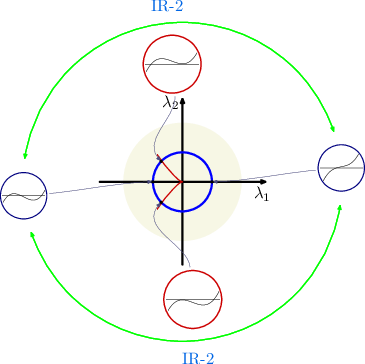}
  \caption{Off-axis $2$-fold tangency.}\label{fig:A5}
\end{figure}

\subsubsection{Strikethrough of a $(2,3)$-cusp}\label{ssub:sA2}
This has normal form with two branches $y = ax$ and $(x=t^3,y=t^2)$ for some $a\neq 0$. We can rescale our coordinates so that $a=1$. By Theorem~\ref{thm:euh_deform}, a versal deformation is given by $y=x+\lambda_1$ and $(x=t^3 - \lambda_2 t,y= t^2)$.

For $\{\lambda_2=0\}$, the cusp persists, leading to an \ref{IR-1} move. For $\lambda_2<0$, the second branch is injective, while for $\lambda_2 \geq 0$, there is a double point for $t=\pm\sqrt{\lambda_2}$ at $(0, \lambda_2)$. Along $\{\lambda_1 = \lambda_2, \lambda_2 \geq 0\}$, the strikethrough line passes through this double point, leading to a triple point and hence an \ref{IR-3} move. Finally, as computed in \cite[Lemma 4.4]{FiedlerKurlin}, along $\{3\lambda_1 + \lambda_2^2=0\}$ we have a tangency of the strikethrough line to the cusp branch, leading to an \ref{IR-2} move. The diagram is given in Figure~\ref{fig:sA2}, see also \cite[Figure 8iii]{FiedlerKurlin}. 
\begin{figure}
  \includegraphics[width=10cm]{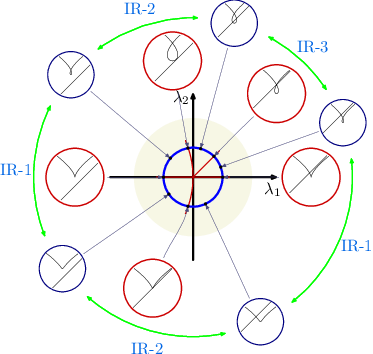}
  \caption{Strikethrough of an off-axis $(2,3)$-cusp.}\label{fig:sA2}
\end{figure}

\subsubsection{Strikethrough of a $1$-fold tangency}\label{ssub:sA3}
This has normal form with three branches $y = 0$, $(x = t, y = t^2)$, and $y = ax$ for some $a \neq 0$. We can rescale our coordinates so that $a=1$. By Theorem~\ref{thm:euh_deform}, a versal deformation is given by sending the second and third branches to $(x = t, y = t^2 + \lambda_1)$ and $y = x + \lambda_2$.

There are no cusps. Along $\{\lambda_1 = 0\}$ the tangency between the first and second branches persists, leading to an \ref{IR-2} move. The double points between the first and second branches occur at $(\pm\sqrt{-\lambda_1},0)$; a triple point occurs when the strikethrough line passes through one of these. This occurs along the locus $\{\lambda_2 = \pm \sqrt{-\lambda_1}, \lambda_1 \leq 0\}$, leading to an \ref{IR-3} move. The diagram is given in Figure~\ref{fig:sA3}, see also \cite[Figure 8ii]{FiedlerKurlin}.

\begin{figure}
  \includegraphics[width=10cm]{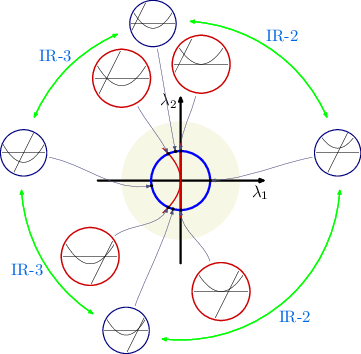}
  \caption{Strikethrough of an off-axis $1$-fold tangency.}\label{fig:sA3}
\end{figure}

\subsubsection{Strikethrough of a triple point}\label{ssub:X9}
This case is not quite covered by Theorem~\ref{thm:euh_deform} since the ordinary triple point is not an unbalanced homogenous singularity in the sense of Definition~\ref{def:euh}. We have a normal form given by $x = 0$, $y = 0$, $y = x$, and $y = ax$, where $a \neq 0$ or $1$; compare \cite{David,FiedlerKurlin}. However, this time the parameter $a$ cannot be removed by a local coordinate change. It is not difficult to check that a versal deformation of the singularity is given by fixing the first two branches and deforming the last two to $y = x + \lambda_1$ and $y = ax + \lambda_2$. However, this set of parameters is really 3-dimensional, because of the failure of simplicity: strictly speaking, the parameters are $(a,\lambda_1,\lambda_2)$. 

We can bypass this difficulty by freezing the parameter $a$. That is, if we have any two-dimensional deformation of the four-tuple point, then by versality it is induced
from the above deformation. Thus, there exist functions $\lambda_1(\zeta_1,\zeta_2)$, $\lambda_2(\zeta_1,\zeta_2)$, $a(\zeta_1,\zeta_2)$
such that the deformation is given by $y = x + \lambda_1(\zeta_1,\zeta_2)$ and $y = a(\zeta_1,\zeta_2) x +\lambda_2(\zeta_1,\zeta_2)$, where
  $\zeta_1,\zeta_2$ are the deformation parameters. We assume without loss of generality that the four-tuple point occurs when $\zeta_1 = \zeta_2=0$ and nowhere else;
  that is $\lambda_1,\lambda_2$ vanish only at $(0,0)$. We can then modify the deformation in such a way that $a$ is constant near $(0,0)$.

  Although in principle the behavior of the deformation may depend on the constant value of $a$, in this case it is not difficult to see that different values of $a$ result in essentially the same picture. We display the case of $a = -1$. There are no cusps or tangencies, because all the branches are straight lines.
It is straightforward to check that triple points occur along the loci $\{\lambda_1 = 0\}$, $\{\lambda_2 = 0\}$, $\{\lambda_1 = \lambda_2\}$, and $\{\lambda_1 = -\lambda_2\}$, leading to \ref{IR-3} moves.
The diagram is given in Figure~\ref{fig:X9}, see also \cite[Figure 8i]{FiedlerKurlin}. 

\begin{figure}
  \includegraphics[width=10cm]{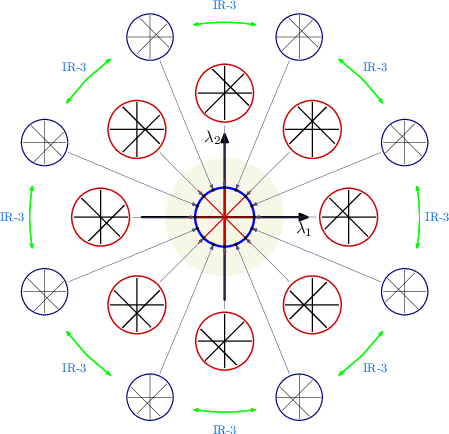}
  \caption{Off-axis quadruple point.}\label{fig:X9}
\end{figure}

\subsection{On-axis and fixed-point cusps}\label{sub:cusp2}

By Lemma~\ref{lem:cusplist}, the on-axis and fixed-point cusps with codimension $2$ are:

\begin{itemize}
  \item on-axis oblique $(2,3)$-cusp;
  \item fixed-point $(2,5)$-cusp;
  \item fixed-point $(3,4)$-cusp.
\end{itemize}
We display these in Figure~\ref{fig:on-the-axis-cusps}.

\begin{figure}
  \begin{tikzpicture}
    \draw (-6,0) node  {\includegraphics[width=2.5cm]{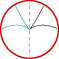}};
    \draw (-3,0) node  {\includegraphics[width=2.5cm]{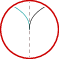}};
    \draw (0,0) node  {\includegraphics[width=2.5cm]{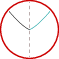}};
  \end{tikzpicture}
  \caption{On-axis and fixed-point cusps. From left-to-right:
  an oblique $(2,3)$-cusp, fixed-point $(2,5)$-cusp, and fixed-point $(3,4)$-cusp.}\label{fig:on-the-axis-cusps}
\end{figure}

\subsubsection{Oblique $(2,3)$-cusp}\label{sub:int_cusp} 
By Lemma~\ref{lem:normal_bound}, an oblique $(2, 3)$-cusp has normal form given by $(x = t^2, y = t^3 + t^2)$ and versal deformation
\[
x(t) = t^2 + \lambda_1,\ y(t) = t^3 + t^2 + \lambda_2t + \lambda_1. 
\]
Note that the deformation of the symmetric branch is given by 
\[
x(s) = -s^2 - \lambda_1,\ y(s) = s^3 + s^2 + \lambda_2 s + \lambda_1.
\]

Along the locus $\{\lambda_2 = 0\}$, the cusp persists as a pair of off-axis cusps, leading to an \ref{IR-1} move. As in Section~\ref{ssub:A4}, to determine when other singularities occur, we start by understanding when multiple points occur in our deformation. Now, however, we must consider both when the cusp branch intersects itself, and when it intersects its symmetric copy. This is due to the fact that our singularity is no longer far away from $\cL$.

To solve for a self-intersection, set $x(t) = x(t')$ and $y(t) = y(t')$ with $t \neq t'$; this results in $t = \pm t'$ from the first equation and $t^3 + \lambda_2 t = 0$ from the second. Whenever $\lambda_2 \leq 0$, we thus have a single self-intersection
\begin{equation}\label{eq:selfintersectoblique}
t = \pm \sqrt{-\lambda_2}\ \text{at } (\lambda_1-\lambda_2, \lambda_1 - \lambda_2).
\end{equation}
If this self-intersection is on $\cL$, then we obtain an on-axis double point; this occurs along the locus $\{\lambda_1 = \lambda_2, \lambda_2 \leq 0\}$ and results in an \ref{M-3} move. Moreover, \eqref{eq:selfintersectoblique} may contribute to a triple point if the $s$-branch passes through the point of self-intersection. This occurs if $s^2=\lambda_2-2\lambda_1$ and $s^2(s+1)+\lambda_2s+\lambda_1=\lambda_1-\lambda_2$, which implies
\[(2\lambda_2-2\lambda_1)(s+1)=0.\] 
There are two ways this can be satisfied. The first is when $s = -1$, in which case substituting back gives $1 = \lambda_2 - 2 \lambda_1$. This does not occur for small values of $\lambda_1, \lambda_2$. The second is when $\lambda_1 = \lambda_2$, but this has already been covered by the previous discussion. 

Now consider when the two symmetric branches intersect each other. Setting $x(t) = x(s)$ and $y(t) = y(s)$, this occurs when
\begin{equation}\label{eq:cusp_eq}t^2+s^2=-2\lambda_1,\ (t-s)(t^2+ts+s^2+t+s+\lambda_2)=0.\end{equation}
One obvious solution to the second equation occurs when $t=s$, in which case we have two points of intersection 
\[
t= s = \pm\sqrt{-\lambda_1}\ \text{at } (0, \pm\sqrt{-\lambda_1}(\lambda_2-\lambda_1))
\]
so long as $\lambda_1 \leq 0$. These occur on $\cL$ and may contribute to the formation of a codimension $1$ singularity in the following manner: if the two points of intersection collide, then we obtain a tangency between the two branches. This occurs if either $\lambda_1 = 0$ or $\lambda_1 = \lambda_2$. The latter case has already been considered. The former induces a line tangency along the locus $\{\lambda_1 = 0\}$, leading to an \ref{R-2} move. 

The other way to solve \eqref{eq:cusp_eq} is for the second factor of the second equation to vanish. Setting $p=t+s$, $q=ts$ turns this into
\begin{equation}\label{eq:cusp_eq1}
  p^2-2q=-2\lambda_1,\ p^2-q+p=-\lambda_2.
\end{equation}
Solving the second equation for $q$ in terms of $p$ and the substituting this into the first gives
\begin{equation}\label{eq:cusp_eq2}
  q=p^2+p+\lambda_2,\ p^2+2p+2(\lambda_2-\lambda_1)=0.
\end{equation}
From the second equation, we see that for small $\lambda_1$, $\lambda_2$, we always have two distinct solutions for $p$. However, each solution in $p$ and $q$ generally corresponds to two solutions in $t$ and $s$, as interchanging the values of $t$ and $s$ does not change $p$ or $q$. Hence a tangency will arise whenever a solution to \eqref{eq:cusp_eq1} in $p$ and $q$ corresponds to a solution with $t = s$, as this reflects a pair of solutions degenerating into a single solution. This will happen when $p^2 = 4q$. Plugging $p^2 = 4q$ back into \eqref{eq:cusp_eq1} gives $q=-\lambda_1$, $p=3\lambda_1-\lambda_2$, and plugging these back into $p^2 = 4q$ gives $(3\lambda_1-\lambda_2)^2=-4\lambda_1$. It is then straightforward to check that we have an on-axis perpendicular tangency along $\{\lambda_2 = 3 \lambda_1 \pm 2 \sqrt{-\lambda_1}, \lambda_1 \leq 0\}$, leading to a \ref{M-2} move.

The diagram is drawn in Figure~\ref{fig:cBD}. As the curve $\lambda_2=3\lambda_1\pm 2\sqrt{-\lambda_1}$ is very close
to the vertical line, for the reader's convenience we zoom in on a portion of the diagram in Figure~\ref{fig:cBD1}. 

\begin{figure}
  \includegraphics[width=10cm]{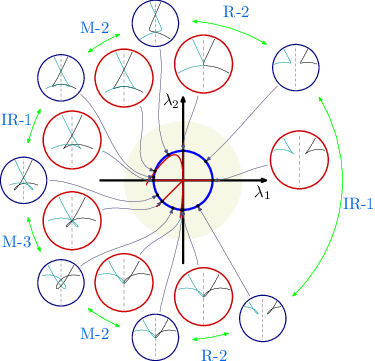}
  \caption{Oblique (2,3)-cusp.}\label{fig:cBD}
\end{figure}
\begin{figure}
  \includegraphics[width=10cm]{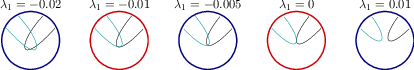}
  \caption{Close-up picture for $\lambda_1 \sim 0$ and $\lambda_2 \sim -0.23$.}\label{fig:cBD1}
\end{figure}

\subsubsection{Fixed-point $(2,5)$-cusp}\label{ssub:25cusp}
By Lemma~\ref{lem:fixedcusp}, a fixed-point $(2, 5)$-cusp has normal form $(x = t^5, y = t^2)$ and versal deformation $(x = t^5 + \lambda_1 t^3 + \lambda_2, y = t^2)$. The analysis is the same as for the off-axis $(2, 5)$-cusp, except that the \ref{IR-1} and \ref{IR-2} moves are replaced with the \ref{R-1} and \ref{R-2} moves, respectively. The diagram is given in Figure~\ref{fig:boundA4}.
\begin{figure}
  \includegraphics[width=10cm]{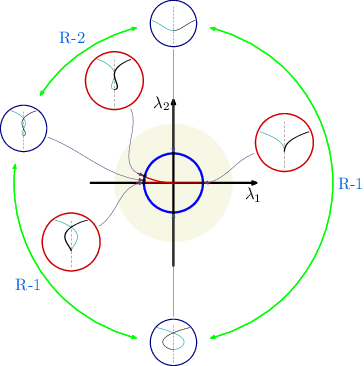}
  \caption{Fixed-point $(2,5)$-cusp.}\label{fig:boundA4}
\end{figure}

\subsubsection{Fixed-point $(3,4)$-cusp}\label{ssub:34cusp}
By Lemma~\ref{lem:dot}, a fixed-point $(3, 4)$-cusp has normal form $(x = t^3, y = t^4)$ and versal deformation $(x = t^3 + \lambda_1 t, y = t^4 + \lambda_2 t^2)$. 

Setting the derivative equal to zero, we see that there are two possibilities for cusps. First, if $\lambda_1 = 0$, then there is a cusp when $t = 0$, at the origin. Hence along the locus $\{\lambda_1 = 0\}$, we have an on-axis $(2, 3)$-cusp, leading to an \ref{R-1} move. Second, along the locus $\{2 \lambda_1 = 3 \lambda_2, \lambda_1 \leq 0\}$, we have a pair of off-axis cusps when $t=\pm\sqrt{-\lambda_1/3}=\pm\sqrt{-\lambda_2/2}$, leading to an \ref{IR-1} move. 

To determine the occurrence of other singularities, we first understand how our branch intersects itself. Setting $p=t+t'$, $q=tt'$ gives the identities
\[
t^2-{t'}^2=(t-t')p,\ t^3-{t'}^3=(t-t')(p^2-q),\ t^4-{t'}^4=(t-t')(p^3-2pq). 
\]
Substituting these into $x(t) = x(t')$ and $y(t) = y(t')$ yields
\begin{equation}\label{eq:34eq}
p^2-q+\lambda_1=0, \ p^3-2pq+\lambda_2p=0.
\end{equation}
Factor the second equation into $p(p^2-2q+\lambda_2)=0$ and substitute $p^2 = q - \lambda_1$ into this to get $p(-q - \lambda_1 + \lambda_2) = 0$. We obtain two classes of solutions corresponding the two factors of this expression vanishing. Combining these with the first equation of \eqref{eq:34eq} gives
\[
p = 0 \quad \text{and} \quad q = \lambda_1
\]
for the first, and
\[
q = -\lambda_1 + \lambda_2 \quad \text{and} \quad p^2 = \lambda_2 - 2 \lambda_1
\]
for the second. In the latter case, we also require $\lambda_2 - 2 \lambda_1 \geq 0$. 

These self-intersections can contribute to the formation of a codimension $1$ singularity in several different ways. First, if a solution to \eqref{eq:34eq} in $p$ and $q$ corresponds to a solution where $t$ or $t'$ is $0$, then we have a fixed double point. For the first class of solution, this occurs when $\lambda_1 = 0$, which is subsumed by our casework for cusps. For the second, this occurs when $-\lambda_1 + \lambda_2 = 0$. Hence we obtain a fixed double point along the locus $\{\lambda_1 = \lambda_2, \lambda_2 \leq 0\}$, leading to an \ref{M-1} move. Second, if a solution to \eqref{eq:34eq} in $p$ and $q$ additionally satisfies $p^2 = 4q$, then it leads to a tangency, as explained in Section~\ref{sub:int_cusp}. Combining $p^2 = 4q$ with the first class of solution to \eqref{eq:34eq} gives $\lambda_1 = 0$, while combining $p^2 = 4q$ with the second gives $2\lambda_1=3\lambda_2$. Both of these are subsumed by our casework for cusps.

However, there is one more possibility for a tangency. This occurs when the two classes of solutions in $p$ and $q$ collide; i.e., when $\lambda_2 - 2\lambda_1 = 0$. Note that in this case we require $\lambda_1 \leq 0$, so that $p = t + t' = 0$ and $q = tt' = \lambda_1$ have solutions in $t$, $t'$. It is then straightforward to check that we have an on-axis perpendicular tangency along the locus $\{2\lambda_1 = \lambda_2, \lambda_1 \leq 0\}$, leading to an \ref{M-2} move. The diagram is drawn in Figure~\ref{fig:boundT34}.

\begin{figure}
  \includegraphics[width=10cm]{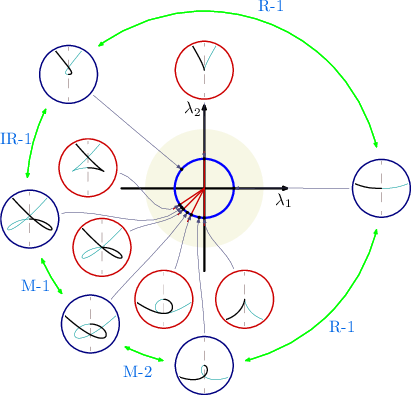}
  \caption{Fixed-point $(3,4)$-cusp.}\label{fig:boundT34}
\end{figure}

\subsection{On-axis and fixed-point strikethroughs}\label{sub:strike2}

By Section~\ref{sub:strike}, the on-axis and fixed-point strikethroughs with codimension $2$ are:

\begin{itemize}
  \item strikethrough of an on-axis $1$-fold line tangency;
  \item strikethrough of an on-axis $2$-fold perpendicular tangency;
  \item strikethrough of an on-axis double point;
  \item strikethrough of a fixed-point $(2, 3)$-cusp;
  \item strikethrough of a fixed double point.
\end{itemize}
These are shown in Figure~\ref{fig:st-on-the-axis}. To provide a deformation for a strikethrough singularity, we move the strikethrough line and separately deform the codimension~1 singularity. For unbalanced homogenous singularities, Theorem~\ref{thm:euh_deform} shows that the resulting deformation is versal. For the two non-simple singularities (i.e., the strikethrough of an on-axis double point and strikethrough of a fixed double point), we use the procedure of freezing the slope of the strikethrough line, as described in Section~\ref{ssub:X9}.

\begin{figure}
  \begin{tikzpicture}
    \draw (-6,0) node  {\includegraphics[width=2.5cm]{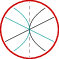}};
    \draw (-3.5,0) node  {\includegraphics[width=2.5cm]{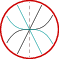}};
    \draw (-1,0) node  {\includegraphics[width=2.5cm]{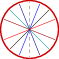}};
    \draw (1.5,0) node  {\includegraphics[width=2.5cm]{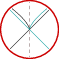}};
    \draw (4.0,0) node  {\includegraphics[width=2.5cm]{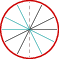}};
  \end{tikzpicture}
  \caption{On-axis and fixed-point strikethroughs. From left-to-right: strikethrough of a line tangency, strikethrough of a perpendicular tangency, strikethrough of an on-axis double point, strikethrough of a fixed-point cusp, and strikethrough of a fixed double point.}\label{fig:st-on-the-axis}
\end{figure}

\subsubsection{Strikethrough of a $1$-fold line tangency}
This has normal form $(x = t^2, y = t)$ and $y = ax$ for some $a \neq 0$. Up to reparameterizing, we can assume $a = 1$. A versal deformation is given by
\[
E_1=\{x=t^2+\lambda_1,y=t\}, \ E_2=\{y=x-\lambda_2\},
\]
with symmetric branches
\[
E_3=\tau(E_1)=\{x=-t^2-\lambda_1, y = t\}, \ E_4=\tau(E_2)=\{y=-x-\lambda_2\}.
\]
For small values of $\lambda_1,\lambda_2$, the only tangency is between $E_1$ and $E_3$. This is a line tangency, which persists along the locus $\{\lambda_1=0\}$. If $\lambda_1>0$, then $E_1$ and $E_3$ are disjoint, leading to a regular diagram. For $\lambda_1 \leq 0$, we see that $E_1$ and $E_3$ intersect $\cL$ together at $(0,\pm\sqrt{-\lambda_1})$. The strikethrough line $E_2$ passes through $(0,\pm\sqrt{-\lambda_1})$ when $\lambda_2=\pm\sqrt{-\lambda_1}$. Thus, along the locus $\{\lambda_1=-\lambda_2^2\}$ we have an on-axis double point. The
diagram is drawn in Figure~\ref{fig:sCL}. 

\begin{figure}
  \includegraphics[width=10cm]{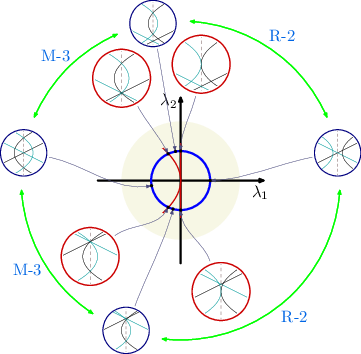}
  \caption{Strikethrough of a $1$-fold line tangency.}\label{fig:sCL}
\end{figure}
\subsubsection{Strikethrough of a $2$-fold perpendicular tangency}
This has normal form $(x = t, y = t^3)$ and $y = ax$ for some $a \neq 0$. Up to reparameterizing, we can assume $a = 1$. A versal deformation is given by
\[
E_1=\{x=t,y=t^3+\lambda_1 t\}, \ E_2=\{y=x+\lambda_2\},
\]
with symmetric branches
\[
E_3=\tau(E_1)=\{x=-t,y=t^3+\lambda_1t\}, \ E_4=\tau(E_2)=\{y=-x+\lambda_2\}.
\]
For small values of $\lambda_1,\lambda_2$, the only tangency is between $E_1$ and $E_3$. This is a perpendicular tangency, which persists along the locus $\{\lambda_1=0\}$.
Next, along the locus $\{\lambda_2=0\}$, all four curves pass through the origin and we have an on-axis double point. This is the only singularity which involves the intersection of $E_2$ with $E_4$. To search for triple points, we thus look for double points involving $E_1$ and $E_3$. These intersect at the origin, as well as $(\pm\sqrt{-\lambda_1},0)$, provided $\lambda_1\le 0$. The line $E_2$ passes through the latter if $\lambda_2=\mp\sqrt{-\lambda_1}$. Along the locus $\{\lambda_1=-\lambda_2^2\}$ we thus obtain an off-axis triple point. The diagram is drawn in Figure~\ref{fig:sPerp}.

\begin{figure}
 \includegraphics[width=10cm]{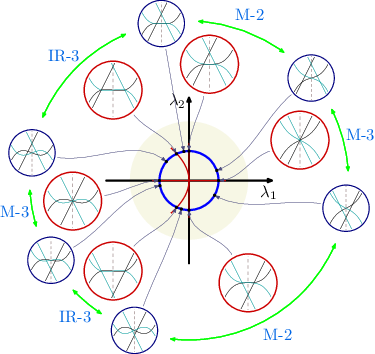}
  \caption{Strikethrough of a $2$-fold perpendicular tangency.}\label{fig:sPerp}
\end{figure}

\subsubsection{Strikethrough of an on-axis double point}\label{sub:sDouble}
We assume the two branches involved in the double point are $y=a_1x$ and $y=a_2x$, and the strikethrough line is $y=a_3x$.
By rescaling $x$ and $y$, we can make $a_1,a_2$ arbitrary, as long as $a_1\neq a_2$, 
but we cannot adjust $a_3$. For explicitness, we suppose $a_1=2.5$, $a_2=1$. The versal deformation is given by
\[
E_1=\{y=2.5x+\lambda_1\}, \ E_2=\{y=x\}, \ E_3=\{y = a_3x+\lambda_2\}, 
\]
with symmetric branches
\[
E_4=\tau(E_1)=\{y=-2.5x+\lambda_1\}, \ E_5=\tau(E_2)=\{y=-x\}, \ E_6=\tau(E_3)=\{y = -a_3x+\lambda_2\}.
\]
The deformation parameters are $\lambda_1,\lambda_2$, and $a_3$. However, as in Section~\ref{ssub:X9}, we can freeze the $a_3$ variable, leading to $a_3$ being constant. For explicitness, we take $a_3=0.4$, so that the slopes of the three lines are approximately $\pm 22.5^\circ$, $\pm 45^\circ$, and $\pm 67.5^\circ$. No matter what the parameters $\lambda_1,\lambda_2$ are, each pair of lines intersects transversely. 

The lines $E_1$, $E_2$, and $E_3$ intersect $\cL$ at the points $(0,\lambda_1)$, $(0,0)$,
$(0,\lambda_2)$. Thus, along any of the loci $\{\lambda_1=0\}$, $\{\lambda_2=0\}$, and $\{\lambda_1=\lambda_2\}$, we have an on-axis double point, where two out
of these three points coincide. This leads to an \ref{M-3} move. Off-axis triple points occur when $E_i\cap E_j\cap E_k$ is nonempty for $i \in \{1, 4\}, j \in \{2, 5\}$, and $k \in \{3, 6\}$. This leads to 8 cases, but by symmetry we may reduce to $4$. The conditions are easy to compute explicitly: $E_1\cap E_2\cap E_3 \neq \emptyset$ along $\{\lambda_2=-\frac25\lambda_1\}$; $E_1\cap E_2\cap E_6\neq\emptyset$ along $\{\lambda_2=-\frac{14}{15}\lambda_1\}$; $E_1\cap E_5\cap E_3\neq\emptyset$ along $\{\lambda_2=\frac25\lambda_1\}$; and $E_1\cap E_5\cap E_6\neq\emptyset$ along $\{\lambda_2=\frac{6}{35}\lambda_1\}$. These give rise to \ref{IR-3} moves. The diagram is drawn in Figure~\ref{fig:sDOA}.

\begin{figure}
  \includegraphics[width=10cm]{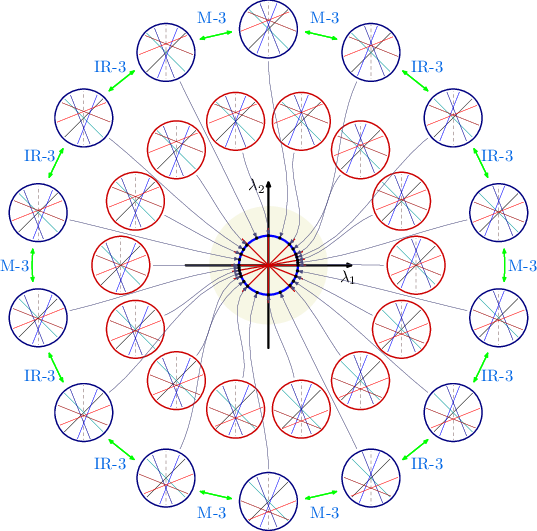}
  \caption{Strikethrough of a central double point.}\label{fig:sDOA}
\end{figure}

\subsubsection{Strikethrough of a fixed-point $(2, 3)$-cusp}\label{ssub:sBcusp}
This has normal form $(x = t^3, y = t^2)$ and $y = ax$ for some $a \neq 0$. Up to reparameterizing, we can assume $a = 1$. A versal deformation is given by
\[
E_1=\tau(E_1)=\{x=t^3-\lambda_2t,\ y=t^2\}, \ E_2=\{y=x+\lambda_1\}, \ E_3=\tau(E_2)=\{y=-x+\lambda_1\}.
\]

Along the locus $\{\lambda_2=0\}$, the fixed-point cusp persists and results in an \ref{R-1} move. The fixed-point branch $E_1$ always intersects $\cL$ at $(0, 0)$; if $E_2$ passes through this, then we obtain a fixed double point. This occurs along the locus $\{\lambda_1 = 0\}$, leading to an \ref{M-1} move. So long as $\lambda_2 \geq 0$, the fixed-point brach also intersects $\cL$ at $(0, \lambda_2)$; if $E_2$ passes through this, we have an on-axis double point. This occurs along the locus $\{\lambda_1 = \lambda_2, \lambda_2 \geq 0\}$, leading to an \ref{M-3} move. 

As for tangencies, $E_2$ is transverse to $E_3$, so we consider tangencies between $E_1$ and $E_2$. For this, we find intersections between $E_1$ and $E_2$ and solve for a double root. Substituting the defining equations for $E_1$ into that of $E_2$, we must find when $t^3-t^2-\lambda_2t+\lambda_1 = 0$ has a double solution. Taking the derivative, this consists of the set of $\lambda_1$, $\lambda_2$ such that $3t^2 - 2t - \lambda_2 = 0$ and $t^3-t^2-\lambda_2t+\lambda_1 = 0$ have a simultaneous solution. We obtain the parameterized curve
\[
C = \{\lambda_2 = 3s^2 - 2s, \lambda_1 = 2s^3 - s^2\}.
\]
Along $C$, there is an off-axis tangency, leading to an \ref{IR-2} move.

The diagram is drawn in Figure~\ref{fig:sBC}. For small values of $\lambda_1$, $\lambda_2$, the curve $C$ is very close to the $\lambda_2$-axis; the full curve $C$ in fact loops back around and intersects the $\lambda_1$ axis again, but we consider only the portion very close to the origin. We thus zoom in on the two singularities corresponding to the intersection of $C$ with $\lambda_1^2 + \lambda_2^2 = \varepsilon$. These are shown in Figures~\ref{fig:sBC1} and \ref{fig:sBC2}.

\begin{figure}
  \includegraphics[width=10cm]{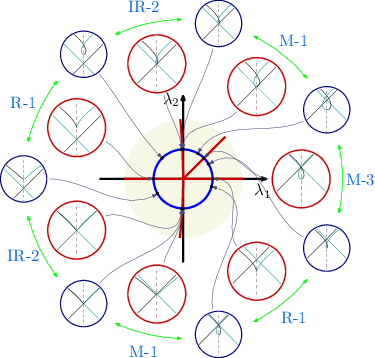}
  \caption{Strikethrough of a fixed-point $(2, 3)$-cusp.}\label{fig:sBC}
\end{figure}

\begin{figure}
  \includegraphics[width=10cm]{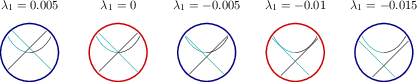}
  \caption{Close-up picture for $\lambda_1 \sim 0$ and $\lambda_2 \sim 0.2$.}\label{fig:sBC1}
\end{figure}
\begin{figure}
  \includegraphics[width=9cm]{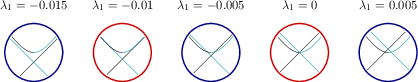}
  \caption{Close-up picture for $\lambda_1 \sim 0$ and $\lambda_2 \sim -0.2$.}\label{fig:sBC2}
\end{figure}

\subsubsection{Strikethrough of a fixed double point}
We assume the branches involved in the fixed double point are $y=0$ and $y=a_1x$, along with its symmetric copy $y = -a_1x$. Let the strikethrough line be $y=a_2x$.
By rescaling $x$ and $y$, we can choose $a_1$, as long as $a_1\neq 0$ and $a_1 \neq a_2$, but we cannot fix $a_2$. For explicitness, we suppose $a_1=1/2$. A versal deformation is given by 
\[
E_1= \tau(E_1) = \{y=0\}, \ E_2=\{y= 0.5x + \lambda_2\}, \ E_3=\{y = a_2x+\lambda_1\}, 
\]
with symmetric branches
\[
E_4=\tau(E_2)=\{y=-0.5x+\lambda_2\}, \ E_5=\tau(E_3)=\{y=-a_2x + \lambda_1\}.
\]
The deformation parameters are $\lambda_1,\lambda_2$, and $a_2$. However, as in Section~\ref{ssub:X9}, we can freeze the $a_2$ variable, leading to $a_2$ being constant. Such a deformation is normal to the stratum defining the strikethrough singularity. For explicitness, we assume $a_2=2$. 

There are no tangencies or cusps, so we consider only multiple points. Along the loci $\{\lambda_1=0\}$ and $\{\lambda_2=0\}$, we have a fixed double point, leading to an \ref{M-1} move. Along the locus $\{\lambda_1 = \lambda_2\}$, we have an on-axis double point, leading to an \ref{M-3} move. There are no other multiple points lying on $\cL$. It is easily checked that $E_1$ must be involved in any multiple point off of $\cL$, so $y = 0$. The intersection of $E_2$ with $E_1$ occurs at $x=-2\lambda_1$, while that of $E_3$ with $E_1$ occurs at $x=-\lambda_2/a_2$. We thus have an off-axis triple point along the locus $\{\lambda_2=2a_2\lambda_1\}$. Replacing $E_3$ by $E_5$ gives a similar triple point locus $\{\lambda_2=-2a_2\lambda_1\}$. The diagram is drawn in Figure~\ref{fig:sBD2p}.

\begin{figure}
  \includegraphics[width=10cm]{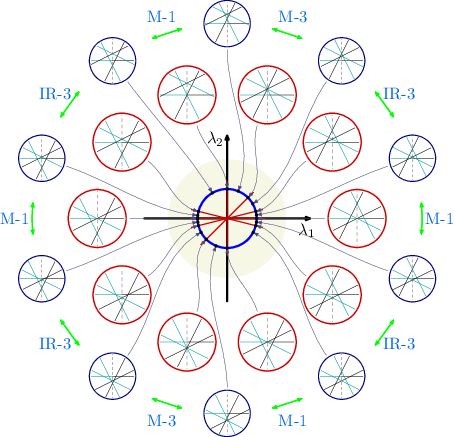}
  \caption{Strikethrough of a fixed double point.}\label{fig:sBD2p}
\end{figure}

\subsection{On-axis and fixed-point tangencies}\label{sub:tang2}
By Lemma~\ref{lem:complicatedtan}, the on-axis and fixed-point tangencies with codimension $2$ are: 
\begin{itemize}
  \item on-axis $2$-fold line tangency;
  \item on-axis $4$-fold perpendicular tangency;
  \item on-axis $1$-fold oblique tangency;
  \item fixed-point $(1,2)$-fold tangency.
\end{itemize}
We also treat the exceptional case from Theorem~\ref{thm:complete} in this section:
\begin{itemize}
\item a fixed-point branch of $\phi$ intersecting an on-axis line tangency of order $1$.
\end{itemize}
We display these in Figure~\ref{fig:st-on-the-axis}.
\begin{figure}
  \begin{tikzpicture}
    \draw (-6,0) node  {\includegraphics[width=2.5cm]{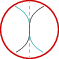}};
    \draw (-3.5,0) node  {\includegraphics[width=2.5cm]{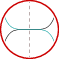}};
    \draw (-1,0) node  {\includegraphics[width=2.5cm]{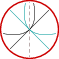}};
    \draw (1.5,0) node  {\includegraphics[width=2.5cm]{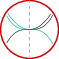}};
    \draw (4,0) node  {\includegraphics[width=2.5cm]{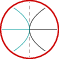}};
  \end{tikzpicture}
  \caption{On-axis and fixed-point tangencies. From left-to-right: $2$-fold line tangency, $4$-fold perpendicular tangency, $1$-fold oblique tangency, fixed-point $(1, 2)$-fold tangency, and a fixed-point branch intersecting a $1$-fold line tangency.}\label{fig:tang-axis}
\end{figure}

\subsubsection{$2$-fold line tangency}\label{ssub:LOTR}
By Lemma~\ref{lem:normal_int_tang}, a $2$-fold line tangency has normal form $(x = t^3, y = t)$ and versal deformation $(x = t^3 + \lambda_1 t + \lambda_2, y = t)$. The analysis is the same as for the off-axis $2$-fold tangency, except that the \ref{IR-2} moves are replaced by \ref{R-2} moves. The diagram is given in Figure~\ref{fig:LOTR}. 

\begin{figure}
  \includegraphics[width=10cm]{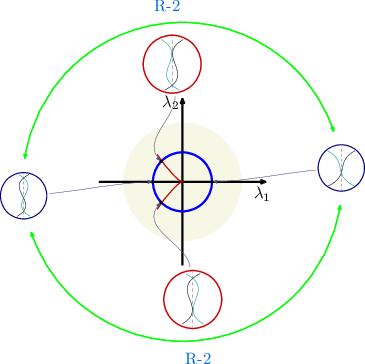}
  \caption{$2$-fold line tangency.}\label{fig:LOTR}
\end{figure}

\subsubsection{$4$-fold perpendicular tangency}\label{ssub:sperp}

By Lemma~\ref{lem:normal_perp}, a $4$-fold perpendicular tangency has normal form $(x = t, y = t^5)$ and versal deformation $(x = t, y = t^5 + \lambda_1t^3+\lambda_2t)$. Note this has symmetric branch $(x = -s, y = s^5 + \lambda_1s^3+\lambda_2s)$. 

Clearly, no cusps and triple points occur. Along $\{\lambda_2 = 0\}$, the tangency persists as a $2$-fold perpendicular tangency, leading to an \ref{M-2} move. To check for other tangencies, we observe that our branch intersects its symmetric copy only at points on the $x$-axis. Hence tangencies occur when $y(t)$ and $y'(t)$ have a common root. This leads to 
\[
t^4 + \lambda_1 t^2 + \lambda_2 = 0 \quad \text{and} \quad 5t^4 + 3\lambda_1 t^2 + \lambda_2 = 0.
\] 
This implies $t^4 + \lambda_1 t^2 = 5t^4 + 3\lambda_1 t^2$ and hence $-2t^2 = \lambda_1$. Substituting this back into the first equation gives $\lambda_1^2 = 4 \lambda_2$. Along the locus $\{\lambda_1^2 = 4\lambda_2, \lambda_1 \leq 0\}$ we thus have an off-axis tangency, leading to an \ref{IR-2} move. The diagram is drawn in Figure~\ref{fig:perp5}.

\begin{figure}
  \includegraphics[width=10cm]{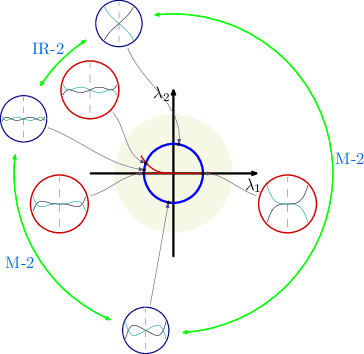}
  \caption{$4$-fold perpendicular tangency.}\label{fig:perp5}
\end{figure}

\subsubsection{$1$-fold oblique tangency}

Lemma~\ref{lem:oblique_form} provides us with a normal form of an oblique tangency as well as with a versal deformation.
Choose coordinates $u=x+y$, $v=x-y$, so that $\cL$ is given by $u = v$ and $\tau$ exchanges $u$ and $v$. A versal deformation is given by 
\[
E_1 = \{v = \lambda_1\}, \ E_2 = \{v = u^2 + \lambda_2\}
\]
with symmetric branches
\[
E_3 = \tau(E_1) = \{u = \lambda_1\}, \ E_4 = \tau(E_2) = \{u = v^2 + \lambda_2\}.
\]

There are clearly no cusps. For $\lambda_1 = \lambda_2$, the tangency between $E_1$ and $E_2$ persists, and for small values of $\lambda_1$, $\lambda_2$ there are no other tangencies. Hence we obtain an off-axis tangency along the locus $\{\lambda_1 = \lambda_2\}$, leading to an \ref{IR-2} move. To understand further singularities, we first determine the intersection $E_1\cap E_2$. This is given by $(\pm\sqrt{\lambda_1-\lambda_2},\lambda_1)$, provided $\lambda_1\ge \lambda_2$. If this lies on $\cL$, we obtain an on-axis double point. Setting $\pm\sqrt{\lambda_1-\lambda_2}=\lambda_1$, this occurs along the locus $\{\lambda_2 = \lambda_1 - \lambda_1^2\}$, leading to an \ref{M-3} move.

We now check for triple points. We see that $E_1\cap E_2\cap E_3$ is non-empty if $\pm\sqrt{\lambda_1-\lambda_2}=\lambda_1$, but this is subsumed by the previous paragraph. Likewise, $E_1\cap E_2\cap E_4$ is non-empty if $\pm\sqrt{\lambda_1-\lambda_2} = \lambda_1^2+\lambda_2$. Squaring both sides and applying various elementary manipulations gives the equivalent condition
\[
(\lambda_2-\lambda_1+\lambda_1^2)(\lambda_1^2+\lambda_1+\lambda_2+1) =0.
\]
The first factor vanishing is again subsumed by the previous paragraph, while the second factor does not vanish for $\lambda_1,\lambda_2$ small. There are no other singularities for small $\lambda_1,\lambda_2$. The diagram is drawn in Figure~\ref{fig:dtan}. 

\begin{figure}
  \includegraphics[width=10cm]{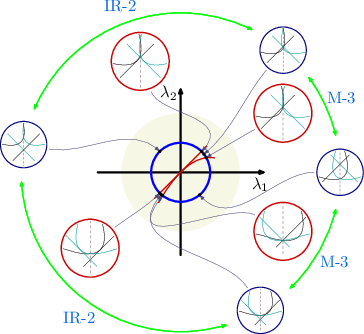}
  \caption{$1$-fold oblique tangency.}\label{fig:dtan}
\end{figure}

\subsubsection{Fixed-point $(1, 2)$-fold tangency}
Lemma~\ref{lem:fixed_point_tan} allows us to find local coordinates in which a $(1, 2)$-fold tangency has form with two symmetric branches $(x = t, y = t^3)$ and $(x = -t, y = t^3)$, and fixed-point branch $(x = t, y = h(t))$, where $h(t)$ is a symmetric function starting with $h(t)=h_2t^2+\dots$, $h_2\neq 0$.  A  deformation transverse to the defining set is given by 
\[
  E_1 = \{y = x^3 + \lambda_1 x + \lambda_2\}, \ E_2 = \tau(E_1) = \{y = - x^3 - \lambda_1 x + \lambda_2\}, \ E_3 = \{y = h(x)\}.
\]
Here $h(x)=h_2x^2+\dots$. On rescaling linearly $x$ and $y$, we may assume that $h_2=1$.

As usual, we will describe the path of Reidemeister move as the deformation parameters go around the circle $\lambda_1^2+\lambda_2^2=\varepsilon$
for $\varepsilon>0$ sufficiently small. We will make the following simplification. Suppose we have proved that
for fixed $\varepsilon>0$, and $h(t)=t^2$, the
path intersects $\cF^1$-stratum transversely. Then, adding higher order terms to $h$, and shrinking $\varepsilon$ if needed, will not change the type of crossings on that path. Therefore, the case of general $h(t)$ will be reduced to the case $h(t)=t^2$. This is what we will
henceforth assume.

Clearly, there are no cusps. Along $\{\lambda_1 = 0\}$, the tangency between $E_1$ and $E_1$ persists; this is a perpendicular tangency and thus leads to an \ref{M-2} move. To understand other singularities, we begin by understanding double points. The branches $E_1$ and $E_2$ intersect at $(0, \lambda_2)$, as well as $(\pm\sqrt{-\lambda_1},\lambda_2)$ if $\lambda_1 \leq 0$. We consider when $E_3$ intersects these. For the first double point this occurs at the origin along $\{\lambda_2 = 0\}$; this turns the singularity into a fixed double point and leads to an \ref{M-1} move. For the latter two double points, this occurs when $\lambda_2 = - \lambda_1$. We thus have an off-axis triple point along the locus $\{\lambda_2 = - \lambda_1, \lambda_1 \leq 0\}$, leading to an \ref{IR-3} move. 

Finally, we check for tangencies between $E_1$ and $E_3$. This occurs when the equation $x^3 + \lambda_1 x + \lambda_2 = x^2$ has a double solution. Following the same procedure as in Section~\ref{ssub:sBcusp} yields the parameterized curve
\[
C = \{\lambda_1=2s-3s^2, \lambda_2=-s^2+2s^3\}.
\]
It is straightforward to check that along $C$, we have a pair of off-axis tangencies, leading to an \ref{IR-2} move. The diagram is drawn in Figure~\ref{fig:cPerp}. For small values of $\lambda_1$, $\lambda_2$, the curve $C$ is very close to the $\lambda_2$-axis. We thus zoom in on the two singularities corresponding to the intersection of $C$ with $\lambda_1^2 + \lambda_2^2 = \varepsilon$. These are shown in Figures~\ref{fig:cPerpA} and \ref{fig:cPerpC}. 
\begin{figure}
  \includegraphics[width=10cm]{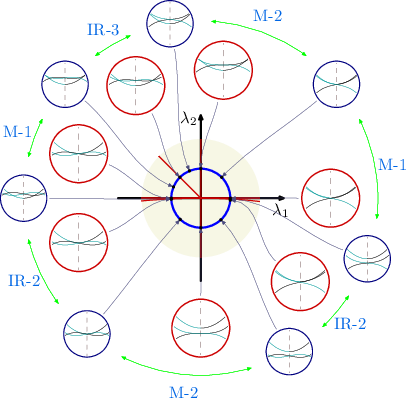}
  \caption{Fixed-point $(1, 2)$-tangency.}\label{fig:cPerp}
\end{figure}
\begin{figure}
  \includegraphics[width=10cm]{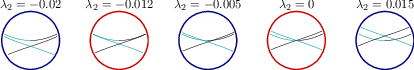}
  \caption{Close-up picture for $\lambda_1\sim 0.2$ and $\lambda_2 \sim 0$.}\label{fig:cPerpA}
\end{figure}
\begin{figure}
  \includegraphics[width=10cm]{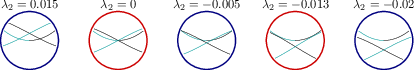}
  \caption{Close-up picture for $\lambda_1\sim -0.2$ and $\lambda_2 \sim 0$.}\label{fig:cPerpC}
\end{figure}

\subsubsection{Intersection of a fixed-point branch with $1$-fold line tangency}\label{ssub:LBP}
We now deal with the exceptional case from Theorem~\ref{thm:complete}, where a fixed-point branch passes through a $1$-fold line tangency. It is not difficult to check that this has normal form with tangency branch $(x = t^2, y = t)$ and fixed-point branch $y = 0$, together with the symmetric branch $(-x = t^2, y = t)$. A versal deformation is given by
\[
E_1 = \{x = y^2 + \lambda_1\}, \ E_2 = \{y = \lambda_2\}, \ E_3 = \tau(E_1) = \{x = - y^2 - \lambda_1\}.
\]

Along $\{\lambda_1 = 0\}$, the line tangency between $E_1$ and $E_3$ persists, leading to an \ref{R-2} move. If $\lambda_1>0$, then $E_1$ and $E_3$ are disjoint, leading to a regular diagram. For $\lambda_1 \leq 0$, we see that $E_1$ and $E_3$ intersect $\cL$ together at $(0,\pm\sqrt{-\lambda_1})$. The fixed-point branch $E_2$ passes through  $(0,\pm\sqrt{-\lambda_1})$ when $\lambda_2 = \pm \sqrt{-\lambda_1}$. Thus, along the locus $\{\lambda_1 = - \lambda_2^2\}$, we have an on-axis double point, leading to an \ref{M-1} move. The diagram is drawn in Figure~\ref{fig:tBP}. 

\begin{figure}
  \includegraphics[width=10cm]{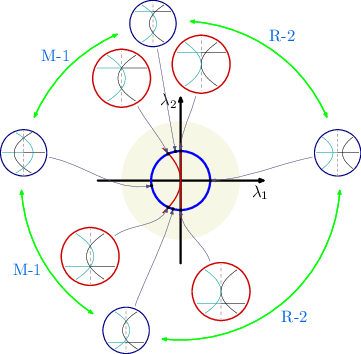}
  \caption{Intersection of a fixed-point branch with $1$-fold line tangency.}\label{fig:tBP}
\end{figure}

\subsection{Loops of Reidemeister moves}\label{sub:local_loops}
We now finally prove Theorem~\ref{thm:intro4} from the introduction. As discussed at the beginning of this section, each of the $18$ codimension $2$ singularities explored above gives rise to a loop of Reidemeister moves. However, there is a slight distinction between \textit{planar} Reidemeister moves and actual Reidemeister moves. The former refers to moves that relate two maps $\phi \colon \cS \rightarrow \R^2$; i.e., maps which do not have any overcrossing or undercrossing data. The latter refers to moves that relate two maps $\phi \colon \cS \rightarrow \R^2$ with given lifts to $\R^3$; i.e., maps which have the overcrossing and undercrossings specified. 

For clarity, we begin with a version of Theorem~\ref{thm:intro4} which is purely for planar Reidemeister moves.

\begin{theorem}\label{thm:two_loops}
Suppose we have two regular paths $\phi_{s, 0}$ and $\phi_{s, 1}$ between two regular maps $\phi_{0, 0} = \phi_{0, 1}$ and $\phi_{1, 0} = \phi_{1, 1}$. By Theorem~\ref{thm:equiv_reid}, these give rise to two sequences of (planar) Reidemeister moves. We claim that one can pass from one sequence of Reidemeister moves to the other via a sequence of the following operations:
  \begin{enumerate}[label=(C-\arabic*)]
    \item Replacing the order of two planar Reidemeister moves appearing in different places. \label{item:repl}
    \item Introducing a birth or death of a pair of mutually inverse planar Reidemeister moves; that is, a planar Reidemeister move followed by its inverse.\label{item:birth}
    \item Replacing a fragment of a local loop of planar Reidemeister moves from Table~\ref{tab:local_loops} by the complement of that fragment. \label{item:loop} 
  \end{enumerate}
\end{theorem}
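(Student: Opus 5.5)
The plan is the standard Cerf-theoretic argument. The two regular paths are joined by a homotopy rel endpoints, and a generic two-parameter family is analyzed via the stratification $\cF^0\cup\cF^1\cup\cF^2$.

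\emph{Step 1: the homotopy.} Since $\cF$ is an affine space of equivariant maps (convex combinations of equivariant maps are equivariant), I would take the straight-line homotopy $\Phi(s,u)=(1-u)\phi_{s,0}+u\phi_{s,1}$. This is a map $[0,1]^2\to\cF$ that equals the given paths on $u=0,1$ and is constant on $s=0$ and $s=1$.

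\emph{Step 2: genericity.} Using the Equivariant Parameter Counting Lemma~\ref{lem:codimension} together with Theorem~\ref{thm:ejet}, I would perturb $\Phi$ rel boundary so that four conditions hold.
\begin{itemize}
\item $\Phi$ misses $\wt{\cF}^2\setminus\cF^2$. By the enumeration in Theorem~\ref{thm:complete} and Lemmas~\ref{lem:coincidence}, \ref{lem:strikethrough}, \ref{lem:cusplist} and \ref{lem:complicatedtan}, this set is a finite union of strata of $\pcodim\ge 3$.
\item $\Phi$ meets $\cF^2=\cF^2_0\cup\dots\cup\cF^2_{18}$ transversally, so only at finitely many interior points.
\item $\Phi$ meets $\cF^1$ transversally, so $\Delta=\Phi^{-1}(\wt{\cF}^1)$ is a union of immersed arcs in the square.
\item The coordinate $u$ restricted to the smooth part of $\Delta$ is Morse, its critical values avoid those of the $\cF^2$-points, and all special events occur at distinct values of $u$.
\end{itemize}
The boundary is already regular, because the two given paths are regular. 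The last condition is a further standard genericity requirement on the two-parameter family.

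\emph{Step 3: sweep in $u$.} For all but finitely many $u$, the path $s\mapsto\Phi(s,u)$ is regular and gives a sequence of planar moves that stays unchanged as $u$ varies in an interval. At the finitely many exceptional values, the sequence changes in one of four ways.
\begin{itemize}
\item Two crossing points of $\Delta$ swap their $s$-order while belonging to disjoint local events. This happens at a double point of $\Delta$ coming from $\cF^2_0$, where versality of products (Lemma~\ref{lem:ver_coi}) makes the local picture two transverse lines. This is \ref{item:repl}.
\item A fold (local extremum of $u$) on an arc of $\Delta$ makes two crossings of one stratum $\cF^1_i$ appear or disappear. Transversality along the arc forces these to be mutually inverse moves, which is \ref{item:birth}.
\item The horizontal line passes through a point of $\cF^2_j$, $j\ge1$. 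By versality (Theorem~\ref{thm:versality_theorem}, with the freezing trick of Section~\ref{ssub:X9} in the non-simple cases), a small disk around this point is induced from the model deformation. Its discriminant was computed in Sections~\ref{sub:off2}--\ref{sub:tang2}. The sequence of moves along the line just below the point differs from the one just above by replacing one arc of the boundary circle of the disk with the complementary arc. This is \ref{item:loop} with the corresponding entry of Table~\ref{tab:local_loops}.
\item Nothing else can occur, because genericity rules out the remaining degenerate situations.
\end{itemize}

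\emph{Main obstacle.} The hardest step is the third case: justifying that the local picture near a point of $\cF^2_j$ in a generic family really is the model bifurcation diagram. A generic family transverse to $\cF^2_j$ is induced from the versal deformation by a map of parameter disks that is a local diffeomorphism, because transversality to a codimension~$2$ stratum in a two-parameter family forces the induced map to be a submersion. The discriminants then correspond, the boundary circle crosses each radiating arc transversally, and the horizontal line crossing the singular point can be homotoped, relative to its ends outside the disk, to go around either half of the circle. I would check this first for the simple singularities. For the fixed-point $(1,2)$-tangency, where only a transverse deformation is available from Lemma~\ref{lem:fixed_point_tan}, I would use a perturbation argument to reduce to $h(t)=t^2$.
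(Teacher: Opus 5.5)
Your proposal is correct and takes essentially the same route as the paper's proof. Both use the linear homotopy, perturb it rel boundary so that it misses $\wt{\cF}^3$ and meets $\cF^2$ transversely at finitely many points, and then read off the three operations: coincidences give (C-1), non-transverse crossings of $\cF^1$ give (C-2), and points of $\cF^2_j$ give (C-3) by versality, with the freezing trick or the reduction to $h(t)=t^2$ in the non-simple cases.
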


\begin{table}
  \begin{tabular}{|c|c|c|p{6cm}|}\hline\hline
    \multicolumn{4}{|c|}{\textbf{Off-axis}, Figure~\ref{fig:off-the-axis}}\\ \hline
    L-1 & $(2,5)$-cusp & Fig.~\ref{fig:A4} & (IR-1)$\mapsto$(IR-2)$\mapsto$(IR-1)$\mapsto\cdots$ \\ \hline
    L-2 & $2$-fold tangency & Fig.~\ref{fig:A5} & (IR-2)$\mapsto$(IR-2)$\mapsto\cdots$ \\ \hline
    L-3 & strikethrough of a $(2, 3)$-cusp & Fig.~\ref{fig:sA2} & (IR-3)$\mapsto$(IR-2)$\mapsto$(IR-1)$\mapsto$(IR-2)$\mapsto$(IR-1)$\mapsto\cdots$ \\ \hline

    L-4 & strikethrough of a $1$-fold tangency & Fig.~\ref{fig:sA3} & (IR-2)$\mapsto$(IR-3)$\mapsto$(IR-3)$\mapsto$(IR-2)$\mapsto\cdots$ \\ \hline

    L-5 & four-tuple point & Fig.~\ref{fig:X9} & loop of $8$ (IR-3) moves \\ \hline\hline
    \multicolumn{4}{|c|}{\textbf{On-axis and fixed-point cusps}, Figure~\ref{fig:on-the-axis-cusps}}\\ \hline
    L-6 & oblique $(2, 3)$-cusp & Fig.~\ref{fig:cBD} & (IR-1)$\mapsto$(R-2)$\mapsto$(M-2)$\mapsto$(IR-1)$\mapsto$(M-3)$\mapsto$(M-2)$\mapsto$(R-2)$\mapsto\cdots$\\ \hline
    L-7 & fixed-point $(2,5)$-cusp & Fig.~\ref{fig:boundA4} & (R-1)$\mapsto$(R-2)$\mapsto$(R-1)$\mapsto\cdots$\\ \hline
    L-8 & fixed-point $(3,4)$-cusp & Fig.~\ref{fig:boundT34} & (R-1)$\mapsto$(R-1)$\mapsto$(IR-1)$\mapsto$(M-1)$\mapsto$(M-2)$\mapsto\cdots$\\ \hline\hline
    \multicolumn{4}{|c|}{\textbf{On-axis and fixed-point strikethroughs}, Fig.~\ref{fig:st-on-the-axis}}\\ \hline
    L-9 & $1$-fold line tangency  & Fig.~\ref{fig:sCL} & (R-2)$\mapsto$(M-3)$\mapsto$(M-3)$\mapsto$(R-2)$\mapsto\cdots$\\ \hline
    L-10 & $2$-fold perpendicular tangency  & Fig.~\ref{fig:sPerp} & (M-3)$\mapsto$(M-2)$\mapsto$(IR-3)$\mapsto$(M-3)$\mapsto$(IR-3)$\mapsto$(M-2)$\mapsto\cdots$\\ \hline
    L-11 & on-axis double point  & Fig.~\ref{fig:sDOA} & 
(IR-3)$\mapsto$(IR-3)$\mapsto$(M-3)$\mapsto$(M-3)$\mapsto$(IR-3)$\mapsto$(IR-3)$\mapsto$(M-3)$\mapsto$(IR-3)$\mapsto$(IR-3)$\mapsto$(M-3)$\mapsto$(M-3)$\mapsto$(IR-3)$\mapsto$(IR-3)$\mapsto$(M-3)$\mapsto\cdots$\\\hline
L-12 & fixed-point $(2,3)$-cusp&Fig.~\ref{fig:sBC}&(M-1)$\mapsto$(IR-2)$\mapsto$(R-1)$\mapsto$(IR-2)$\mapsto$(M-1)$\mapsto$(R-1)$\mapsto\cdots$\\\hline
L-13 & fixed double point  & Fig.~\ref{fig:sBD2p} & 
    (IR-3)$\mapsto$(M-3)$\mapsto$(M-1)$\mapsto$(IR-3)$\mapsto$(M-1)$\mapsto$(IR-3)$\mapsto$(M-3)$\mapsto$(M-1)$\mapsto$(IR-3)$\mapsto$(M-1)$\mapsto\cdots$\\\hline\hline
    \multicolumn{4}{|c|}{\textbf{On-axis and fixed-point tangencies}, Figure~\ref{fig:tang-axis}}\\ \hline
    L-14 & $2$-fold line tangency & Fig.~\ref{fig:LOTR} & (R-2)$\mapsto$(R-2)$\mapsto\cdots$\\ \hline
    L-15 & $4$-fold perpendicular tangency  & Fig.~\ref{fig:perp5} & (M-2)$\mapsto$(IR-2)$\mapsto$(M-2)$\mapsto\cdots$\\ \hline
    L-16 & $1$-fold oblique tangency & Fig.~\ref{fig:dtan} & (M-3)$\mapsto$(IR-2)$\mapsto$(IR-2)$\mapsto$(M-3)$\mapsto\cdots$\\ \hline
    L-17 & fixed-point $(1, 2)$-fold tangency & Fig.~\ref{fig:cPerp} & (M-1)$\mapsto$(M-2)$\mapsto$(IR-3)$\mapsto$(M-1)$\mapsto$(IR-2)$\mapsto$(M-2)$\mapsto$(IR-2)$\mapsto\cdots$\\ \hline
    L-18 & fixed-point strikethrough of tangency & Fig.~\ref{fig:tBP} & (R-2)$\mapsto$(M-1)$\mapsto$(M-1)$\mapsto$(R-2)$\mapsto\cdots$\\ \hline
\hline
  \end{tabular}
  \caption{Local loops of equivariant Reidemeister moves. Each loop can be traversed in a positive (clockwise) or negative direction.}\label{tab:local_loops}
\end{table}

\begin{proof}
We first observe that the two paths $\phi_{s, 0}$ and $\phi_{s, 1}$ are obviously path-homotopic via the linear homotopy 
 \[
 \phi_{s, t}=(1-t)\phi_{s, 0} + t\phi_{s, 1}
 \]
defined for $s, t \in [0, 1]$. Note that each $ \phi_{s, t}$ is indeed an equivariant map from $\cS$ into $\R^2$. We aim to perturb $\smash{\phi_{s, t}}$ to be generic, but first we have to define the space $\cF^2$. Let $\cF^2_0$ be the subset of maps in $\cF$ with a coincidence of two codimension~1 singularities. For $i = 1, \dots, 18$, let $\cF^2_i$ correspond to item (L-$i$) in Table~\ref{tab:local_loops}. These are the $18$ codimension $2$ singularities listed previously. We then set
 \[
 \cF^2 = \cF^2_0 \cup \cF^2_1 \cup \cdots \cup \cF^2_{18} \quad \text{and} \quad  \wt{\cF}^3 = \cF \setminus(\cF^0\cup\cF^1\cup\cF^2).
 \]
By Theorem~\ref{thm:complete}, $\wt{\cF}^3$ has codimension $3$. An argument similar to that of Theorem~\ref{thm:complem} then shows that we can perturb $\smash{\phi_{s, t}}$ rel boundary so that it:
  \begin{itemize}
    \item misses $\wt{\cF}^3$;
    \item hits $\cF^2$ transversely at finitely many points;
    \item intersects $\cF^1$ non-transversely at finitely many points. 
  \end{itemize}
We first show that each point of intersection with $\cF^2$ leads to a loop of Reidemeister moves. Let $(s_0, t_0) \in [0, 1]^2$ be such a point and write $\phi =  \phi_{s_0, t_0}$. First suppose $\phi$ is not a coincidence, but rather lies in $\cF^2_j$ for $j > 0$, corresponding to one of our $18$ codimension $2$ singularities.

Let $D$ be a small disk around $(s_0, t_0)$. Then the pair $(\phi_{s, t}, D)$ as $(s, t)$ varies over $D$ is a deformation of $\phi$ with $2$-dimensional base. If the singularity is simple, transversality implies that this deformation is versal. That is, it is induced from the deformation described in previous sections. Let $\lambda_1$, $\lambda_2$ thus be coordinates as in Sections~\ref{sub:off2} through \ref{sub:tang2} and let $D'$ be an even smaller disk centered at $(s_0, t_0)$ whose boundary corresponds to our small loop of parameters. Then we can understand the behavior of the path $\phi_{s, t}$ as $t$ crosses $t_0$ as follows. The path $\phi_{s, t_0 - \epsilon}$ is homotopic to the path displayed in Figure~\ref{fig:circle} which has a segment that goes along the lower arc of $D'$. Likewise, the path $\phi_{s, t_0 + \epsilon}$ is homotopic to the path in Figure~\ref{fig:circle} that goes along the upper arc of $D'$. It is then clear that $\phi_{s, t_0 - \epsilon}$ and $\phi_{s, t_0 + \epsilon}$ are related by replacing a Reidemeister loop fragment with its complement. 
  
  \begin{figure}
    \includegraphics[width=10cm]{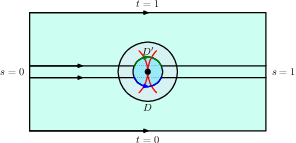}
    \caption{Proof of Theorem~\ref{thm:two_loops}. The two paths of Reidemeister moves differ by replacing a part of $\partial D'$
      with the complement going in the opposite direction. The two arcs passing through the center of the disk represent the discriminant locus.}\label{fig:circle}
  \end{figure}

If the singularity is not simple, then a few more details are needed.
In each non-simple case instead of specifying a versal deformation, we produced a family transverse to the defining set
of singularity (as the singularity is not simple Theorem~\ref{thm:inf_vers} does not apply). Each such family is a two-parameter family.
Whenever a family $\phi_{s,t}$ hits the non-simple singularity at $(s_0,t_0)$, we locally replace $\phi_{t,s}$ by the pre-defined
transverse family. 

We illustrate this rather standard procedure of a strikethrough of an on-axis double point. Suppose $s_0,t_0$ is the parameter
at which $\phi_{s_0,t_0}$ acquires this singularity.

For a strikethrough of an on-axis double point, we have a 3-parameter versal deformation: $\lambda_1,\lambda_2$, and $a$, where in all three cases $a$ is the slope of the strikethrough line and $\lambda_1=\lambda_2=0$ corresponds to $\phi_{s_0,t_0}$. By versality, for parameters $s,t$ close to $s_0,t_0$, the parameters $\lambda_1,\lambda_2$, and $a$ can be regarded as function of $(s,t)$. In fact, as the multijet extension
of $\phi_{s,t}$
is transverse to the defining equation $\lambda_1=\lambda_2=0$, we may assume that $(s,t)\mapsto (\lambda_1,\lambda_2)$
is locally invertible near $(s_0,t_0)$. That is,
the functions $\lambda_1,\lambda_2$ form a local coordinate
  system near $(0,0)$. Replace the deformation in a neighborhood of the origin by freezing $a$ (that is, letting $a$ be constant near the origin). This can be done by an explicit formula that changes $\phi^r_s$ locally.
  Namely, let the points in the domain which are involved in the singularity be $t_1, t_2, t_3 \in \Sfree$. Then $\phi$ near these points is given by
  \[
    (t-t_1,2.5(t-t_1)+\lambda_1),\ (t-t_2,0.4(t-t_2)),\ (t-t_3,a(\lambda_1,\lambda_2)(t-t_3)+\lambda_2),
  \]
  Write now the path
  \[
    (t-t_1,2.5(t-t_1)+\lambda_1),\ (t-t_2,0.4(t-t_2)),\ (t-t_3,a'(\lambda_1,\lambda_2)(t-t_3)+\lambda_2),
  \]
  where 
  \[a'(\lambda_1,\lambda_2)=a(0,0)\eta(\lambda_1,\lambda_2)+a(\lambda_1,\lambda_2)(1-\eta(\lambda_1,\lambda_2)),\]
  and $\eta$ is a cutoff function, equal to $1$ near $(0,0)$ and vanishing away from
  a small neighborhood of $(0,0)$.
  It is routine to check that this change modifies each diagram by an isotopy. Once the parameter $a$ has been frozen, the rest of the proof is as above.  A similar procedure works for other non-simple singularities.

Now suppose $\phi$ is a coincidence of two codimension~1 singularities $S_1$ and $S_2$. Let $D$ and $D'$ be as before. We have a versal deformation with parameters $\lambda_1,\lambda_2$ corresponding to $S_1$ and $S_2$; this has discriminant locus $\{\lambda_1\lambda_2=0\}$. Note that crossing the discriminant locus corresponds to doing the corresponding Reidemeister move. The resulting loop of Reidemeister moves consists of doing $S_1$, then $S_2$, and then undoing $S_1$, and then undoing $S_2$. Thus, $\phi_{s, t_0 - \epsilon}$ and $\phi_{s, t_0 + \epsilon}$ differ by replacing a fragment of this four-move loop with its complement. It is straightforward to check that this is some combination of \ref{item:repl} and \ref{item:birth}. For example, we can replace $S_1$-and-then-$S_2$ with $S_2$-and-then-$S_1$; this is precisely \ref{item:repl}. Or, we can replace $S_1$-and-then-$S_2$-and-then-$S_1$-reverse with $S_2$; this can be regarded as first doing \ref{item:repl} and then canceling a pair of moves via \ref{item:birth}.

Finally, we show that each tangency between $\phi_{s, t}$ and $\cF^1$ leads to a \ref{item:birth} move. As $\cF^1$ is a union of codimension~1 strata, it locally separates $\cF$. Lack of transversality means that as $t$ varies, the number of intersections of the path $\phi_{s, t}$ with $\cF^1$ changes by $2$. This corresponds to a birth or death of a pair of Reidemeister moves, as drawn in Figure~\ref{fig:non_transverse}.
\end{proof}
\begin{figure}
  \includegraphics[width=4cm]{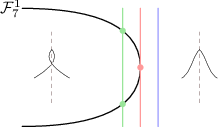}
  \caption{A schematic of a path which is non-transverse to $\cF^1$. The three vertical lines represent three paths in the function
    space. The one to the right does not intersect the $\cF^1_7$-stratum. The middle one is tangent to the stratum. Moving the path to the left
  creates two transverse intersection points with the $\cF^1_7$-stratum. Each of the two points corresponds to an (R-1) move.}\label{fig:non_transverse}
\end{figure}

Finally, we prove Theorem~\ref{thm:intro4} from the introduction.

\introfour*

\begin{proof}
  The proof follows the pattern used in Corollary~\ref{cor:perturb} and Corollary~\ref{cor:Reidemeister}.
  Define $\phi_{s,t}=\pi\circ\wt{\phi}_{s,t}$. Perturb $\phi_{s,t}$ to $\phi_{s,t,n}$, $n=1,\dots$ so that for each $n$
  $\phi_{s,t,n}$ is regular as a two-parameter family, that is,
  it misses codimension~3 singularities and has only finitely many codimension~2 singularities. We assume that $\phi_{s,t,n}$ converge,
  as $n\to\infty$ to $\phi_{s,t}$ in the $C^\infty$ topology. Set $\wt{\phi}_{s,t,n}=\wt{\phi}_{t,s}-\phi_{t,s}+\phi_{t,s,n}$. Then,
  $\wt{\phi}_{s,t,n}$ converges to $\wt{\phi}_{s,t}$ in the $C^\infty$ topology, in particular, in the $C^1$-norm. That is, for sufficiently
  large $n$, $\wt{\phi}_{s,t,n}$ is an embedding for all $s,t$.

  Apply Theorem~\ref{thm:two_loops} to $\phi_{s,t,n}$. Note that the proof of Theorem~\ref{thm:two_loops}
  actually constructs the homotopy $\phi_{s,t,n}$. Here, we insist that the homotopy come from the path $\wt{\phi}_{s,t,n}$. 

  As a consequence of Theorem~\ref{thm:two_loops}, we can connect $\phi_{0,t,n}$ and $\phi_{1,t,n}$ by a path of planar moves \ref{item:repl}--\ref{item:loop}. The data that $\phi_{s,t,n}$ is actually a projection of the map $\wt{\phi}_{s,t,n}$ allows us to lift the planar loops to
  loops of actual Reidemeister moves, because there is a consistent choice of which strand is a bridge and which strand is a tunnel.
\end{proof}
\section{Cobordisms of involutive links}\label{sec:4}

It is a standard fact that a cobordism between knots can be decomposed into the sequence of elementary cobordisms.
The corresponding statement in the equivariant setting is less well-known.  In this section we decompose equivariant cobordisms into ``elementary" cobordisms by studying equivariant Morse functions.

\subsection{Introduction}
Let $M$ be a closed $m$-dimensional smooth manifold with a smooth involution $\tau\colon M\to M$ such that
the fixed point set is an $n$-dimensional manifold 
\[M^\tau=\Fix(\tau).\]

The following result is standard; we include it for completeness.
\begin{lem}\label{lem:local_coor}
  Suppose $u_0\in M^\tau$. There are local coordinates $(x_1,\dots,x_n,y_1,\dots,y_{m-n})$ in a neighborhood of $u_0$ such that
  \begin{itemize}
    \item $u_0=(0,\dots,0)$;
    \item $\tau(x_1,\dots,x_n,y_1,\dots,y_{m-n})=(-x_1,\dots,-x_n,y_1,\dots,y_{m-n})$, in particular, $M^\tau$ is given by $x_1=\dots=x_{m-n}=0$.
  \end{itemize}
\end{lem}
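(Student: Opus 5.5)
The plan is the standard averaging argument: linearize the involution at $u_0$, then use an invariant metric (or an averaged chart) to straighten it. Note that the statement as typed has a small index slip. The fixed set should be $\{x_1=\dots=x_n=0\}$ in these coordinates, and this is codimension $n$. To match the convention $\dim M^\tau = n$, one relabels so that the $-1$ eigenspace has dimension $m-n$. I will prove the version in which the coordinates split into a $(-1)$-block and a $(+1)$-block, with the $(+1)$-block spanning $T_{u_0}M^\tau$.

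First, consider $d\tau_{u_0}\colon T_{u_0}M\to T_{u_0}M$. Since $\tau^2=\mathrm{id}$, we have $(d\tau_{u_0})^2=\mathrm{id}$. Hence $T_{u_0}M = E_+\oplus E_-$ splits into the $\pm1$ eigenspaces, and I choose a linear basis adapted to this splitting.

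Next, choose any chart $\psi\colon U\to\R^m$ around $u_0$ with $\psi(u_0)=0$, where $U$ is $\tau$-invariant (for instance $U\cap\tau(U)$). Let $A=d\tau_{u_0}$, expressed in the chart as the linear map $A=d(\psi\tau\psi^{-1})_0$. Define the averaged map
\[
\Psi=\tfrac12\bigl(\psi + A\circ\psi\circ\tau\bigr)\colon U\to\R^m .
\]
Then $\Psi(u_0)=0$ and $d\Psi_{u_0}=\tfrac12(d\psi+A\,d\psi\,d\tau)=d\psi_{u_0}$, so by the inverse function theorem $\Psi$ is a local diffeomorphism near $u_0$. Moreover, using $A^2=\mathrm{id}$ and $\tau^2=\mathrm{id}$,
\[
\Psi\circ\tau=\tfrac12\bigl(\psi\tau + A\psi\bigr)=A\circ\Psi .
\]
So $\Psi$ conjugates $\tau$ to the linear involution $A$. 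Composing with a linear change of coordinates diagonalizing $A$ as $\mathrm{diag}(-1,\dots,-1,1,\dots,1)$ gives coordinates $(x,y)$ in which $\tau(x,y)=(-x,y)$, which is the required normal form.

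Finally, in these coordinates the fixed set is exactly $\{x=0\}$. This is a submanifold whose dimension equals $\dim E_+$, so $\dim E_+ = n$ agrees with the given dimension of $M^\tau$, and the block sizes come out as claimed. The only point requiring care is making the neighborhood $\tau$-invariant after shrinking. To handle it, I take a small ball $B$ in the linear coordinates: it is $A$-invariant, so its preimage is $\tau$-invariant. There is no real obstacle; the averaging identity is the key step.
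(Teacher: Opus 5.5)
Your proof is correct. Your reading of the indices is the consistent one: the $(+1)$-eigenspace has dimension $n=\dim M^\tau$ and the $(-1)$-eigenspace has dimension $m-n$. The paper's own proof has the same slip, concluding ``$s=n$'' even though its $T$ negates the $\wt{x}_i$.

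Your route differs from the paper's in the key tool. The paper averages a Riemannian metric so that $\tau$ becomes an isometry fixing $u_0$, and then uses $\exp_{u_0}$. Because isometries carry geodesics to geodesics, $\exp_{u_0}$ intertwines $d\tau_{u_0}$ with $\tau$, and normal coordinates in an eigenbasis of $d\tau_{u_0}$ linearize $\tau$. You instead average the chart itself (Bochner's linearization trick). Your only analytic input is the inverse function theorem, and the conjugation $\Psi\circ\tau=A\circ\Psi$ becomes a one-line algebraic identity rather than a statement about geodesics.

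The metric version gives geometrically meaningful (normal) coordinates, which is what the paper alludes to when it refines these coordinates in the proof of Theorem~\ref{thm:eem}. Yours is more elementary and self-contained. Both extend verbatim to compact group actions via Haar averaging.

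One detail should be stated explicitly in your last step. Before taking the preimage of $B$, shrink the domain $W$ on which $\Psi$ is a diffeomorphism to the $\tau$-invariant set $W\cap\tau(W)$. Otherwise $\tau w$ need not lie in $W$, and $(\Psi|_W)^{-1}(B)$ need not be $\tau$-invariant.
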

\begin{proof}
  Consider the tangent space $V=T_{u_0}M$. As $u_0$ is a fixed point, the derivative
  of $\tau$ acts as an automorphism on $V$. Call this action $T\colon V\to V$. Then $T^2=\Id$ and so there is a basis
  $\wt{x}_1,\dots,\wt{x}_s,\wt{y}_1,\dots,\wt{y}_{m-s}$ of $V$ such that
  \[T(\wt{x}_1,\dots,\wt{x}_s,\wt{y}_1,\dots,\wt{y}_{m-s})=(-\wt{x}_1,\dots,-\wt{x}_s,\wt{y}_1,\dots,\wt{y}_{m-s}).\]
  We need to show that $s=n$.
  Choose an equivariant Riemannian metric on $M$. The map $\exp\colon V\to M$ is a local diffeomorphism such that
  $\exp(T\wt{x})=\tau\exp(x)$. That is, there are open subsets $U_V\subset V$ and $U_M\subset M$
  containing $0\in V$, respectively $u_0\in M$, such that $\exp\colon U_V\to U_M$ is a diffeomorphism. On replacing $U_V$
  by $U_V\cap T(U_V)$ and $U_M$ by $U_M\cap\tau(U_M)$ we may assume that $U_V$ and $U_M$ are $T$-invariant, respectively $\tau$-invariant.
  The maps $\exp$ being equivariant, takes $U_V\cap\Fix T$ to $U_M\cap \Fix\tau$. In particular, $n=s$.
  
  Let $x_1,\dots,x_n,y_1,\dots,y_{m-n}$ be the coordinates on $U_M$ such that
  \[\exp(\wt{x}_1,\dots,\wt{y}_{m-n})=(x_1,\dots,y_{m-n}).\]
  These coordinates satisfy both items of Lemma~\ref{lem:local_coor}.
\end{proof}
To have a uniform notation, we  sometimes write  the coordinates as $z_1,\dots,z_m$, with $z_i=x_i$ for $i\le n$ and $z_i=y_{i-s}$
for $i>n$. We set then
\begin{equation}\label{eq:def_delta_i}
  \delta_i=\begin{cases} -1 & i\le n \\ 1 & i>n.\end{cases}
\end{equation}
With this notation, we write the coordinates of $\tau$ as $(\tau_1,\dots,\tau_m)$, that is, $\tau(u)=(\tau_1(u),\dots,\tau_m(u))$. We have
\begin{equation}\label{eq:diff_tau}
  \frac{\partial\tau_j}{\partial z_i}=\begin{cases} 0 & i\neq j\\ \delta_i & i=j.\end{cases}
\end{equation}

\begin{defn}\label{def:invariant}
  A function $F\colon M\to\R$ is \emph{$\tau$-invariant} (in short: invariant), if $F(\tau u)=F(u)$ for all $u\in M$.
  A function $F\colon M\to\R$ is \emph{anti-invariant} if $F(\tau u)=-F(u)$ for all $u\in M$.

  If $\delta\in\{\pm1\}$ we also say that $F$ is $\delta$-invariant, if $F(\tau u)=\delta F(u)$.
\end{defn}
Clearly, an anti-invariant function vanishes on $M^\tau$.
\begin{lem}\label{lem:derivative}
  Suppose $F$ is invariant, $u_0\in M^\tau$ and $(x_1,\dots,y_{n-s})$ is a coordinate system in a neighborhood $U\subset M$ of $u_0$ 
  as in Lemma~\ref{lem:local_coor}. Then:
  \begin{itemize}
    \item[(a)] for any $i=1,\dots,n$, the derivative $\frac{\partial F}{\partial x_i}$ is anti-invariant;
    \item[(b)] for any $j=1,\dots,m-n$, the derivative $\frac{\partial F}{\partial y_j}$ is invariant.
  \end{itemize}
\end{lem}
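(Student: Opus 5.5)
The plan is to differentiate the identity $F\circ\tau=F$ with the chain rule, working entirely inside the neighborhood $U$ given by Lemma~\ref{lem:local_coor}. Shrinking $U$ if necessary, I take it to be $\tau$-invariant; this is possible because the exponential-map construction in the proof of Lemma~\ref{lem:local_coor} already produces a $\tau$-invariant neighborhood. On such a $U$, the involution is linear in the coordinates $z_1,\dots,z_m$ and acts by $z_i\mapsto \delta_i z_i$, with $\delta_i$ as in \eqref{eq:def_delta_i}.

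First I write invariance in coordinates as
\[
F(\delta_1 z_1,\dots,\delta_m z_m)=F(z_1,\dots,z_m)\quad\text{for all } u=(z_1,\dots,z_m)\in U .
\]
Next I differentiate both sides with respect to $z_i$. By the chain rule and \eqref{eq:diff_tau}, the left side gives
\[
\sum_j \frac{\partial F}{\partial z_j}(\tau u)\,\frac{\partial \tau_j}{\partial z_i}(u)=\delta_i\,\frac{\partial F}{\partial z_i}(\tau u),
\]
while the right side gives $\frac{\partial F}{\partial z_i}(u)$. Since $\delta_i^2=1$, this rearranges to
\[
\frac{\partial F}{\partial z_i}(\tau u)=\delta_i\,\frac{\partial F}{\partial z_i}(u).
\]
In the language of Definition~\ref{def:invariant}, this says $\partial F/\partial z_i$ is $\delta_i$-invariant. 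For $i\le n$ we have $\delta_i=-1$, and $z_i=x_i$ is one of the coordinates that is negated, so (a) follows. For $i>n$ we have $\delta_i=1$, and $z_i$ is a coordinate $y_j$ fixed by $\tau$, so (b) follows.

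The only delicate point is bookkeeping rather than mathematics. The invariance statements are meant on the $\tau$-invariant domain $U$ of the chart. The indexing also has to be matched correctly: the coordinates negated by $\tau$ are the $x_i$, as in the displayed formula for $\tau$ in Lemma~\ref{lem:local_coor}. The subscripts in that lemma and in the present statement (for example ``$y_{n-s}$'' and ``$x_{m-n}$'') contain typos, and I would read them in the way consistent with that displayed formula. No other obstacle is expected.
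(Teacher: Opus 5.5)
Your proposal is correct and is the paper's argument: differentiate $F\circ\tau=F$ with the chain rule and use \eqref{eq:diff_tau} to get $\frac{\partial F}{\partial z_i}(\tau u)=\delta_i\frac{\partial F}{\partial z_i}(u)$, then read off the sign of $\delta_i$. Your remarks about taking $U$ to be $\tau$-invariant and reading the garbled indices consistently with the displayed formula for $\tau$ are reasonable bookkeeping; the paper leaves both implicit.
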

\begin{proof}
  We calculate
  \[
    \left.\frac{\partial F}{\partial z_i}\right|_{u}=\left.\frac{\partial F\circ \tau}{\partial z_i}\right|_{u}=
	\left.\sum_{j=1}^m\frac{\partial F}{\partial z_j}\right|_{\tau u}\frac{\partial\tau_j}{\partial z_i}=
  \left.\delta_i\frac{\partial F}{\partial z_i}\right|_{\tau u}.\]
  Here, the first equality is invariance of $F$. The second is the chain rule and third is \eqref{eq:diff_tau}. Hereafter, we use the notation
  $\frac{\partial F}{\partial z_i}|_{\tau u}$ to emphasize the order of operations. This notation means `first differentiate, then substitute'.
  The notation for the opposite operation is $\frac{\partial (F\circ\tau)}{\partial z_i}|_u$.

  The displayed equation shows that the derivative is invariant if $\delta_i=1$, and anti-invariant if $\delta_i=-1$.
\end{proof}
\begin{corollary}\label{cor:vanish}
  If $F$ is invariant, then for any $u\in M^\tau$ we have $\frac{\partial F}{\partial x_j}(u)=0$.
\end{corollary}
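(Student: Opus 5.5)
The corollary follows directly from Lemma~\ref{lem:derivative}(a), together with the observation made just after Definition~\ref{def:invariant} that an anti-invariant function vanishes on $M^\tau$. The plan is first to fix coordinates $(x_1,\dots,x_n,y_1,\dots,y_{m-n})$ near $u$ as in Lemma~\ref{lem:local_coor}, centered at $u$. Here I read the $x_j$ as the coordinates on which $\tau$ acts by negation, i.e.\ those with $\delta_j=-1$ in \eqref{eq:def_delta_i}. Lemma~\ref{lem:derivative}(a) then says that $G=\partial F/\partial x_j$ satisfies $G(\tau v)=-G(v)$ for all $v$ in the chart.

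Next I evaluate this identity at $v=u$. Since $u\in M^\tau$, we have $\tau u=u$, so $G(u)=-G(u)$ and therefore $G(u)=0$. This is the whole argument.

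The only care needed is bookkeeping rather than mathematics. The statement of Lemma~\ref{lem:local_coor} mixes up the roles of $n$ and $m-n$ (it writes $M^\tau=\{x_1=\dots=x_{m-n}=0\}$), so I would state explicitly that the $x_j$ are the coordinates negated by $\tau$. With that convention the derivatives $\partial F/\partial x_j$ are exactly the anti-invariant ones in Lemma~\ref{lem:derivative}.

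One could also argue directly: the identity $F(\tau v)=F(v)$ shows that $x_j\mapsto F$ is an even function along the $x_j$-direction through $u$, so its derivative at $x_j=0$ vanishes. I do not expect any step to be an obstacle.
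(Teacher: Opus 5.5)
Your proof is correct and is exactly the paper's argument: Lemma~\ref{lem:derivative}(a) makes $\partial F/\partial x_j$ anti-invariant, and an anti-invariant function vanishes on $M^\tau$, since $G(u)=G(\tau u)=-G(u)$ there. Your clarification that the $x_j$ must be read as the coordinates negated by $\tau$ is justified, because the indexing in Lemma~\ref{lem:local_coor} is inconsistent, and it does not affect the argument.
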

\begin{proof}
  An anti-invariant function vanishes on the fixed point set.
\end{proof}
Our next result shows that the second derivative of $F$ has a block structure. This will have important implications on Morse functions.
\begin{lem}\label{lem:block}
  Let $F$ be an invariant function, $u_0\in M^\tau$ and $(x_1,\dots,y_{m-n})$ coordinates as in Lemma~\ref{lem:local_coor}
  defined in an open set $U\ni u_0$. For any $u\in U\cap M^\tau$, the second derivative $D^2F(u)$ has a block structure
  $D^2_xF(u)\oplus D^2_yF(u)$, where $D^2_xF(u)$ is the matrix of derivatives over the $x$-variables, $D^2_yF(u)$ is
  the matrix of derivatives over the $y$-variables.
\end{lem}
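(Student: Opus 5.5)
The plan is to show that the mixed second derivatives $\frac{\partial^2 F}{\partial x_i\partial y_j}$ vanish at every point $u\in U\cap M^\tau$. Once this holds, the Hessian $D^2F(u)$, written in the ordered coordinates $(x_1,\dots,x_n,y_1,\dots,y_{m-n})$, splits as the direct sum $D^2_xF(u)\oplus D^2_yF(u)$, which is exactly the claim.

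The argument uses Lemma~\ref{lem:derivative} twice. Fix $i\le n$ and $j\le m-n$. By Lemma~\ref{lem:derivative}(a), the function $G=\frac{\partial F}{\partial x_i}$ is anti-invariant on $U$ (after shrinking $U$ to be $\tau$-invariant, as in the proof of Lemma~\ref{lem:local_coor}). Differentiating $G(\tau u)=-G(u)$ with respect to $y_j$ and using \eqref{eq:diff_tau} with $\delta_j=1$ gives
\[
\left.\frac{\partial G}{\partial y_j}\right|_{\tau u}=-\left.\frac{\partial G}{\partial y_j}\right|_{u}.
\]
This is the same computation as in the proof of Lemma~\ref{lem:derivative}, with the sign carried along. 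So $\frac{\partial G}{\partial y_j}$ is anti-invariant, and therefore vanishes on $M^\tau$, as observed just before Corollary~\ref{cor:vanish}.

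By symmetry of second derivatives, $\frac{\partial^2F}{\partial y_j\partial x_i}=\frac{\partial^2F}{\partial x_i\partial y_j}$, so both off-diagonal blocks vanish at every $u\in U\cap M^\tau$.

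There is no real obstacle here. The only point needing care is to state the invariance and anti-invariance relations on a $\tau$-invariant neighborhood. In the coordinates of Lemma~\ref{lem:local_coor}, $\tau$ is linear and diagonal, so the chain-rule step is just the substitution $x\mapsto -x$, and the parity of $F$ in each $x_i$ directly forces the mixed terms to be odd in $x_i$, hence zero at $x=0$.
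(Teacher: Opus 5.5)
Your argument is correct and is essentially the paper's. Both show that the mixed partial $\frac{\partial^2F}{\partial x_i\partial y_j}$ is anti-invariant and hence vanishes on $M^\tau$; the paper differentiates in $y_j$ first, so it can apply Lemma~\ref{lem:derivative} and Corollary~\ref{cor:vanish} directly to the invariant function $\partial F/\partial y_j$, whereas you differentiate in $x_i$ first and redo the chain-rule computation for an anti-invariant function. One small correction to your closing remark: $F$ is even under the simultaneous flip $x\mapsto -x$, not in each $x_i$ separately, so the mixed terms are odd under that simultaneous flip, which still forces them to vanish at $x=0$.
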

\begin{proof}
  The statement is equivalent to saying that $\frac{\partial^2 F}{\partial x_i\partial y_j}(u)=0$ for $u\in U\cap M^\tau$, $i=1,\dots,n$,
  $j=1,\dots,m-n$.

  Note that by Lemma~\ref{lem:derivative}, the derivative $\frac{\partial F}{\partial y_i}$ is $\tau$-invariant. Therefore,
  the derivative $\frac{\partial}{\partial y_j}\frac{\partial F}{\partial x_i}$ is
  anti-invariant, so by Corollary~\ref{cor:vanish}, it vanishes on $U\cap M^\tau$.
\end{proof}

\subsection{Ambient equivariant Morse theory}
Let $M$ be a closed smooth $m$-dimensional
manifold endowed with a smooth involution $\tau\colon M\to M$, preserving orientation and such that $M^\tau$
is a smooth $n$-dimensional manifold.
\begin{defn}
  A smooth, $\tau$-invariant function $F\colon M\to\R$ is called an \emph{equivariant Morse function}, if the determinant $\det D^2F(u)$ is
  nonzero at any point $x$ such that $DF(u)=0$.
\end{defn}
\begin{lem}\label{lem:restrict}
  Suppose $F$ is an equivariant Morse function. Set $f=F|_{M^\tau}$. Then $f$ is a Morse function on $M^\tau$.
\end{lem}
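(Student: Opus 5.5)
The plan is to show that every critical point of $f$ is a critical point of $F$, and then to read off the Hessian of $f$ as a diagonal block of the Hessian of $F$.

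Let $u_0 \in M^\tau$ be a critical point of $f = F|_{M^\tau}$. Choose coordinates $(x_1,\dots,x_n,y_1,\dots,y_{m-n})$ around $u_0$ as in Lemma~\ref{lem:local_coor}. In these coordinates $\tau$ negates one group of variables and fixes the other. I will write $x$ for the negated coordinates and $y$ for the fixed ones. Then $M^\tau$ is locally cut out by the vanishing of the $x$-variables, so the $y$-variables give coordinates on $M^\tau$ near $u_0$.

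\emph{Step 1.} By Corollary~\ref{cor:vanish}, $\partial F/\partial x_i$ vanishes at every point of $M^\tau$. Since $u_0$ is critical for $f$, the derivatives $\partial F/\partial y_j$ also vanish at $u_0$, because they are the derivatives of $f$ in the coordinates on $M^\tau$. Hence $DF(u_0)=0$, and $u_0$ is a critical point of $F$.

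\emph{Step 2.} Because $F$ is Morse, $\det D^2F(u_0)\neq 0$. By Lemma~\ref{lem:block}, $D^2F(u_0) = D^2_xF(u_0)\oplus D^2_yF(u_0)$, so
\[
\det D^2F(u_0)=\det D^2_xF(u_0)\cdot \det D^2_yF(u_0).
\]
Each block is therefore nonsingular.

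\emph{Step 3.} The block $D^2_yF(u_0)$ is exactly the Hessian of $f$ at $u_0$ in the coordinates $y$ on $M^\tau$, since differentiating in the $y$-directions commutes with restricting to $M^\tau$. So $f$ has a nondegenerate Hessian at each of its critical points, which means $f$ is Morse.

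The main point needing care is the coordinate bookkeeping. Lemma~\ref{lem:local_coor} is stated somewhat loosely about which variables are negated and which cut out $M^\tau$. One has to be consistent: the coordinates tangent to $M^\tau$ are the $\tau$-invariant ones, and the vanishing of the derivatives in the anti-invariant directions comes from Lemma~\ref{lem:derivative}. Once that is fixed, the argument is formal.
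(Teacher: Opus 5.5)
Your proposal is correct and follows the paper's proof essentially line for line: Corollary~\ref{cor:vanish} together with criticality of $f$ gives $DF(u_0)=0$, and then the block decomposition of Lemma~\ref{lem:block} forces $\det D^2_yF(u_0)=\det D^2f(u_0)\neq 0$. Your remark about taking the $\tau$-invariant coordinates as the ones tangent to $M^\tau$ correctly resolves the index inconsistency in the statement of Lemma~\ref{lem:local_coor}.
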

\begin{proof}
  Take $u\in M^\tau$, choose coordinates $(x_1,\dots,y_{m-n})$ as in Lemma~\ref{lem:local_coor}.
  Suppose $u$ is a critical point of~$f$. This amounts to saying that $\frac{\partial f}{\partial y_i}(u)=0$
  for all $i=1,\dots,m-n$. As $y_1,\dots,y_{m-n}$ are coordinates on $M^\tau$, this means that $\frac{\partial F}{\partial y_i}(u)=0$
  for all $i=1,\dots,m-n$. By Corollary~\ref{cor:vanish}, $\frac{\partial F}{\partial x_j}(u)=0$ for $j=1,\dots,n$. This means 
  that $u$ is a critical point of $F$.

  By Lemma~\ref{lem:block}, the second derivative $D^2F(u)$ has a block structure $D^2F(u)=D^2_xF(u)\oplus D^2_yF(u)$.
  The Morse condition implies that $\det D^2F(u)\neq 0$, but $\det D^2F(u)=\det D^2_xF(u)\cdot\det D^2_yF(u)$.
  Hence, $\det D^2_yF(u)\neq 0$, but $D^2_yF(u)=D^2f(u)$. Hence, $u$ is a Morse critical point.
\end{proof}
In the proof of Lemma~\ref{lem:restrict}, we have proved the following result, which we state explicitly for future reference.
\begin{corollary}\label{lem:critical}
  A critical point $u\in M^\tau$ of $f$ is also a critical point of $F$.
\end{corollary}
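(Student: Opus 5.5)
The statement to prove is Corollary~\ref{lem:critical}: if $u\in M^\tau$ is a critical point of $f=F|_{M^\tau}$, then $u$ is a critical point of $F$. The claim is local, so we work in the coordinates $(x_1,\dots,x_n,y_1,\dots,y_{m-n})$ of Lemma~\ref{lem:local_coor} on a $\tau$-invariant neighborhood $U$ of $u$. In these coordinates $\tau$ negates the $x$-variables and fixes the $y$-variables, so $U\cap M^\tau=\{x=0\}$ and the $y_j$ restrict to coordinates on $M^\tau$ near $u$.

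The argument splits the differential $DF(u)$ into its $x$-part and its $y$-part.

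\emph{$y$-directions.} Since $f(y)=F(0,y)$, the derivative $\frac{\partial f}{\partial y_j}(u)$ equals $\frac{\partial F}{\partial y_j}(u)$ for every $j$. Criticality of $u$ for $f$ therefore gives $\frac{\partial F}{\partial y_j}(u)=0$ for $j=1,\dots,m-n$.

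\emph{$x$-directions.} By Lemma~\ref{lem:derivative}(a), each $\frac{\partial F}{\partial x_i}$ is anti-invariant. By Corollary~\ref{cor:vanish}, it therefore vanishes on $M^\tau$, so $\frac{\partial F}{\partial x_i}(u)=0$ for $i=1,\dots,n$.

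Together these give $DF(u)=0$, so $u$ is a critical point of $F$. This is exactly the first paragraph of the proof of Lemma~\ref{lem:restrict}, and that paragraph uses neither the Morse hypothesis nor the block structure of Lemma~\ref{lem:block}. The corollary therefore holds for any smooth invariant $F$.

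No step here is a real obstacle. The only point needing care is the $x$-directions: the vanishing of $\frac{\partial F}{\partial x_i}(u)$ is forced by the invariance of $F$, not by any hypothesis on $f$.
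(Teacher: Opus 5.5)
Your proposal is correct and is essentially the paper's own argument, namely the first paragraph of the proof of Lemma~\ref{lem:restrict}: criticality of $f$ kills the $y$-derivatives of $F$ at $u$, and Corollary~\ref{cor:vanish} kills the $x$-derivatives. You are also right that neither the Morse hypothesis nor the block structure is used, so the statement holds for any smooth invariant $F$.
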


We will now continue our study of equivariant Morse functions by recalling the equivariant Morse Lemma, proved
originally by Wassermann.
\begin{thm}[Equivariant Morse Lemma]\label{thm:wass_mors}
  Let $F$ be an equivariant Morse function. If $u_0\in M^\tau$ is a critical point, then there are coordinates $x_1,\dots,x_n,y_1,\dots,y_{m-n}$
  on $M$ near $x_0$, such that $u_0=(0,0,\dots,0)$, $\tau(x_1,\dots,y_{m-n})=(-x_1,\dots,-x_n,y_1,\dots,y_{m-n})$ (in particular,
  $x_1,\dots,x_n$ are coordinates normal to $M^\tau$, $y_1,\dots,y_{m-n}$ are coordinates on $M^\tau$. Furthermore,
  in these coordinates
  \[F(x_1,\dots,y_{n-m})=F(u_0)-x_1^2-\dots-x_k^2+x_{k+1}^2+\dots+x_n^2-y_1^2-\dots-y_\ell^2+y_{\ell+1}^2+\dots+y_{m-n}^2\]
  for some integers $k$ and $\ell$.
\end{thm}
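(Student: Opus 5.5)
We follow the standard proof of the Morse Lemma via Hadamard's lemma and a Gram--Schmidt type diagonalization, and keep every step equivariant. First we choose coordinates near $u_0$ as in Lemma~\ref{lem:local_coor}, so that $\tau$ acts by $(x,y)\mapsto(-x,y)$. We may assume $F(u_0)=0$. Since $u_0$ is a critical point, $DF(u_0)=0$. By Lemma~\ref{lem:block}, $D^2F(u_0)=D^2_xF(u_0)\oplus D^2_yF(u_0)$, and the Morse condition makes both blocks nondegenerate.

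Next we write $F$ as a quadratic form with variable coefficients. Hadamard's lemma gives $F(z)=\sum_{i,j} z_iz_j h_{ij}(z)$ with $h_{ij}=\int_0^1(1-s)\,\partial_i\partial_jF(sz)\,ds$. Since $\tau$ is linear in these coordinates, the transformation rule of Lemma~\ref{lem:derivative} (applied twice) gives $h_{ij}(\tau z)=\delta_i\delta_j h_{ij}(z)$. So the $x$--$x$ and $y$--$y$ entries are invariant and the mixed $x$--$y$ entries are anti-invariant. Alternatively, we could average $F$ and $h$ over $\tau$, but this symmetry already holds for the formula above. Thus the matrix $H(z)=(h_{ij}(z))$ satisfies $H(\tau z)=T H(z) T$ with $T=\mathrm{diag}(\delta_i)$, and $H(0)=\tfrac12 D^2F(u_0)$ is block diagonal.

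The key step, and the main obstacle, is diagonalizing $H$ by an equivariant coordinate change. We do the classical inductive completion of squares, but first in the $y$-block and then in the $x$-block, never mixing the two. Suppose $h_{11}(0)\neq 0$ for some $y$-variable; after a linear change within the $y$'s we may assume this. We set $y_1'=\sqrt{|h_{11}|}\,\bigl(y_1+\sum_{j\neq 1} z_j h_{1j}/h_{11}\bigr)$. For a $y$-variable we need $y_1'$ to be invariant. This holds because $h_{11}$ is invariant, and each product $z_jh_{1j}$ is invariant: for $j$ a $y$-index both factors are invariant, and for $j$ an $x$-index both factors are anti-invariant. Similarly, when completing the square in an $x$-variable, the new $x_1'$ is anti-invariant, since each term $z_jh_{1j}$ with $j$ ranging over the remaining variables has sign $\delta_1$. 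After each step the remaining quadratic form in the other variables still has coefficients obeying the same parity rule, so the induction continues. The nondegeneracy needed at each stage comes from the block diagonality at $0$: a mixed coefficient vanishes at $u_0$, so it does not disturb the invertibility of the Jacobian there.

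Finally, an invariant function of the $y$-coordinates and anti-invariant functions of the $x$-coordinates, with invertible Jacobian at $0$, form a coordinate system in which $\tau$ still acts by $(x,y)\mapsto(-x,y)$. By the inverse function theorem, the composite change is thus a local equivariant diffeomorphism. We reorder signs within each block to get the stated form, with $k$ the index of $D^2_xF(u_0)$ and $\ell$ that of $D^2_yF(u_0)$. The only delicate point is verifying the parity bookkeeping at each inductive step; the rest is the classical argument verbatim.
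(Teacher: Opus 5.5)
Your proof is correct, and it takes a different route from the paper, which gives no argument of its own and simply cites \cite[Lemma~4.1]{Wassermann}. You carry out Milnor's completion-of-squares proof and check that every step respects parity:
\begin{itemize}
\item because $\tau$ is linear in the coordinates of Lemma~\ref{lem:local_coor}, the Hadamard coefficients satisfy $h_{ij}\circ\tau=\delta_i\delta_j h_{ij}$;
\item the updated coefficients $h_{ij}-h_{ir}h_{jr}/h_{rr}$ keep parity $\delta_i\delta_j$;
\item the new coordinate $\sqrt{|h_{rr}|}\,\bigl(z_r+\sum_{j>r}z_jh_{rj}/h_{rr}\bigr)$ has parity $\delta_r$.
\end{itemize}
Hence each step is an equivariant local diffeomorphism.

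One correction concerns where block diagonality is actually used. The Jacobian of each step is invertible at $0$ whatever the mixed coefficients are, since its determinant is $\sqrt{|h_{rr}(0)|}$. What block diagonality really provides is a pivot inside the correct block at every stage. The remaining coefficient matrix at $0$ is nondegenerate, being the Schur complement of the pivot in the previous nondegenerate matrix. Its mixed $x$--$y$ entries are anti-invariant, hence vanish at the fixed point $0$. So both remaining blocks are nondegenerate, and a linear change inside one block gives $h_{rr}(0)\neq 0$.

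\textbf{Comparison with the cited result.} Your argument is elementary and self-contained. It also shows directly that $\ell$ and $k$ are the indices of $D^2_yF(u_0)$ and $D^2_xF(u_0)$, consistent with Lemmas~\ref{lem:block} and~\ref{lem:restrict}. It does rely on $\Z_2$ acting by $\pm1$ on individual coordinates, so that every variable has a definite parity. Wassermann's lemma is stated for compact Lie group actions in general, where one cannot complete squares one coordinate at a time. The citation buys that generality and brevity; your proof buys transparency in exactly the case the paper needs.
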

\begin{proof}
  The statement is proved in \cite[Lemma 4.1]{Wassermann}.
\end{proof}
\begin{defn}
  The pair $(k,\ell)$ is called the \emph{multi-index} of a critical point. The number $k$ is the \emph{anti-invariant index},
  while $\ell$ is called the \emph{invariant index}.
\end{defn}

The next result is also proved by Wassermann \cite[Density Lemma 4.8]{Wassermann}:
\begin{thm}\label{thm:density_of_Morse}
  Let $\cF$ be the set of functions $F\colon M\to\R$ that are $\tau$-invariant and of class $C^N$ for $N\ge 3$. Then, the subspace of equivariant Morse
  functions is open-dense.
\end{thm}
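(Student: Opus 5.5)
The plan is to prove openness and density separately, working locally and equivariantly and patching with an invariant partition of unity.

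\emph{Openness.} Let $F$ be equivariant Morse. On the compact manifold $M$ the function $|DF|^2+|\det D^2F|^2$ (computed in charts, with respect to a fixed finite atlas and an invariant metric) is strictly positive. This positivity is preserved under $C^2$-small perturbations. In particular it survives $C^N$-small invariant perturbations with $N\ge 3$, so the Morse functions form an open set.

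\emph{Density.} We reduce to the non-equivariant statement on the free part, and treat neighbourhoods of $M^\tau$ separately.

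\emph{Step 1: near the fixed set.} By Lemma~\ref{lem:restrict} and Corollary~\ref{lem:critical}, at points of $M^\tau$ the critical points of $F$ are exactly the critical points of $f=F|_{M^\tau}$. At such points the Hessian splits as $D^2_xF\oplus D^2_yF$ by Lemma~\ref{lem:block}. First perturb $f$ to a Morse function on $M^\tau$ by the classical density theorem. Extend the perturbation invariantly via a tubular neighbourhood, using the coordinates of Lemma~\ref{lem:local_coor} and pulling back along the projection $(x,y)\mapsto y$; functions of $y$ are $\tau$-invariant.

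After this step $D^2_yF$ is nondegenerate at every critical point on $M^\tau$, and there are only finitely many such points. Near each of them, add $\sum_i \epsilon_i x_i^2\,\rho(x,y)$, where $\rho$ is an invariant cutoff. The term $x_i^2$ is $\tau$-invariant. For generic small $\epsilon_i$ this makes $D^2_xF$ nondegenerate, since the perturbed block is $D^2_xF+\mathrm{diag}(2\epsilon_i)$, whose determinant is a nonzero polynomial in $\epsilon$. The critical points on $M^\tau$ do not move, because the first derivatives along $M^\tau$ are unchanged there.

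Nondegeneracy is an open condition, so a whole invariant neighbourhood $U$ of $M^\tau$ then contains no degenerate critical points. The justification is as follows. Near a nondegenerate critical point, nondegeneracy holds locally. Away from the critical points of $f$, $DF\ne 0$ on $M^\tau$, and hence $DF\neq 0$ on a neighbourhood.

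\emph{Step 2: the free part.} On $M\setminus \overline{U'}$, for a smaller invariant neighbourhood $U'\subset U$, the action is free. Invariant functions there are exactly functions on the manifold $(M\setminus\overline{U'})/\tau$. Apply the classical density of Morse functions relative to a closed set; equivalently, use Thom transversality (Theorem~\ref{thm:ttt}) to make $\mj^1$ transverse to the zero section of $T^*$. The perturbation should be supported away from $\overline{U'}$ and be small enough not to create degenerate critical points in $U\setminus U'$, where $F$ is already Morse. Then pull the result back.

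\emph{Main obstacle.} The main difficulty is Step 1: arranging the extension and the perturbations so that they are simultaneously invariant, small in $C^N$, and do not destroy nondegeneracy achieved elsewhere. The plan is to handle this with invariant cutoffs, obtained by averaging $\rho$ and $\rho\circ\tau$, together with the openness argument above, which guarantees that sufficiently small later perturbations preserve nondegeneracy on compact sets where it already holds.
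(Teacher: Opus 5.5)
Your argument is correct, but the paper does not argue this way: it gives no proof of its own here and simply cites Wassermann's Density Lemma~4.8, which is stated for actions of general compact Lie groups. You instead give a self-contained proof specific to $\Z_2$, built only from the lemmas of this section.

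The key point you exploit is Lemma~\ref{lem:block}. At a fixed point, the Hessian of an invariant function splits into two blocks: the invariant block $D^2_yF=D^2(F|_{M^\tau})$ and the anti-invariant block $D^2_xF$. You repair these independently.
\begin{itemize}
\item The $y$-block is fixed by classical Morse density on $M^\tau$, followed by an invariant extension through an equivariant tubular neighbourhood. This is the same device the paper uses in Lemma~\ref{lem:step2}.
\item The $x$-block is fixed by adding invariant terms $\epsilon_i x_i^2\rho$. These vanish to second order on $M^\tau$, so they change neither $F|_{M^\tau}$, nor its critical points, nor the $y$-block.
\end{itemize}
Compactness of $M^\tau$ then gives an invariant neighbourhood containing no degenerate critical points. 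Off that neighbourhood the action is free, so the classical relative density theorem on the quotient finishes the proof. Openness on compact sets protects what was already achieved.

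Citing Wassermann buys brevity and generality, covering arbitrary compact groups and orbit types. Your route buys an elementary proof that uses only Lemma~\ref{lem:local_coor}, Corollary~\ref{cor:vanish} and Lemma~\ref{lem:block}. It also closely parallels the paper's own proof of the embedded version, Theorem~\ref{thm:density}.
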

\subsection{Embedded equivariant Morse theory}\label{sub:EEM_theory}
Suppose $\Omega$ is a closed, $m+k$-dimensional manifold, $k>0$, 
and $M\subset\Omega$ is a closed submanifold of dimension $m$.
We assume that there is an action $\tau\colon\Omega\to\Omega$ with fixed point set $\Omega^\tau$, which is  an $n$-dimensional manifold.
We assume that $M\cap \Omega^\tau$ is a manifold of dimension $r<n$, but we do not insist that the intersection of $M$ and $\Omega^\tau$ be transverse.
The following notion generalizes the definition of \cite{BP}.

\begin{defn}\label{def:EEM}
  A smooth invariant function $F\colon\Omega\to\R$ is called a \emph{embedded equivariant Morse}, in short EEM, 
  if it has the following properties
  \begin{enumerate}[label=(EEM-\arabic*)]
    \item (Morse on $\Omega)$ $F$ is a Morse function on $\Omega$;\label{item:M_morse}
    \item (Morse on $M$) the restriction $f=F|_M$ is Morse on $M$.\label{item:M_rest}
    \item (genericity of position of critical points) all critical points of $F$ are on $\Omega\setminus M$;\label{item:M_gen}
  \end{enumerate}
\end{defn}
By convention, we will also refer to critical points of $F_M$ occasionally as critical points of $F$, when it will not cause confusion.

Suppose $u_0$ is a critical point of $F$ or of $f=F|_M$. 
To study the local behavior of $F$ near $u_0$, we consider the following cases. We recall that $M^\tau$ and $\Omega^\tau$ denote
\begin{itemize}
  \item if $u_0\notin M$ and $u_0\notin \Omega^\tau$, we use the standard Morse Lemma to find local behavior of $F$. Note, that the point $\tau u_0$
    is also a critical point of $F$;
  \item if $u_0\notin M$, but $u_0\in \Omega^\tau$, we use the Equivariant Morse Lemma (Theorem~\ref{thm:wass_mors} above);
  \item if $u_0\in M$, but $u_0\notin \Omega^\tau$, we use the Embedded Morse Lemma \cite[Lemma 2.17]{BP}. In this case, $\tau u_0$ is another
    critical point;
  \item if $u_0\in M\cap \Omega^\tau$, we need an additional tool, that is, Equivariant Embedded Morse Lemma. The statement and proof follow promptly.
\end{itemize}
For a critical point $u_0$ of $f$ on $M\cap \Omega^\tau$, we have the following variant of Morse lemma.
\begin{thm}[Equivariant Embedded Morse Lemma]\label{thm:eem}
  Suppose $u_0\in M\cap \Omega^\tau$ is a critical point of $f$.
  There exists coordinates $x_1,\dots,x_r,y_1,\dots,y_{m-r},v_1,\dots,v_{n-r},w_1,\dots,w_{s}$ (with $n-r+s=k$)
  on $\Omega$ near $u_0$ (i.e. $u_0=(0,\dots,0)$) such that
  \begin{enumerate}[label=(EML-\arabic*)]
    \item $\tau$ is linear, that is, \label{item:EM_linear}
    \[\tau(x_1,\dots,w_s)=(-x_1,\dots,-x_r,y_1,\dots,y_{m-r},-v_1,\dots,-v_{n-r},w_1,\dots,w_s).\]
  \item $M$ is given by $\{v_1=\dots=v_{n-r}=w_1=\dots=w_s=0\}$;\label{item:EM_M}
  \item $\Omega^\tau$ is given by $\{x_1=\dots=x_r=v_1=\dots=v_{n-r}=0\}$;\label{item:EM_N}
  \item the function $F$ in these coordinates is given by:\label{item:EM_F}
    \[F(x_1,\dots,w_s)=F(u_0)-x_1^2-\dots-x_a^2+x_{a+1}^2+\dots+x_r^2-y_1^2-\dots-y_b^2+y_{b+1}^2+\dots+y_{m-r}^2+w_1.\]
  \end{enumerate}
\end{thm}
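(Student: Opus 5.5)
We will prove Theorem~\ref{thm:eem} in three stages: linearize $\tau$ compatibly with $M$ and $\Omega^\tau$, straighten $F$ along a normal direction in which it is nonsingular, and then apply Wassermann's Equivariant Morse Lemma (Theorem~\ref{thm:wass_mors}) to the restriction $f=F|_M$ on $M$.

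\emph{Stage 1: compatible linear coordinates.} The derivative $T=D\tau(u_0)$ is an involution of $T_{u_0}\Omega$ preserving $T_{u_0}M$. Split
\[
T_{u_0}M=E_-\oplus E_+,\qquad T_{u_0}\Omega/T_{u_0}M=N_-\oplus N_+,
\]
into $\mp1$ eigenspaces, and note $\dim E_+=m-r$. Choose an equivariant Riemannian metric. Use the equivariant normal exponential map of $M$ to build $\tau$-equivariant tubular coordinates, exactly as in the proof of Lemma~\ref{lem:local_coor}. The involution $\tau|_M$ is linearized on $M$ first, and the fibres are linearized using the $T$-action on the normal bundle. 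This gives coordinates $(x,y,v,w)$ satisfying \ref{item:EM_linear} and \ref{item:EM_M}, with $x$ spanning $E_-$, $y$ spanning $E_+$, $v$ spanning $N_-$, and $w$ spanning $N_+$. Condition \ref{item:EM_N} then follows automatically, since the fixed set of a linear involution is its $+1$ eigenspace. A dimension count with $\dim\Omega^\tau=n$ gives $\dim N_+=n-(m-r)$; we relabel so that the counts match the statement, $n-r+s=k$.

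\emph{Stage 2: a nonvanishing normal derivative.} By \ref{item:M_gen}, $DF(u_0)\neq0$, while $Df(u_0)=0$, so $DF(u_0)$ vanishes on $T_{u_0}M$. By Corollary~\ref{cor:vanish}, the derivatives of $F$ in the anti-invariant directions $x,v$ vanish at a fixed point. Hence some $\partial F/\partial w_j(u_0)\neq0$, and after a linear change among the invariant $w$'s we may assume $\partial F/\partial w_1(u_0)\neq0$. Now set
\[
w_1' = F(x,y,v,w)-F(x,y,0,0).
\]
This function is $\tau$-invariant, because $F$ is invariant and the map $(x,y,v,w)\mapsto(x,y,0,0)$ is equivariant. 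It vanishes on $M$, and its derivative in $w_1$ at $u_0$ is nonzero. So replacing $w_1$ by $w_1'$ is an equivariant coordinate change that preserves $M$ and $\Omega^\tau$, and in the new coordinates $F=f(x,y)+w_1$.

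\emph{Stage 3: normalizing $f$.} We apply the Equivariant Morse Lemma (Theorem~\ref{thm:wass_mors}) to $f$ on $M$, which is Morse by \ref{item:M_rest}. We then extend this coordinate change of $M$ trivially in $(v,w)$, which keeps everything equivariant and gives \ref{item:EM_F}.

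\emph{Main obstacle.} The delicate step is Stage 1, where we need the tubular coordinates to linearize $\tau$ simultaneously on $M$ and in the normal directions. The natural tool is the equivariant normal exponential map of $M$ together with a $\tau$-invariant linear trivialization of the normal bundle near $u_0$. Such a trivialization exists by averaging any local frame over $\{1,\tau\}$ and taking eigen-projections.
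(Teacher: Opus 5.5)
This is correct and is essentially the paper's proof. Equivariant tubular coordinates give (EML-1)--(EML-3). Corollary~\ref{cor:vanish} together with (EEM-3) forces $\partial F/\partial w_1(u_0)\neq 0$ for an invariant normal coordinate. An equivariant straightening of $w_1$ plus Wassermann's lemma on $f$ then give (EML-4); the only difference is that you straighten first, via $F(x,y,v,w)-F(x,y,0,0)$ as in Lemma~\ref{lem:fix_coor}, whereas the paper normalizes $f$ first and subtracts the quadratic form, which is immaterial.

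One bookkeeping remark: since $E_+=T_{u_0}(M\cap\Omega^\tau)$, one actually has $\dim E_+=r$ and $\dim N_+=n-r$. So the statement's index counts are swapped unless $m=2r$ and $k=2(n-r)$, which holds in the application $m=n=k=2$, $r=1$. Your ``relabeling'' silently absorbs this, as does the paper.
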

\begin{proof}
  We first construct a local coordinate system refining the one of Lemma~\ref{lem:local_coor}. To this end, we
  need to impose conditions on the Riemannian metric used to define the $\exp$ map.

  To this end, choose a neighborhood $U$ of $u_0$ in $\Omega$ such that $U$ is equivariantly diffeomorphic to $U_M\times U_T$,
  where $U_M\subset M$ and $U_T$ is a subset of $\R^k$. Such $U$ exists by the equivariant tubular neighborhood theorem,
  see \cite[Theorem VI.2.2]{Bredon}.
  We can choose coordinates on $U_M$ and on $U_T$ separately using Lemma~\ref{lem:local_coor}. We call the coordinates on
  $M$ $x_1,\dots,x_r,y_1,\dots,y_{m-r}$ and the coordinates on $U_T$ are $v_1,\dots,v_{n-r},w_1,\dots,w_s$. Conditions~\ref{item:EM_linear},
  \ref{item:EM_M}, and \ref{item:EM_N} are automatically satisfied.

  It remains to prove~\ref{item:EM_F}. We will follow the proof of \cite[Lemma 2.17]{BP} with adaptations to the equivariant case. 
  By Theorem~\ref{thm:wass_mors}, we can make 
  an equivariant transformation to variables $x_1,\dots,y_{n-r}$ in such a way that $f=F|_M$ has form
  \[f(x_1,\dots,y_{n-r})=f(x_0)-x_1^2-\dots-x_a^2+x_{a+1}^2+\dots+x_r^2-y_1^2-\dots-y_b^2+y_{b+1}^2+\dots+y_{n-r}^2.\]
  Next, the point $u_0$ is not a critical point of $F$, thus, there exists either $i=1,\dots,m-r$ or $j=1,\dots,s$
  such that $\frac{\partial F}{\partial v_i}(x_0)\neq 0$ or $\frac{\partial F}{\partial w_j}(x_0)\neq 0$. The first option contradicts
  Corollary~\ref{cor:vanish}. Hence, $\frac{\partial F}{\partial v_i}(x_0)\neq 0$ for some $i$. On reindexing
  variables, we may  assume that $\frac{\partial F}{\partial w_1}(x_0)\neq 0$.
  Consider the map
  \[\Phi(x_1,\dots,y_{n-r},v_1,v_2,\dots,w_1,\dots,w_s)=(x_1,\dots,y_{n-r},v_1,v_2,\dots,w_1',\dots,w_s),\]
  where
  \[w_1=F(x_1,\dots,w_s)-(-x_1^2-\dots+y_{n-r}^2)-F(u_0).\]
  The map $\Phi$ is equivariant. Moreover, by the implicit function theorem, $\Phi$ is a local diffeomorphism. Furthermore,
  $\Phi$ preserves $M$. In the
  coordinates $(x_1,\dots,y_{n-r},v_1,v_2,\dots,w_1',w_2,\dots,w_s)$, $F$ has the form~\ref{item:EM_F}.
\end{proof}

We now pass to proving openness and density of the space of Equivariant Embedded Morse functions. The result relies on
the density for ambient equivariant Morse functions, Theorem~\ref{thm:density_of_Morse}, but some effort has to be made.

Let $N\ge 2$ be an integer. We know that if $F,G\in C^N(\Omega;\R)$,
then $FG\in C^N(\Omega;\R)$. There is a constant $\Pi_N$, depending only on $\dim\Omega$ and $N$, and on conventions 
used to define the $C^N$-norms, such that
$||FG||_{C^N}\le \Pi_N||F||_{C^N}||G||_{C^N}$.
 
\begin{thm}\label{thm:density}
  Let $C^N_\tau(\Omega)$ be the space of $C^N$-smooth functions $F\colon \Omega\to\R$ such that $F(\tau u)=F(u)$ for all $u$. Let $\cM$ be the
  subspace of functions satisfying conditions \ref{item:M_morse} -- \ref{item:M_gen}. Then $\cM$ is open-dense in $C^N_\tau(\Omega)$.
\end{thm}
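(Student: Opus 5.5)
The plan is to prove openness and density separately. For each of the three conditions \ref{item:M_morse}--\ref{item:M_gen} we show that the set of $F$ satisfying it is open and dense in $C^N_\tau(\Omega)$; the intersection of finitely many open-dense sets is then open-dense.

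\emph{Openness.} Since $\Omega$ and $M$ are compact, all three conditions are open in the $C^2$ topology, hence in $C^N$ for $N\ge 2$.
\begin{itemize}
\item Condition \ref{item:M_morse} is equivalent to the nonvanishing of $|DF|^2+(\det D^2F)^2$ on $\Omega$.
\item Condition \ref{item:M_rest} is the same statement for $f=F|_M$, whose derivatives up to order two are controlled by those of $F$.
\item Condition \ref{item:M_gen} says that the critical set of $F$, which is finite once \ref{item:M_morse} holds, avoids the compact set $M$. Critical points of a Morse function move continuously under $C^2$-small perturbations, so this condition persists.
\end{itemize}

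\emph{Density.} Let $F\in C^N_\tau(\Omega)$ and $\varepsilon>0$.

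\textbf{Step 1 (Morse on $\Omega$).} By Theorem~\ref{thm:density_of_Morse} applied on $\Omega$, there is an $\varepsilon/3$-close invariant $F_1$ satisfying \ref{item:M_morse}.

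\textbf{Step 2 (Morse on $M$).} Next we make $F_1|_M$ Morse. The set $M$ is $\tau$-invariant, since otherwise the statement is meaningless. Apply Theorem~\ref{thm:density_of_Morse} to the manifold $M$ with the restricted involution. This gives a $\tau$-invariant $g$ on $M$, $C^N$-close to $F_1|_M$, that is Morse. We then extend $g-F_1|_M$ to an invariant function $H$ on $\Omega$ and set $F_2=F_1+H$. To build $H$:
\begin{itemize}
\item use the equivariant tubular neighbourhood $U\cong$ (normal bundle of $M$), as in the proof of Theorem~\ref{thm:eem};
\item pull $g-F_1|_M$ back along the equivariant projection to $M$;
\item multiply by an invariant cutoff function in the normal direction;
\item average over $\tau$ if needed.
\end{itemize}
The product estimate $\|FG\|_{C^N}\le\Pi_N\|F\|_{C^N}\|G\|_{C^N}$ bounds $\|H\|_{C^N}$ by a constant times $\|g-F_1|_M\|_{C^N}$. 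So $H$ is small, and by openness $F_2$ still satisfies \ref{item:M_morse}.

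\textbf{Step 3 (critical points off $M$).} Finally we push the finitely many critical points of $F_2$ off $M$. Let $u_0$ be such a critical point lying on $M$; then $\tau u_0$ is also a critical point on $M$. Add a small invariant perturbation supported near $\{u_0,\tau u_0\}$ that vanishes on $M$ to first order but shifts the critical point in a normal direction. Concretely, in equivariant tubular coordinates $(p,\nu)$ with $p\in M$, take
\[
H = \chi\,\ell(\nu),
\]
where $\chi$ is an invariant bump function and $\ell$ is linear in the normal coordinate. Because $\ell$ vanishes on $M$ and $H$ has small $C^2$ norm, this does not change $F|_M$, so \ref{item:M_rest} is preserved, and it keeps \ref{item:M_morse}. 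The choice of $\ell$ depends on where $u_0$ sits:
\begin{itemize}
\item If $u_0\notin\Omega^\tau$, any normal direction works; choose $\ell$ at $u_0$ and transport it by $\tau$.
\item If $u_0\in M\cap\Omega^\tau$, $\ell$ must be $\tau$-invariant, so it can only use the invariant normal coordinates $w_j$. Using an anti-invariant coordinate $v_i$ would break invariance, by Corollary~\ref{cor:vanish}.
\end{itemize}
The perturbation moves the critical point to where $D^2F_2\cdot\delta = -DH$. Since $D^2F_2(u_0)$ is invertible, the displacement $\delta$ is a small nonzero vector, and its normal component is nonzero for a generic choice of $\ell$.

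\emph{Main obstacle.} The hardest step is Step 3 at a point $u_0\in M\cap\Omega^\tau$ when there are no invariant normal directions ($s=0$). Then the invariant displacement is tangent to $\Omega^\tau$, and $\Omega^\tau$ itself may lie inside $M$. In that case there is no room to push the critical point off $M$ equivariantly. The plan is to use the block structure of Lemma~\ref{lem:block}. In this situation $u_0$ is a critical point of $F|_{\Omega^\tau}$ lying in $M\cap\Omega^\tau$. If $s=0$ is forced, we would first use density on $\Omega^\tau$ to arrange that $F|_{\Omega^\tau}$ is Morse with critical points off the lower-dimensional set $M\cap\Omega^\tau$ (using $r<n$). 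We then lift that perturbation to $\Omega$ via an invariant extension as in Step 2. This uses that critical points of $F$ on $\Omega^\tau$ are exactly the critical points of $F|_{\Omega^\tau}$, by Corollary~\ref{lem:critical}.
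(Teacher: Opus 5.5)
Your route is the paper's: the same three open-dense sets $\cM_1,\cM_2,\cM_3$, the same tubular-neighbourhood extension to achieve \ref{item:M_rest}, and the same local perturbation $F_1+\phi\,\varepsilon\eta\,w_1$ by an invariant normal coordinate near each critical point on $M$.

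In Step~3 you do not need the displacement $\delta$ or a ``generic'' $\ell$. Since $H|_M\equiv 0$, any critical point of $F_2+H$ on $M$ near $u_0$ is a critical point of $F_2|_M$, hence equals $u_0$, and there $D(F_2+H)(u_0)=DH(u_0)\neq 0$. This is the argument of Lemma~\ref{lem:on_m3}. Your displacement claim does in fact hold for every nonzero $\ell$ vanishing on $T_{u_0}M$. The reason is that $D^2F_2(u_0)$ restricted to $T_{u_0}M$ is the nondegenerate Hessian of $F_2|_M$, so $\delta\in T_{u_0}M$ would force $\ell=0$.

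The real defect is the ``main obstacle'' paragraph. It misses the key point and proposes something that cannot work.

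\emph{The case $s=0$ never occurs.} At $u\in M\cap\Omega^\tau$, take a $\tau$-invariant splitting $T_u\Omega=T_uM\oplus N_u$. The $+1$-eigenspaces of $D\tau$ are $T_u\Omega^\tau$, of dimension $n$, and $T_u(M\cap\Omega^\tau)=(T_uM)^{+}$, of dimension $r$. Hence $\dim N_u^{+}=s=n-r\ge 1$ by the hypothesis $r<n$. This is exactly how the paper obtains $w_1$ (``given that $n>r$''), and it is the observation your Step~3 needs.

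\emph{Your fallback for $s=0$ is self-contradictory.} $s=0$ means $r=n$ near $u_0$, so $M\cap\Omega^\tau$ is open in $\Omega^\tau$, not a lower-dimensional subset. ``Using $r<n$'' is therefore unavailable there. In that situation the statement is actually false. A component of $\Omega^\tau$ contained in $M$ is compact, so $F|_{\Omega^\tau}$ has a maximum on it. By Corollary~\ref{lem:critical} that maximum is a critical point of $F$ lying on $M$, so \ref{item:M_gen} fails for every invariant $F$.

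Replace that paragraph by the count $s=n-r\ge 1$ and the proof is complete.
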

\begin{proof}
  Openness of each condition is straightforward. Fix an invariant metric on $\Omega$. Let $F$ be an $\tau$-invariant function
  on $\Omega$ with bounded $C^N$-norm. Let $\varepsilon>0$. To prove Theorem~\ref{thm:density} we need several lemmas.

  \begin{lem}\label{lem:step1}
    Let $\cM_1\subset C^N_\tau(\Omega)$ be the subspace of functions satisfying~\ref{item:M_morse}. Then, $\cM_1$ is open-dense.
  \end{lem}
  \begin{proof}
    This is an immediate consequence of 
    Theorem~\ref{thm:density_of_Morse}.
  \end{proof}

  \begin{lem}\label{lem:step2}
    Let $\cM_2\subset C^N_\tau(\Omega)$ be the subspace of functions whose restriction to $M$ is Morse (that is, which satisfy \ref{item:M_rest}). Then $\cM_2$ is open-dense.
  \end{lem}
  \begin{proof} 
Openness is clear, we focus on density.
    Suppose $F\in C^N_\tau(\Omega)$.
 Let $f=F|_M$. Choose an invariant tubular neighborhood $U$ of $M$ in $\Omega$ (which exists by \cite[Theorem VI.2.2]{Bredon}). Fix an equivariant projection $\pi\colon \ol{U}\to M$. Let $\kappa_N$ be the norm of the map $P_\pi\colon C^N(M)\to C^N(\ol{U})$
 given by $P_\pi(g)=g\circ\pi$.

 Let $U'$ be an invariant tubular neighborhood of $M$ contained in $U$.
 Let $\phi\colon\Omega\to[0,1]$ be a $C^N$-smooth
 bump function supported on $U$ and equal to $1$ on $U'$.
 On replacing $\phi$ by $\frac12(\phi+\phi\circ\tau)$ we may assume that $\phi$ is $\tau$-invariant. Let $\lambda$
 be the $C^N$-norm of $\phi$.

 As equivariant Morse functions on $M$ are dense, there exists a function $f'\colon M\to\R$ such
 that $||f-f'||_{C^N(M)}<\Pi_N^{-1}\kappa_N^{-1}\lambda^{-1}\varepsilon$.

 Define
 \[F'=F+\phi(\pi^{-1}(f-f')).\]
 Then $F'|_M=f'$ is Morse on $M$. By construction, $F'$ is invariant and $C^N$-smooth. Moreover, 
 \begin{multline*}
   ||F-F'||_{C^N(\Omega)}=||\phi(\pi^{-1}(f-f'))||_{C^N(\ol{U})}\le \Pi_N||\phi||_{C^N(\ol{U})}||\pi^{-1}(f-f')||_{C^N(\ol{U})}\le \\
   \Pi_N\lambda_N ||\pi^{-1}(f-f')||_{C^N(\ol{U})}\le \Pi_N\kappa_N\lambda_N||f-f'||_{C^N(M)}<\varepsilon.
 \end{multline*}
 That is, near each function $F\in C^N_\tau(M)$, we can find a function $F'$, arbitrarily close to $F$ in the $C^N$-norm,
 such that $F'|_M$ is Morse. 
\end{proof}
\begin{lem}\label{lem:step3}
  The set $\cM_3\subset C^N_\tau(\Omega)$ of functions satisfying~\ref{item:M_gen} is open-dense.
\end{lem}
\begin{proof}
  As $M$ is closed, a function without critical points on $M$ has in its $C^N$-neighborhood only
  functions without critical points on $M$. This shows openness.

  For density, suppose $F\in C^N_\tau(\Omega)$. Let $\varepsilon>0$ and choose a function $F_1\in\cM_1\cap\cM_2$ such that $||F-F_1||<\varepsilon/2$. This is possibly by Lemmata~\ref{lem:step1} and~\ref{lem:step2}. We aim
 to push all critical points of $F_1$ off the submanifold $M$. If $F_1$ has no such critical points, we have already found a function
 in $C^N_\tau(\Omega)$ satisfying \ref{item:M_gen}. If not, as critical points of $F_1$ are Morse, they are isolated. In particular,
 there is a finite number $p$ of critical points of $F_1$ on $M$. Choose such a critical point $u$.
 Let $U$ be an equivariant neighborhood of $u$ on which $\tau$ acts linearly. We may assume that there are coordinates
 $x_1,\dots,w_s$ on $U$ satisfying \ref{item:EM_linear}, \ref{item:EM_M}, and~\ref{item:EM_N}. Shrink $U$ to ensure that $F|_M$ has no
 critical points on $U$ other than $u$. 

 We assume that such neighborhoods corresponding to different
 critical points are pairwise disjoint. Let $U'$ be a neighborhood of $u$ such that $\ol{U'}\subset U$. Choose a bump function $\phi\colon\Omega\to[0,1]$ that is supported on $U$ and equal to $1$ on $U'$. Let $\lambda_N$ be the $C^N$-norm of $\phi$.

 Given that $n>r$, for $\eta>0$, we define a function
 \begin{equation}\label{eq:replacement_m3}F'_1=F_1+\phi\varepsilon\eta w_1.\end{equation}
 \begin{lem}\label{lem:on_m3}
   If $\eta$ is sufficiently small, then:
   \begin{itemize}
     \item $||F'_1-F_1||_{C^N}<\varepsilon/2p$;
     \item $F'_1$ has no critical points on $U\cap M$.
   \end{itemize}
 \end{lem}
 \begin{proof}
   The first item is straightforward, we need to show that $||\eta\phi w_1||_{C^N(U)}<1/p$. This can be done by estimating the $C^N$-norm
   of $w_1$ (which is finite) and using the finiteness of the norm of $\phi$.

   For the second part, we note that $F'_1$ has no critical points on $U'\cap M$ regardless of $\eta$ (as long as it is positive). 
   In fact, on $U'$ we have $\phi\equiv 1$, so $F'_1=F_1+\varepsilon\eta w_1$. As $u$ is an isolated critical point of $F_1|_M$,
   at each point of $U'|_M$ except at $u$, the derivative of $F_1|_M$ is not zero. The derivative of $F_1'$ with respect to $w_1$
   is not zero at $u$ (it can be zero at some points of $U'|_M$). That is, $F'_1$ has no critical points on $U'|_M$.

   On $(U\setminus U')|_M$ the function $F_1$ has no critical points, hence its derivative is bounded from below by a constant. Denote it $\alpha>0$. Next, let $\beta$ be the upper bound on the derivative of $\phi\cdot w_1$ on $\ol{U\setminus U'}|_M$.
   If $\varepsilon\eta<\frac{\alpha}{\beta}$, then $F_1+\varepsilon\eta\phi w_1$ has non-vanishing derivative
   on $U\setminus U'$.
 \end{proof}
 Given Lemma~\ref{lem:on_m3}, we continue with the proof of Lemma~\ref{lem:step3}.
 The replacement \eqref{eq:replacement_m3} can be done near all critical points independently.
 The resulting function $\wt{F}\in C^N_\tau(\Omega)$
 satisfies $||\wt{F}-F||_{C^N(\Omega)}<\varepsilon$ and $\wt{F}$ has no critical points on $M$, that is, it satisfies~\ref{item:M_gen}.
 This means, that if $\cM_3$ denotes the subset of functions satisfying~\ref{item:M_gen}, then it is dense (openness is clear).
\end{proof}

 \emph{Finishing the proof of Theorem~\ref{thm:density}.} 
 We have defined three subspaces $\cM_1$, $\cM_2$, and $\cM_3$ of functions of $C^N_\tau(\Omega)$ satisfying respectively \ref{item:M_morse},
 \ref{item:M_rest}, and~\ref{item:M_gen}. We have showed that each of the three spaces is open-dense. Their intersection consists
 of Morse functions.
 \end{proof}

 \subsection{Equivariant cobordisms of involutive links}\label{sub:applications}
In this section we apply equivariant embedded Morse theory to study cobordisms of involutive links.
Let $\Omega=S^3\times[0,1]$ and $\tau\colon\Omega\to\Omega$ be a smooth involution fixing $S^3\times\{t\}$
for all $t\in[0,1]$. That is, we choose the standard involution of $S^3\times[0,1]$, by rotation about a fixed axis in $S^3$.
We will often use the fact that $\Omega^\tau=\Fix\tau$ is a 2-dimensional manifold whose intersection with $S^3\times\{0\}$ and $S^3\times\{1\}$ is a circle.

Consider an equivariant link cobordism $M\subset S^3\times[0,1]$, that is, a smooth oriented two-dimensional surface fixed by $\tau$. 
Throughout Section~\ref{sub:applications}, we make the following assumption.
\begin{con}\label{con:no_iso}
The action of $\tau\colon M\to M$ has no isolated fixed points, and $\tau$ is not the identity on any connected component of $M$.
\end{con}
This assumption is justified by the following result.
\begin{lemma}\label{lem:no_isolated}
  Suppose $z_0$ is an isolated fixed point of the involution $\tau\colon M\to M$. Then the action of $\tau$ on the connected
  component $M_0$ of $M$ containing $z_0$ is orientation-preserving.
\end{lemma}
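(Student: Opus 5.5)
The plan is to linearize $\tau$ at $z_0$ and read off the local orientation behaviour. By Lemma~\ref{lem:local_coor}, applied to the involution $\tau|_M$ on the surface $M$ (here $m=2$), there are local coordinates $(z_1,z_2)$ on $M$ centered at $z_0$ in which $\tau$ is linear and diagonal with entries $\pm1$. The number of $+1$ entries equals the dimension of the fixed-point set near $z_0$. Since $z_0$ is isolated, this dimension is $0$, so in these coordinates $\tau(z_1,z_2)=(-z_1,-z_2)$. The differential $D\tau(z_0)=-\Id$ on $T_{z_0}M$ has determinant $+1$, so $\tau$ preserves the local orientation at $z_0$.

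The next step is to pass from this local statement to the whole component. First I check that $\tau(M_0)=M_0$: $\tau$ maps connected components to connected components and fixes $z_0\in M_0$. I would treat $M$ as oriented; if that were not available, I would argue on the orientable double cover or with local orientations. On the connected surface $M_0$, the locally constant function sending $z$ to $\pm1$, according to whether $D\tau(z)\colon T_zM\to T_{\tau z}M$ preserves or reverses the orientation, is continuous. Because $M_0$ is connected, this function is constant. At $z_0$ it equals $+1$, so it equals $+1$ everywhere, and hence $\tau|_{M_0}$ is orientation-preserving.

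The only point needing care is the claim that the fixed-point dimension at $z_0$ equals the number of $+1$ eigenvalues of $D\tau(z_0)$. This is exactly the content of the proof of Lemma~\ref{lem:local_coor}, via the equivariant exponential map, so I expect no real obstacle. I would also note a consequence of the same computation: at an orientation-reversing component the fixed set is locally a curve, with eigenvalues $(-1,+1)$, never an isolated point. This is why Condition~\ref{con:no_iso} holds automatically when $\tau$ reverses the orientation of each component.
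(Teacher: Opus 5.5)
Your proof is correct and follows essentially the same route as the paper. You linearize $\tau$ at $z_0$, use isolation to force $D\tau(z_0)=-\mathrm{Id}$ on $T_{z_0}M$ (determinant $+1$), and propagate to the whole component; your check that $\tau(M_0)=M_0$ and the local-constancy argument for the orientation sign just spell out the paper's one-line final step.
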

\begin{proof}
  Let $N=M^\tau$ be the fixed set. By the assumption
  $z_0\in N$.
  As $\tau(z_0)=z_0$, $\tau$ acts on $T_{z_0}M$ linearly. Since $\tau$ acts smoothly, $(T_{z_0}M)^\tau$ is the tangent space $T_{z_0}N$.
  The point $z_0$ is an isolated fixed point, therefore $\tau\colon T_{z_0}M\to T_{z_0}M$ is a linear involution of $\R^2$ with
  only $0$ as the fixed point. Hence, $\tau$ is the symmetry about $0$, so it preserves the orientation of $T_{z_0}M$. Hence, it
  preserves the orientation of the whole connected component of $M$ containing $z_0$.
\end{proof}
\begin{remark}
  The case when $\tau$ has isolated fixed points on $M$ is discussed in Section~\ref{sub:cob_isolated} below.
\end{remark}

 Perturb $F$ to be an embedded equivariant Morse function on $\Omega$ with respect to $M$. As the original function did not have critical points on $\Omega$, its perturbation can be chosen not to have such points either.
In the notation of Subsection~\ref{sub:EEM_theory} we have $m=2$, $k=2$, $n=2$ and $r=1$.
The embedded Equivariant Morse Lemma (Theorem~\ref{thm:eem}) gives the following possibilities for critical points on $M$.
\begin{itemize}
  \item Critical points outside $M\cap \Omega^\tau$. The local form is $\pm x_1^2\pm x_2^2+w_1$. Depending on signs we have:
    \begin{itemize}
      \item A pair of births with the local form $x_1^2+x_2^2+w_1$;
      \item A pair of saddle points with the local form $-x_1^2+x_2^2+w_1$;
      \item A pair of deaths with the local form $-x_1^2-x_2^2+w_1$.
    \end{itemize}
  \item A critical point in $M\cap \Omega^\tau$ has local form $\pm x_1^2\pm y_1^2+w_1$. Depending on signs, we distinguish:
    \begin{itemize}
      \item An invariant birth with local form $x_1^2+y_1^2+w_1$, creating an invariant circle on which $\tau$ acts by reflection;
      \item Invariant saddles with local forms either $-x_1^2+y_1^2+w_1$ or $x_1^2-y_1^2+w_1$. Replacing $F$ by $-F$ and $w_1$ by $-w_1$
	 swaps the two local forms;
      \item An invariant death with local form $-x_1^2-y_1^2+w_1$.
    \end{itemize}
\end{itemize}
It is worth recalling that $x_i,y_i$ are the coordinates on $M$, while $v_j,w_j$ are the coordinates normal to $M$. The $\tau$-action
changes the sign of the $x_i,v_j$ coordinates, and fixes the $y_i,w_j$ coordinates.

The change of the topology of the link $M\cap F^{-1}(t)$ near the critical point can be presented as a change of the link diagram:
a birth should create an unknotted circle or a pair of unknotted circles etc. In order to describe this change rigorously, we need to find
a projection of a link that is compatible with the change of topology, for example, the circle created during the birth should not be mapped to a segment. In Theorem~\ref{thm:526} below, we prove that this is indeed the case. In particular, we show that under some precise genericity conditions, any critical point induces an equivariant Morse move, which we are now going to define.
\begin{figure}
  \begin{tikzpicture}
    \draw(-5,0) node [rectangle, minimum width=2cm,fill=yellow!20] {\includegraphics[height=1.5cm]{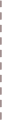}};
    \draw(-2,0) node [rectangle, minimum width=2cm,fill=yellow!20] {\includegraphics[height=1.5cm]{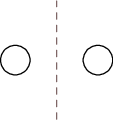}};
    \draw(2,0) node [rectangle, minimum width=2cm,fill=yellow!20] {\includegraphics[height=1.5cm]{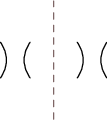}};
    \draw(5,0) node [rectangle, minimum width=2cm,fill=yellow!20] {\includegraphics[height=1.5cm]{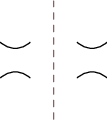}};
    \draw[->, decorate, decoration={snake,amplitude=0.5pt},thin] (-5,0.9) .. controls ++ (45:1) and ++ (135:1) .. node [above, scale=0.8] {birth} (-2,0.9);
    \draw[->, decorate, decoration={snake,amplitude=0.5pt},thin] (-2,-0.9) .. controls ++ (225:1) and ++ (315:1) .. node [above, scale=0.8] {death} (-5,-0.9);
    \draw[<->, decorate, decoration={snake,amplitude=0.5pt},thin] (2,0.9) .. controls ++ (45:1) and ++ (135:1) .. node [above, scale=0.8] {saddle} (5,0.9);
  \end{tikzpicture}
  \caption{Critical points outside the symmetry axis}\label{fig:outside_crits}
\end{figure}

\begin{figure}
  \begin{tikzpicture}
    \draw(-5,0) node [rectangle, minimum width=2cm,fill=yellow!20] {\includegraphics[height=1.5cm]{pics/crits-7.eps}};
    \draw(-2,0) node [rectangle, minimum width=2cm,fill=yellow!20] {\includegraphics[height=1.5cm]{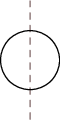}};
    \draw(2,0) node [rectangle, minimum width=2cm,fill=yellow!20] {\includegraphics[height=1.5cm]{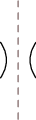}};
    \draw(5,0) node [rectangle, minimum width=2cm,fill=yellow!20] {\includegraphics[height=1.5cm]{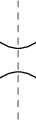}};
    \draw[->, decorate, decoration={snake,amplitude=0.5pt},thin] (-5,0.9) .. controls ++ (45:1) and ++ (135:1) .. node [above, scale=0.8] {birth} (-2,0.9);
    \draw[->, decorate, decoration={snake,amplitude=0.5pt},thin] (-2,-0.9) .. controls ++ (225:1) and ++ (315:1) .. node [above, scale=0.8] {death} (-5,-0.9);
    \draw[<->, decorate, decoration={snake,amplitude=0.5pt},thin] (2,0.9) .. controls ++ (45:1) and ++ (135:1) .. node [above, scale=0.8] {saddle} (5,0.9);
  \end{tikzpicture}
  \caption{Critical points on the symmetry axis}\label{fig:inside_crits}
\end{figure}
\begin{defn}
  A transformation of an equivariant link depicted on Figure~\ref{fig:outside_crits} or Figure~\ref{fig:inside_crits} is called an \emph{equivariant Morse move}. The moves on Figure~\ref{fig:outside_crits} are called \emph{outer} or \emph{off-axis} moves, while the moves
  on Figure~\ref{fig:inside_crits} are called \emph{inner} or \emph{on-axis} moves.
\end{defn}

Our study is now divided into steps. First, we specify local coordinates. This step does not require the function to be Morse. Next,
in Subsection~\ref{sub:local_comp}, we define a local compression of $M$, and show that a Morse critical point. In Subsection~\ref{sub:movies},
we prove Theorem~\ref{thm:intro2} about the movies associated with a cobordism.  Finally, in Subsection~\ref{sub:good_exists}, we show
that good projections (see Definition~\ref{con:generic}) exist.

\subsubsection{Local coordinates and good projections}\label{sub:local_coors}
We now start the discussion of local changes of the diagram after passing through a critical point. This analysis will culminate in
Theorem~\ref{thm:morse_genericity}, which is the key step in proving Theorem~\ref{thm:526}.
We begin with choosing coordinates.
Suppose a critical point $u_0\in M$ 
of $F|_M$ occurs at a level set $t_0$. 
Set $\varepsilon>0$, which we will eventually adjust to be sufficiently small. 
Choose an arc $\omega\in F^{-1}[t_0-\varepsilon,t_0+\varepsilon]\cap \Omega^\tau$, missing $M$ and transverse to the
level sets of $F$. Such arc is constructed by picking first a point in $\Omega^\tau\setminus M$ at the level set $F^{-1}(t_0)$ and
extending it in a normal direction to the level set. We think
of this arc as a choice of the point at infinity for the level set $F$. 

We let $S_t\cong S^3$ denote $F^{-1}(t)$, and $R_t=S_t\setminus\omega$. Then, $R_t$ is diffeomorphic to $\R^3$ for all $t\in[t_0-\varepsilon,t_0+\varepsilon]$. Moreover, these diffeomorphisms can be chosen by a fixed stereographic projection from $\omega\cap S_t$ to $\R^3$. Hence,
the identification of $R_t$ with $\R^3$ depends smoothly on $t$.
Set $X_t=M\cap R_t$, with $X_{t_0}$ being the singular level set. Near $u_0$, we choose coordinates $(x_1,y_1)$ on $M$. 
If $u_0\in M^\tau$, we require that $\tau(x_1,y_1)=(-x_1,y_1)$.
The next result shows a local form of $f$. Note that we \emph{do not} assume that $u_0$ is a Morse critical point of $F|_{M}$.
\begin{lemma}\label{lem:fix_coor}
  The coordinate system on $M$ can be completed to a local coordinate system $(x_1,y_1,v_1,w_1)$ near $u_0$ on $\Omega$
  in such a way that $F$ in these coordinates is equal to $F(u_0)+f(x_1,y_1)+w_1$, where in the local coordinates,
  $f=F|_M$. Moreover, if $u_0\in M^{\tau}$,
  then $\tau(v_1,w_1)=(-v_1,w_1)$.
\end{lemma}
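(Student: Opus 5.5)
The plan mirrors the last step of the proof of Theorem~\ref{thm:eem}, but without using any Morse normal form for $f$. First I will choose an equivariant tubular neighborhood of $M$ near $u_0$ via \cite[Theorem VI.2.2]{Bredon}. This gives a local equivariant product structure $U\cong U_M\times U_T$, where $U_M\subset M$ carries the fixed coordinates $(x_1,y_1)$ and $U_T\subset\R^2$ is a normal slice.

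If $u_0\in M^\tau$, then by Lemma~\ref{lem:local_coor} applied to the linear action on the normal fiber, I can pick coordinates $(v_1,w_1')$ on $U_T$ with $\tau(v_1,w_1')=(-v_1,w_1')$. The count works because $\Omega^\tau$ is $2$-dimensional and meets $M$ in a curve: the normal fiber at $u_0$ has exactly one invariant direction and one anti-invariant direction. If $u_0\notin M^\tau$, no equivariance constraint is needed locally, and I take arbitrary normal coordinates $(v_1,w_1')$.

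Next I use that $u_0$ is not a critical point of $F$ on $\Omega$. This holds because $F$ has no critical points on $\Omega$ at all, being a perturbation of the height function. Since $DF|_M(u_0)=0$, some normal derivative $\partial F/\partial v_1$ or $\partial F/\partial w_1'$ is nonzero at $u_0$. When $u_0\in M^\tau$, Corollary~\ref{cor:vanish} forces $\partial F/\partial v_1(u_0)=0$, so $\partial F/\partial w_1'(u_0)\neq0$. In the free case, I will relabel the normal coordinates so that this also holds.

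I then define the new coordinate
\[
w_1=F(x_1,y_1,v_1,w_1')-F(u_0)-f(x_1,y_1),
\]
where $f(x_1,y_1)=F(x_1,y_1,0,0)$, and keep the coordinates $x_1,y_1,v_1$. The Jacobian of the map $(x_1,y_1,v_1,w_1')\mapsto(x_1,y_1,v_1,w_1)$ is triangular with diagonal entry $\partial F/\partial w_1'(u_0)\neq0$ in the last slot, so by the inverse function theorem it is a local diffeomorphism. In the new coordinates, $F=F(u_0)+f(x_1,y_1)+w_1$ by construction, and $M=\{v_1=w_1=0\}$ is preserved because $w_1$ vanishes on $M$.

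Equivariance in the fixed-point case follows because $F$ is invariant and $f$ is invariant under $(x_1,y_1)\mapsto(-x_1,y_1)$, as $f$ is the restriction of $F$ to an invariant submanifold. Hence $w_1$ is $\tau$-invariant, and $\tau(v_1,w_1)=(-v_1,w_1)$ holds.

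The only delicate point is checking that the normal derivative in the invariant direction is the nonvanishing one, which is exactly the role of Corollary~\ref{cor:vanish}; everything else is routine. In the free case, one should also shrink the neighborhood so that it is disjoint from its $\tau$-image, and define the coordinates near $\tau u_0$ by transport under $\tau$.
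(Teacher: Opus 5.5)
Your proposal is correct and follows the paper's proof essentially step for step. It uses equivariant coordinates with $M=\{v_1=w_1=0\}$ and $\tau$ acting by $(-x_1,y_1,-v_1,w_1)$, applies Corollary~\ref{cor:vanish} to get $\partial F/\partial v_1(u_0)=0$ and hence $\partial F/\partial w_1(u_0)\neq 0$, and then makes the triangular, $\tau$-invariant change of the last coordinate; you additionally justify the equivariant normal coordinates via the tubular neighborhood, which the paper takes for granted.

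One bookkeeping fix is needed: with $f(x_1,y_1)=F(x_1,y_1,0,0)$, your $w_1$ equals $-F(u_0)$ on $M$, not $0$. Either set $f=F|_M-F(u_0)$, which the statement's formula forces anyway since evaluating it at $u_0$ gives $f(u_0)=0$, or, as the paper does, take $w_1=F-F(x_1,y_1,0,0)$ and absorb the constant into $f$.
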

\begin{proof}[Proof of Lemma~\ref{lem:fix_coor}]
  The proof is similar for cases $u_0\in M^{\tau}$ and $u_0\notin M^{\tau}$. We give the proof in the case $u_0\in M^{\tau}$, which
  Extend $(x_1,y_1)$ to equivariant coordinates $(x_1,y_1,v_1,w_1)$ near $u_0$ on $\Omega$ in such a way that $M=\{v_1=w_1=0\}$,
  and $\tau(x_1,y_1,v_1,w_1)=(-x_1,y_1,-v_1,w_1)$.
By Lemma~\ref{lem:derivative} $\frac{\partial F}{\partial v_1}(u_0)=0$, and as $F|_M$ has critical point at $u_0$, we have
$\frac{\partial F}{\partial x_1}(u_0)=\frac{\partial F}{\partial y_1}(u_0)=0$. As $DF(u_0)\neq 0$, 
we must have $\frac{\partial F}{\partial w_1}(u_0)\neq 0$.
Then, the change of variables $(x_1,y_1,u_1,w_1)\mapsto
(x_1,y_1,u_1,F(x_1,y_1,u_1,w_1)-F(x_1,y_1,0,0))$ is a local equivariant diffeomorphism. Applying this, we arrive at the situation where
\begin{equation}\label{eq:F_non_morse}
  F(x_1,y_1,v_1,w_1)=w_1+f(x_1,y_1),
\end{equation}
where $f(x_1,y_1):=F(x_1,y_1,0,0)$ is smooth and $\Z_2$-invariant. 
\end{proof}
\begin{figure}
  \begin{tikzpicture}
    \draw(-1,0) node [rectangle, minimum width=2cm] {\includegraphics[height=4cm]{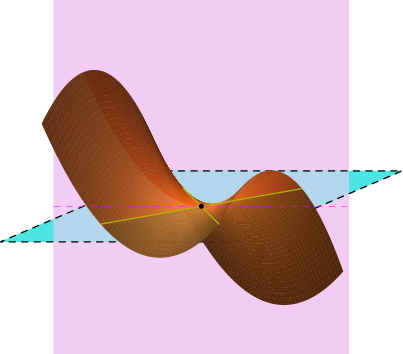}};
    \definecolor{mycolor}{rgb}{0.3,0.9,0.9}
    \fill[mycolor] (3,-1.5) -- (6,-1.5) -- (6,1.5) -- (3,1.5) -- cycle;
    \draw (3.5,-1) -- (5.5,1);
    \draw (3.5,1) -- (5.5,-1);
    \draw (-2.3,1.8) -- ++ (-0.5,0.5) -- node[scale=0.7,above,midway] {symmetry plane $\Omega^\tau$} ++ (-2,0);
    \draw (-2.9,-0.65) -- ++ (-0.5,0.5) -- node[scale=0.7,above,midway] {singular level set $R=R_{t_0}$}  ++ (-2.5,0);
    \fill[yellow] (-1,-0.32) circle (0.03);
    \draw[->] (-1,-0.32) -- node[above,midway,scale=0.5] {$y_1$} ++(0.5,0);
    \draw[->] (-1,-0.32) -- node[above,left,scale=0.5] {$x_1$} ++(-0.4,-0.4);
    \draw[->] (-1,-0.32) -- node[above,left,scale=0.5] {$w_1$} ++(0,0.5);
    \draw (-2.4,1.3) -- ++ (-0.5,0.5) -- node [scale=0.7,above,midway] {$M$} ++ (-1.3,0);
    \draw[densely dotted] (3,0) -- (6,0); \draw (5.6,0) -- ++ (0.5,0.5) -- node[scale=0.7,above,midway] {symmetry axis} ++ (2,0);
    \draw[->] (4.5,0) -- node[above,midway,scale=0.5] {$y_1$} ++ (0.5,0);
    \draw[->] (4.5,0) -- node[midway,left,scale=0.5] {$x_1$} ++ (0,0.5);
    \fill[yellow] (-1,-0.32) circle (0.03);
  \end{tikzpicture}
  \caption{The surface $M$ (left) and its intersection with the singular level set (right). We omit the $v_1$--coordinate on both pictures.}\label{fig:coors}
\end{figure}
Choose a neighborhood $U$ of $u_0$ such that the coordinates are specified. We assume that the arc $\omega_{t}$ misses $t$.
As $w_1$ is transverse to the level sets of $F$, the coordinates $x_1,y_1,v_1$ provide a coordinate system on $U\cap R_t$
for each $t\in[t_0-\varepsilon,t_0+\varepsilon]$. We extend these coordinates to the whole of $R_t$ arbitrarily, but
if $u_0\in M^\tau$, we require that 
$\tau(x_1,y_1,v_1)=(-x_1,y_1,-v_1)$, see Figure~\ref{fig:coors}.

In these coordinates, we can specify what we mean by a good projection. Here, by a projection, we mean a smooth equivariant fibration $\R^3\to\R^2$ with fibers $\R$. In general, we will consider projections that are $C^1$-small perturbations of linear projections.
\begin{defn}\label{con:generic}\
  A projection $\pi\colon R_{t_0}\to\R^2$ is \emph{good} if it satisfies the following properties.
  \begin{enumerate}[label=(P-\arabic*)]
    \item If $u_0\notin M^{\tau}$, then $\pi(u_0)\notin\cL$;\label{item:nocL}
    \item The vectors $\frac{\partial}{\partial x_1},\frac{\partial}{\partial y_1}$ have linearly independent images under $D\pi(0,0,0)$. \label{item:proj1}
    \item The only points on $X_{t_0}$ mapped to $\pi(U\cap R_{t_0})$ are in $X_{t_0}\cap (U\cap R_{t_0})$. \label{item:proj2}
  \end{enumerate}
\end{defn}
We defer the proof that good projections exist until Subsection~\ref{sub:good_exists}.

\subsubsection{Local compression of $M$ and equivariant Morse moves}\label{sub:local_comp}
Suppose $u_0$ is a critical point of $F|_M$, but not of $F$ itself. Let $\pi$ be a good projection. Define the surface
\[M'\subset\R^2\times[t_0-\varepsilon,t_0+\varepsilon]=\{(\pi(u),F(u))\colon u\in M\cap F^{-1}[t_0-\varepsilon,t_0+\varepsilon]\}.\]
We refer to $M'$ as the \emph{compressed surface} and the map $(\pi,F)$ as the \emph{compression map}.
While $M'$ might be singular, by items~\ref{item:proj1}, \ref{item:proj2} of Definition~\ref{con:generic}, there exists a neighborhood $U'$
of $(0,0,F(u_0))$ such that $M'\cap U'$ is smooth. Indeed, \ref{item:proj1} indicates that a small neighborhood of $u_0$ in $M$ is mapped
diffeomorphically onto its image, and \ref{item:proj2} prohibits existence of multiple points in $U'$. Write $U''$ for $U'\cup\tau U'$.
This allows us to handle the situation, where $u_0\notin M^\tau$, so $\tau u_0$ is another critical point. Notice that if $\pi$ is a good
projection for $u_0$, by symmetry, it is also a good projection for $\tau u_0$.

By construction, $M'\cap F^{-1}(t)$ for $t\in [t_0-\varepsilon,t_0+\varepsilon]$ is the diagram of the link. Perturb the compression map
away from the preimage of $U''$ in such a way that $M'\cap F^{-1}(t_0)$ is a regular link diagram (in the sense of Definition~\ref{def:regular-half-knot}). This is possible using the techinques from the proof of Theorem~\ref{thm:generic_half_knot} applied to the curve
\[\cS:=(M\setminus (\pi,F)^{-1}(U''))\cap R_{t_0}\]
and then extending the perturbation to other level sets. On shrinking $\varepsilon$ if necessary, we may assume that $M'\cap F^{-1}(t)$
is a regular link diagram away from $U''$ for all $t\in[t_0-\varepsilon,t_0+\varepsilon]$. That is, the only change of the diagram
occurs in $U''$. This change depends on the exact properties of the function $f$.

\begin{thm}\label{thm:morse_genericity}
  Suppose $\pi\colon R\to\R^2$ is a good projection. Let $L_t^0=(M\cap U'\cap F^{-1}(t)$, $D^0_t=\pi_t(L_t)$, where $t\in[t_0-\varepsilon,t_0+\varepsilon]$. Suppose $f(x_1,y_1)$ is Morse.
  \begin{itemize}
    \item if $u_0\notin M^\tau$, then $D_{t_0+\varepsilon}^0$ differs from $D_{t_0-\varepsilon}^0$ by a pair of births, saddle moves or births (depending on the index), see Figure~\ref{fig:outside_crits}.
    \item if $u_0\in M^\tau$, then the change is as in Figure~\ref{fig:inside_crits}. More precisely:
  \begin{itemize}
    \item if $u_0$ is a birth, then $D^0_t=\emptyset$ for $t<t_0$. For $t>t_0$, $D^0_t$ is a circle intersecting the symmetry axis
      at two points;
    \item if $u_0$ is a death, then the situation is the same as for birth but with time reversed;
    \item if $u_0$ is a saddle point, for $t\neq t_0$, $D^0_t$ is a union of two arcs.
  \end{itemize}
  \end{itemize}
\end{thm}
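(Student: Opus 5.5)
Throughout, work in the coordinates of Lemma~\ref{lem:fix_coor}: near $u_0$ we have $F = F(u_0) + f(x_1,y_1) + w_1$ and $M=\{v_1=w_1=0\}$. For $t$ near $t_0$, the part of the link in the slice $R_t$ that lies in $U$ is therefore
\[
L^0_t=\{(x_1,y_1,0)\colon f(x_1,y_1)=t-F(u_0)\}.
\]
Thus $L^0_t$ is a level set of $f$ on the surface chart, placed inside $R_t$ with coordinates $(x_1,y_1,v_1)$. The plan is to reduce everything to the topology of the level sets of $f$ and then transport them to the plane via $\pi$.

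\textbf{Step 1: put $f$ in normal form.} Apply the ordinary Morse lemma when $u_0\notin M^\tau$. When $u_0\in M^\tau$, apply the Equivariant Morse Lemma (Theorem~\ref{thm:wass_mors}) on $M$ with the involution $(x_1,y_1)\mapsto(-x_1,y_1)$. This gives coordinates with $f=\pm x_1^2\pm y_1^2$, and in the equivariant case these coordinates are still compatible with $\tau$. The coordinate change acts only on $M$. We extend it to $R_t$ trivially in $v_1$, which keeps the extension equivariant.

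\textbf{Step 2: transport to the plane.} By \ref{item:proj1}, $D\pi$ restricted to the plane $\{v_1=0\}$ at the origin is an isomorphism. Hence, after shrinking $U$, the map $\pi|_{M\cap U}$ composed with the chart is a diffeomorphism onto its image. When $u_0\in M^\tau$ it is moreover an equivariant diffeomorphism from $(x_1,y_1)$ with $\tau$ acting as $x_1\mapsto -x_1$ to $\R^2$ with $\tau$, and it sends the $y_1$-axis into $\cL$.

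Consequently $D^0_t$ is the image, under an (equivariant) diffeomorphism, of the level curve $\{\pm x_1^2\pm y_1^2=t-F(u_0)\}$. Condition \ref{item:proj2} guarantees that no other strands of the link project into $\pi(U)$, so this curve is the entire diagram there. If $u_0\notin M^\tau$, condition \ref{item:nocL} keeps $\pi(U)$ off $\cL$ after shrinking, and the symmetric copy at $\tau u_0$ gives the paired move.

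\textbf{Step 3: read off the case analysis.} This is now the standard picture.
\begin{itemize}
\item Index $0$: the level set is empty for $t<t_0$ and a small ellipse for $t>t_0$. When it is symmetric under $x_1\mapsto -x_1$, it meets the $y_1$-axis, and hence $\cL$, in exactly two points.
\item Index $2$: the same as index $0$ with time reversed.
\item Index $1$: the level set of $\pm(x_1^2-y_1^2)$ is a pair of hyperbola arcs for $t\neq t_0$. The two arcs switch pairing as $t$ crosses $t_0$.
\end{itemize}
This yields Figures~\ref{fig:outside_crits} and~\ref{fig:inside_crits}. The equivariant saddle has two variants, according to whether the arcs cross $\cL$ or are exchanged by $\tau$; these correspond to the forms $-x_1^2+y_1^2$ and $x_1^2-y_1^2$.

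\textbf{Main obstacle.} The delicate point is Step 2 in the equivariant case. We must make sure that the diffeomorphism induced by $\pi$, together with the Morse coordinate change, is uniform in $t\in[t_0-\varepsilon,t_0+\varepsilon]$. We also need that shrinking $U$ and $\varepsilon$ does not allow the level curves to exit $U$ before they close up.

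I would handle this as follows. Choose a closed $\tau$-invariant disk $B$ in the $(x_1,y_1)$-chart on which $\pi$ is an embedding. Then take $\varepsilon$ small enough that the level sets of $f$ with value in $[-\varepsilon,\varepsilon]$ meet $\partial B$ only in a fixed transverse pattern. There are none for indices $0$ and $2$, and four endpoints for index $1$. This makes the identification of $D^0_t$ a genuine isotopy-rel-boundary statement.
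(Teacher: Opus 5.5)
Your proposal is correct and follows the paper's argument. You use Lemma~\ref{lem:fix_coor} to write $F=F(u_0)+f(x_1,y_1)+w_1$, put $f$ in (equivariant) Morse form, use \ref{item:proj1} and \ref{item:proj2} so that $\pi$ carries the level sets of $f$ diffeomorphically onto $D^0_t$, and then read off the circle and hyperbola pictures; your uniformity-in-$t$ paragraph makes explicit what the paper leaves implicit. One small correction to your saddle remark: for either form $\pm(x_1^2-y_1^2)$, the two arcs each cross $\cL$ on one side of $t_0$ and are exchanged by $\tau$ (missing $\cL$) on the other, so the two forms are time-reverses of each other, not variants distinguished by which of these configurations occurs.
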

\begin{proof}
  The change of the diagram is controlled by the function $f$ in $U''$. If $f$ has local maximum at $u_0$ (and at $\tau u_0$ if $u_0\notin M^\tau$), then the change creates a circle. For $u_0\in M^\tau$, the circle is given locally by $x_1^2+y_1^2=t-t_0$, so it intersects
  the symmetry axis at two points. The same argument holds if $f$ has local maximum.

  If $f$ has a saddle point, the change of the link is by performing a saddle move. Note that in the case $u_0\in M^\tau$, the hyperbolas
  $x_1^2-y_1^2=\pm(t-t_0)$ intersect the axis $\cL=\{x_1=0\}$ at zero or two points depending on the sign of $\pm(t-t_0)$.
\end{proof}

\subsubsection{Movies}\label{sub:movies}

\begin{defn}\label{def:movie}
	A \emph{movie} (more precisely, an \emph{equivariant movie}) is a sequence $D_i$ for $i\in \{0,\ldots, n\}$ of equivariant link diagrams, together with a sequence $\scS_i$ of terms, each of which is either an equivariant Morse move, equivariant Reidemeister move, or equivariant plane isotopy, from $D_i$ to $D_{i+1}$.
\end{defn}
Associated to an equivariant movie, there is an equivariant cobordism as follows.  Each move $\scS_i$ specifies a cobordism from $D_i$ to $D_{i+1}$ in $S^3\times [0,1]$ in an apparent way (e.g. equivariant Reidemeister moves by their trace, etc.).  For convenience, say the cobordisms built from each $\scS_i$ this way are product cobordisms near $S^3\times \{0\}$ and $S^3\times \{1\}$.  The cobordisms from each $\scS_i$ may be stacked together to obtain a composite $D_0\to D_{n}$.
\begin{defn}
  The cobordism associated to a movie as above is called the \emph{trace} of the (equivariant) movie.
\end{defn}
It is evident from the construction that the equivariant isotopy class of the trace of a movie is well-defined, although the trace itself is not well-defined (it depends on choosing a diffeomorphism $[0,n]\to [0,1]$).

We combine the proofs of Theorem~\ref{thm:intro1} and Theorem~\ref{thm:morse_genericity} to obtain the following statement, proving
Theorem~\ref{thm:intro2} from the introduction, up to the case of isolated fixed points of the $\tau$-action, which is explained in
Section~\ref{sub:cob_isolated} below.

\begin{thm}\label{thm:526}
  Suppose there exists an equivariant cobordism $\Sigma$ between two involutive links $L_0,L_1\subset S^3$, where $L_0,L_1$ are in general position with underlying transvergent diagrams $D_0,D_1$. Assume that the $\Z_2$-action
  on $\Sigma$ has no isolated fixed points.  Then there is a movie $\mathcal{M}$ from $D_0$ to $D_1$ whose trace is equivariantly isotopic to $\Sigma$.
\end{thm}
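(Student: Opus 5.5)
We use the height function $F\colon S^3\times[0,1]\to[0,1]$ given by projection to the second factor, which is $\tau$-invariant and has no critical points on $\Omega$. The cobordism $\Sigma$ is a closed-in-the-interior surface $M$ satisfying Condition~\ref{con:no_iso}: if some component were fixed pointwise it would not be a cobordism of involutive links with finitely many axis points, and isolated fixed points are excluded by hypothesis. The plan has five steps.

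\textbf{Perturb to an EEM function.} By Theorem~\ref{thm:density}, after a small equivariant isotopy of $\Sigma$ rel boundary we may assume that $F$ is embedded equivariant Morse with respect to $M$. Equivalently, we perturb $F$ to a nearby invariant function without ambient critical points and then pull it back to the product height by an equivariant diffeomorphism isotopic to the identity. We may also assume that the critical values of $F|_M$ are distinct modulo the pairing $u_0\leftrightarrow\tau u_0$, and that they lie away from $0$ and $1$. Near $\partial\Sigma$ the surface is a product, so nothing changes there.

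\textbf{Handle the critical levels.} Let $t_1<\dots<t_p$ be the critical values. For each $t_i$, the Equivariant Embedded Morse Lemma (Theorem~\ref{thm:eem}) together with Lemma~\ref{lem:fix_coor} gives local coordinates in which $F=F(u_0)+f(x_1,y_1)+w_1$ with $f$ a nondegenerate quadratic form. When $u_0\in M^\tau$, the form is invariant, of type $\pm x_1^2\pm y_1^2$. We then choose a good projection (Definition~\ref{con:generic}), whose existence is deferred to Subsection~\ref{sub:good_exists}. We perturb the compression map away from $U''$ so that the slice diagrams near $t_i$ are regular outside $U''$. Theorem~\ref{thm:morse_genericity} then identifies the change across $[t_i-\varepsilon,t_i+\varepsilon]$ as a single equivariant Morse move: off-axis pairs as in Figure~\ref{fig:outside_crits}, or on-axis moves as in Figure~\ref{fig:inside_crits}. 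The trace of this move is equivariantly isotopic to $\Sigma\cap F^{-1}[t_i-\varepsilon,t_i+\varepsilon]$, via the straight-line isotopy in the compressed coordinates.

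\textbf{Handle the regular intervals.} On each interval $[t_i+\varepsilon,t_{i+1}-\varepsilon]$, $F|_M$ has no critical points. Hence $\Sigma$ restricted there is an equivariant isotopy of involutive links, obtained by following an equivariant gradient-like flow and using the identification $R_t\cong\R^3$, or $S^3$ globally. By Theorem~\ref{thm:intro1}, in the form of Theorem~\ref{thm:equiv_reid} together with the discussion of the I-move in Subsection~\ref{sub:ES3}, this isotopy can be perturbed to a regular path. Such a path yields a sequence of equivariant Reidemeister moves, I-moves and planar isotopies whose trace is isotopic to that piece of $\Sigma$. The endpoint diagrams must be matched with those of the adjacent Morse pieces. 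This is arranged by perturbing only in the interior of each interval and keeping the diagrams at $t_i\pm\varepsilon$ regular, which is possible by openness (Theorem~\ref{thm:open_dense}). Stacking the pieces gives the movie $\mathcal M$, and concatenating the local isotopies shows that its trace is equivariantly isotopic to $\Sigma$.

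\textbf{Main obstacle.} The key technical point is the existence of a good projection at an on-axis critical point, namely conditions \ref{item:proj1} and \ref{item:proj2}. Condition \ref{item:proj1} requires the tangent plane $T_{u_0}M$ to project isomorphically. Since $\partial_{x_1},\partial_{y_1}$ are respectively anti-invariant and invariant, we need the projection direction to avoid a codimension-one condition within equivariant linear projections. We would first try a generic equivariant $C^1$-small perturbation of the fixed projection, possibly rotating the fiber direction equivariantly inside the level set. Condition \ref{item:proj2} fails only when another strand lies over the critical point. This is an extra coincidence condition, and a parameter count shows it can be avoided by moving the arc $\omega$ or applying a small equivariant shear supported away from $u_0$.
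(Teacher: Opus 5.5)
Your proposal is correct and follows essentially the same route as the paper. You make the height function embedded equivariant Morse on $\Sigma$ and isolate each critical orbit in a thin slab, where a good projection and Theorem~\ref{thm:morse_genericity} produce a single equivariant Morse move; the remaining slabs are equivariant isotopies, which you convert into equivariant Reidemeister moves and I-moves via Theorem~\ref{thm:intro1}, perturbing rel the regular endpoint diagrams. Your treatment of good projections (a generic rotation for \ref{item:proj1}, an equivariant shear supported away from $u_0$ for \ref{item:proj2}) is also the argument of Subsection~\ref{sub:good_exists}; note that off-axis critical orbits additionally need \ref{item:nocL}, which rules out only two rotation angles (Lemma~\ref{lem:nocL_part}).
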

\begin{proof}
  Let $M\subset S^3\times[0,1]$ be an equivariant cobordism between $L_0$ and $L_1$. Let $F\colon S^3\times[0,1]\to[0,1]$ be the projection
  onto the second factor. Assume that $F$ is an equivariant embedded Morse function (see Definition~\ref{def:EEM}). If it is not the case, we perturb $F$ to $F'$. Such perturbation does not create any critical points of $S^3\times[0,1]$. In particular, by stability of Morse functions
  (see \cite[Theorem III.2.2]{GoluGuille}, but the proof applies in the equivariant setting), there is an equivariant diffeomorphism of $S^3\times[0,1]$ taking $F$ to $F'$. Such a diffeomorphism can be thought of as an equivariant perturbation of $M$. From now on, we assume that
  $F|_M$ is equivariant Morse.

  If $F|_M$ has no critical points at all, we conclude that $L_0$ and $L_1$ are equivariantly isotopic, and the statement follows directly from Theorem~\ref{thm:intro1}. If $F|_M$ has more than one critical points, we can split the cobordisms into pieces, when each piece has precisely one critical point of $F$. Therefore, it is enough to consider only the situation when $F|_M$ has precisely one critical point. We assume the critical value is $t_0=\frac12$. The discussion below deals with the case of the critical point on the axis, while the critical point off the axis can be done in a similar manner.

  Choose $\omega_{1/2}\in (S^3\times\{1/2\})^\tau$  disjoint from $M$,
where we recall that the superscript denotes the set of fixed points of $\tau$. 
  Set $R=(S^3\times\{1/2\})\setminus\{\omega_{1/2}\}$, and let $\pi\colon R\to\R^2$ be a good projection. Extend $\omega_t$ to a path of points disjoint from $M$ as above, and $\pi$ to the projection from $(S^3\times\{t\})\setminus \omega_t$, where $t\in[1/2-\varepsilon,1/2+\varepsilon]$.

  Let $L_t$ be the link $M\cap (S^3\times\{t\})$ (it is singular at $t=1/2$). Finally, let $D_t$ be the diagram of $L_t$, $D_t=\pi_t(L_t)$.
  Note that fixing an equivariant parameterization $\Phi\colon\Sigma\to S^3\times[0,1]$ of $M$, where $\Sigma$ is an abstract surface
  endowed with a $\Z_2$-action, gives us an equivariant parameterization $\wt{\phi}_t\colon\cS_t\to S^3\times\{t\}$ of $L_t$ for all $t\neq\frac12$.
  On choosing an appropriate good projection, by Theorem~\ref{thm:morse_genericity}, we may find $\varepsilon>0$ such that
  for $t\in[\frac12-\varepsilon,\frac12+\varepsilon]$, the change of the diagrams is a single equivariant Morse move.

  For $t\in[0,\frac12-\varepsilon]$, $\cS_t=\cS_0$, so $\wt{\phi}_t$ can be regarded as a map from $\cS_0$. That is, this is a path of maps from $\cS_0$ to $S^3$. Note that for $t=0,\frac12-\varepsilon$, the map $\wt{\phi}_t$ is regular. On perturbing the path $\phi_t:=\pi\circ\wt{\phi}_t$
  as in Theorem~\ref{thm:intro1}, we may construct a regular path $\phi'_t$ equal to $\phi_t$ at the end points and as close as we please
  to the original path. That is, $\phi_t$ induces a movie consisting of
  Reidemeister moves and Morse moves. Note that sufficiently perturbation preserves isotopy class. 
  That is, with $\wt{\phi'}_t\colon\cS_0\to S^3$
  being the lift, we note that the trace
  \[\{(\wt{\phi}_t(x),t)\subset S^3\times[0,\frac12-\varepsilon]\colon x\in\cS_0,\ t\in[0,\frac12-\varepsilon]\}\]
  is equivariantly isotopic to $M\cap S^3\times[0,\frac12-\varepsilon]$. 
  This shows two things at once.
  \begin{itemize}
    \item The links $L_0$ and $L_{\frac12-\varepsilon}$ are connected by a movie consisting of equivariant Reidemeister moves and the I-move;
    \item The trace of this movie is equivariantly isotopic to $M\cap S^3\times[0,\frac12-\varepsilon]$.
  \end{itemize}
  An analogous discussion can be done for $t\in[\frac12+\varepsilon,1]$.
  Combining this with the part for $t\in[\frac12-\varepsilon,\frac12+\varepsilon]$, we obtain the statement of the theorem.
\end{proof}

\subsubsection{Existence of good projections}\label{sub:good_exists}
The good projection of Definition~\ref{con:generic} will be obtained by perturbing
the following \emph{canonical projection}:
\[\pi_\phi(x_1,y_1,v_1)=(\cos(\phi)x_1+\sin(\phi)v_1,y_1),\]
where $\phi$ is a parameter, and we require that $\phi$ is not an integer  multiple of $\frac{\pi}{2}$.
We begin with \ref{item:nocL}, which we prove for $u_0\notin M^{\tau}$.
\begin{lemma}\label{lem:nocL_part}
  There are two parameters $\phi\in S^1$ for which $\pi_\phi$ takes $u_0$ to $\cL$. That is, for all but two parameters $\phi$,
  \ref{item:nocL} is satisfied.
\end{lemma}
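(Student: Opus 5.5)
The point $u_0\notin M^\tau$ sits in $R_{t_0}$, and in the coordinates $(x_1,y_1,v_1)$ fixed on $R_{t_0}$ (extended arbitrarily, but equivariantly wherever the action is involved) it has some coordinates $(a,b,c)$. Everything will be computed in these global coordinates, where $\tau$ acts by $(x_1,y_1,v_1)\mapsto(-x_1,y_1,-v_1)$. Since $\pi_\phi$ is linear, it is equivariant: $\pi_\phi\circ\tau=\tau\circ\pi_\phi$ with $\tau(x,y)=(-x,y)$ on $\R^2$. The axis $\cL$ is $\{x=0\}$. So $\pi_\phi(u_0)\in\cL$ if and only if
\[
\cos(\phi)\,a+\sin(\phi)\,c=0.
\]

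First I will show that $(a,c)\neq(0,0)$. The fixed set of $\tau$ in $R_{t_0}$ in these coordinates is $\{x_1=v_1=0\}$. This is the symmetry axis $\wt{\cL}$, which equals $\Omega^\tau\cap R_{t_0}$. Since $u_0\notin M^\tau=M\cap\Omega^\tau$ and $u_0\in M$, we have $u_0\notin\Omega^\tau$. Hence $(a,c)\neq(0,0)$.

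With this, the equation above describes the vanishing of the inner product of the nonzero vector $(a,c)$ with the unit vector $(\cos\phi,\sin\phi)$. On the circle $S^1$ of parameters $\phi$, this holds for exactly two values, namely the two unit vectors orthogonal to $(a,c)$, which differ by $\pi$. For every other $\phi$ we get $\pi_\phi(u_0)\notin\cL$, which is \ref{item:nocL}.

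The only place needing care is the coordinate bookkeeping. The coordinates $x_1,y_1,v_1$ were constructed near $u_0$ with equivariance imposed only in the case $u_0\in M^\tau$. I will use the convention that the extension to all of $R_{t_0}$ is chosen equivariantly with respect to the standard action, so that $\cL$ and $\wt{\cL}$ are the linear loci above. If one prefers, the additional requirement that $\phi$ not be a multiple of $\pi/2$ can be combined with this, but it does not affect the count.
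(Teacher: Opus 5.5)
Your proof is correct and follows the paper's argument. Write $(a,c)=r(\cos\phi_u,\sin\phi_u)$; then $r>0$ because $u_0\notin\Omega^\tau$, and the first coordinate of $\pi_\phi(u_0)$ is $r\cos(\phi-\phi_u)$, which vanishes for exactly the two antipodal values $\phi_u\pm\pi/2$ that you identify (the paper's ``$\phi_u\pm\pi$'' is a slip), while your explicit choice of equivariant global coordinates on $R_{t_0}$ is the same implicit assumption the paper uses to get $r>0$.
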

\begin{proof}
  Suppose $u_0=(u_{x},u_y,u_v)$. Write $(u_x,u_v)=r_u(\cos\phi_u,\sin\phi_u)$. As $u_0\notin M^\tau$, $r_0>0$. The parameters such that $\pi_\phi$
  takes $u_0$ to $(0,u_y)\in\cL$ are $\phi_u\pm\pi$.
\end{proof}

The remaining part of the proof assumes $u_0\in M^{\tau}$. The case $u_0\notin M^\tau$ can be easily adapted.
%
%
%

\begin{lem}\label{lem:linear_is_enough}
  The conditions in Definition~\ref{con:generic} can be satisfied after an arbitrary small perturbation of $\pi_\phi$.
\end{lem}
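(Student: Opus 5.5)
The plan is to check the three conditions of Definition~\ref{con:generic} one at a time for the canonical projections $\pi_\phi$. Each condition will hold either for all admissible $\phi$ or for all but finitely many. The only place a genuine perturbation is needed is \ref{item:proj2}, and there I will use a small equivariant perturbation.

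\textbf{Condition \ref{item:nocL}.} This is Lemma~\ref{lem:nocL_part}: it excludes only finitely many $\phi$.

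\textbf{Condition \ref{item:proj1}.} This holds for every $\phi$ that is not a multiple of $\pi/2$. The differential of $\pi_\phi$ at the origin sends $\partial/\partial x_1$ to $(\cos\phi,0)$ and $\partial/\partial y_1$ to $(0,1)$. These are independent because $\cos\phi\ne 0$. The condition is open, so it survives any $C^1$-small perturbation.

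\textbf{Condition \ref{item:proj2}, the main step.} Shrink $U$ to a small equivariant box around $u_0$, and let $K=X_{t_0}\setminus U$; this is compact. The condition fails exactly when some point of $K$ lies on a fibre of $\pi_\phi$ through $U\cap R_{t_0}$. Such fibres are lines parallel to the direction $d_\phi=(-\sin\phi,0,\cos\phi)$ in the $(x_1,y_1,v_1)$ coordinates.

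I would parametrize the bad configurations by pairs $(p,q)\in K\times (X_{t_0}\cap U)$ with $q-p$ parallel to $d_\phi$. Here $K$ is one-dimensional and $X_{t_0}\cap U$ is the singular curve $\{f=0\}$, a union of at most two arcs through $u_0$. So for a given $\phi$ the bad set is a two-dimensional family of pairs cut out by two equations. As $\phi$ varies, the set of bad $\phi$ is the image of a two-dimensional set under a smooth map to the circle, and this image could be the whole circle. For that reason a pure choice of $\phi$ does not suffice.

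The fix is to shrink $U$. As the radius $\rho$ of the box tends to zero, the set of lines in direction $d_\phi$ meeting $U$ shrinks to the single line through $u_0$. So it is enough to find $\phi$ such that the line $u_0+\R d_\phi$ misses $K$, and then choose $\rho$ small by compactness.

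The directions $\phi$ for which this line meets $K$ are given by the map
\[
K\to \R P^2,\qquad p\mapsto [p-u_0],
\]
whose image has dimension at most one. The admissible directions $d_\phi$ form a circle in $\R P^2$. After an arbitrarily small equivariant perturbation of $K$, meaning of $M$ away from $u_0$, or equivalently a small perturbation of the projection, this circle meets the image in finitely many points. Hence all but finitely many $\phi$ are good, and we can pick one that also avoids the exceptions from \ref{item:nocL}.

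\textbf{Equivariance of the perturbation.} Equivariance should cause no trouble. Because $\tau$ maps the line $u_0+\R d_\phi$ to itself, the transversality argument can be carried out on the quotient by $\tau$, as in Lemma~\ref{lem:codimension}. If a perturbation of $\pi_\phi$ is used instead of a perturbation of $M$, it can be symmetrized by averaging over $\tau$, which keeps all three open conditions intact.
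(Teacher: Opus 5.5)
Your proof is correct in substance but takes a different route from the paper's. Both arguments reduce \ref{item:proj2} to the single fibre through $u_0$; the paper, too, only examines $\Xi^{-1}(0,0,\sigma,\phi)$. Both also rely on the freedom to move the invariant coordinate $y_1$ away from $u_0$.

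The paper does this with an explicit equivariant two-parameter family
\[
\Xi(x_1,y_1,v_1,\sigma,\phi)=(\cos\phi\,x_1+\sin\phi\,v_1,\ y_1+\sigma(1-\theta)x_1^2)
\]
and a coarea/Hausdorff-measure count. A point of $X_{t_0}\setminus U$ can lie on the fibre over $(0,0)$ only if $x_1\neq0$, and then only for $\sigma=-y_1/x_1^2$ and at most two angles $\phi$. So the bad parameters form the image of a curve in the $(\phi,\sigma)$-plane, and almost every $(\phi,\sigma)$ works.

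You instead observe that the fibres through $u_0$ of all the $\pi_\phi$ sweep out the invariant plane $\{y_1=0\}$. Once $K$ is made transverse to that plane by one small perturbation, only finitely many $\phi$ are bad. This is essentially the ``quick argument involving parameter counting'' the paper mentions but does not write out. It gives a sharper conclusion (cofinitely many angles for a fixed perturbation), but the equivariant transversality step is only sketched. The paper's route names the perturbation, is manifestly equivariant because $\theta$ and $x_1^2$ are invariant, and uses nothing beyond elementary measure theory.

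Two corrections.

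First, equivariance is harmless, but not because $\tau$ preserves the line $u_0+\R d_\phi$. The real reason is that the fixed axis $\{x_1=v_1=0\}$ meets $\{y_1=0\}$ only at $u_0\notin K$, so $K\cap\{y_1=0\}$ consists of free points. This needs checking: at a fixed point of $K$, any invariant function restricted to $K$ is automatically critical, so equivariant transversality cannot be invoked blindly there.

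Second, drop the averaging alternative. Openness of \ref{item:proj2} near a non-equivariant good perturbation $\pi'$ gives no control over $\tfrac12(\pi'+\tau\circ\pi'\circ\tau)$. That average may be as far from $\pi'$ as $\pi_\phi$ is. Instead, perturb $y_1$ by an invariant function supported near the free points of $K\cap\{y_1=0\}$; this is the role the paper's term $\sigma(1-\theta)x_1^2$ plays.
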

\begin{proof}
  Condition~\ref{item:proj1} is satisfied for all map $\pi_\phi$ with $\sin(2\phi)\neq 0$ without even a need of perturbation.

  For \ref{item:proj2}, we can use a quick argument involving parameter counting. It might also be instructive for the reader
  to give an alternative
  proof, not invoking transversality theorem.

  Choose an equivariant bump function $\theta\colon R_{t_0}\to[0,1]$,
  with supported on $U\cap R_{t_0}$ and equal to $1$ on a smaller neighborhood of $(0,0,0)\in R_{t_0}$. 
  Consider an equivariant map $\Xi\colon R_{t_0}\times\R^2\to \R^2\times\R^2$ by
  \[\Xi(x_1,y_1,v_1,\sigma,\phi)=(\cos(\phi)x_1+\sin(\phi)v_1,y_1+\sigma(1-\theta)x_1^2,\sigma,\phi).\]
  If for some $\phi,\sigma$, the preimage $\Xi^{-1}(0,0,\sigma,\phi)$ does not contain any point of $X$ outside $U\cap R_{t_0}$ (and $\cos(\phi)\neq 0$), then we replace the coordinate system on $R_{t_0}$ by an automorphism $(x_1,y_1,v_1)\mapsto (x_1,y_1+\sigma(1-\theta)x_1^2,v_1)$ and \ref{item:proj2} is satisfied for this particular value of $\phi,\sigma$. Note that the derivative of this automorphism at $(0,0,0)$ is the identity, hence replacing the coordinate system does not affect \ref{item:proj1}. Therefore, we need to find such $\phi,\sigma$.

  Suppose towards contradiction that there exists a compact set $A\subset \R^2\setminus\{\phi=\frac{k}{2}\pi\}$, $k\in\Z$, of positive 2-dimensional Lebesgue measure such for $(\phi,\sigma)\in A$, the preimage $\Xi^{-1}(0,0,\phi,\sigma)$ contains a point of $X\times\{(\phi,\sigma)\}$ outside $(U\cap R_{t_0})\times\{(\phi,\sigma)\}$. Define $\wt{X}\subset (R_{t_0}\setminus U)\times A$ as the intersection of $(R_{t_0}\setminus U)\times\R^2$ with the preimage of $(0,0)\times A$. 
  As $\Xi\colon \wt{X}\to (0,0)\times A$ is onto, by coarea formula (see e.g. \cite[Theorem 3.2.11]{federer}), the subset $\wt{X}$
  has positive 2-dimensional Hausdorff measure.
  
  Now, consider the projection of $\wt{X}$ onto $X$. We claim the fibers of the projection have at most two points.
  To see this, suppose the points $(x_1,y_1,v_1,\sigma,\phi)$ and $(x_1,y_1,v_1,\sigma',\phi')$ both belong to $\wt{X}$. Then,
  \begin{align*}
    y_1+\sigma(1-\theta)x_1^2&=y_1+\sigma'(1-\theta)x_1^2 =0 ,\\
    \cos(\phi)x_1+\sin(\phi)v_1&=\cos(\phi')x_1+\sin(\phi')v_1 =0.
  \end{align*}
  As the points are outside $U$, $\theta=0$. Hence $\sigma x_1^2=\sigma' x_1^2$. If $x_1=0$ then $y_1=0$
  and $\sin(\phi)v_1=\sin(\phi')v_1=0$. As we excluded $\phi=\frac{k}{2}\pi$ from $A$, $\sin(\phi),\sin(\phi')\neq 0$, so $v_1=0$. But then $(x_1,y_1,w_1)=(0,0,0)\in U$, contradicting that these points are away from $U$. 

  Therefore, $x_1\neq 0$. Then, $\sigma=\sigma'$. Now let $\vec{a}=(\cos\phi,\sin\phi)$, $\vec{b}=(\cos\phi',\sin\phi')$, $\vec{c}=(x_1,v_1)$. The second displayed equation reads $(\vec{a}-\vec{b})\perp \vec{c}$. With $\vec{c}$ and $\vec{a}$ known, we see that
  $\vec{b}$ belongs to an affine line in $\R^2$. But $||\vec{b}||=1$, so $\vec{b}$ belongs to an intersection of a circle and a line, yielding
  at most two possibilities for $\phi'$, one being $\phi$.

  Given that the projection of $\wt{X}$ onto $X$ has at most $2$ points in each fiber, and that the one-dimensional Hausdorff measure of $X$
  is one, we invoke the coarea formula once again to deduce that the one-dimensional Hausdorff measure of $\wt{X}$ is finite. But this is impossible, since the two-dimensional Hausdorff measure of $\wt{X}$ was shown to be positive.
\end{proof}

\subsection{Cobordisms with isolated fixed points}\label{sub:cob_isolated}
We keep the notation of Section~\ref{sub:applications}. That is, $\Omega=S^3\times[0,1]$ is endowed with the standard action of $\Z_2$ (having $S^1$ as a fixed set on each $S^3\times\{t\}$)
and $M\subset\Omega$ is an embedded surface such that $\tau$ preserves $M$ setwise. We think of $M$ as an equivariant cobordism
between the involutive links $M\cap S^3\times\{0\}$ and $M\cap S^3\times\{1\}$. Let $f\colon\Omega\to[0,1]$ be the projection onto the second factor. On perturbing
$f$, if needed, we can assume that $f$ restricts to an equivariant Morse function on $M$.

Assume that $z_0\in M$ is an isolated fixed point of the action of $\tau$. 
By Corollary~\ref{cor:vanish}, $z_0$ is a critical point of $f|_M$.
By the embedded Equivariant Morse Lemma (Theorem~\ref{thm:eem}), we can find local coordinates near $z_0$. These coordinates are $(x_1,x_2,w_1,w_2)$, where $\tau$ fixes $w_1,w_2$, $\tau x_1=-x_1$, $\tau x_2=-x_2$,
and $M$ is given locally as $\{w_1=w_2=0\}$. In these coordinates $f$ is given by
\begin{equation}\label{eq:form_of_f_iso}
f(x_1,x_2,w_1,w_2)=f(z_0)\pm x_1^2\pm x_2^2 + w_1.
\end{equation}
Let $t_0=f(z_0)$. By chosing consistently points at infinity as in Section~\ref{sub:applications} above, for $t$ close to $t_0$ we consider
the level set $f^{-1}(t)$ with the point at infinity removed. We keep using the notation $R_t$ for $f^{-1}(t)$ with the point at infinity
removed. Clearly, $R_t\cong\R^3$. The flow of $\nabla f$
allows us to identify $R_t$ with $R_{t_0}$, so that level sets $f^{-1}(t)\cap M$ can be viewed as belonging to the same space $R_{t_0}\cong\R^3$.

As $t_0$ is a critical point of $f|_M$, the topology of the level sets $f^{-1}(t)\cap M$ changes as $t$ crosses $t_0$.
Depending on the index of the critical point $z_0$ we can have a birth, a death or a saddle. The corresponding level sets are depicted on
the left pictures of Figure~\ref{fig:iso_fix1} for a birth or a death, respectively Figure~\ref{fig:iso_fix2} for a saddle. For simplicity, we assume that the local coordinates $(x_1,x_2,w_2)$ on $R_{t_0}$ have been extended to the whole of $R_{t_0}$. The point $z_0$ has coordinates $(0,0,0)$.

To study the changes of the link diagram associated with $f^{-1}(t)\cap M$, we choose an equivariant projection $\pi\colon R_{t_0}\to\R^2$ and look at the images
$\pi(f^{-1}(t)\cap M)$.
Ideally, the projection should satisfy the axioms of a good projection of Definition~\ref{con:generic}. Unfortunately, this can never be achieved, as the following result shows.
\begin{lemma}\label{lem:noP2}
  Suppose $\pi\colon R_{t_0}\to\R^2$ is equivariant. The images of $\frac{\partial}{\partial x_1}$ and $\frac{\partial}{\partial x_2}$ under $D\pi(0,0,0)$ are linearly dependent.
\end{lemma}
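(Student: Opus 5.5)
The plan is to use equivariance of $\pi$ at the fixed point $z_0=(0,0,0)$. Since $z_0$ lies in the fixed set of $\tau$ on $R_{t_0}$, the projection $\pi$ must send $z_0$ into the fixed set of $\tau$ on $\R^2$, namely $\cL$. Differentiating the identity $\pi\circ\tau=\tau\circ\pi$ at $z_0$ and using the chain rule gives
\[
D\pi(z_0)\circ D\tau(z_0)=D\tau_{\R^2}\circ D\pi(z_0).
\]
Thus $D\pi(z_0)$ intertwines the linear involutions on $T_{z_0}R_{t_0}$ and on $\R^2$. In particular, $D\pi(z_0)$ maps the $(-1)$-eigenspace of $D\tau(z_0)$ into the $(-1)$-eigenspace of $\tau(x,y)=(-x,y)$ on $\R^2$.

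The next step identifies the eigenspaces. In the chosen local coordinates, $\tau$ negates $x_1$ and $x_2$ and fixes $w_2$, as recorded before the lemma. Hence $\frac{\partial}{\partial x_1}$ and $\frac{\partial}{\partial x_2}$ both lie in the $(-1)$-eigenspace of $D\tau(z_0)$. The $(-1)$-eigenspace of $\tau$ on $\R^2$ is the one-dimensional span of $\frac{\partial}{\partial x}$.

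Combining these, both images $D\pi(z_0)\frac{\partial}{\partial x_1}$ and $D\pi(z_0)\frac{\partial}{\partial x_2}$ lie in a one-dimensional subspace, so they are linearly dependent.

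The only point that needs care is confirming that $D\tau(z_0)$ is exactly $\operatorname{diag}(-1,-1,1)$ in the coordinates $(x_1,x_2,w_2)$ on $R_{t_0}$. The coordinates of Theorem~\ref{thm:eem} make $\tau$ linear, and these coordinates are the ones extended to $R_{t_0}$, so this holds directly. No other obstacle is expected.
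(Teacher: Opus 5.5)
Your proof is correct and is essentially the paper's argument. The paper writes $\pi=(\phi_x,\phi_y)$, observes that equivariance makes $\phi_y$ $\tau$-invariant, and applies Corollary~\ref{cor:vanish} to get $\partial\phi_y/\partial x_1=\partial\phi_y/\partial x_2=0$ at $(0,0,0)$. That is precisely your statement that $D\pi(z_0)$ carries the $(-1)$-eigenspace of $D\tau(z_0)$ into the one-dimensional $(-1)$-eigenspace of $\tau$ on $\R^2$.
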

\begin{proof}
  Let $x,y$ be the coordinates in $\R^2$. The map $\pi$ can be then written as $\phi=(\phi_x,\phi_y)$. As $\tau x=-x$ and
  $\tau y=y$, the map $\phi_x$ is anti-invariant and
  $\phi_y$ is $\tau$-invariant, see Definition~\ref{def:invariant}. By Corollary~\ref{cor:vanish}, 
  $\frac{\partial\phi_y}{\partial x_1}$ and $\frac{\partial\phi_y}{\partial x_2}$ vanish at $(0,0,0)$. Hence, the image of
  $\frac{\partial}{\partial x_1},\frac{\partial}{\partial x_2}$ under $D\pi(0,0,0)$ is contained in a one-dimensional space.
\end{proof}
It follows from Lemma~\ref{lem:noP2} that $\pi$ restricted to $\{w_2=0\}$ is
singular at $(0,0,0)$.
We aim to study typical (mildest) singularities that occur.
\begin{lemma}\label{lem:phi_equiv}
  On perturbing $\phi$ equivariantly, we may assume that
  \begin{itemize}
    \item there exists a neighborhood $U$ of $(0,0,0)$ in $R_{t_0}$ such that $f$
      has the form \eqref{eq:form_of_f_iso} in $U$ and neighborhoods $V$ of $(0,0)$ $\R^2$ such that $\pi^{-1}(V)$ does not
      contain any point in $f^{-1}(t)\cap M$ away from $U$, whenever $t$ is close to $t_0$;
    \item the map $\phi$ restricted to $w_2=0$ is locally a fold, i.e. $\phi_x$ has non-vanishing linear term, and $\phi_y$ can be written as
      $w_1+P(x_1,x_2)+\dots$, where $P$ is a quadratic non-degenerate function
			and dots mean higher order terms in $x_1,x_2$ and $w_1$.
  \end{itemize}
\end{lemma}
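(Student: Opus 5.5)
The plan is to treat the two items separately and to use only equivariant perturbations supported near $z_0$ together with the parameter-counting/transversality machinery from Section~\ref{sec:mutt}.

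\textbf{First item.} I will argue exactly as in the construction of good projections, condition \ref{item:proj2}. Since $z_0$ is an isolated point of $f^{-1}(t_0)\cap M$ inside $U$, the set $X_{t_0}\setminus U$ is a compact curve. The point $(0,0)=\pi(z_0)$ lies on $\cL$, so what is needed is that no point of $X_{t_0}\setminus U$ maps to $(0,0)$. Equivariance forces $\pi^{-1}(0,0)$ to be a $\tau$-invariant fibre, i.e.\ an arc through the origin.

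Take the canonical linear projection $\pi_\phi(x_1,x_2,w_2)=(\cos\phi\, x_1+\sin\phi\, x_2,\ w_2)$, or a family of such fibres. The set of parameters $\phi$ for which this fibre meets the curve $X_{t_0}\setminus U$ has measure zero. This is the coarea/Hausdorff-dimension argument of Lemma~\ref{lem:linear_is_enough}: a one-dimensional curve cannot meet a two-parameter family of lines for a positive-measure set of parameters unless its two-dimensional measure is positive. Compactness then gives a neighborhood $V$ of $(0,0)$ that avoids $\pi(X_{t}\setminus U)$. This is an open condition, so it persists for $t$ near $t_0$.

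\textbf{Second item.} Write $\pi=(\phi_x,\phi_y)$ with $\phi_x$ anti-invariant and $\phi_y$ invariant.

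For $\phi_x$: restricted to $\{w_2=0\}$, it is an odd function of $(x_1,x_2)$ with $w_1$ even. Its linear term is a linear form in $(x_1,x_2)$, which is generically nonzero; for the canonical projection above it already is.

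For $\phi_y$: it is invariant, so by Corollary~\ref{cor:vanish} its expansion contains only even monomials in $(x_1,x_2)$. The equivariant version of the Whitney/Schwarz theorem, as used in the appendix, then gives
\[
\phi_y = c\,w_1 + P(x_1,x_2) + \text{higher order terms},
\]
with $P$ quadratic. I will add small invariant perturbations $\eta_1 w_1+\eta_2 Q(x_1,x_2)$, using an invariant bump function supported in $U$. These achieve $c\neq0$, which can then be rescaled to $1$, and make $P$ nondegenerate. Both conditions are open and dense in the finite-dimensional space of $2$-jets of invariant functions: $c\ne0$ is the complement of a hyperplane, and $\det P\neq 0$ is the complement of a quadric cone.

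Finally, I will check that these perturbations are $C^1$-small, so the first item still holds after shrinking $U$ and $V$.

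\textbf{Main obstacle.} The hardest step is making the first item compatible with the local perturbation: perturbing $\phi_y$ near the origin must not create new preimages of $V$. I will handle this by shrinking $V$ after the perturbation, which is allowed because the perturbation is supported inside $U$.
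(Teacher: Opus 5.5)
The gap is in your first item. The family $\pi_\phi(x_1,x_2,w_2)=(\cos\phi\,x_1+\sin\phi\,x_2,\ w_2)$ has only one parameter, and its fibres over $(0,0)$ are exactly the lines through the origin in the plane $\{w_2=0\}$, so together they sweep out only that plane. The set of bad $\phi$ is therefore the set of directions of points of $(X_{t_0}\setminus U)\cap\{w_2=0\}$, and nothing in your setup makes this set have measure zero. For instance, $L_{t_0}$ may contain the $\tau$-invariant circle $\{x_1^2+x_2^2=4,\ w_2=0\}$ lying outside $U$. Then every fibre $\pi_\phi^{-1}(0,0)$ meets it, and no $\phi$ works. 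The coarea argument you cite gives nothing for a one-parameter family: a positive-measure set of bad $\phi$ only forces $\widetilde X$ to have positive one-dimensional measure, which is compatible with $X$ being a curve. Your own sentence speaks of a two-parameter family of lines, but the family you write down is not one. This is precisely why Lemma~\ref{lem:linear_is_enough} introduces the second parameter $\sigma$.

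There are two ways to repair this.
\begin{itemize}
\item Use a genuinely two-parameter family, e.g.\ $(\cos\phi\,x_1+\sin\phi\,x_2,\ w_2+\sigma(1-\theta)(x_1^2+x_2^2))$, with $\theta$ an invariant bump supported in $U$ and equal to $1$ near $z_0$. A point of the fibre over $(0,0)$ lying off $U$ then determines $\sigma$ uniquely and $\phi$ up to two choices, so the argument of Lemma~\ref{lem:linear_is_enough} goes through. The $2$-jet at $z_0$ is unchanged.
\item Argue as the paper does: perturb $\pi$ equivariantly away from $U$ so that the map $\pi|_{L_{t_0}\setminus U}$, from a curve to $\R^2$, misses the single point $(0,0)$. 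This is a codimension-one condition.
\end{itemize}
After either repair, compactness and openness give $V$ and the range of $t$ as you say.

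Your second item is essentially the paper's argument: the three degeneracies are proper closed conditions on the invariant $2$-jet at $z_0$, and you make the perturbation explicit. It is genuinely needed, since $\pi_\phi$ has $\phi_y=w_2$ and so $P=0$. Because that perturbation is supported in $U$, $\pi$ is unchanged on $R_{t_0}\setminus U$, so the ``main obstacle'' you describe does not actually arise.

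A minor slip: when $z_0$ is a saddle, $f^{-1}(t_0)\cap M$ near $z_0$ is a pair of crossing lines, so $z_0$ is not isolated in it. This is harmless, since you only use compactness of $X_{t_0}\setminus U$.
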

\begin{proof}
  Recall that $f^{-1}(t)\cap M$ belongs initially to $R_t$, but we have identified $R_t$ with $R_{t_0}$ for $t$ close to $t_0$.
  Choose first $U$, a neighborhood of $(0,0,0)$ in $R_{t_0}$ such that $f$ is written as in \eqref{eq:form_of_f_iso}. Perturb $\phi$ in such a way
  that with $L_{t_0}=f^{-1}(t_0)\cap M$, $\pi|_{L_{t_0}\setminus U}$ misses $(0,0)$. This is possible, because the condition
  on hitting $(0,0)$ is of codimension~1. If $\pi|_{L_{t_0}\setminus U}$ misses $(0,0)$, by openness, there exists $\varepsilon>0$ and $V\subset\R^2$ containing
  $(0,0)$ such that with $L_t=f^{-1}(t)\cap M\subset R_{t_0}$, and $t\in[t_0-\varepsilon,t_0+\varepsilon]$, the map $\pi|_{L_t\setminus U}$
  misses $V$. This concludes the proof of the first property.

  As for the second item, notice that each of the three conditions at $(0,0,0)\in R_{t_0}$:
  \begin{itemize}
    \item $\frac{\partial\phi_x}{\partial x_1}=\frac{\partial\phi_x}{\partial x_2}=0$;
    \item $\frac{\partial\phi_y}{\partial w_1}=0$;
    \item $\det D^2_{x_1,x_2}\phi_y=0$,
  \end{itemize}
  where $D^2_{x_1,x_2}$ is the 2 by 2 matrix of second derivatives of $\phi_y$ with respect to $x_1,x_2$ variables, is of parametric
  codimension~1 in the appropriate equivariant jet space. Therefore, all three situations can be avoided by perturbing $\phi$. We leave the details to the reader.

	Suppose the three conditions are negated. Then, we may assume that $\frac{\partial\phi_x}{\partial x_1}=1$ and $\frac{\partial\phi_x}{\partial x_2}=0$ (by a linear automorphism of $x_1,x_2$), so indeed $\phi_x(x_1,x_2)=x_1+\dots$. Likewise,
	the negation of the next two conditions is that $\phi_y=w_1+P(x_1,x_2)+\dots$. Here, $P$ is a non-degenerate quadratic polynomial and the dots mean higher order terms (degree $2$ or more in $w_1$ or degree $4$ or more in $x_1,x_2$). This concludes the proof.
\end{proof}

The first part of Lemma~\ref{lem:phi_equiv} implies that the change of the diagram on passing the point $t_0$ is local, that is,
does not interact with the remaining part of the diagram.
The second part implies that $\phi$ restricted to $\{w_1=0\}$ is a fold map,
see \cite{Whitney_Euclidean} and \cite[Chapter 1.5]{AVG}. The level sets $f^{-1}(t)\cap M$ are drawn in Figures~\ref{fig:iso_fix1}
(case of a birth or a death) and~\ref{fig:iso_fix2} (case of a saddle). The images of the level sets under $\pi$ acquire self-intersections.
A critical point of index $0$ or $2$ leads to creating, respectively destroying, a circle with a single self-intersection on axis,
while a critical point of index~$1$ leads to a handle with a crossing on axis. These considerations
lead us to the following lemma, which completes the proof of Theorem~\ref{thm:intro2} from the introduction.
\begin{lemma}\label{lem:equi_moves}
  Suppose $z_0\in M$ is an isolated fixed point of the $\tau$-action. Then, the diagrams of the link right below and right after
  the critical point differ by
  \begin{itemize}
    \item a creation or destruction of a singular circle in case of a birth, respectively a death, see Figure~\ref{fig:iso_fix3};
    \item a singular saddle move on-axis, see Figure~\ref{fig:iso_fix4}.
  \end{itemize}
\end{lemma}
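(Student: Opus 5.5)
The plan is to compute explicitly in the local model supplied by Lemma~\ref{lem:phi_equiv} and the form \eqref{eq:form_of_f_iso}. By the first item of Lemma~\ref{lem:phi_equiv}, the only change of the diagram for $t$ near $t_0$ occurs inside $V$. So it suffices to describe the curves $\pi(f^{-1}(t)\cap M\cap U)$ for $t$ slightly below and above $t_0$. In the coordinates $(x_1,x_2,w_1)$ on $R_{t_0}$, which are obtained by flowing along $\nabla f$, the level set $f^{-1}(t)\cap M$ near $z_0$ is the planar conic
\[
\{\pm x_1^2\pm x_2^2=t-t_0,\ w_1=0\}
\]
inside the plane $\{w_1=0\}$. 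On that plane, $\tau$ acts by $(x_1,x_2)\mapsto(-x_1,-x_2)$, and the conic is centrally symmetric.

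Next I restrict $\pi$ to the plane $\{w_1=0\}$. By the second item of Lemma~\ref{lem:phi_equiv}, after an equivariant linear change of $x_1,x_2$ and coordinates in the target, we have $\phi_x=x_1+O(3)$ and $\phi_y=P(x_1,x_2)+O(4)$ with $P$ nondegenerate. Here $\phi_x$ is odd and $\phi_y$ is even, so this is an equivariant fold, with fold curve tangent to $\{x_1=0\}$. I then absorb the $x_1^2$ and $x_1x_2$ terms of $P$, modulo the fold, by an equivariant change $\phi_y\mapsto\phi_y-a\phi_x^2$ and a shear of $x_2$. This should reduce $\pi$ to the normal form $(x_1,x_2)\mapsto(x_1,\pm x_2^2)$ up to higher-order terms, and I would justify dropping those terms by equivariant fold stability (Whitney). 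The image of $x_2\mapsto(0,\pm x_2^2)$ lies on $\cL$, which is consistent with Lemma~\ref{lem:noP2}.

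Then I would compute the images case by case.

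\emph{Birth/death.} The conic is a small circle $x_1^2+x_2^2=\delta$ (after scaling), and its image under the fold $(x_1,x_2^2)$ is a curve. The points $(x_1,x_2)$ and $(x_1,-x_2)$ map to the same image point. So antipodal points $(0,\pm\sqrt\delta)$ both map to the single point $(0,\delta)\in\cL$, and the genuine crossing is created by the higher-order term in $\phi_x$: points $(x_1,x_2)$ and $(-x_1,-x_2)$ land symmetrically. I would adjust the model so that $\phi_x=x_1+c\,x_1x_2^2+\dots$. The image is then a figure-eight-like curve with exactly one transverse double point on $\cL$, which is the singular circle of Figure~\ref{fig:iso_fix3}.

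\emph{Saddle.} The hyperbolas $x_1^2-x_2^2=\pm\delta$ map to two arcs that form a single on-axis crossing, and the crossing configuration switches as $t$ crosses $t_0$, as in Figure~\ref{fig:iso_fix4}. Crossing information is read off from the $w_2$ (normal) coordinate, which is odd under $\tau$.

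The main obstacle is the normal-form step: showing that a generic equivariant perturbation really yields an equivariant fold whose odd component has a nonvanishing cubic term. That term is what makes the self-intersection transverse and exactly one on the axis. I would first try a jet-transversality count in $\mJ_{\Z_2}$, as in the proof of Lemma~\ref{lem:phi_equiv}, treating the vanishing of this cubic coefficient as one more codimension-one condition to be avoided. Then I would verify, with the explicit parametrization $x=\sqrt\delta(\cos\theta,\sin\theta)$, that the image curve has a single transverse double point, at $\theta=0,\pi$, lying on $\cL$.
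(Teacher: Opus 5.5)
Your outline matches the paper's: locality and the fold from Lemma~\ref{lem:phi_equiv}, then the images of the level sets of $f|_M$ under the fold. The gap is in the step that is supposed to produce the crossing.

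\textbf{The normalization discards the crossing.} You cannot have $f|_M=\pm x_1^2\pm x_2^2$ and $\pi|_M=(x_1,\pm x_2^2)$ simultaneously. Write $P=a_{11}x_1^2+2a_{12}x_1x_2+a_{22}x_2^2$. The shear $x_2\mapsto x_2+(a_{12}/a_{22})x_1$ that you use to remove the $x_1x_2$-term also turns the round level set into a tilted conic. The paper makes exactly this remark in Subsection~\ref{sub:twist_morse}.

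Because you kept the circle, you correctly find that $(x_1,x_2)$ and $(x_1,-x_2)$ have the same image, so the image is a doubly covered arc. Your repair does not change this. The term $c\,x_1x_2^2$ is even in $x_2$, so these two points still have the same image for every $c$. Indeed, the equivariant target change $(X,Y)\mapsto(X/(1+cY),Y)$ removes the term altogether. Your proposed check with $x=\sqrt\delta(\cos\theta,\sin\theta)$ would find that $\theta$ and $-\theta$ have the same image for all $\theta$. It would also find that $\theta=0,\pi$ are fold points whose images are two distinct points off $\cL$.

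So the genericity condition you want to impose, a nonzero cubic coefficient, is not the relevant one. The degeneracy is an artifact of having discarded the leading-order effect. Two smaller slips: the critical curve of $(x_1,\pm x_2^2)$ is $\{x_2=0\}$, not a curve tangent to $\{x_1=0\}$. And $w_1,w_2$ are $\tau$-fixed at an isolated fixed point, so crossing data are not read off from an odd $w_2$.

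\textbf{The actual mechanism is the tilt, at quadratic order.} Keep $\pi|_M=(u,v^2)$ with $\tau(u,v)=(-u,-v)$, and write $f|_M-f(z_0)=\alpha u^2+2\beta uv+\gamma v^2+O(4)$. For a birth in your normalization ($f|_M=x_1^2+x_2^2$ and $\phi_x=x_1+O(3)$ after a rotation), one gets $\beta=-a_{12}/a_{22}$. This is exactly the coefficient your shear discarded. Invariantly, $\beta\neq 0$ says the critical line of the fold is not $D^2f(z_0)$-orthogonal to $\ker D(\pi|_M)(z_0)$.

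Distinct points with equal image have the form $(u,\pm v)$. If both lie on $\{f=t\}$, subtracting the two equations gives $4\beta uv=0$, so $u=0$ and $\gamma v^2=t-t_0$. Hence, when $\beta\gamma\neq0$, there is exactly one double point, and it has the following properties.
\begin{itemize}
\item It is formed by the $\tau$-orbit $(0,\pm v_0)$ and lies on $\cL$.
\item It is transverse, since the image tangents are $(\gamma,\mp 2\beta v_0)$.
\item It is present only when $\gamma(t-t_0)>0$.
\end{itemize}
This gives the singular circle appearing or disappearing at a birth or death, and the single on-axis crossing on one side of a saddle.

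For $\beta\neq0$ the image of the conic is immersed away from this point. Rescaling by $\sqrt{|t-t_0|}$ makes the higher-order terms $C^1$-small, so the picture persists. Equivalently, $(\pi,f)|_M$ is a cross-cap whose curve of double points lies in $\cL\times\R$. The conditions to impose generically are therefore $a_{22}\neq0$ (fold), $a_{12}\neq0$ (tilt), and $\gamma\neq0$ for a saddle.
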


\begin{figure}
  \begin{tikzpicture}
    \node at (-5,0) {\includegraphics[height=3cm]{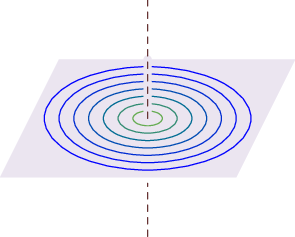}};
    \node at (0,0) {\includegraphics[height=3cm]{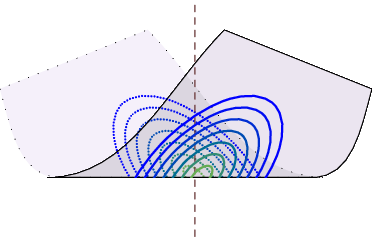}};
    \node at (5,0) {\includegraphics[height=3cm]{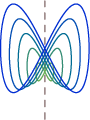}};
  \end{tikzpicture}
  \caption{Birth/death on-axis associated with an isolated fixed point of $\tau$. Left: level sets of $f$. Middle: the map $\pi$ is a fold map.
  Right: the resulting level sets on the diagram.}\label{fig:iso_fix1}
\end{figure}
\begin{figure}
  \begin{tikzpicture}
    \node at (-5,0) {\includegraphics[height=3cm]{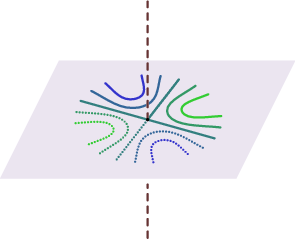}};
    \node at (0,0) {\includegraphics[height=3cm]{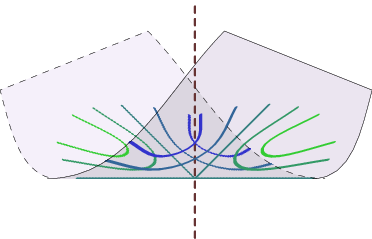}};
    \node at (5,0) {\includegraphics[height=3cm, width=3.5cm]{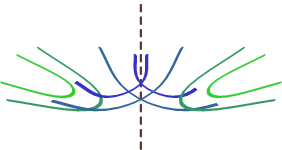}};
  \end{tikzpicture}
  \caption{Saddle on-axis associated with an isolated fixed point of $\tau$. Left: level sets of $f$. Middle: the map $\pi$ is a fold map.
  Right: the resulting level sets on the diagram.}\label{fig:iso_fix2}
\end{figure}
\begin{figure}
  \begin{tikzpicture}
    \node at (-4,0) {\includegraphics[height=2cm]{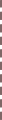}};
    \node at (0,0) {\includegraphics[height=2cm]{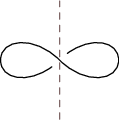}};
  \end{tikzpicture}
  \caption{The Morse move associated to a birth/death on-axis for an isolated fixed point of $\tau$. The crossing on-axis can be
  positive or negative.}\label{fig:iso_fix3}
\end{figure}
\begin{figure}
  \begin{tikzpicture}
    \node at (-4,0) {\includegraphics[height=3cm]{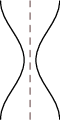}};
    \node at (0,0) {\includegraphics[height=3cm]{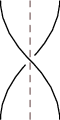}};
  \end{tikzpicture}
  \caption{The Morse move associated to a saddle on-axis for an isolated fixed point of $\tau$. The crossing on-axis can be
  positive or negative.}\label{fig:iso_fix4}
\end{figure}

\section{Movie moves}\label{sec:carter_saito}
We continue the study of cobordisms of involutive links started in Section~\ref{sec:4}. Our goal is to explain possible movie moves
for 1-parameter families. The main result is Theorem~\ref{thm:main_loop} below, echoing Theorem~\ref{thm:intro3} from the introduction.
\begin{remark}
  Throughout Section~\ref{sec:carter_saito}, we assume that cobordisms do not have isolated fixed points.
\end{remark}

\subsection{Genericity conditions for cobordisms}\label{sub:genericity}
To fix the notation, suppose $L_0,L_1$ are involutive links, parameterized as $\wt{\phi}_0\colon\cS_0\to S^3$, $\wt{\phi}_1\colon\cS_1\to S^3$,
where $\cS_0,\cS_1$ are two closed $1$-dimensional manifold with a $\Z_2$ action admitting at most two fixed points on each connected component.
Suppose $\Sigma$ is a compact oriented surface whose boundary is $\cS_0\sqcup \cS_0$ endowed with a $\Z_2$ action $\tau\colon \Sigma\to\Sigma$, such that the fixed point set, $\Fix\tau$, intersects each boundary component
of $\Sigma$ at at most two points. Assume that $\Phi_s$, $s\in[0,1]$ is a family of smooth equivariant maps from $\Sigma$ to $S^3\times[0,1]$
such that $\Phi_s$ extends $\wt{\phi}_0$ and $\wt{\phi_1}$, i.e. it takes  $\cS_0$ to $L_0\subset S^3\times\{0\}$ and $\cS_1$ to $L_1\subset S^3\times\{1\}$. Recall that we assume that the action of $\tau$ on $S^3\times[0,1]$ is standard.

For each $s$, $\Phi_s(\Sigma)$ is an equivariant cobordism between  $L_0$ and $L_1$. As such, under genericity conditions, which we will spell out in a moment, it introduces a sequence of Reidemeister moves, I-moves and Morse moves between strongly invertible diagrams of $L_0$ and $L_1$.
Suppose these genericity conditions hold for $s=0$ and $s=1$. We ask, how differ the sequence of Reidemeister moves, I-moves and Morse moves for $s=0$ and $s=1$.

Genericity conditions are obtained by summarizing the discussion in Subsections~\ref{sub:reg_path}, \ref{sub:ES3} and~\ref{sub:applications}.
Before passing to a formal definition, we recall the main ideas.

In Subsection~\ref{sub:applications}, our main assumption was 
that the composition $H_s=F\circ\Phi_s$, where $F$ is  the projection $F\colon S^3\times[0,1]\to[0,1]$, is Morse. 
For the critical points of $H_s$ there was a list of conditions, we imposed \ref{item:nocL}, \ref{item:proj1}, and \ref{item:proj2},
to hold for a projection $\pi\colon\R^3\to\R^2$ used to define the Morse moves, see Subsection~\ref{sub:local_coors}.
These conditions guaranteed that at critical points of $H_s$, the
local change is via the equivariant Morse moves; compare Subsection~\ref{sub:local_comp}.

For non-critical values of  $H_s$, there are two types of conditions. We imposed conditions for transversality at infinity in Subsection~\ref{sub:ES3}. Finally, for the parts of diagrams in $\R^3$, we required that the suitably defined path $\phi_s$ is regular in the sense of
Definition~\ref{def:regular_path_half}.

To formalize the discussion, 
set $\cS_{ts}=H_s^{-1}(t)$, and let 
$\wt{\phi}_{ts}$ be the embedding $\Phi_s$ restricted to $\cS_{ts}$. If $t$ is a non-critical level set of $H_s$, then $\wt{\phi}_{ts}$ is
a link diagram.

With $\cS_{ts}^0=\{u\in\cS_{ts}\colon \wt{\phi}_{ts}(u)\neq\infty\}$, we set $\phi_{ts}\colon\cS_{ts}^0\to\R^2$ to be the diagram. Note that
if $\wt{\phi}_{ts}$ actually hits infinity, the space $\cS_{ts}^0$ is not compact.
If $(t,s)$ vary in a neighborhood of a point $(t_0,s_0)$ and all $\cS_{ts}^0$ in that neighborhood are equivariantly diffeomorphic,
we simply write $\cS^0$ instead of $\cS_{ts}^0$.

We are now ready to specify regularity conditions. Throughout the list, $s\in[0,1]$ is fixed. Also, the points $t_i^\infty$,
$t^M_j$, $u_i^M$, $w_i^M$ and $t_i^M$ depend on $s$. Even their number is not fixed for $s\in[0,1]$.
\begin{enumerate}[label=(CS-\arabic*)]
  \item For any $s\in[0,1]$, the path $t\mapsto\wt{\phi}_{ts}$ is regular at infinity in the sense of Definition~\ref{defn:reg_at_inft}.
    For any $s$, we specify the points
    $0<t_1^\infty<\dots<t_{r}^\infty<1$, for which $\Phi_s$ hits the point at infinity.\label{item:cs_ifirst}
  \item We require that each point $t_i^\infty$ is a non-critical value of the map $H_s$. \label{item:cs_isecond}
  \item We assume that for any $t_i^\infty$, $\phi_{t_i^\infty s}$ is a regular map in the sense of Definition~\ref{def:generic_R2};  \label{item:cs_imove} 
  \item Let $t_1^M,\dots,t_{r'}^M$ be the set of critical values of $H_s$. Set $K^M_i=\Phi_s(H_s^{-1}(t_i^\infty))$.  We require that the diagram  $D_i^M$ obtained as $\pi(K_i^M)$ is regular  except at the images of the orbit of critical points;\label{item:cs_morse2}
  \item To each critical value of $H_s$ corresponds a unique orbit of critical points $u_i^M\in\Sigma$;\label{item:cs_morse_regular}
  \item If $u_i^M$ is a critical point of $H_s$, and $u_i^M$ is not fixed by the $\tau$-action, then $\pi(u_i)$ is not fixed
    by the $\tau$-action, that is \ref{item:nocL} is satisfied;\label{item:cs_nocL}
  \item For any critical point $u_i^M$ of $H_s$, there are local coordinates $x_1,y_1$ on $\Sigma$, (if $u_i^M\in \Fix\tau$, then $\tau$
    fixes $y_1$ and multiplies $x_1$ by $-1$), such that \ref{item:proj1} is satisfied;\label{item:cs_proj1}
  \item For any critical point $u_i^M$ of $H_s$, write $z_i^M=\Phi_s(u_i^M)\subset \R^3\times\{t_i^M\}$ and $w_i^M(s)=\pi(z_i^M(s))$. Then,
    the preimage $\pi^{-1}(w_i^M)$ in $\R^3\times\{t_i^M\}$ consists only of the orbit of $z_i^M$;
    \label{item:cs_proj2}
  \item If $u_i^M$ is a critical point of $H_s$, and \ref{item:cs_proj1} is satisfied, then the compressed surface \label{item:cs_morse}
    \[\Sigma':=\{(\pi\Phi_s(u),H_s(u))\colon u\in\Sigma\}\subset\R^2\times\R\]
     is smooth near the image $v_i^M$ of $u_i^M$ (the map $u\mapsto (\pi\Phi_s(u),H_s(u))$ takes $\Sigma$ to $\Sigma'$). 
    The projection $\R^2\times\R\to\R$ restricts to $\Sigma'$, and $u_i^M$ is a critical point of it. We require it to be Morse. 
  \item If $I$ is an interval in $[0,1]$ not containing any of $t_i^M$ and $t_i^\infty$, then the path $I\ni t\mapsto\phi_{ts}$ is regular in the sense of Definition~\ref{def:regular_path_half}.\label{item:cs_reid}
\end{enumerate}
We comment on the above conditions. Item~\ref{item:cs_proj2} is an analog of \ref{item:proj2}. We could require that $H_s$ is Morse,
and then deduce \ref{item:cs_morse} from that condition. Instead, we require the projection $\R^2\times\R\to\R$ to restrict
to a Morse function in \ref{item:cs_morse}. This leads to a simpler description of possible singularities: we can invoke a result of \cite{BB}
in that case.

We conclude the subsection with a statement that follows from previous discussions, but the explicit statement will be helpful for
the later statement of Theorem~\ref{thm:main_loop}.
\begin{lemma}\label{lem:o_dense}
  Suppose $L_0,L_1$ are two involutive links in $S^3\times\{0\}$ and $S^3\times\{1\}$, respectively. Let $\Sigma$ be a surface with a $\Z_2$-action whose boundary is divided into two parts (each part being a union of circles).

  Let $\cG(\Sigma)$ be the space of smooth equivariant maps from $\Sigma\to S^3\times[0,1]$ that take part of the boundary of $\Sigma$ to $L_0$
  and the other part to $L_1$. Let $\cG^0(\Sigma)$ be the space of maps satisfying \ref{item:cs_ifirst} -- \ref{item:cs_reid} (where we ignore
  parameter $s$).

  Then $\cG^0(\Sigma)$ is residual in $\cG(\Sigma)$.
\end{lemma}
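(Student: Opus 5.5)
The plan is to write $\cG^0(\Sigma)$ as a countable intersection of open dense subsets of $\cG(\Sigma)$, one for each group of conditions among \ref{item:cs_ifirst}--\ref{item:cs_reid}, so that the result follows from the Baire property of $\cG(\Sigma)$. The single map $\Phi\in\cG(\Sigma)$ is regarded as a one-parameter family in the variable $t$. In each case, genericity will come from a transversality statement already proven in the paper, namely Theorem~\ref{thm:ejet} together with the equivariant parameter counting Lemma~\ref{lem:codimension}, applied in a suitable equivariant (multi)jet space.

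First I would handle the Morse conditions. Perturbing $\Phi$ equivariantly is equivalent, via an equivariant diffeomorphism of $S^3\times[0,1]$, to perturbing $F$. So Theorem~\ref{thm:density} shows that a residual (in fact open dense) set of $\Phi$ makes $H=F\circ\Phi$ equivariant Morse on $\Sigma$. A further parameter count shows that two distinct orbits of critical points sharing a critical value is a codimension-one condition in a zero-parameter family, which gives \ref{item:cs_morse_regular}. Condition \ref{item:cs_nocL} is one equation at a finite set of points, so it holds generically; Lemma~\ref{lem:nocL_part} is the model for this. Conditions \ref{item:cs_proj1} and \ref{item:cs_proj2} are exactly the good-projection conditions. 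Here I would mimic Lemma~\ref{lem:linear_is_enough}, but now perturb $\Phi$ rather than $\pi$. Concretely, \ref{item:cs_proj1} is an open condition that fails only on a codimension $\geq 1$ jet condition at a critical point. For \ref{item:cs_proj2}, a point of $\Sigma$ on the critical level mapping to $w_i^M$ imposes $2$ equations on a one-dimensional level curve, so it is avoided generically. Condition \ref{item:cs_morse} follows because the compression map is a local diffeomorphism near $u_i^M$ by \ref{item:cs_proj1}, and the height function on $\Sigma'$ pulls back to $H$, which is Morse.

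Next I would handle the conditions at infinity and away from critical values. Condition \ref{item:cs_ifirst} is Theorem~\ref{thm:reg_at_inf} applied to the family $t\mapsto\wt{\phi}_t$. Conditions \ref{item:cs_isecond} and \ref{item:cs_imove} are coincidences of a codimension-one event (hitting $\infty$) with another codimension-one event, either a critical value or a non-regular diagram. By Lemma~\ref{lem:coincidence} these have codimension $2$ and are missed by a generic one-parameter family. Condition \ref{item:cs_morse2} is handled the same way: a non-regular crossing away from the critical orbit at the critical level is again a coincidence of codimension $2$. Finally, \ref{item:cs_reid} is Theorem~\ref{thm:complem} applied on each interval between the finitely many special levels.

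The main obstacle is that the domain curves $\cS_t=H^{-1}(t)$ change topology, so $t\mapsto\phi_t$ is not a path in a fixed $\cF$. I would handle this by working directly with jets of $\Phi$ on $\Sigma$ (or $\Sigma^{(s)}$), with $t=H(u)$ as an extra coordinate. Each bad stratum of Section~\ref{sec:one_para} becomes a submanifold of an equivariant multijet space of maps from $\Sigma$ whose parametric codimension exceeds $1$. Wall's transversality theorem (Theorem~\ref{thm:ejet}) then applies verbatim, and the codimension counts already done for $\cF^1$, $\wt{\cF}^2$ and for infinity carry over unchanged.
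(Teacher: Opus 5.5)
Your proposal is correct and follows essentially the same route as the paper. Like the paper, you run a case-by-case check that the complement of each violation of (CS-1)--(CS-10) is residual, using the same ingredients: Theorem~\ref{thm:reg_at_inf} at infinity, coincidence and parameter counts for (CS-2)--(CS-4), the arguments of Lemmas~\ref{lem:nocL_part} and~\ref{lem:linear_is_enough} for (CS-6)--(CS-8), Morse density for (CS-5), and Theorem~\ref{thm:complem} for (CS-10). The differences are minor: you deduce (CS-9) from (CS-7) together with Morse-ness of $H$ (a reduction the paper itself remarks is possible), and you spell out the jets-on-$\Sigma$ mechanism that the paper only asserts when it says the arguments carry over to a non-product source.
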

\begin{proof}
  The proof is done on a case-by-case analysis. We define $\wt{\cG}_i^1$ the space of maps violating condition (CS-$i$).
  We will show that the complement of each $\wt{\cG}_i^1$ is residual. In most cases we will invoke previously proved results. Sometimes,
  the results from previous sections do not literally prove residuality of the complement, but rather the arguments will carry over.

  For \ref{item:cs_ifirst}, that is $\wt{\cG}^1_1$, we use the arguments of Theorem~\ref{thm:reg_at_inf}, where a path fails to be regular if it hits $\cI^2\cup\wt{\cI}^3$, and this union is essentially $\wt{\cG}^1_1$. The only technical difference is that in Subsection~\ref{sub:ES3}, we considered families of maps from $\cS$, that is, maps from $\cS\times[0,1]$ to $\cI$. Here, the source is $\Sigma$, usually not a product. However, the argument is valid in the present situation.

  For \ref{item:cs_isecond}, $\wt{\cG}^1_2$, coincidence of critical values of $H$ and hitting a point at infinity is given by conditions that there are points $x,y\in\Sigma$ such that $\Phi_s(x)=\{\infty\}\times\{t_i^\infty\}$, $\Phi_s(y)\in S^3\times\{t_i^\infty\}$, $DH_s(y)=0$. A straightforward
  argument involving equivariant jet transversality (this time, the source is the surface) leads to showing that $\cG\setminus\wt{\cG}^1_2$ is residual.

  Failure of regularity of $\phi_{t_i^\infty}$ is of codimension~1; we have a family $\phi_{t_i^\infty}$ parameterized by a finite set of
  indices. At each index, we use the arguments of Theorem~\ref{thm:generic_half_knot}.  This takes care of \ref{item:cs_imove}. An
  analogous argument works for \ref{item:cs_morse2}.

  Item~\ref{item:cs_nocL} is handled in Lemma~\ref{lem:nocL_part}.
  Items~\ref{item:cs_proj1} and~\ref{item:cs_proj2} are dealt with as in the proof of Lemma~\ref{lem:linear_is_enough}.
  Items~\ref{item:cs_morse} and~\ref{item:cs_morse_regular} are consequence of density of regular Morse functions (i.e. Morse functions for which distinct critical points have different critical values), and item~\ref{item:cs_reid}
  is proved in Theorem~\ref{thm:complem}.
\end{proof}

\subsection{Codimension 1 singularities of cobordisms}\label{sub:col_deg}

In Lemma~\ref{lem:o_dense}, we essentially proved that any equivariant cobordism can be perturbed to a cobordism satisfying items \ref{item:cs_ifirst} through \ref{item:cs_reid}. However, a generic one-parameter family can violate some conditions. In the following, we list all possible
singularities that may occur in such families. For the reader's convenience we divide them into two groups. The first one corresponds to singularities occurring at distinct places. They correspond to coincidences of moves. The second group are more complex singularities corresponding
either to loops of Reidemeister moves, or to interactions between Morse moves and Reidemeister moves.

The first group of singularities is as follows.
\begin{enumerate}[label=(Coi-\arabic*)]
  \item The map $\phi_{t_i^\infty s}$ of item~\ref{item:cs_imove} is not regular; \label{item:Col1}
  \item The diagram $D_i^M(s)$ of item~\ref{item:cs_morse2} has a single codimension~1 singularity;
  \item There is an equality $t_i^\infty(s)=t_{i'}^M(s)$ for one pair of indices $i,i'$;\label{item:col_R_I}
  \item There are two orbits of critical point of $H_s$ at the same level set,
    violating \ref{item:cs_morse_regular}; 
  \item The family $\alpha\times I\to\R^2$ of item~\ref{item:cs_reid} has a member $s\in I$ for which two separate codimension~1 singularities
    occur.\label{item:Col2}
\end{enumerate}
All these items are of codimension~1 and may occur on a generic one-parameter family. These singularities correspond to:
\begin{itemize}
  \item coincidence of a Reidemeister move and an I-move or an S-move;
  \item coincidence of an equivariant Morse move and a Reidemeister move;
  \item coincidence of an I-move or an S-move and an equivariant Morse move;
  \item coincidence of two equivariant Morse moves;
  \item coincidence of two Reidemeister moves.
\end{itemize}
Note that there cannot be any coincidence of two I-moves, or S-moves because these would imply failure of regularity at infinity. Such a coincidence is in fact a Reidemeister move at infinity, described below.

The second group of singularities is more complex and leads to loops of Reidemeister moves, I-moves, S-moves and equivariant Morse moves.
\begin{enumerate}[label=(ML-\arabic*)]
  \item The map $\wt{\phi}_{ts}$ is not regular at infinity for some $(t,s)$, so \ref{item:cs_ifirst} is violated;\label{item:ml_iloop}
  \item The map $H_s$ has a critical point at infinity, violating \ref{item:cs_isecond};\label{item:ml_morse_inf}
  \item A critical point $u_i^M$ of $H_s$ such that $u_i^M\neq\tau u_i^M$ can be mapped by $\pi$ to the symmetry axis (violating \ref{item:nocL} and
    \ref{item:cs_nocL};\label{item:ml_morse_on_line}
  \item There is a Morse critical point $u_i^M$ for which condition~\ref{item:proj1} (equivalently \ref{item:cs_proj1}) 
    is violated; \label{item:ml_twist_morse}
  \item There is a Morse critical point $u_i^M$ for which condition~\ref{item:proj2} (equivalently \ref{item:cs_proj2}) is violated; \label{item:ml_morse_on_diagram}
  \item There is a critical point $u_i^M$ such that \ref{item:proj1} and \ref{item:proj2} are satisfied, but $v_i^M$ is not a Morse
    critical point;\label{item:ml_morse_loop} 
  \item With $I$ as in~\ref{item:cs_reid}, there is a point $t_0\in I$ such that the family $t\mapsto \phi_{ts}$ has a codimension~2 singularity at $t_0$.\label{item:ml_reid}
\end{enumerate}
We describe these singularities in separate subsections. Each of these singularities will lead to a loop. Several loops of Reidemeister moves
were described in Section~\ref{sec:codim2}: that notion does not require further explanations. If we take into account Morse moves,
it might be worthwile to give a precise definition.
\begin{defn}
  A \emph{loop} of moves (i.e. Reidemeister moves, I-move, S-moves and Morse moves) 
  is a movie (see Definition~\ref{def:movie}) such that $D_0$ and $D_n$ are the same link diagram.
\end{defn}
We stress that the trace of a loop of moves need not be a product cobordism.

Subsections~\ref{sub:iloop} -- Subsection~\ref{sub:reid} will describe 39 loops of moves, which we refer to as \emph{elementary loops}.\footnote{In the count of 39, we do not add collisions, i.e. loops of moves occurring at different places of the diagram.}

\subsection{Case \ref{item:ml_iloop}. Singularities at infinity}\label{sub:iloop}
Suppose $\wt{\phi}_{ts}$ (with $t\in[0,1]$ and $s$ fixed) 
is not regular at infinity at a point $t_0$, but we assume that $\wt{\phi}_{ts}$ is regular at infinity as a 2-parameter family
as in Definition~\ref{def:two_regular}.
We assume that $t_0$ is not a critical value of $\Phi_s$. In fact,
if it is, then either this is a critical point away from infinity, leading to a collision as in item~\ref{item:col_R_I}, or there is a singularity at infinity, in which case we land in Case~\ref{item:ml_morse_inf}.

As $t_0$ is not a critical value of $H_s$, if we set $\cS=H_s^{-1}(t_0)$, then by implicit function theorem,
for $s'$ close to $s$ and $t'$ close to $t_0$
there is a map $\wt{\phi}_{s',t'}\colon\cS\to S^3$ whose image is equal to the image of $\Phi_{s'}$ intersected with $H_{s'}^{-1}(t')$.
These maps depend on two parameters, so we are in the situation of Definition~\ref{defn:reg_at_inft}. That is, the two-parameter
family of maps $\phi^\iota_{s',t'}$ (where $\iota$ indicates that we are composing with inversion of $S^3$ as in Subsection~\ref{sub:ES3})
acquires a codimension $1$ singularity at $0\in\R^2$; compare \ref{item:Sigma_codim} of Definition~\ref{def:two_regular}. These singularities were described in Subsection~\ref{sub:reg_path}.
More precisely, we can have.
\begin{itemize}
  \item Cusp on the axis at $0$;
  \item Central tangency on the axis at $0$;
  \item Boundary double point at $0$;
  \item Central perpendicularity at $0$;
  \item Central double point on the axis at $0$;
\end{itemize}
The family $\phi^{\iota}_{s',t'}$ is a versal deformation of the singularity by a deformation (leading to a Reidemeister move) combined with
shifting the singular point $0$ up and down along the symmetry axis $\cL$. After returning to the diagram $\phi_{s',t'}$ (without the inversion $\iota$), we see the following five loops of moves. They correspond to a versal deformation of the singularity after or before passing through $\infty$. These are:
\begin{enumerate}[label=(RI-\arabic*)]
  \item The R-1 move at infinity, see Figure~\ref{fig:RI1};
  \item The R-2 move at infinity, see Figure~\ref{fig:RI2};
  \item The M-1 move at infinity, see Figure~\ref{fig:MI1};
  \item The M-2 move at infinity, see Figure~\ref{fig:MI2};
  \item The M-3 move at infinity, see Figure~\ref{fig:MI3}.
\end{enumerate}
\begin{figure}
  \begin{tikzpicture}
    \node at (-6,0){\includegraphics[width=2cm]{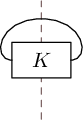}};
    \node at (-3,0){\includegraphics[width=2cm]{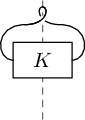}};
    \node at (0,0){\includegraphics[width=2cm]{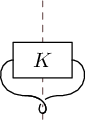}};
    \node at (3,0){\includegraphics[width=2cm]{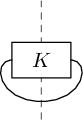}};
    \node at (6,0){\includegraphics[width=2cm]{pics/movie_moves-45.eps}};
  \end{tikzpicture}
  \caption{R-1 move at infinity induces a loop. We have R-1 move, then a sequence of I- and S-moves, the inverse of R-1 move and the I-move again.}\label{fig:RI1}
\end{figure}
\begin{figure}
  \begin{tikzpicture}
    \node at (-6,0){\includegraphics[width=2cm]{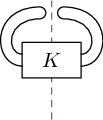}};
    \node at (-3,0){\includegraphics[width=2cm]{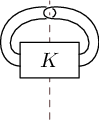}};
    \node at (0,0){\includegraphics[width=2cm]{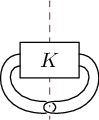}};
    \node at (3,0){\includegraphics[width=2cm]{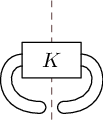}};
    \node at (6,0){\includegraphics[width=2cm]{pics/movie_moves-39.eps}};
  \end{tikzpicture}
  \caption{R-2 move at infinity induces a loop. We have R-2 move, then a sequence of two S-moves, the inverse of R-2 move and then an isotopy.}\label{fig:RI2}
\end{figure}
\begin{figure}
  \begin{tikzpicture}
    \node at (-6,0){\includegraphics[width=2cm]{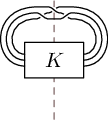}};
    \node at (-3,0){\includegraphics[width=2cm]{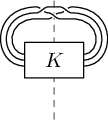}};
    \node at (0,0){\includegraphics[width=2cm]{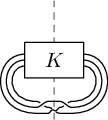}};
    \node at (3,0){\includegraphics[width=2cm]{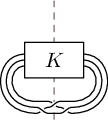}};
    \node at (6,0){\includegraphics[width=2cm]{pics/movie_moves-51.eps}};
  \end{tikzpicture}
  \caption{The loop of M-1 moves around infinity. We have M-1 move, then a sequence of I- and S-moves, the inverse of M-1 move and then a sequence of I- and S- moves.}\label{fig:MI1}
\end{figure}
\begin{figure}
  \begin{tikzpicture}
    \node at (-6,0){\includegraphics[width=2cm]{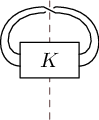}};
    \node at (-3,0){\includegraphics[width=2cm]{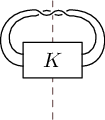}};
    \node at (0,0){\includegraphics[width=2cm]{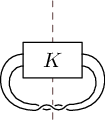}};
    \node at (3,0){\includegraphics[width=2cm]{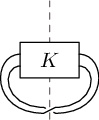}};
    \node at (6,0){\includegraphics[width=2cm]{pics/movie_moves-3.eps}};
  \end{tikzpicture}
  \caption{The loop of M-2 moves around infinity. The M-2 move is followed by S-moves and Reidemeister moves, then M-2 is undone and the S-move is applied.}\label{fig:MI2}
\end{figure}
\begin{figure}
  \begin{tikzpicture}
    \node at (-6,0){\includegraphics[width=2cm]{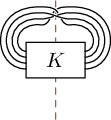}};
    \node at (-3,0){\includegraphics[width=2cm]{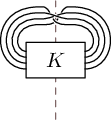}};
    \node at (0,0){\includegraphics[width=2cm]{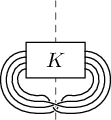}};
    \node at (3,0){\includegraphics[width=2cm]{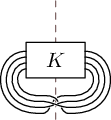}};
    \node at (6,0){\includegraphics[width=2cm]{pics/movie_moves-55.eps}};
  \end{tikzpicture}
  \caption{The loop of M-3 moves around infinity. The M-3 move is done. Then follows a sequence of S-moves and Reidemeister moves, then M-3 is undone. Finally, a sequence of S-moves and Reidemeister moves is done.}\label{fig:MI3}
\end{figure}
We pause for the moment to describe the loops involving S-moves, but not I-moves that is, corresponding to R-2, M-2, and M-3 moves at infinity.
As an S-move can be replaced by a series of equivariant Reidemeister moves (different link diagrams yield possibly different series),
the loops involving an S-move and Reidemeister moves can be replaced by loops of Reidemeister moves. As such, via Theorem~\ref{thm:intro4},
they can be decomposed as loops of Reidemeister moves given by Table~\ref{tab:local_loops}.

In other words, these three loops can be written as other loops. We include them because it might sometimes be easier to verify invariance
of a map associated to a cobordism (e.g. in Khovanov theory)
under a short local loop involving an S-move.

\subsection{Case \ref{item:ml_morse_inf}. The Morse point at infinity.}\label{sub:ml_morse_inf}
This is the situation where there is a point $x\in\Sigma$ that is mapped to $\infty\times\{t^\infty_i(s)\}$ and it is a
critical point of $H_s$ at the same time. A dimension counting argument does not prohibit this situation. 
However, again by a dimension counting argument, we can show
that the non-Morse situation, or the situation such that $\pi^\iota=\pi\circ\iota$ violates either \ref{item:proj1} or \ref{item:proj2} is of higher
codimension and does not occur on a 1-parameter family of maps $H_s$.

The discussion in Subsection~\ref{sub:applications} implies that the diagram of the link obtained via $\pi^\iota$ changes as in Figure~\ref{fig:inside_crits}. That is, we might have an equivariant birth at infinity, equivariant saddle at infinity and equivariant death.

With that type of critical points we associate a loop as in Subsection~\ref{sub:iloop}. The loop consists of moving the critical point
along the axis but off the infinity point, or performing the Morse move.
Eventually, we have the following two loops.
\begin{enumerate}[label=(IM-\arabic*)]
  \item Birth/death at infinity, see Figure~\ref{fig:IMorse1};
  \item Saddle at infinity, see Figure~\ref{fig:IMorse2}.
\end{enumerate}
\begin{figure}
  \begin{tikzpicture}
    \node at (0.3,3){\includegraphics[width=2cm]{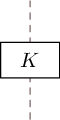}};
    \node at (-5,0){\includegraphics[width=2cm]{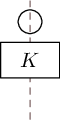}};
    \node at (-0.3,0){\includegraphics[width=2.4cm]{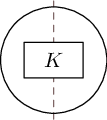}};
    \node at (5.2,0){\includegraphics[width=2cm]{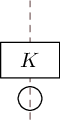}};
    \draw[<->,blue] (-1,3) .. controls ++ (-3,0) and ++(0,1) .. node [near start,above,scale=0.8] {Morse move} (-5,2);
    \draw[<->,blue] (1.6,3) .. controls ++ (3,0) and ++(0,1) .. node [near start,above,scale=0.8] {Morse move} (5.2,2);
    \draw[<->,green!40!black] (-3.8,0) -- node[midway, above, scale=0.8]{I-move} (-1.7,0);
    \draw[<->,green!40!black] (1.5,0) -- node[midway, above, scale=0.8]{I-move} (4,0);
  \end{tikzpicture}
  \caption{A loop of birth/death at infinity.}\label{fig:IMorse1}
\end{figure}
\begin{figure}
  \begin{tikzpicture}
    \node at (-3,3){\includegraphics[width=2cm]{pics/movie_moves-39.eps}};
    \node at (-3,0){\includegraphics[width=2cm]{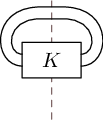}};
    \node at (3,0.5){\includegraphics[width=2cm]{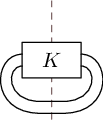}};
    \node at (3,3.5){\includegraphics[width=2cm]{pics/movie_moves-42.eps}};
    \draw[<->,blue] (-4,3.5) .. controls ++(-1.5,0) and ++(-1.5,0) .. (-4,0) node[scale=0.8,midway, right] {Saddle move};
    \draw[<->,blue] (4.1,3.7) .. controls ++(1.5,0) and ++(1.5,0) .. (4.1,0.2) node[scale=0.8,midway, left] {Saddle move};
    \draw[<->,green!40!black] (-1.5,3) -- node[scale=0.8,midway,above] {isotopy} (1.5,3);
    \draw[<->,green!40!black] (-1.5,0) -- node[scale=0.8,midway,above] {2x I-move} (1.5,0);
  \end{tikzpicture}
  \caption{The loop of saddle at infinity.}\label{fig:IMorse2}
\end{figure}

Said differently, for the movie at $s<s_0$, there is a saddle near infinity (to the south)  while for $s>s_0$, part of the diagram is pulled through infinity (to the south), and a saddle happens near north infinity.  

\subsection{Case \ref{item:ml_morse_on_line}. An orbit of handles hitting the axis}\label{sub:morse_online}
Suppose $u_0$ is one of the $u_i^M$: a critical point of $H_s$ such that $u_0\notin\Sigma^\tau$, but $z_0:=\pi\Phi_s(u_0)$ belongs to the axis
$\cL$. Set $u_1=\tau u_0$. It is a critical point of $H_s$ by equivariance, $u_1\neq u_0$, but $\pi\Phi_s(u_0)=\tau z_0=z_0$, so the two critical points occur on the axis.

If $u_0$ is a birth (then $u_1$ is a birth, too), after passing the critical level, two ellipses are created on the diagram (we assume that
\ref{item:proj1} is satisfied, and that \ref{item:proj2} except that $(\pi\Phi_s)^{-1}=\{u_0,u_1\}$). In a generic situation, the axes of the
ellipses are transverse (it is enough that neither axis is parallel to or perpendicular to the symmetry axis $\cL$), the non-generic leads
to a codimension 2 behavior. With these choices, the diagram above the level set looks locally as in the middle picture
in the bottom row of Figure~\ref{fig:birth_on_line}. The two ellipses are mapped to each other by a $\tau$-action.

A deformation moves the point $u_0$ off-axis in both directions, leading to a situation above the critical level set as in the
row in Figure~\ref{fig:birth_on_line}, whereas before the critical level set, that part of the diagram is empty, see Figure~\ref{fig:birth_on_line} for the full loop of moves.

The situation is analogous if $u_0,u_1$ are saddles. The singular level set for each point is a pair of lines passing through $z_0$. In a generic situation, these lines are pairwise transverse, see Figure~\ref{fig:sad_on_line}. Changing the level set leads to a resolution of the intersection
points: each pair of lines is replaced by a pair of hyperbolas. This can be seen in the middle part of Figure~\ref{fig:saddle_on_line}.

Moving $u_0$ and $u_1$ off the axis separates the two crossings of the singular level set. Resolutions lead to the loop as in Figure~\ref{fig:saddle_on_line}.

\begin{figure}
  \includegraphics[width=2cm]{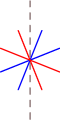}
  \caption{Singular level set for a pair of saddles on-axis. The blue and red lines indicate singular level sets for $u_0$ and $u_1$.}\label{fig:sad_on_line}
\end{figure}

\begin{figure}
  \begin{tikzpicture}
    \node at (0,3){\includegraphics[height=2cm]{pics/reidemeister-20.eps}};
    \node at (-4,0){\includegraphics[height=2cm]{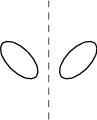}};
    \node at (-2,0){\includegraphics[height=2cm]{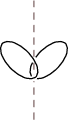}};
    \node at (0,0){\includegraphics[height=2cm]{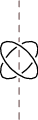}};
    \node at (2,0){\includegraphics[height=2cm]{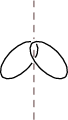}};
    \node at (4,0){\includegraphics[height=2cm]{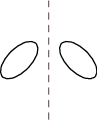}};
    \draw[<->,green!50!black] (-4,1.2) .. controls ++(0,0.5) and ++(0,0.5) .. node [scale=0.8,above,midway]{R-2} (-2,1.2);
    \draw[<->,green!50!black] (4,1.2) .. controls ++(0,0.5) and ++(0,0.5) .. node [scale=0.8,above,midway]{R-2} (2,1.2);
    \draw[<->,green!50!black] (2,-1.2) .. controls ++(0,-0.5) and ++(0,-0.5) .. node [scale=0.8,above,midway]{IR-2} (0.1,-1.2);
    \draw[<->,green!50!black] (-2,-1.2) .. controls ++(0,-0.5) and ++(0,-0.5) .. node [scale=0.8,above,midway]{IR-2} (-0.1,-1.2);
    \draw[<->,blue] (-1,3) .. controls ++ (-4,0) and ++ (-1,0) .. node [scale=0.8,left,midway]{Morse} (-5,0);
    \draw[<->,blue] (1,3) .. controls ++ (4,0) and ++ (1,0) .. node [scale=0.8,right,midway]{Morse} (5,0);
  \end{tikzpicture}
  \caption{A loop for an orbit of handles hitting the symmetry axis. The Morse label indicates birth or death depending
  on the direction. The diagram on the top is an empty diagram (other components of the link might be present, but do not interfere.}\label{fig:birth_on_line}
\end{figure}
\begin{figure}
  \begin{tikzpicture}
    \node at (-6,0){\includegraphics[height=2cm]{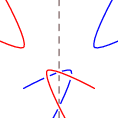}};
    \node at (-3,0){\includegraphics[height=2cm]{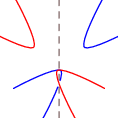}};
    \node at (0,0){\includegraphics[height=2cm]{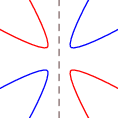}};
    \node at (3,0){\includegraphics[height=2cm]{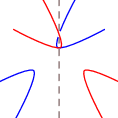}};
    \node at (6,0){\includegraphics[height=2cm]{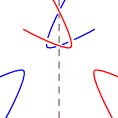}};
    \node at (-6,4){\includegraphics[height=2cm]{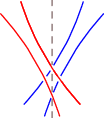}};
    \node at (0,4){\includegraphics[height=2cm]{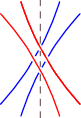}};
    \node at (6,4){\includegraphics[height=2cm]{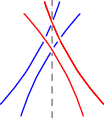}};
    \draw[<->,green!50!black] (-6,-1.2) .. controls ++(0,-1.0) and ++(0,-1.0) .. node [scale=0.8,above,midway]{R-2} (-3.1,-1.2);
    \draw[<->,green!50!black] (6,-1.2) .. controls ++(0,-1.0) and ++(0,-1.0) .. node [scale=0.8,above,midway]{R-2} (3.1,-1.2);
    \draw[<->,green!50!black] (2.9,-1.2) .. controls ++(0,-1.0) and ++(0,-1.0) .. node [scale=0.8,above,midway]{IR-2} (0.1,-1.2);
    \draw[<->,green!50!black] (-2.9,-1.2) .. controls ++(0,-1.0) and ++(0,-1.0) .. node [scale=0.8,above,midway]{IR-2} (-0.1,-1.2);
    \draw[<->,purple] (-5,4) -- node [scale=0.8,above,midway] {isotopy} (-1,4);
    \draw[<->,purple] (5,4) -- node [scale=0.8,above,midway] {isotopy} (1,4);
    \draw[<->,blue] (0,2.5) -- node[scale=0.8,right,midway] {saddles} (0,1.5);
    \draw[<->,blue] (6,2.5) -- node[scale=0.8,right,midway] {saddle} (6,1.5);
    \draw[<->,blue] (-6,2.5) -- node[scale=0.8,right,midway] {saddle} (-6,1.5);
    \draw[<->,blue] (6,2.5) -- node[scale=0.8,right,midway] {saddle} (6,1.5);
    \draw[<->,blue] (0,2.5) -- node[scale=0.8,left,midway] {singular} (0,1.5);
  \end{tikzpicture}
  \caption{A loop for an orbit of handles on the symmetry axis. Saddle.}\label{fig:saddle_on_line}
\end{figure}

\subsection{Case \ref{item:ml_twist_morse}. Singular equivariant handles}\label{sub:twist_morse}
This situation corresponds to a critical point $x_0\in\Sigma$ for which \ref{item:proj1} is violated. 
That is to say, in the local coordinates $(u_1,u_2)$ on $\Sigma$ near the critical point $x_0$, the map $\pi\circ\Phi_s\colon\Sigma\to\R^2$ (where $\pi\colon\R^3\to\R^2$ is the projection) has a critical point. This situation, depicted in \cite[Figure 14]{CRS}, was first studied by Giller \cite{Giller}.    
Recall that the changes of the link diagram are given by images of level sets of $H_s$ under the map $\pi\circ\Phi_s$.

\subsubsection*{The non-equivariant case.}
Dimension counting arguments indicate that $\pi\circ\Phi_s$ must have the mildest possible (codimension~1) 
singularity as a map from $\R^2$ to $\R^2$.
Low-codimension singularities of such maps have been classified, and the only case in codimension~1 is the fold, see \cite{Whitney_Euclidean} and \cite[Chapter 1.5]{AVG}. That is, in
local coordinates, it is given by $(u_1,u_2)\mapsto (u_1^2,u_2)$.

The map $H_s$ in the coordinates $(u_1,u_2)$ cannot, in general, be put in the form $\pm u_1^2\pm u_2^2$, because the $u_1,u_2$ coordinates
have already been fixed by choosing the local form of the fold. Instead, we assume that $H_s(u_1,u_2)=B(u_1,u_2)+\dots$, where $B$ is a quadratic part and the dots denote higher order terms (of order 3 and more). The quadratic part is nondegenerate. 

Suppose $B$ is definite, which corresponds to a birth or a death. The level sets of $B$ are concentric ellipses. These ellipses
are mapped under the fold map to curves with precisely one transverse double point, see Figure~\ref{fig:concentric}. Since
$H_s$ differs from $B$ by terms of order $3$ or more, the level sets of $H_s$  have the same qualitative behavior as $B$: they are approximate
ellipses such that the fold map takes them to curves with precisely one transverse double point. We leave the details to the reader.

\begin{figure}
  \begin{tikzpicture}
    \node at (-5,0) {\includegraphics[width=3.5cm]{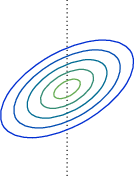}};
    \node at (0,0) {\includegraphics[width=3.5cm]{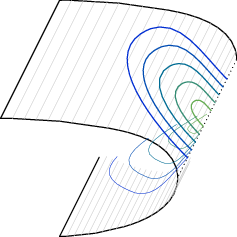}};
    \node at (5,0) {\includegraphics[width=2cm]{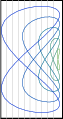}};
  \end{tikzpicture}
  \caption{Case \ref{item:ml_twist_morse}. The dotted vertical line is the fold axis. The images of level sets of $H_s$ under the fold map are circles with one double point.}\label{fig:concentric}
\end{figure}
\begin{figure}
  \begin{tikzpicture}
    \node at (-5,0) {\includegraphics[width=3.5cm]{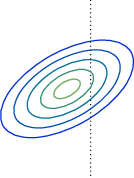}};
    \node at (0,0) {\includegraphics[width=3.5cm]{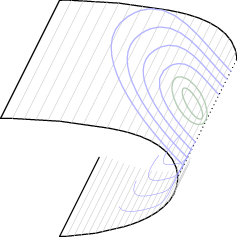}};
    \node at (5,0) {\includegraphics[width=2cm]{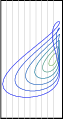}};
  \end{tikzpicture}
  \caption{Case \ref{item:ml_twist_morse}, continued. If the critical point is moved away from the fold line, the level sets for small parameter
  are mapped into simple closed curves. A crossing occurs when the level sets cross the fold line.}\label{fig:concentric2}
\end{figure}
\begin{figure}
  \begin{tikzpicture}
    \node at (-5,0) {\includegraphics[width=3.5cm]{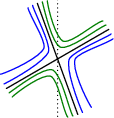}};
    \node at (0,0) {\includegraphics[width=3.5cm]{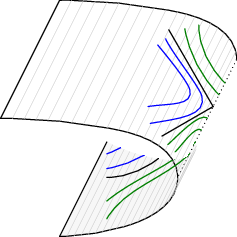}};
    \node at (5,0) {\includegraphics[width=2cm]{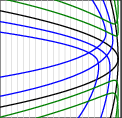}};
  \end{tikzpicture}
  \caption{Case \ref{item:ml_twist_morse}, saddle point. The dotted vertical line is the fold axis. One set of hyperbolas (corresponding
    to the level sets of $H_s$ below or above the critical point) are mapped bijectively by the fold map. The other set (corresponding
  to the opposite level sets) acquire a double point after the fold map.}\label{fig:concentric3}
\end{figure}
The movie is obtained as follows: changing the parameter $s$ moves the critical point away from the fold line (see leftmost figure of Figure~\ref{fig:concentric2}). The level sets that do not intersect the fold line will be mapped to circles with no double point. At some moment, a cusp
is acquired and the double point is created. Moving the critical points from the left of the fold line to the right will lead
to a movie as in Figure~\ref{fig:SMorse1}.

The situation of the saddle point is analogous. The analog of Figure~\ref{fig:concentric} is
sketched in Figure~\ref{fig:concentric3}. It leads to a local loop in Figure~\ref{fig:SMorse2}. This concludes the standard, non-equivariant situation. 
\subsubsection*{The equivariant case off-axis.}  This case is analogous to the non-equivariant case: the same movie is performed on both sides of the axis.
That is, we have the following two off-axis movies from case~\ref{item:ml_twist_morse}.
We refer to them as \emph{singular Morse handles off-axis}:
\begin{enumerate}[label=(SM-\arabic*)]
  \item Singular birth off-axis, see Figure~\ref{fig:SMorse1};
  \item Singular saddle off-axis, see Figure~\ref{fig:SMorse2}.
\end{enumerate}

\begin{figure}
  \begin{tikzpicture}
    \node at (-1.5,3){\includegraphics[height=1cm]{pics/reidemeister-20.eps}};
    \node at (-6,0){\includegraphics[width=1cm]{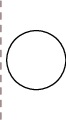}};
    \node at (-3,0){\includegraphics[width=1cm]{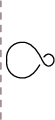}};
    \node at (0,0){\includegraphics[width=1cm]{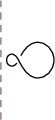}};
    \node at (3,0){\includegraphics[width=1cm]{pics/movie_moves-9.eps}};
    \draw[<->,green!40!black] (-5.2,0) -- node[scale=0.8,above,midway]{IR-1} (-3.8,0);
    \draw[<->,green!40!black] (-2.2,0) -- node[scale=0.8,above,midway]{IR-1} (-0.8,0);
    \draw[<->,green!40!black] (0.8,0) -- node[scale=0.8,above,midway]{IR-1} (2.2,0);
    \draw[<->,blue] (-1.9,3) .. controls ++(-5,0) and ++(-1,0) .. node[scale=0.8,right,midway]{Morse} (-6.7,0);
    \draw[<->,blue] (-1.1,3) .. controls ++(5,0) and ++(1,0) .. node[scale=0.8,right,midway]{Morse} (3.7,0);
  \end{tikzpicture}
\caption{Twisted birth and death off-axis. The diagram is mirrored on the other side of the axis. In the middle diagrams, the smaller loop is done/undone
by an IR-1 move.} \label{fig:SMorse1}
\end{figure}
\begin{figure}
  \begin{tikzpicture}
    \node at (-3,0){\includegraphics[width=1cm]{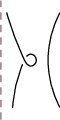}};
    \node at (-0,0){\includegraphics[width=1cm]{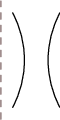}};
    \node at (3,0){\includegraphics[width=1cm]{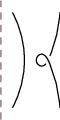}};
    \node at (0,-3){\includegraphics[width=1cm]{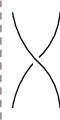}};
    \draw[<->,blue] (3,-1) .. controls ++ (0,-1) and ++(2,0) .. node[midway, scale=0.8,left]{Saddle} (1,-3);
    \draw[<->,blue] (-3,-1) .. controls ++ (0,-1) and ++(-2,0) .. node[midway, scale=0.8,right]{Saddle} (-1,-3);
    \draw[<->,green!40!black] (0.8,0) -- node[scale=0.8,above,midway]{IR-1} (2.2,0);
    \draw[<->,green!40!black] (-2.2,0) -- node[scale=0.8,above,midway]{IR-1} (-0.8,0);
  \end{tikzpicture}
\caption{Twisted saddle off-axis. The diagram is mirrored on the other side of the axis. In the row, the R-1 moves are performed.} \label{fig:SMorse2}
\end{figure}
\begin{figure}
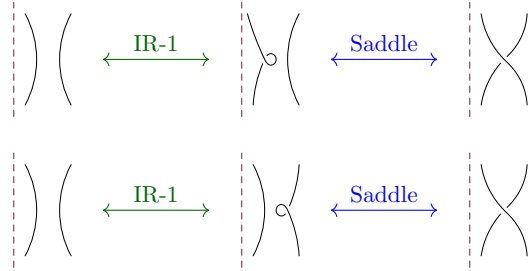

  \begin{tikzpicture}
    \node at (3,-2){\includegraphics[width=0.8cm]{pics/movie_moves-7.eps}};
    \node at (3,0){\includegraphics[width=0.8cm]{pics/movie_moves-7.eps}};
    \node at (-3,-2){\includegraphics[width=0.8cm]{pics/movie_moves-5.eps}};
    \node at (-3,0){\includegraphics[width=0.8cm]{pics/movie_moves-5.eps}};

    \node at (0,0){\includegraphics[width=0.8cm]{pics/movie_moves-6.eps}};
    \node at (0,-2){\includegraphics[width=0.8cm]{pics/movie_moves-8.eps}};
    \draw[<->,blue] (0.8,0) -- node[scale=0.8,above,midway]{Saddle} (2.2,0);
    \draw[<->,green!40!black] (-2.2,0) -- node[scale=0.8,above,midway]{IR-1} (-0.8,0);
    \draw[<->,blue] (0.8,-2) -- node[scale=0.8,above,midway]{Saddle} (2.2,-2);
    \draw[<->,green!40!black] (-2.2,-2) -- node[scale=0.8,above,midway]{IR-1} (-0.8,-2);
  \end{tikzpicture}
  \caption{An actual movie move associated to the loop in Figure~\ref{fig:SMorse2}.} \label{fig:SMorse2a}
\end{figure}

\subsubsection*{Equivariant case on-axis} We have a point $u_0\in\Sigma^\tau$ that is mapped by $\pi\circ\Phi_s$ to a point in $\cL$. We choose
local coordinates $x_1,y_1$ near $u_0$, with $\tau x_1=-x_1$, $\tau y_1=y_1$.
By equivariant Morse Lemma (Theorem~\ref{thm:wass_mors}), 
we can adjust the coordinates equivariantly in such a way that $H_s(x_1,y_1)=H_s(u_0)+\pm x_1^2+\pm y_1^2$.

Choose $x_2,y_2$ the coordinates in the target, with $\tau x_2=-x_2$, $\tau y_2=y_2$. The map $\pi\circ\Phi_s$ can be given as
$x_2=\rho_x(x_1,y_1)$, $y_2=\rho_y(x_1,y_1)$.
The maps $\rho_x,\rho_y$ must satisfy equivariance conditions, so that we can write the local Taylor expansion:
\[\begin{cases}
    \rho_1(x_1,y_1)&=x_1(a_1+a_2 y_1+a_{31}x^2_1+a_{32}y^2_1+\dots)\\
    \rho_2(x_1,y_1)&=b_0+b_1 y_1+b_{21}x^2_1+b_{22}y^2_1+b_{31}y^3_1+b_{32}x^2_1y_1+\dots.
\end{cases}\]
Above, the first number in the subscript denotes the order of the polynomial, while the second number, if present, enumerates
polynomials of the fixed order. We may assume that $b_0=0$.
The condition that $\pi\circ\Phi_s$ has a critical point at $u_0$ implies that $a_1b_1=0$.

\emph{Case 1. $b_1=0$.} We show that this case cannot occur. To see this, choose a nonzero vector $v\in T_{u_0}\Sigma^\tau$. As $\Phi_s$ is an embedding, $v':=D\Phi_s(v)\in T_{u'}(S^3\times[0,1])$ is nonzero, where $u'=\Phi_s(u_0)$. Next, $F_s|_{\Phi_s(\Sigma)}$ has a critical point at $u'$, that is, $T_{u'}(\Phi_s(\Sigma))\subset \ker DF_s$.  Hence, $v'\in\ker DF_s$. Let $t_0=F_s(u_0)$. The fact that $v'\in\ker DF_s$ implies that $v'\in T_{u'}R_{t_0}$. 

The map $\pi\colon R_{t_0}\to\R^2$ is a projection taking the set $R_{t_0}^\tau=\wt{\cL}$ to $\cL=(\R^2)^{\tau}$. In particular, even
if $\pi$ has one-dimensional kernel, $\pi$ restricted to $\wt{\cL}$ is an isomorphism onto its image. As $v'$ is tangent to $\wt{\cL}$,
it follows that $D\pi(v')\neq 0$. That is the original vector $v$ gets mapped to a nonzero vector under $\pi\circ\Phi_s$.

To show that this contradicts $b_1=0$, we note that in the local coordinates we can take $v=(0,1)$. Then, the image of $v$ under
$DH_s$ is $D(\rho_1,\rho_2)$ acted on $(0,1)$, that is $(0,b_1)$.

\smallskip
\emph{Case 2. $a_1=0$.} On counting dimensions, we may assume that $b_1a_2\neq 0$ (the case $b_1a_2=0$ has higher codimension singularity). Up to higher order terms, the map
$(\rho_1,\rho_2)$ is given by $(x_1,y_1)\mapsto(a_2x_1y_1,y_1)$, that is, it contracts the line $y_1=0$ to a single point.

If $D^2H_s$ is definite, the level sets of $H_s$ are approximate ellipses. Each circle intersects the line $\{y_1=0\}$ at two points. The map
$\pi\circ\Phi_s=(\rho_1,\rho_2)$ will glue these points to create a (generically) transverse self-intersection. The diagram is stable, in
the sense that higher order terms of $(\rho_1,\rho_2)$ will not change the picture.
The situation is very similar to the off-axis case: if the critical point is moved above the line $\{y_1=0\}$, small ellipses (level sets of $H_s$)
are disjoint from $\{y_1=0\}$, and so, they are mapped curves with no self-intersections. Once they touch $\{y_1=0\}$, they acquire a cusp and higher values of $H_s$ lead to the circle with self-intersection. 

The situation for saddles is analogous. We are led to the following two loops of moves.
\begin{enumerate}[label=(OM-\arabic*)]
  \item Singular birth on-axis, see Figure~\ref{fig:OMorse1};
  \item Singular saddle on-axis, see Figure~\ref{fig:OMorse2}.
\end{enumerate}
\begin{figure}
  \begin{tikzpicture}
    \node at (-1.5,3){\includegraphics[height=1cm]{pics/reidemeister-20.eps}};
    \node at (-6,0){\includegraphics[width=1cm]{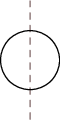}};
    \node at (-3,0){\includegraphics[width=1cm]{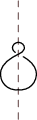}};
    \node at (0,0){\includegraphics[width=1cm]{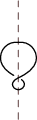}};
    \node at (3,0){\includegraphics[width=1cm]{pics/movie_moves-16.eps}};
    \draw[<->,green!40!black] (0.8,0) -- node[scale=0.8,above,midway]{R-1} (2.2,0);
    \draw[<->,green!40!black] (-0.8,0) -- node[scale=0.8,above,midway]{R-1} (-2.2,0);
    \draw[<->,green!40!black] (-3.8,0) -- node[scale=0.8,above,midway]{R-1} (-5.2,0);
    \draw[<->,blue] (-1.9,3) .. controls ++(-5,0) and ++(-1,0) .. node[scale=0.8,right,midway]{Morse} (-6.7,0);
    \draw[<->,blue] (-1.1,3) .. controls ++(5,0) and ++(1,0) .. node[scale=0.8,right,midway]{Morse} (3.7,0);
  \end{tikzpicture}
\caption{Twisted birth/death on-axis.} \label{fig:OMorse1}
\end{figure}
\begin{figure}
  \begin{tikzpicture}
    \node at (-3,0){\includegraphics[width=1cm]{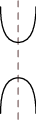}};
    \node at (-6,0){\includegraphics[width=1cm]{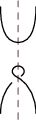}};
    \node at (-3,-3){\includegraphics[width=0.8cm]{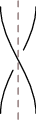}};
    \node at (0,0){\includegraphics[width=1cm]{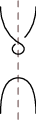}};
    \draw[<->,blue] (-6,-2) .. controls ++(0,-0.5) and ++(-0.5,0) .. (-3.5,-3);
    \draw[<->,blue] (0,-2) .. controls ++(0,-0.5) and ++(0.5,0) .. (-2.5,-3);
    \draw[<->,green!40!black] (-3.8,0) -- node[scale=0.8,above,midway]{R-1} (-5.2,0);
    \draw[<->,green!40!black] (-2.2,0) -- node[scale=0.8,above,midway]{R-1} (-0.8,0);
  \end{tikzpicture}
  \caption{Twisted saddle on-axis. The arrows indicate the Morse moves (saddles). The row consists of Reidemeister moves.} \label{fig:OMorse2}
\end{figure}

\subsection{Case \ref{item:ml_morse_on_diagram}. Handles crossing the diagram}\label{sub:morse_on_diagram}
We discuss now the situation, where \ref{item:proj2} holds, but \ref{item:proj1} is violated. In the codimension~1 situation, we might have
the following cases.
\begin{enumerate}[label=(DM-\arabic*)]
  \item A birth off-axis over a smooth point of the diagram, see Figure~\ref{fig:DMorse1};
  \item A saddle off-axis over a smooth point of the diagram, see Figure~\ref{fig:DMorse2};
  \item A birth on-axis over a double point, see Figure~\ref{fig:DMorse3};
  \item A saddle on-axis over a double point, see Figure~\ref{fig:DMorse4}.
\end{enumerate}
Notice that neither the birth nor the saddle can possibly occur at the image of $\cS^{\Z_2}$. Indeed, this would mean that two fixed point of the $\tau$ action (the critical point and the point of $\cS^{\Z_2}$) are mapped to the same point on $\cL$. This would imply that they are the same point, violating the assumption that $\phi_s$ is an embedding.

The deformation in each of the four cases consists of moving the image of the critical point off the diagram. In the equivariant setting,
the image of critical point is moved off the diagram along the axis.

\begin{figure}
  \begin{tikzpicture}
    \node at (0,3){\includegraphics[width=1cm]{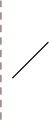}};
    \node at (-3,0){\includegraphics[width=1cm]{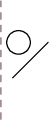}};
    \node at (0,0){\includegraphics[width=1cm]{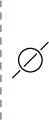}};
    \node at (3,0){\includegraphics[width=1cm]{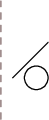}};
    \draw[<->,blue] (-1,3) .. controls ++(-0.5,0) and ++(0,0.5) .. (-3,1);
    \draw[<->,blue] (1,3) .. controls ++(0.5,0) and ++(0,0.5) .. (3,1);
    \draw[<->,green!40!black] (-2.2,0) -- node[scale=0.8,above,midway]{IR-2} (-0.8,0);
    \draw[<->,green!40!black] (2.2,0) -- node[scale=0.8,above,midway]{IR-2} (0.8,0);
  \end{tikzpicture}
\caption{Birth/death over a point off-axis.} \label{fig:DMorse1}
\end{figure}
\begin{figure}
  \begin{tikzpicture}
    \node at (0,0){\includegraphics[width=1cm]{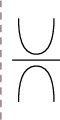}};
    \node at (-3,0){\includegraphics[width=1cm]{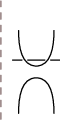}};
    \node at (0,3){\includegraphics[width=1cm]{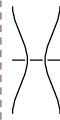}};
    \node at (3,0){\includegraphics[width=1cm]{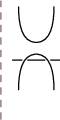}};
    \draw[<->,blue] (-1,3) .. controls ++(-0.5,0) and ++(0,0.5) .. (-3,1);
    \draw[<->,blue] (1,3) .. controls ++(0.5,0) and ++(0,0.5) .. (3,1);
    \draw[<->,green!40!black] (-2.2,0) -- node[scale=0.8,above,midway]{IR-2} (-0.8,0);
    \draw[<->,green!40!black] (2.2,0) -- node[scale=0.8,above,midway]{IR-2} (0.8,0);
  \end{tikzpicture}
\caption{Saddle over a point off-axis. The arrows indicate the Morse move.} \label{fig:DMorse2}
\end{figure}
\begin{figure}
  \begin{tikzpicture}
    \node at (0,3){\includegraphics[width=1cm]{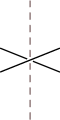}};
    \node at (-3,0){\includegraphics[width=1cm]{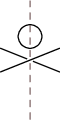}};
    \node at (0,0){\includegraphics[width=1cm]{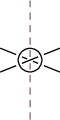}};
    \node at (3,0){\includegraphics[width=1cm]{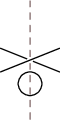}};
    \draw[<->,blue] (-1,3) .. controls ++(-0.5,0) and ++(0,0.5) .. (-3,1);
    \draw[<->,blue] (1,3) .. controls ++(0.5,0) and ++(0,0.5) .. (3,1);
    \draw[<->,green!40!black] (-2.2,0) -- node[scale=0.8,above,midway]{M-1} (-0.8,0);
    \draw[<->,green!40!black] (2.2,0) -- node[scale=0.8,above,midway]{M-1} (0.8,0);
  \end{tikzpicture}
\caption{Birth/death over a point on-axis} \label{fig:DMorse3}
\end{figure}
\begin{figure}
  \begin{tikzpicture}
    \node at (0,0){\includegraphics[width=1cm]{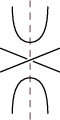}};
    \node at (-3,0){\includegraphics[width=1cm]{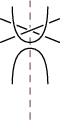}};
    \node at (0,3){\includegraphics[width=1cm]{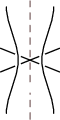}};
    \node at (3,0){\includegraphics[width=1cm]{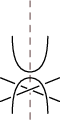}};
    \draw[<->,blue] (-1,3) .. controls ++(-0.5,0) and ++(0,0.5) .. (-3,1);
    \draw[<->,blue] (1,3) .. controls ++(0.5,0) and ++(0,0.5) .. (3,1);
    \draw[<->,green!40!black] (-2.2,0) -- node[scale=0.8,above,midway]{M-1} (-0.8,0);
    \draw[<->,green!40!black] (2.2,0) -- node[scale=0.8,above,midway]{M-1} (0.8,0);
  \end{tikzpicture}
\caption{Saddle over a point on-axis} \label{fig:DMorse4}
\end{figure}
\subsection{Case \ref{item:ml_morse_loop}. Non Morse critical point}\label{sub:morse_loop}
This is the case when \ref{item:cs_morse} is violated. We recall that near $u_i^M(s)\in\Sigma$, the map 
$\Sigma\ni u\mapsto (\pi\Phi_s(u),H_s(u))\in\R^2\times\R$ 
locally parameterizes a surface in $\R^3$. We denote it $\Sigma'$. The projection $\R^2\times\R$ to $\R$ restricted to $\Sigma'$
has a non-Morse critical point $v_i^M$, which is the image of $u_i^M(s)$. For near parameters $s'$, the non-Morse critical points
splits into Morse critical points. That is, on $v_i^M$ we have a codimension~1 singularity of Morse functions.

Codimension~1 non-Morse singularities of $\Z_2$-equivariant functions were studied 
in detail
in \cite{BB}; compare also \cite{Akaho,Akaho2}.
Each such singularity induces a loop of equivariant Morse moves.
Degeneracies are the following:
\begin{enumerate}[label=(NM-\arabic*)]
  \item birth+saddle off-axis, see Figure~\ref{fig:NM1};
  \item birth+saddle on-axis, see Figure~\ref{fig:NM2};
  \item collision of two saddles and a birth, see Figure~\ref{fig:NM3};
  \item collision of two births and a saddle, see Figure~\ref{fig:NM4}.
\end{enumerate}
\begin{figure}
  \begin{tikzpicture}
    \node at (-6,0){\includegraphics[width=1cm]{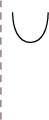}};
    \node at (-3,0){\includegraphics[width=1cm]{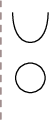}};
    \node at (0,0){\includegraphics[width=1cm]{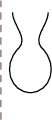}};
  \end{tikzpicture}
  \caption{Birth and saddle off-axis can cancel.} \label{fig:NM1}
\end{figure}
\begin{figure}
  \begin{tikzpicture}
    \node at (-6,0){\includegraphics[width=1cm]{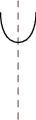}};
    \node at (-3,0){\includegraphics[width=1cm]{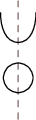}};
    \node at (0,0){\includegraphics[width=1cm]{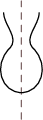}};
  \end{tikzpicture}
  \caption{Birth and saddle on-axis can cancel.} \label{fig:NM2}
\end{figure}
\begin{figure}
  \begin{tikzpicture}
    \node at (-6,0){\includegraphics[width=1cm]{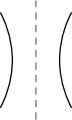}};
    \node at (-3,0){\includegraphics[width=1cm]{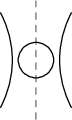}};
    \node at (0,0){\includegraphics[width=1cm]{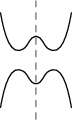}};
  \end{tikzpicture}
  \caption{Collision of critical points. A birth on the axis and a pair of saddles off-axis are equivalent to a saddle on-axis.} \label{fig:NM3}
\end{figure}
\begin{figure}
  \begin{tikzpicture}
    \node at (-6,0){\includegraphics[height=1cm]{pics/reidemeister-20.eps}};
    \node at (-3,0){\includegraphics[width=1cm]{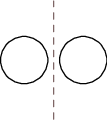}};
    \node at (0,0){\includegraphics[width=1cm]{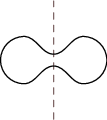}};
    \node at (3,0){\includegraphics[height=1cm]{pics/reidemeister-20.eps}};
  \end{tikzpicture}
  \caption{Collision of critical points. A birth off the axis and a saddle on the axis can be traded for a birth on the axis.} \label{fig:NM4}
\end{figure}

\subsection{Case \ref{item:ml_reid}. Codimension two singularity of the diagram}\label{sub:reid}
Suppose $s_0,t_0$ are such that $t_0$ is a non-critical point of $F\circ\Phi_{s_0}$. Set as usual, $\cS=\Phi_{s_0}(\Sigma)\cap H_{s_0}^{-1}(t_0)$.
Then, by implicit function theorem, there exists a family $\wt{\phi}_{s,t}\colon\cS\to\R^3$ of equivariant maps such that
the image of $\cS$ parameterizes $\Phi_s(\Sigma)\cap H_{s}^{-1}(t)$, and $(s,t)$ are close to $(s_0,t_0)$. We let $\phi_{s,t}$ be the diagrams
obtained by the projection.

It might happen, for finitely many pairs $(s_0,t_0)$ that $\phi_{s_0,t_0}\in\cF^2$, in which case the whole family is an unfolding of a codimension~2 Reidemeister singularity.
This leads to one of the 18 loop (L-1)--(L-18) displayed in Table~\ref{tab:local_loops}.

\subsection{Summary. Loops and movie moves}\label{sub:list}
The discussion of Section~\ref{sec:carter_saito} can be summarized in the following 
list of loops.
\begin{enumerate}[label=(MM-\arabic*)]
  \item Commuting two moves occurring at different places, e.g. equivariant Reidemeister move and a Morse move, that is:\label{item:MM1}
    \begin{itemize}
      \item commuting two Reidemeister moves;
      \item commuting an I-move or an S-move and a Reidemeister move;
      \item commuting an I-move or an S-move with an equivariant Morse move;
      \item commuting two equivariant Morse moves;
      \item commuting a Reidemeister move and an equivariant Morse move;
      \item inserting and removing an I-move or an S-move.
    \end{itemize}
  \item A loop of Reidemeister moves of Theorem~\ref{thm:intro4};\label{item:MM3}
  \item A loop of Reidemeister moves around the point at infinity, leading to:\label{item:MM2}
    \begin{itemize}
      \item The R-1 loop at infinity, see Figure~\ref{fig:RI1};
      \item  \emph{The R-2 loop at infinity, see Figure~\ref{fig:RI2}};\footnote{Italicized items can be deduced from loops of Reidemeister
	moves, but in a non-local way.}
      \item The M-1 loop at infinity, see Figure~\ref{fig:MI1};
      \item \emph{The M-2 loop at infinity, see Figure~\ref{fig:MI2};}
      \item \emph{The M-3 loop at infinity, see Figure~\ref{fig:MI3}.}
    \end{itemize}
  \item A loop associated to a Morse move for a critical point hitting the axis; \label{item:MM25}
    \begin{itemize}
      \item A loop of a birth or a death on the symmetry line, see Figure~\ref{fig:birth_on_line};
      \item A loop of a saddle on the symmetry line, see Figure~\ref{fig:saddle_on_line}.
    \end{itemize}
  \item A loop of equivariant Morse moves around the point at infinity:\label{item:MM4}
    \begin{itemize}
      \item Birth/death at infinity, see Figure~\ref{fig:IMorse1};
      \item Saddle at infinity, see Figure~\ref{fig:IMorse2}.
    \end{itemize}
  \item A loop related to a singular equivariant Morse move:\label{item:MM5}
    \begin{itemize}
      \item Singular birth off-axis, see Figure~\ref{fig:SMorse1};
      \item Singular saddle off-axis, see Figure~\ref{fig:SMorse2}.
      \item Singular birth on-axis, see Figure~\ref{fig:OMorse1};
      \item Singular saddle on-axis, see Figure~\ref{fig:OMorse2}.
    \end{itemize}
  \item A loop related to an equivariant Morse move over another point in the diagram:\label{item:MM6}
    \begin{itemize}
      \item A birth off-axis over a smooth point of the diagram, see Figure~\ref{fig:DMorse1};
      \item A saddle off-axis over a smooth point of the diagram, see Figure~\ref{fig:DMorse2};
      \item A birth on-axis over a double point, see Figure~\ref{fig:DMorse3};
      \item A saddle on-axis over a double point, see Figure~\ref{fig:DMorse4}.
    \end{itemize}
  \item A loop of equivariant Morse moves related to a failure of Morse condition:\label{item:MM7}
    \begin{itemize}
      \item birth+saddle off-axis, see Figure~\ref{fig:NM1};
      \item birth+saddle on-axis, see Figure~\ref{fig:NM2};
      \item collision of two saddles and a birth, see Figure~\ref{fig:NM3};
      \item collision of two births and a saddle, see Figure~\ref{fig:NM4}.
    \end{itemize}
\end{enumerate}
\begin{defn}
  A loop of the above list is called an \emph{elementary loop}.
\end{defn}

The way an elementary loop produces an elementary movie move is the following.
Suppose a loop consists of moves  $\scS_1,\dots,\scS_n$. We choose $i,j=1,\dots,n$ as parameters with $i\le j$. We consider
movies $\scM_1$ consisting of  $\scS_{i+1},\scS_{i+2},\dots,\scS_j$ and $\scM_2$ consisting of $\scS_i,\scS_{i-1},\scS_{i-2}\dots,\scS_1,\scS_n,\dots,\scS_{j+1}$.
Informally, the loop is broken into two sets of movies. We say that $i,j$ are \emph{admissible} if the traces of $\scM_1$ and $\scM_2$
are equivariantly isotopic.
\begin{defn}
  An \emph{elementary movie move} consists of exchanging $\scM_1$ with $\scM_2$ or $-\scM_1$ with $-\scM_2$, where
  $\scM_1,\scM_2$ are two sets of movies coming from admissible $i,j$ and one of the loop in the list \ref{item:MM1}--\ref{item:MM7} above.
  Here the sign `$-$' on the movie denotes the reverse movie.
\end{defn}

\begin{example}
  If $\scS_1,\dots,\scS_n$ is a loop of Reidemeister moves from Table~\ref{tab:local_loops}, then any $i,j$ are admissible.
\end{example}
\begin{example}
  Consider the loop associated with a collision as in Figure~\ref{fig:NM4}. Here $\scS_1$ is a birth off-the axis, $\scS_2$ is a saddle on-axis, $\scS_3$ is the death on-axis. Only the choice $i=0$, $j=2$ is possible: the trace of $\scS_1,\scS_2$ is the disk with two minima, while
  the trace of $-\scS_3$ is a disk with the minimum. All other choices lead to non-isotopic traces.
\end{example}
\begin{example}
  The loop L-1 from Table~\ref{tab:local_loops} induces an elementary movie move replacing (IR-1) by (IR-1) and (IR-2) moves. It also induces
  an elementary movie move replacing one (IR-2) move by two (IR-1) moves.
\end{example}
\begin{example}
  Consider a local loop described in Figure~\ref{fig:SMorse2}. It induces an elementary movie move replacing (IR-1)+Saddle by another 
  type (IR-1)+Saddle;
  see Figure~\ref{fig:SMorse2a}. However,
  it does not induce an elementary movie move replacing two (IR-1) moves by two saddles, because the genera of traces are not the same.
\end{example}

We obtain now a theorem, expanding on Theorem~\ref{thm:intro3}
from the introduction.
\begin{theorem}\label{thm:main_loop}
  Suppose $L_0,L_1$ are two equivariant links in $S^3\times\{0\}$ and $S^3\times\{1\}$, respectively. Suppose $\Phi_s\colon \Sigma\to S^3\times[0,1]$, $s\in[0,1]$ is a family of equivariant cobordisms connecting $L_0$ and $L_1$. Assume that $\Phi_s(\Sigma)$ 
  does not have any isolated fixed points of the action of $\tau$.
  Suppose for $s=0,1$ the maps $\Phi_0,\Phi_1$ satisfy conditions \ref{item:cs_ifirst} -- \ref{item:cs_reid}.
  The diagrams $D_0$ and $D_1$ of $L_0$ and $L_1$
  can be connected by the movies associated with $\Phi_0$ and $\Phi_1$. The two movies can be transformed one into other
  by the elementary movie moves.
\end{theorem}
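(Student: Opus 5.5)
The proof follows the template of Theorem~\ref{thm:two_loops}, now applied to the one-parameter family $\Phi_s$ regarded as a path in $\cG(\Sigma)$. By Lemma~\ref{lem:o_dense}, the endpoints $\Phi_0,\Phi_1$ lie in the residual set $\cG^0(\Sigma)$. The first step is to perturb the path $s\mapsto\Phi_s$ rel endpoints, by a $C^\infty$-small equivariant perturbation, so that it has two properties. First, it meets the complement $\cG(\Sigma)\setminus\cG^0(\Sigma)$ only at finitely many parameters $s_1<\dots<s_p$. Second, at each such $s_j$ exactly one of the codimension-one degenerations (Coi-1)--(Coi-5) or \ref{item:ml_iloop}--\ref{item:ml_reid} occurs, and the path crosses the corresponding stratum transversely. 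Any perturbation that is small in $C^1$ keeps each $\Phi_s$ an embedding, so the equivariant isotopy class of every $\Phi_s(\Sigma)$ is unchanged. The perturbed family therefore still has no isolated fixed points.

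To justify this genericity, I would argue exactly as in Lemma~\ref{lem:o_dense}, but for families. Each violation of (CS-$i$) is a union of strata in a suitable equivariant multijet space. These are jets of $\Phi$ on $\Sigma$, jets of $H_s$ at critical points, and jets of the diagrams $\phi_{ts}$ in the $t$-direction. The listed degenerations are precisely the strata of parametric codimension one, and everything else has codimension at least two. The ingredients for the codimension count are:
\begin{enumerate}
\item Definition~\ref{def:two_regular} and Lemma~\ref{lem:generic_at_infty} for behaviour at infinity;
\item Theorem~\ref{thm:complete} and the construction of $\cF^2$ for the diagram part, where $(t,s)$ is now a two-parameter family;
\item the equivariant codimension-one classification of \cite{BB} for non-Morse points of the compressed surface;
\item a direct count for the conditions \ref{item:proj1} and \ref{item:proj2}, refining Lemma~\ref{lem:linear_is_enough}.
\end{enumerate}
Condition~\ref{con:no_iso} is used here: it rules out the singular Morse moves of Lemma~\ref{lem:equi_moves}, so that every critical point is handled by Theorem~\ref{thm:eem}. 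Equivariant parameter counting (Lemma~\ref{lem:codimension} together with Theorem~\ref{thm:ejet}) then yields the perturbation.

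Next I analyse what happens to the movie as $s$ varies. On each open interval $(s_j,s_{j+1})$ the movie changes only by equivariant planar isotopy together with reparametrisation of the $t$-coordinates of the moves, so its combinatorial type is constant. At each $s_j$ I take a small disk around the degenerate point in the $(t,s)$-plane. Passing from $s_j-\epsilon$ to $s_j+\epsilon$ then replaces one arc of the boundary loop by the complementary arc, as in Figure~\ref{fig:circle}. The loop itself is read off from the local model at that point:
\begin{enumerate}
\item coincidences give the commutations \ref{item:MM1};
\item \ref{item:ml_reid} gives the loops of Table~\ref{tab:local_loops}, via Theorem~\ref{thm:intro4};
\item \ref{item:ml_iloop} and \ref{item:ml_morse_inf} give the loops at infinity of Subsections~\ref{sub:iloop} and~\ref{sub:ml_morse_inf};
\item \ref{item:ml_morse_on_line}, \ref{item:ml_twist_morse} and \ref{item:ml_morse_on_diagram} give the loops of Subsections~\ref{sub:morse_online}, \ref{sub:twist_morse} and~\ref{sub:morse_on_diagram};
\item \ref{item:ml_morse_loop} gives the loops NM-1 to NM-4.
\end{enumerate}
Non-transverse tangencies to the codimension-one strata produce births and deaths of cancelling pairs of moves, which are again movie moves of type \ref{item:MM1} or come from Theorem~\ref{thm:two_loops}. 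Because the replaced arc and its complement are both realised by the actual family, their traces are equivariantly isotopic. Each such replacement is therefore an admissible elementary movie move, and concatenating over $j$ proves the theorem.

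I expect the main obstacle to be completeness of the codimension-one list for the Morse-related degenerations. One must show that violations of \ref{item:proj1} generically produce only a fold, or the on-axis degeneration with $a_1=0$ and $b_1a_2\neq0$. Similarly, violations of \ref{item:proj2} should involve only a single extra smooth point or an on-axis double point, and never a fixed branch. Finally, one needs the versality statements that justify reading each local loop from the model family. I would prove these by explicit jet counts in the coordinates of Theorem~\ref{thm:eem} and Lemma~\ref{lem:fix_coor}, treating the on-axis and off-axis cases separately and citing \cite{BB} for the non-Morse strata.
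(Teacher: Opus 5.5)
Your proposal is correct and follows essentially the same route as the paper. You perturb the path $\Phi_s$ so that it crosses the codimension-one degenerations (Coi-1)--(Coi-5) and \ref{item:ml_iloop}--\ref{item:ml_reid} of Subsection~\ref{sub:col_deg} at finitely many parameters, read off the elementary loop at each crossing from the local analyses of Subsections~\ref{sub:iloop}--\ref{sub:reid}, and relate the movies on either side by the arc-replacement argument of Theorem~\ref{thm:two_loops}. Your sketch is in fact somewhat more explicit than the paper's proof about the genericity of the path, the cancelling pairs coming from non-transverse tangencies, and why the resulting replacements are admissible.
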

\begin{remark}
  The assumptions that \ref{item:cs_ifirst} -- \ref{item:cs_reid} are satisfied for $\Phi_0$ and $\Phi_1$ is not
  restrictive: we can guarantee it by a small equivariant perturbation of maps, see Lemma~\ref{lem:o_dense}.
\end{remark}
\begin{proof}[Proof of Theorem~\ref{thm:main_loop}]
  Suppose $\Phi_s$ is a path of equivariant embeddings of $\Sigma$ to $S^3\times[0,1]$. For some values of $s$, conditions \ref{item:cs_ifirst} -- \ref{item:cs_reid} can
  be violated. These singularities have been described in Subsection~\ref{sub:col_deg}.
  In particular, the moves~\ref{item:MM1} are described as consequences of \ref{item:Col1} -- \ref{item:Col2}.

  In Subsection~\ref{sub:col_deg}, we described also other types of violations of conditions \ref{item:cs_ifirst} -- \ref{item:cs_reid},
  namely the list \ref{item:ml_iloop} -- \ref{item:ml_reid}. Item \ref{item:ml_iloop} of that list is handled in Subsection~\ref{sub:iloop}
  and leads to \ref{item:MM2}. Indeed, say the map $\wt{\phi}_{ts}$, at $t=t_0,s=s_0$, has a diagram that is not regular at infinity.
  By the discussion in Subsection~\ref{sub:iloop}, for the loop $\wt{\phi}_{t's'}$, with $t'=t_0+r\cos\gamma$, $s'=s_0+r\sin\gamma$,
  with $r$ sufficiently small, then $\gamma\in[0,2\pi]$ is one of the elementary loops corresponding to \ref{item:MM2}. It follows
  from the same arguments as in the proof of Theorem~\ref{thm:two_loops} indicates that the movies associated to paths
  $t\mapsto \wt{\phi}_{ts_1}$ and $t\mapsto\wt{\phi}_{ts_2}$ for $s_1=s_0-r$, $s_2=s_0+r$ are related by an elementary movie move.

 Analogously, the failure of regularity described in \ref{item:ml_morse_inf} is discussed in Subsection~\ref{sub:ml_morse_inf}. It leads
 to \ref{item:MM4}. Next, case \ref{item:ml_twist_morse} (Subsection \ref{sub:twist_morse}) creates loops in \ref{item:MM5}.
 Item \ref{item:ml_morse_on_diagram} is expanded on in Subsection \ref{sub:morse_on_diagram}, leading to \ref{item:MM6}.  
Item \ref{item:ml_morse_loop} is shown in Subsection \ref{sub:morse_loop} to create loops \ref{item:MM7}. Finally,
the case of Reidemeister loops is discussed in Subsection \ref{sub:reid}, and in the whole Section~\ref{sec:codim2}.
The list of loops is given in item \ref{item:MM3}.
\end{proof}

\appendix
\section{Equivariant Malgrange theorem}\label{sec:app}
The versality Theorem~\ref{thm:versality_theorem} requires  Malgrange preparation
theorem in its proof. As alluded to in Subsection~\ref{sub:versal}, the equivariant version of versality theorem relies
on an equivariant version of Malgrange preparation theorem. That result is known to the experts but not commonly stated in the literature.
For the reader's convenience, we include a short discussion. We begin with the classical, non-equivariant statement.

\begin{thm}[Malgrange preparation theorem, see \expandafter{\cite[Theorem IV.3.6]{GoluGuille}}]\label{thm:mpt}
  Let $X,Y$ be two smooth, finite dimensional manifolds. Let $p\in X$, $q\in Y$ and $f\in C^\infty(X,Y)$ take $p$ to $q$.
  Denote by $\cO_p,\cO_q$ the rings of germs at $p$ (respectively: at $q$) of $C^\infty$ smooth functions from $X$ (respectively: from $Y$)
  to $\R$. Let $\mm_p,\mm_q$ be the maximal ideals.

  A finitely generated $\cO_p$-module $A$ is finitely generated as an $\cO_q$-module if and only if $A/\mm_q A$ is a finite
  dimensional $\R$-vector space.
\end{thm}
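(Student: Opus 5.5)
This is the classical Malgrange preparation theorem, which the paper quotes from Golubitsky--Guillemin. The plan is to follow the standard route through the Weierstrass--Malgrange division theorem.

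The ``only if'' direction is immediate. If $A$ is generated over $\cO_q$ by $a_1,\dots,a_k$, then the images of the $a_i$ span $A/\mm_qA$ over $\cO_q/\mm_q=\R$.

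For the converse, I would first reduce to a special case. Factor $f$ through its graph $X\to X\times Y\to Y$. The graph embedding can be handled by a quotient: a finitely generated $\cO_p$-module is finitely generated over $\cO_{(p,q)}$ via the surjection $\cO_{(p,q)}\to\cO_p$. So it suffices to treat projections $X\times Y\to Y$. Splitting off one coordinate at a time, it suffices to treat the projection $\R^{n+1}\to\R^n$, $(t,y)\mapsto y$. The reduction is transitive: the hypothesis that $A/\mm_qA$ is finite dimensional passes to the intermediate rings by a Nakayama-type induction.

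Now suppose $A/\mm_qA$ has dimension $k$ with basis $\bar a_1,\dots,\bar a_k$, and lift these to $a_1,\dots,a_k\in A$. Write $A$ as generated over $\cO_{n+1}$ by $a_1,\dots,a_k$ together with finitely many further elements. Multiplication by $t$ then gives a relation
\[
t a_i=\sum_j c_{ij}a_j \pmod{\mm_qA}.
\]
Form the matrix $tI-(c_{ij})$ and use Cramer's rule as in the usual determinant trick. This shows $P(t)\,A\subset \mm_qA+\cdots$, where $P(t)=\det(tI-C)$ is a monic polynomial in $t$ of degree $k$.

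The key analytic input is the Malgrange division theorem. For a germ $G(t,y)$ with $G(t,0)=t^k\cdot(\text{unit})$, every germ $h$ can be written as $h=Gq+\sum_{i<k} r_i(y)t^i$, with $q$ and the $r_i$ smooth. I would apply this to the unfolded polynomial $G=\det(tI-C(t,y))$. Its restriction to $y=0$ is $t^k$ times a unit, because the relation above is ``nilpotent modulo $\mm_q$.'' Division then shows that $A$ is generated over $\cO_q$ by $t^ia_j$ for $i<k$. Nakayama's lemma over $\cO_q$ trims this generating set down to the lifts of a basis.

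The main obstacle is the smooth division theorem itself. Its formal and analytic versions are easy, but the $C^\infty$ case requires Mather's proof: extend to complex $t$ with almost-analytic extensions and solve a $\bar\partial$-type problem, or equivalently use Nirenberg's argument. I would cite this rather than reprove it, since the paper needs Theorem~\ref{thm:mpt} only as a black box for the equivariant version. The equivariant version then follows by the Whitney even-function identification mentioned in the proof of Theorem~\ref{thm:versality_theorem}.
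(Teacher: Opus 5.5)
The paper gives no proof of this statement; it is quoted from Golubitsky--Guillemin. Your outline (division theorem, the case of the projection $\R^{n+1}\to\R^n$ by a determinant trick, then reduction through the graph $X\to X\times Y\to Y$) is precisely their argument.

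As written, however, the central step does not close. You allow $A$ to be generated over $\cO_p$ by the lifts $a_1,\dots,a_k$ \emph{together with further elements}. You then pass from the congruence $ta_i\equiv\sum_j c_{ij}a_j \pmod{\mm_qA}$ to an unfolded matrix $C(t,y)$ with $C(t,0)=(c_{ij})$ and an exact relation $\sum_j(t\delta_{ij}-C_{ij}(t,y))a_j=0$. That passage needs the error term in $\mm_qA$ to be of the form $\sum_{\ell,j}y_\ell\, d_{ij\ell}(t,y)\,a_j$, i.e.\ a combination of the $a_j$ alone. If other generators are present, the error may involve them, and then there is no $k\times k$ unfolding. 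For the same reason, division by $\Delta=\det(tI-C(t,y))$ only controls elements $\sum_j h_ja_j$, so the conclusion that $A$ is generated over $\cO_q$ by the $t^ia_j$ does not follow.

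The missing observation is Nakayama's lemma over $\cO_p$. Since $\mm_q\cO_p\subseteq\mm_p$, the classes of the $a_j$ span $A/\mm_pA$, so $a_1,\dots,a_k$ already generate $A$ over $\cO_p$. Then $\mm_qA=\sum_{\ell,j}y_\ell\,\cO_p\,a_j$, and Cramer's rule gives $\Delta a_j=0$ for every $j$. Dividing each coefficient $h_j$ of $a=\sum_jh_ja_j$ by $\Delta$ then yields the generators $t^ia_j$. Equivalently, you could run the determinant trick on the whole generating set.

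Two smaller remarks. First, your claim that $\Delta(t,0)$ is $t^k$ times a unit ``because the relation is nilpotent'' is true but needs an argument. Multiplication by $t$ is nilpotent on $V=A/\mm_qA$: the chain $V\supseteq tV\supseteq t^2V\supseteq\cdots$ stabilizes at some $W$ with $tW=W$, and Nakayama over the ring of germs in $t$ gives $W=0$. In fact you do not need this. The germ $\Delta(t,0)=\det(tI-c)$ is a monic polynomial, hence equals $t^lg(t)$ with $g(0)\neq0$ and $l\le k$, and that is all the division theorem requires.

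Second, the transitivity in your reduction needs no Nakayama-type induction. At each intermediate projection, the pullback of $\mm_q$ lies in the maximal ideal of the intermediate ring. So the relevant quotient of $A$ is a quotient of $A/\mm_qA$, and hence finite dimensional.
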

Here, $\cO_p$ is regarded as an $\cO_q$ module via the map $f$. That is, $A$ is regarded as a $\cO_q$-module, as well.

In the $\Z_2$-equivariant setting, the statement is analogous.

\begin{thm}[Equivariant Malgrange preparation theorem]\label{thm:empt-y}
  Let $X,Y$ be two smooth, finite dimensional manifolds acted on smoothly by $\Z_2$. Let $p\in X^{\Z_2}$, $q\in Y^{\Z_2}$ 
  and $f\in C^\infty(X,Y)$ be an equivariant function taking $p$ to $q$.
  Denote by $\cE_p,\cE_q$ the rings of germs at $p$ (respectively: at $q$) of $C^\infty$ smooth $\Z_2$-invariant 
  functions from $X$ (respectively: from $Y$)
  to $\R$. Let $\me_p,\me_q$ be the maximal ideals.

  A finitely generated $\cE_p$-module $E$ is finitely generated as an $\cE_q$-module if and only if $E/\mm_q E$ is a finite
  dimensional $\R$-vector space.
\end{thm}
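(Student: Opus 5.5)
The plan is to reduce Theorem~\ref{thm:empt-y} to the classical Theorem~\ref{thm:mpt} by passing to orbit spaces, as indicated in the proof of Theorem~\ref{thm:versality_theorem}. The ring of invariant germs should be identified with the ring of all smooth germs on a Euclidean space: invariant germs at a fixed point are pulled back from smooth germs of the Hilbert map. The "only if" direction is immediate: if $E$ is generated over $\cE_q$ by $e_1,\dots,e_k$, then $E/\me_q E$ is spanned by their images.

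For the converse, first linearize. By Lemma~\ref{lem:local_coor}, choose coordinates $(x,y)\in\R^a\times\R^b$ near $p$ with $\tau(x,y)=(-x,y)$, and similarly $(x',y')\in\R^{a'}\times\R^{b'}$ near $q$. Let $\rho_X(x,y)=\bigl((x_ix_j)_{i\le j},\,y\bigr)$ be the quadratic Hilbert map into $\R^{N}$, where $N=a(a+1)/2+b$, and define $\rho_Y$ similarly. The key input is a Schwarz--Whitney-type statement: every invariant smooth germ $g$ has the form $g=h\circ\rho_X$ with $h$ a smooth germ on $\R^N$. For $a=1$ this is exactly Whitney's even-function theorem \cite{Whitney}, used with parameters $y$. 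For general $a$ I would prove it by formal Hilbert invariant theory plus Borel's lemma, correcting by a flat invariant remainder, or simply cite Schwarz's theorem. This gives surjections $\cO_{\rho_X(p)}\to\cE_p$ and $\cO_{\rho_Y(q)}\to\cE_q$.

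Next, construct a smooth germ $\bar f\colon(\R^N,0)\to(\R^{N'},0)$ with $\rho_Y\circ f=\bar f\circ\rho_X$. Each component of $\rho_Y\circ f$ is an invariant function on $X$ (for instance $f_{x'_i}f_{x'_j}$), so the representation theorem applied componentwise produces $\bar f$. Now $E$ becomes a finitely generated module over $\cO_{\rho_X(p)}$ via the surjection. Its $\cO_{\rho_Y(q)}$-module structure, induced through $\bar f$, agrees with the $\cE_q$-structure composed with the surjection $\cO_{\rho_Y(q)}\to\cE_q$. The maximal ideal of $\cO_{\rho_Y(q)}$ maps into $\me_q$; conversely, generators of $\me_q$ are pullbacks of germs vanishing at $0$. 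Hence both ideals generate the same submodule of $E$, so $E/\mm E=E/\me_q E$ is finite dimensional.

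Apply Theorem~\ref{thm:mpt} to $\bar f$ to conclude that $E$ is finitely generated over $\cO_{\rho_Y(q)}$, and therefore over its image $\cE_q$. The main obstacle is the invariant-representation step for several anti-invariant coordinates, and I would cite Schwarz rather than reprove it. In the applications of this paper, the anti-invariant directions are essentially one-dimensional, or the images $\rho$ lie in semianalytic sets, so Whitney's theorem with parameters would suffice.
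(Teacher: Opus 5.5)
Your proposal follows the same route as the paper's proof: linearize the actions, pass to the orbit space via the Hilbert map, use Whitney's theorem (Schwarz's theorem when there are several anti-invariant directions) to write invariant germs as pullbacks, and apply the classical Theorem~\ref{thm:mpt} to the induced map between the quotient charts. Your version is more careful than the paper's, which asserts a ring isomorphism $\cE_p\cong\cO_{p'}$ where only the surjection $\cO_{p'}\to\cE_p$ holds (germs vanishing on the image of the Hilbert map lie in the kernel); you rightly use only that surjection, together with the fact that the maximal ideal of $\cO_{q'}$ maps onto $\me_q$.
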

\begin{proof}
  It is enough to suppose that $X,Y$ are vector spaces acted linearly by $\Z_2$.
  Then, $\cE_p$ is the ring of germs of equivariant
  functions.

  Let $\pi_x\colon X\to\R^k$, $\pi_y\colon Y\to\R^\ell$ be the quotient maps by the group action. Let $p'=\pi_x(p)$, $q'=\pi_y(q)$. The equivariant map $f$
  induces a (germ of) smooth map $f'\colon\R^k\to\R^n$

  By a result of Whitney \cite{Whitney}, there is a canonical isomorphism between $\cE_p$ and $\cO_{p'}$. Canonical means that the diagram
  \[
    \begin{tikzcd}
      \cE_q\ar[r,"f^*"]\ar[d] & \cE_p\ar[d]\\
      \cO_{q'}\ar[r,"{f'}^*"] & \cO_{p'}
    \end{tikzcd}
  \]
  commutes. Then $E$ is a finitely generated $\cO_{p'}$-module and it is finitely generated $\cO_{q'}$-module
  if and only if $E/\mm_{q'}E$ is finitely dimensional.
\end{proof}

\bibliographystyle{amsalpha}
\def\MR#1{}
\bibliography{bib}

\typeout{Number of still unresolved questions: \arabic{nparcount}.}
\typeout{Number of resolved questions: \arabic{sparcount}.}

\end{document}